\newif\ifpdf
\numberwithin{equation}{section}       
\newtheorem{prop} {Proposition} [section]
\newtheorem{thm}[prop] {Theorem} 
\newtheorem{defi}[prop] {Definition}
\newtheorem{lem}[prop] {Lemma}
\newtheorem{cor}[prop]{Corollary}
\newtheorem{prop-def}[prop]{Proposition-Definition}
\newtheorem*{thmA}{Theorem A} 
\newtheorem*{thmB}{Theorem B} 
\newtheorem*{thmC}{Theorem C}
\theoremstyle{remark}
\newtheorem{ex}[prop]{Example}
\newtheorem{rem}[prop]{Remark}
\newtheorem*{ackn}{Acknowledgements}
\newcommand{\C}{{\mathbb{C}}}
\newcommand{\N}{{\mathbb{N}}}
\newcommand{\PP}{{\mathbb{P}}}
\newcommand{\Q}{{\mathbb{Q}}}
\newcommand{\R}{{\mathbb{R}}}
\newcommand{\fa}{{\mathfrak{a}}}
\newcommand{\cE}{{\mathcal{E}}}
\newcommand{\cH}{{\mathcal{H}}}
\newcommand{\cL}{{\mathcal{L}}}
\newcommand{\cM}{{\mathcal{M}}}
\newcommand{\cO}{{\mathcal{O}}}
\newcommand{\cT}{{\mathcal{T}}}
\newcommand{\ie}{i.e.~}
\newcommand{\fX}{{\mathfrak{X}}}
\newcommand{\cK}{{\mathcal{K}}}
\newcommand{\fm}{{\mathfrak{m}}}
\newcommand{\tf}{\widetilde{\varphi}}
\newcommand{\tmu}{\widetilde{\mu}}
\newcommand{\tom}{\widetilde{\omega}}
\newcommand{\tX}{\widetilde{X}}
\newcommand{\tU}{\widetilde{U}}
\renewcommand{\a}{\alpha}
\renewcommand{\b}{\beta}
\newcommand{\de}{\delta}
\newcommand{\e}{\varepsilon}
\newcommand{\om}{\omega}
\newcommand{\f}{\varphi}
\newcommand{\p}{\psi}
\newcommand{\la}{\lambda}
\newcommand{\D}{\Delta}
\newcommand{\MA}{\mathrm{MA}}
\newcommand{\Amp}{\mathrm{Amp}\,}
\newcommand{\Ric}{\mathrm{Ric}}
\newcommand{\loc}{\mathrm{loc}}
\newcommand{\reg}{\mathrm{reg}}
\newcommand{\pddt}{\frac{\partial}{\partial t}}
\newcommand{\mes}{\mathrm{m}}
\newcommand{\psh}{{\mathrm{PSH}}}
\newcommand{\pshn}{{\mathrm{PSH}_\mathrm{norm}}}
\newcommand{\pshf}{{\mathrm{PSH}_\mathrm{full}}}
\newcommand{\enorm}{{\cE^1_\mathrm{norm}}}
\newcommand{\Tf}{{\cT_\mathrm{full}}}
\newcommand{\din}{\operatorname{Ding}}
\newcommand{\mab}{\operatorname{Mab}}
\newcommand{\dbar}{\overline{\partial}}
\renewcommand{\Re}{\operatorname{Re}}
\renewcommand{\Im}{\operatorname{Im}}
\newcommand{\Ker}{\operatorname{Ker}}
\newcommand{\Aut}{\operatorname{Aut}}
\newcommand{\ord}{\operatorname{ord}}
\newcommand{\tr}{\operatorname{tr}}
\newcommand{\supp}{\operatorname{supp}}
\title[K\"ahler-Einstein metrics and the K\"ahler-Ricci flow]{K\"ahler-Einstein metrics and the K\"ahler-Ricci flow on log Fano varieties}
\date{\today}
\author{R.J.Berman
    \and
    S.Boucksom
    \and
    P.Eyssidieux
    \and
    V.Guedj
    \and
    A. Zeriahi*}
\address{Chalmers Techniska H{\"o}gskola \\
 G{\"o}teborg\\
 Sweden}
\email{robertb@math.chalmers.se}
\address{CNRS-\'Ecole Polytechnique\\
 C.M.L.S.\\
 91128 Palaiseau cedex\\
 France}
\email{sebastien.boucksom@polytechnique.edu}
\address{Institut Universitaire de France \& Institut Fourier, Saint-Martin d'H{\`e}res\\
France}
\email{Philippe.Eyssidieux@ujf-grenoble.fr}
 \address{Institut Universitaire de France \& Institut Math\'ematiques de Toulouse, Toulouse,
France}
\email{vincent.guedj@math.univ-toulouse.fr}
\address{Institut Math\'ematiques de Toulouse, Universit{\'e} Paul Sabatier, Toulouse
France}
\email{ahmed.zeriahi@math.univ-toulouse.fr}
\subjclass[2010]{Primary: 32W20, seconday: 32Q20 \\
*The authors are partially supported by the French A.N.R. project MACK}
\begin{document}

\begin{abstract} We prove the existence and uniqueness of K\"ahler-Einstein metrics on $\Q$-Fano varieties with log terminal singularities (and more generally on log Fano pairs) whose Mabuchi functional is proper. We study analogues of the works of Perelman on the convergence of the normalized K\"ahler-Ricci flow, and of Keller, Rubinstein on its discrete version, Ricci iteration.  In the special case of (non-singular) Fano manifolds, our results on Ricci iteration yield smooth convergence without any additional condition, improving on previous results. Our result for the K\"ahler-Ricci flow provides weak convergence independently of Perelman's celebrated estimates. 
\end{abstract} 

\maketitle

\setcounter{tocdepth}{1}

\tableofcontents

\newpage

\section*{Introduction}

Complex Monge-Amp\`ere equations have been one of the most powerful tools in K\"ahler geometry since T.~Aubin and S.T.~Yau's classical works  \cite{Aub78,Yau}, culminating in Yau's solution to the Calabi conjecture. A notable application is the construction of K\"ahler-Einstein metrics on compact K\"ahler manifolds. Whereas their existence on manifolds with ample and trivial canonical class was settled in \cite{Aub78} and \cite{Yau} respectively, determining necessary and sufficient conditions for a Fano manifold to carry a K\"ahler-Einstein metric is still an open problem that attracts a lot of attention 
(see \cite{PS10})\footnote{This problem has been solved recently by Chen, Donaldson and Sun  \cite{CDS14,CDS3}, see also [Tia15].}. 

In recent years, following the pioneering work of H.~Tsuji~\cite{Tsu}, degenerate complex Monge-Amp\`ere equations have been intensively studied by many authors. In relation to the Minimal Model Program, they led to the construction of singular K\"ahler-Einstein metrics with zero or negative Ricci curvature \cite{EGZ1} or, more generally, of canonical volume forms on compact K\"ahler manifolds with nonnegative Kodaira dimension  \cite{ST08,ST}. 

Making sense of and constructing K\"ahler-Einstein metrics on (possibly singular) Fano varieties turns out to require more advanced tools in the study of degenerate complex Monge-Amp\`ere equations. 
The purpose of this article is to develop these tools, following the first step taken in \cite{BBGZ}, so as to investigate K\"ahler-Einstein metrics on singular Fano varieties, and more generally on log Fano pairs. 

A main motivation to study them comes from the fact that singular K\"ahler-Einstein Fano varieties arise naturally as Gromov-Hausdorff limits of K\"ahler-Einstein Fano manifolds. This had been strongly suggested by \cite{CCT,Tia10,LX}, among other works, and was finally established very recently by S.~Donaldson and S.~Sun in \cite{DS}. 

\smallskip

Let $X$ be a $\Q$-Fano variety, \ie a normal projective complex variety such that $-K_X$ is $\Q$-Cartier and ample, without any further \emph{a priori} restriction on its singularities. Any reasonable notion of a K\"ahler-Einstein metric on $X$ should at least restrict to a K\"ahler metric $\om$ on the regular locus $X_\reg$ with $\Ric(\om)=\om$. 

We show (see Proposition \ref{prop:KEklt}) that the existence of such a metric $\om$ on $X_\reg$ forces $X$ to have \emph{log terminal singularities}, a class of singularities which comprises quotient singularities and is characterized in analytic terms by a finite volume condition. 

We prove that the volume $\int_{X_\reg}\om^n$ is automatically finite, bounded above by $c_1(X)^n$ with $n:=\dim_{\C} X$. Relying on finite energy techniques and regularity of solutions to Monge-Amp\`ere equations, we further show that $\int_{X_\reg}\om^n=c_1(X)^n$ if and only if $\om$ extends to a closed positive $(1,1)$-current $\om$ on $X$ lying in $c_1(X)$ and having continuous local potentials, or equivalently the curvature form of a continuous psh metric on the $\Q$-line bundle $-K_X$. We then say that $\om$ is a K\"ahler-Einstein metric on $X$. 

In \cite[Theorem 1.2]{DS} it is proved that the Gromov-Hausdorff limit of any sequence $(X_j,\om_j)$ of K\"ahler-Einstein Fano manifolds with fixed volume $c_1(X_j)^n=V$ is  a $\Q$-Fano 
variety $X$ with log terminal singularities, equipped with  a K\"ahler-Einstein metric $\om$ in the above sense, with $c_1(X)^n=V$. Combined with \cite{LX}, this strongly suggests that K\"ahler-Einstein $\Q$-Fano varieties can be used to compactify the moduli space of K\"ahler-Einstein Fano manifolds. We thank O.Debarre and B.Totaro for emphasizing this point.

\smallskip

 We now give precise formulations of our main results. To simplify the exposition, we only consider the  easier context of Fano varieties and refer the reader to the sequel for the corresponding statements for log Fano pairs.

\subsection*{Existence and uniqueness of K\"ahler-Einstein metrics}
 
We define a Mabuchi functional $\mab$ (extending the classical Mabuchi $K$-energy) and a $J$-functional $J$ for a given $\Q$-Fano variety $X$ with log terminal singularities, and we say as usual that the Mabuchi functional is \emph{proper} if $\mab\to+\infty$ as $J\to+\infty$. 

Our first main result is as follows.

\begin{thmA} Let $X$ be a $\Q$-Fano variety with log terminal singularities. 
\begin{itemize}
\item[(i)] The identity component $\Aut^0(X)$ of the automorphism group of $X$ acts transitively on the set of K\"ahler-Einstein metrics on $X$,
\item[(ii)] If the Mabuchi functional of $X$ is proper, then $\Aut^0(X)=\{1\}$ and $X$ admits a unique K\"ahler-Einstein metric, which is the unique minimizer of the Mabuchi functional. 
\end{itemize}
\end{thmA}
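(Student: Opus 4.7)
The plan is to deploy the variational method on the finite energy space $\cE^1$, combining properness of $\mab$ for existence with convexity of $\mab$ along weak geodesics for uniqueness modulo $\Aut^0(X)$. This adapts the Bando--Mabuchi uniqueness strategy and the approach of [BBGZ] to the log-terminal setting, where the relevant objects are finite energy psh weights on $-K_X$ and Monge--Amp\`ere measures against the adapted volume form $\mu$ provided by log-terminality.

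For (ii), I would begin with existence via the direct method. Writing $\mab(\varphi) = H(\MA(\varphi)\,|\,\mu) + L(\varphi)$, with $H$ the relative entropy and $L$ an energy-linear term finite on $\cE^1$, one obtains a functional which is lower semi-continuous in the $\cE^1$-topology, since entropy is lsc under weak convergence of measures. For a minimizing sequence $\varphi_j \in \cE^1$ normalized by $\sup_X \varphi_j = 0$, properness forces $J(\varphi_j)$ to be bounded, so the $J$-compactness of $\cE^1$-sublevel sets established earlier yields a limit $\varphi_\infty \in \cE^1$ with $\mab(\varphi_\infty) = \inf \mab$. The Euler--Lagrange equation of $\mab$ on $\cE^1$ is precisely the K\"ahler-Einstein equation $\MA(\varphi_\infty) = c\, e^{-\varphi_\infty}\mu$; Ko{\l}odziej-type $L^\infty$ estimates then upgrade $\varphi_\infty$ to a continuous psh weight, and Aubin--Yau regularity on $X_\reg$ produces a smooth K\"ahler-Einstein metric in the sense defined earlier.

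For (i), I would connect two K\"ahler-Einstein potentials $\varphi_0, \varphi_1$ by the weak $\cE^1$-geodesic $\{\varphi_t\}_{t\in[0,1]}$ obtained from the homogeneous complex Monge--Amp\`ere equation on $X \times \{0 < \log|\tau| < 1\}$. Berndtsson's subharmonicity of Bergman kernels implies convexity of $t \mapsto \mab(\varphi_t)$. Since K\"ahler-Einstein potentials are critical points of $\mab$, the one-sided derivatives vanish at both endpoints, so convexity forces $t \mapsto \mab(\varphi_t)$ to be constant. Berndtsson's equality case then produces a holomorphic vector field on $X$ whose flow realizes the passage from $\varphi_0$ to $\varphi_1$, yielding an element of $\Aut^0(X)$ relating the two K\"ahler-Einstein metrics. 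Granted (i), properness in (ii) rules out nontrivial $\Aut^0(X)$: the orbit of a K\"ahler-Einstein potential under a one-parameter subgroup would have bounded $\mab$ (by $\Aut^0(X)$-invariance) but unbounded $J$, contradicting properness; hence the minimizer is unique.

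The main obstacle is extending Berndtsson's convexity, and especially its equality case characterization, to weak geodesics in this singular setting: one must deal with quasi-psh weights on $-K_X$ carrying the analytic singularities dictated by log-terminality, while having only $\cE^1$-regularity along the geodesic. A companion difficulty is the regularity upgrade of an $\cE^1$-minimizer, which a priori satisfies the K\"ahler-Einstein equation only in the weak Monge--Amp\`ere sense; the combination of Ko{\l}odziej estimates, EGZ-type regularity and Aubin--Yau smoothness on $X_\reg$ must be marshalled to produce a genuine K\"ahler metric with $\Ric(\om) = \om$ on the regular locus, with continuous psh potentials across $X$.
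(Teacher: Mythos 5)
Your broad outline — variational existence via properness, uniqueness modulo $\Aut^0(X)$ via convexity along weak geodesics plus an analysis of the equality case — is the strategy the paper follows, but two steps are attributed to results that do not exist in the form you cite, and a third is logically reversed.

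The central problem is the claim that ``Berndtsson's subharmonicity of Bergman kernels implies convexity of $t \mapsto \mab(\varphi_t)$.'' Berndtsson's theorem yields convexity of the \emph{Ding} functional along weak geodesics, because $L(\f)=-\log\int e^{-\f}\mu_0$ is a log-Bergman-kernel quantity and $E$ is harmonic along geodesics (Lemmas~\ref{lem:psh} and~\ref{lem:convdin}). Convexity of the Mabuchi $K$-energy itself along $\cE^1$-geodesics is a substantially harder statement that was not available, and the paper never invokes it; instead it passes between $\din$ and $\mab$ only at the level of infima via Lemma~\ref{lem:comp} ($\mab\ge\din$ with equality exactly on K\"ahler--Einstein currents). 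Your geodesic argument must be recast entirely in terms of $\din$ to be correct. Likewise, ``the one-sided derivatives of $\mab$ vanish at the endpoints'' is both unnecessary and not established in the $\cE^1$ setting; what the paper actually uses (Corollary~\ref{cor:geodKE}) is that both endpoints are minimizers of $\din$ and $\din$ is convex along the geodesic, forcing constancy.

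Second, the ``equality case'' of Berndtsson is treated as a black box, whereas it is the technical heart of part (i). In the present singular setting the holomorphic vector field produced by Berndtsson's argument a priori lives only on a log resolution, has poles along the exceptional divisor of the canonical bundle formula, and its flow must be shown to extend across $X_{\sing}$. The paper handles this by working with forms valued in $L=-K_X+E$ on $\tX\times S$, showing the resulting vector field descends to a Zariski-open set of codimension $\ge 2$ complement, and then extending its local flow to honest automorphisms of $X$ (Lemma~\ref{lem:vf}, Theorem~\ref{thm:unique}). None of this is free, and it is precisely where the novelty lies.

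Third, your argument for $\Aut^0(X)=\{1\}$ runs in the wrong direction: you assume a non-trivial one-parameter subgroup gives an orbit with $\mab$ bounded but $J$ unbounded, to contradict properness. But unboundedness of $J$ along the orbit is exactly what is \emph{not} automatic. The paper argues the other way (proof of Theorem~\ref{thm:uniqueKE}(i)): properness, applied to the constant-$\mab$ orbit, shows $J_\om(F_t^*\om)$ is \emph{bounded} on $\C$; since $E(\f^t)$ is harmonic and $\int_X(\f^t-\f_\om)\MA(\f_\om)$ is subharmonic along the orbit (which solves $(\om_0+dd^c\f)^{n+1}=0$ on $X\times\C$), the quantity $J_\om(F_t^*\om)$ is a bounded subharmonic function on $\C$ vanishing at $t=0$, hence identically zero, giving $F_t^*\om=\om$ and then $V\equiv 0$.

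The existence part of (ii) is essentially correct in outline (it matches Theorem~\ref{thm:mab}, Corollary~\ref{cor:KE}, Theorem~\ref{thm:varKE}), though deriving the Euler--Lagrange equation on $\cE^1$ is itself non-trivial; the paper uses the differentiability of $t\mapsto E(P(\f+tv))$ from \cite{BB} and again goes through the Ding functional rather than $\mab$ directly.
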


When $X$ is non-singular, the first point is a classical result of S.~Bando and T.~Mabuchi \cite{BM87}. Our proof in the present context builds on the recent work of B.~Berndtsson \cite{Bern11}. The second point generalizes a result of W.Y.~Ding and G.~Tian (see \cite{Tian}), and relies as in \cite{Ber10} on the variational approach developed in our previous work \cite{BBGZ}. It should be recalled that, when $X$ is non-singular and $\Aut^0(X)=\{1\}$, a deep result of G.~Tian \cite{Tia97}, strengthened in \cite{PSSW}, conversely shows that the existence of a K\"ahler-Einstein metric implies the properness of the Mabuchi functional. 
It would of course be very interesting to establish a similar result  for singular varieties\footnote{This should be a consequence of the present article and the recent work of Darvas-Rubinstein \cite{DR15}.} .

\subsection*{Ricci iteration}
In their independent works \cite{Kel} and \cite{Rub}, J.~Keller and Y.~Rubinstein investigated the dynamical system known as \emph{Ricci iteration}, defined by iterating the inverse Ricci operator. Our second main result deals with the existence and convergence of Ricci iteration in the more general context of $\Q$-Fano varieties.

\begin{thmB} Let $X$ be a $\Q$-Fano variety with log terminal singularities. 
\begin{itemize}
\item[(i)] Given a smooth form $\om_0\in c_1(X)$, there exists a unique sequence of closed positive currents $\om_j\in c_1(X)$ with continuous potentials on $X$, smooth on $X_\reg$, and such that 
$$
\Ric(\om_{j+1})=\om_j
$$
on $X_\reg$ for all $j\in\N$. 

\item[(ii)] If we further assume that the Mabuchi functional of $X$ is proper and let $\om_{\mathrm{KE}}$ be the unique K\"ahler-Einstein metric provided by Theorem A, then $\lim_{j\to+\infty}\om_j=\om_{\mathrm{KE}}$, the convergence being in $C^\infty$-topology on $X_\reg$, and uniform on $X$ at the level of potentials. 
\end{itemize}
\end{thmB}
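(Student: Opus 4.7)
The plan is to interpret each step of the iteration as a degenerate complex Monge-Amp\`ere equation, solve it using the singular $L^\infty$-theory on $\Q$-Fano varieties, and then to establish convergence via a Ding-type variational monotonicity combined with the properness hypothesis.

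I would begin by fixing a smooth Hermitian metric on $-K_X$ with curvature $\om_0$, letting $\mu_0$ denote the adapted measure on $X$ (finite mass by klt) and writing $\om_j=\om_0+dd^c\f_j$. The identity $\Ric(\om_{j+1})=\om_j$ on $X_\reg$ is equivalent, after a suitable normalization, to a Monge-Amp\`ere equation of the form $\MA(\f_{j+1})=c_j\,e^{-\f_j}\,\mu_0$ with $c_j>0$ an explicit normalization constant. With $\f_j$ continuous on $X$, the right-hand side is a bounded continuous density times $\mu_0$, so the Ko{\l}odziej/Eyssidieux-Guedj-Zeriahi $L^\infty$-theory---as extended in \cite{BBGZ} to log terminal Fano varieties---produces a unique continuous solution $\f_{j+1}$; since the right-hand side is smooth and strictly positive on $X_\reg$, the standard Aubin-Yau bootstrap (Laplacian estimate, Evans-Krylov, Schauder) delivers $\f_{j+1}\in C^\infty(X_\reg)$. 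Iterating from the smooth $\om_0$ gives (i).

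For (ii), the conceptual key is to identify $\f_{j+1}$ variationally as the unique minimizer on $\cE^1(X,\om_0)$ of the strictly convex functional $\mathcal{G}_j(\p):=-E(\p)+\int_X\p\,d\tmu_j$, where $d\tmu_j:=V^{-1}c_j\,e^{-\f_j}\,\mu_0$ is a probability measure and $E$ is the pluricomplex energy from \cite{BBGZ}. Applying Jensen's inequality to $d\tmu_j$ and combining with $\mathcal{G}_j(\f_{j+1})\le \mathcal{G}_j(\f_j)$ yields the Ding-monotonicity $\din(\f_{j+1})\le \din(\f_j)$, strict unless $\f_j=\f_{j+1}$ is a K\"ahler-Einstein potential. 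Under properness of $\mab$, both $\din$ and $\mab$ dominate $J$ modulo constants, so the decreasing bounded-below sequence $\din(\f_j)$ forces $J(\f_j)\le C$; the compactness of $J$-sublevel sets in $(\cE^1,d_1)$ from \cite{BBGZ} then extracts a $d_1$-convergent subsequence $\f_{j_k}\to \f_\infty$. Summability of the Jensen deficits $\din(\f_j)-\din(\f_{j+1})$ forces them to vanish in the limit, so $\f_\infty$ saturates Jensen, minimizes $\din$, and hence by Theorem A(ii) coincides with the unique K\"ahler-Einstein potential $\f_{\mathrm{KE}}$; uniqueness of the limit upgrades subsequential convergence to full $d_1$-convergence.

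For the uniform and smooth convergence statements, a Skoda integrability argument combined with the uniform $d_1$-bound will give uniform $L^p(\mu_0)$-bounds on $e^{-\f_j}$ for some $p>1$, and Ko{\l}odziej-EGZ stability will then yield a uniform $C^0$-bound on $\f_j$; combined with $d_1$-convergence this promotes convergence to the uniform topology on $X$. On $X_\reg$, the right-hand sides $e^{-\f_j}\mu_0$ are then uniformly smooth and positive on compact subsets, so interior Aubin-Yau estimates furnish $C^k$-bounds uniform in $j$ on compact subsets of $X_\reg$; Arzel\`a-Ascoli together with uniqueness of the limit delivers $C^\infty_{\mathrm{loc}}(X_\reg)$-convergence. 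I expect the main technical obstacle to be securing these interior estimates uniformly in $j$ as one approaches the singular locus of $X$, without any Perelman-type global input: the whole flavor of the argument is to replace such global heat-kernel estimates by singular $L^\infty$-theory on $X$ combined with purely local elliptic regularity on $X_\reg$.
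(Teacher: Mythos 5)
Your proof of part (i) is essentially the paper's argument: the iteration is a sequence of degenerate Monge--Amp\`ere equations $\MA(\f_{j+1})=e^{-\f_j+L(\f_j)}\mu_0$, solved by the singular $L^\infty$-theory on $\tX\to X$, with smoothness on $X_\reg$ from an interior Laplacian estimate and Evans--Krylov (the paper's Lemma \ref{lem:tame}, Lemma \ref{lem:R}, Theorem \ref{thm:paun}). Your Ding-monotonicity, obtained from the variational characterization of $\f_{j+1}$ together with Jensen and concavity of $E$, is also sound and is equivalent to what the paper establishes via $\din(R\om)\le\mab(R\om)\le\din(\om)\le\mab(\om)$ (Lemma \ref{lem:monotone} and Lemma \ref{lem:comp}).

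However, there is a genuine gap in the compactness step of part (ii). You claim to extract a \emph{$d_1$-convergent} (i.e.\ strongly convergent) subsequence from a uniform bound $J(\f_j)\le C$, invoking ``the compactness of $J$-sublevel sets in $(\cE^1,d_1)$.'' This is false: the sets $\cE^1_C(X,\om_0)$ are only weakly compact, and are \emph{not} compact for the strong topology already when $n=1$ (this is stated explicitly in \S\ref{sec:e1}, right after Proposition \ref{prop:conven}, and is essentially the non-compactness of the unit ball of $W^{1,2}$). Weak convergence alone is useless here because neither the Ding nor the Mabuchi functional is lower semicontinuous for the weak topology, and the Monge--Amp\`ere operator is weakly discontinuous for $n\ge 2$, so you cannot identify the limit as a K\"ahler--Einstein potential.

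The missing ingredient -- and the paper's main technical novelty in this section -- is the strong compactness of measures with bounded entropy relative to the tame measure $\mu_0$ (Theorem \ref{thm:compactness}), which rests on the zero-Lelong-number/Izumi result (Theorem \ref{thm:lelres}) and the H\"older--Young inequality. The Mabuchi monotonicity $\mab(\om_{j+1})\le\mab(\om_j)$ gives a uniform bound on $\mab$, and since $\mab=H-E^*$ with $E^*$ bounded by properness, this yields a uniform entropy bound $H_{\mu_0}(V^{-1}\om_j^n)\le A$. Theorem \ref{thm:compactness} then yields strong compactness of $\{\om_j\}$; combined with strong continuity of the Ricci inverse operator (Lemma \ref{lem:R}) and Lemma \ref{lem:monotone}, the Lyapunov argument closes. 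Without passing through entropy, your argument cannot deliver the strong convergence you need, so this is a structural gap rather than a minor omission. The remaining $C^0$- and $C^\infty_{\mathrm{loc}}(X_\reg)$-upgrades you sketch are in line with the paper's Steps 2 and 3.
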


When $X$ is non-singular, this result settles \cite[Conjecture 3.2]{Rub}, which was obtained in \cite[Theorem 3.3]{Rub} under the more restrictive assumption that Tian's $\alpha$-invariant satisfies $\a(X)>1$ (an assumption that implies the properness of the Mabuchi functional).   Building on a preliminary version of the present paper, a more precise version of Theorem B was obtained in \cite[Theorem 2.5]{JMR} for K\"ahler-Einstein metrics with cone singularities along a smooth hypersurface of a non-singular variety.

\subsection*{Convergence of the K\"ahler-Ricci flow}

When $X$ is a $\Q$-Fano variety with log terminal singularities, the work of J.~Song and G.~Tian \cite{ST} shows that given an initial closed positive current $\om_0\in c_1(X)$ with continuous potentials, there exists a unique solution $(\om_t)_{t>0}$ to the normalized K\"ahler-Ricci flow, in the following sense: 
\begin{itemize} 
\item[(i)] For each $t>0$, $\om_t$ is a closed positive current in $c_1(X)$ with continuous potentials; 
\item[(ii)] On $X_\reg\times]0,+\infty[$, $\om_t$ is smooth and satisfies $\dot\om_t=-\Ric(\om_t)+\om_t$; 
\item[(iii)] $\lim_{t\to 0_+}\om_t=\om_0$, in the sense that their local potentials converge in 
${\mathcal C}^0(X_\reg)$.  
\end{itemize}
Our third main result studies the long time behavior of this normalized K\"ahler-Ricci flow, and provides a weak analogue for singular Fano varieties of G.~Perelman's\footnote{Perelman explained his celebrated estimates during a seminar talk at MIT in 2003 (see \cite{SeT08}). These have been used since then in studying the ${\mathcal C}^{\infty}$-convergence of the normalized K\"ahler-Ricci flow under various assumptions (see notably \cite{TZ,PSS,PSSW08b}).} result on the convergence of the K\"ahler-Ricci flow on K\"ahler-Einstein Fano manifolds:

\begin{thmC} Assume that the Mabuchi functional of $X$ is proper, and denote by $\om_{\mathrm{KE}}$ its unique K\"ahler-Einstein metric. Then $\lim_{t\to+\infty}\om_t=\om_{\mathrm{KE}}$ and $\lim_{t\to+\infty}\om_t^n=\om_\mathrm{KE}^n$, both in the weak topology. 
\end{thmC}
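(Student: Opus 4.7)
The plan is to carry out a variational/compactness argument, parallel to the one used for Theorem B but in continuous time. Three ingredients go into it: the Mabuchi functional is a Lyapunov functional for the normalized K\"ahler-Ricci flow; its properness upgrades this to $\cE^1$-compactness of $(\omega_t)$; and any cluster point is a K\"ahler-Einstein metric, hence equal to $\omega_{\mathrm{KE}}$ by Theorem A.

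First, I would establish the parabolic monotonicity
\[
\frac{d}{dt}\mab(\omega_t) \;=\; -\int_{X_\reg} |\dbar \dot\varphi_t|^2_{\omega_t}\,\omega_t^n \;\leq\; 0,
\]
where $\omega_t = \omega_0 + dd^c \varphi_t$ with $\varphi_t$ a suitably normalized potential. On $X_\reg$ this is the classical computation; the log-terminal hypothesis, combined with the integrability properties of $\mab$ developed in the earlier sections, is what allows one to integrate by parts without boundary contributions from $\sing(X)$. Since properness forces $\mab$ to be bounded below by $\mab(\omega_{\mathrm{KE}})$, the non-increasing function $\mab(\omega_t)$ has a finite limit $m_\infty$, and the inequality $\mab(\omega_t) \leq \mab(\omega_0)$ combined with properness yields a uniform bound on $J(\omega_t)$, hence on the $\cE^1$-energy of $\varphi_t$.

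Compactness of $\cE^1$-sublevel sets, one of the central achievements of \cite{BBGZ} and of the preceding sections, now permits, for any sequence $t_j \to +\infty$, the extraction of a subsequence along which $\omega_{t_{j_k}} \to \omega_\infty$ in the $\cE^1$-topology, with $\omega_\infty \in c_1(X)$ a positive current of finite energy. To identify $\omega_\infty$ with $\omega_{\mathrm{KE}}$ I would use a limiting-flow argument: the continuous dependence of the Song-Tian flow on its initial condition, transported to the $\cE^1$-topology, gives $\omega_{t_{j_k}+s} \to \omega_\infty(s)$ for every $s \geq 0$, where $\omega_\infty(s)$ denotes the K\"ahler-Ricci flow issued from $\omega_\infty$. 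The continuity of $\mab$ along $\cE^1$-strong convergence (valid on sublevel sets of bounded entropy, which is exactly the regime into which properness places us) together with the monotone convergence $\mab(\omega_t) \to m_\infty$ yields $\mab(\omega_\infty(s)) = m_\infty$ for all $s \geq 0$. The vanishing of the right-hand side of the monotonicity identity above then forces the limit flow to be stationary, \ie $\Ric(\omega_\infty) = \omega_\infty$ on $X_\reg$. By the extension criterion proved earlier in the paper, $\omega_\infty$ is a K\"ahler-Einstein metric in $c_1(X)$; Theorem A(ii) now gives $\omega_\infty = \omega_{\mathrm{KE}}$. Uniqueness of the cluster point yields the weak convergence $\omega_t \to \omega_{\mathrm{KE}}$, and the $\cE^1$-continuity of the Monge-Amp\`ere operator provides the convergence $\omega_t^n \to \omega_{\mathrm{KE}}^n$.

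The main obstacle, as in Theorem B but aggravated by the continuous-time setting, is controlling the Mabuchi functional along solutions of the K\"ahler-Ricci flow on a singular variety: both the boundary-term-free derivation of the monotonicity identity on $X_\reg$ and the continuous dependence of the Song-Tian flow on its initial condition in the $\cE^1$-topology require the log-terminal hypothesis and the full finite-energy formalism developed in the paper. Once those pieces are in place, the compactness/uniqueness scheme above delivers Theorem C without any appeal to Perelman's estimates.
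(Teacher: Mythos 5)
Your high-level scheme — a Lyapunov functional, properness-driven $\cE^1$-compactness, identification of any strong cluster point as a K\"ahler-Einstein metric, then uniqueness — is the right template and matches the paper's spirit. But two of your specific steps are not available in the framework the paper builds, and the paper in fact takes a genuinely different route to avoid them.

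First, your identification of the limit uses a ``limiting flow'' argument: you pass to $\om_\infty(s)$ as an $\cE^1$-limit of $\om_{t_{j_k}+s}$ via continuous dependence of the Song--Tian flow on its initial condition in the strong topology. That continuity is nowhere established (Theorem \ref{thm:ST} only asserts existence/uniqueness), and it looks genuinely hard on a singular variety; it is not part of the toolbox the paper develops. Second, you invoke \emph{continuity} of the Mabuchi functional along strong convergence to conclude $\mab(\om_\infty(s))=m_\infty$. But Lemma \ref{lem:lsc} only gives \emph{lower} semicontinuity of $\mab$ in the strong topology; upper semicontinuity fails in general because the entropy term $H(\mu)$ is itself only lsc, and restricting to bounded-entropy sublevel sets does not repair this. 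So as written, the step $\mab(\om_\infty(s))=m_\infty$, and hence the stationarity of the limit flow, is a gap.

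The paper's actual proof sidesteps both issues by taking the \emph{Ding} functional, not Mabuchi, as the workhorse. The point is that $\din$ is \emph{strongly continuous} (Lemma \ref{lem:lsc}), whereas $\mab$ is only lsc, and Proposition \ref{prop:monotoneflow} upgrades the bare monotonicity to a quantitative Pinsker-type estimate
\begin{equation*}
\din(\om(t'))-\din(\om(t))\le-\int_{t}^{t'}\bigl\|V^{-1}\om(s)^n-\mu_{\om(s)}\bigr\|^2\,ds,
\end{equation*}
proved via the regularized flows of \cite{ST} rather than by differentiating on $X_\reg$. Since $\din$ is bounded below, this forces $\|V^{-1}\om(t_j)^n-\mu_{\om(t_j)}\|\to 0$ along a sequence $t_j\to\infty$. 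Properness of $\mab$ keeps $(\om(t))$ in a strongly compact set, so after extracting $\om(t_j)\to\om_\infty$ strongly one can pass to the limit in $V^{-1}\om(t_j)^n=\mu_{\om(t_j)}+o(1)$ directly — using strong continuity of $\MA$ (Proposition \ref{prop:conven2}) and of $\om\mapsto\mu_\om$ (Lemma \ref{lem:contmu}) — to get $V^{-1}\om_\infty^n=\mu_{\om_\infty}$, i.e.\ $\om_\infty$ is KE, with no limiting-flow analysis at all. Strong continuity of $\din$ then gives $\din(\om(t_j))\to\inf\din$, monotonicity propagates this to the whole flow, and Lemma \ref{lem:cvdin} concludes. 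So keep your compactness/uniqueness skeleton, but replace the Mabuchi/limiting-flow step by the Ding/Pinsker mechanism; that is what makes the argument close in the singular setting.
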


When $X$ is non-singular, the above result is certainly weaker than Perelman's theorem, which yields convergence in $C^\infty$-topology. On the other hand, our approach, which relies on a variational argument using results of \cite{BBGZ}, is completely independent of Perelman's deep estimates - which are at any rate out of reach for the moment on singular varieties.

\subsection*{The strong topology of currents with finite energy}

The classical differential-geometric approach to the above convergence results requires delicate \emph{a priori} estimates to guarantee compactness of a given family of metrics in $C^{\infty}$-topology. Our approach consists in working with the set $\cT^1$ of closed positive $(1,1)$-currents $\om$ in the fixed cohomology class $c_1(X)$ and having \emph{finite energy} in the sense that $J(\om)<+\infty$. As a consequence of \cite{BBGZ}, the map $\om\mapsto V^{-1}\om^n$ (which corresponds to the complex Monge-Amp\`ere operator at the level of potentials) sets up a bijection between $\cT^1$ and the set $\cM^1$ of probability measures with finite energy. 

With respect to the weak topology of currents, compactness in $\cT^1$ is easily obtained: any set of currents with uniformly bounded energy is weakly compact. But the drawback of this weak topology is that the Monge-Amp\`ere operator is {\it not} weakly continuous as soon as $n\ge 2$. 

An important novelty of this article is to define, study and systematically use a strong topology on $\cT^1$ and $\cM^1$, which turns them into complete metric spaces and with respect to which the bijection $\cT^1\simeq\cM^1$ described above becomes a homeomorphism. At the level of potentials, the strong topology appears as a higher dimensional and non-linear version of the Sobolev $W^{1,2}$-norm\footnote{This topology has been recently studied further in \cite{Dar14}.}. 

Relying on a property of Lelong numbers of psh functions proved in Appendix A, we prove that $\om$-psh functions with finite energy have identically zero Lelong numbers on any resolution of singularities of $X$, and hence satisfy an exponential integrability condition. This is used to prove a compactness result in the strong topology of $\cM^1$ for probability measures with uniformly bounded entropy, which is a key point to our approach. Under the assumption that the Mabuchi functional is proper, the entropy bound is easily obtained along the normalized K\"ahler-Ricci flow and Ricci iteration, thanks to the monotonicity property of the Mabuchi functional.

\subsection*{Structure of the article}
The article is organized as follows:
\begin{itemize}
\item Section \ref{sec:fec} is a recap on finite energy currents. It provides in particular a crucial integrability property of their potentials (finite energy functions).
\item Section \ref{sec:strong}  introduces the strong topology and establishes strong compactness of measures with uniformly bounded entropy.  
\item Section \ref{sec:lfp}  studies the first basic properties of K\"ahler-Einstein metrics on log Fano pairs, showing in particular that the singularities are at most log terminal;
\item Section \ref{sec:variational} gives a variational characterization of K\"ahler-Einstein metrics, extending some results from \cite{BBGZ};
\item Section \ref{sec:unique} provides an extension of the uniqueness results of Bando-Mabuchi and Berndtsson to the context of log Fano pairs, finishing the proof of Theorem A;
\item Section \ref{sec:iteration} studies 'Ricci iteration' in the context of log Fano pairs, 
and contains the proof of Theorem B. 
\item Section \ref{sec:flow} recalls Song and Tian's construction of the normalized K\"ahler-Ricci flow on a $\Q$-Fano variety, and proves Theorem C;
\item Section \ref{sec:example}  adapts a construction of \cite{AGP06} to get examples of log Fano pairs and log terminal Fano varieties with K\"ahler-Einstein metrics that are not of orbifold type;
\item The article ends with three appendices. The first one proves an Izumi-type estimate that plays a crucial role for the integrability properties of quasi-psh functions with finite energy. The second one provides an explicit version of Paun's Laplacian estimate \cite{Pau}, on which relies the $C^\infty$-convergence in Theorem B.
The third one gives a detailed proof of a version of Berndtsson's subharmonicity theorem that plays a crucial role in the proof of the uniqueness of
K\"ahler-Einstein metrics.

\end{itemize}

\text{ }

\noindent {\it Nota Bene.} 
The current version of this article differs substantially from the first version of the arXiv preprint. The latter dealt more generally with Mean Field Equations with reference measures having H\"older continuous potentials, a point of view that made it less readily accessible to differential geometers. Meanwhile, the preprint \cite{DS} appeared, giving further motivation for the general context of our previous work. Another new feature of the present version is the Izumi-type result proved in Appendix A. 

Various important works have appeared since the the first version of our work was circulating. We have only mentioned in footnotes those that are 
immediately connected to the contents of the present article.

\begin{ackn} 
The authors would like to express their gratitude to Bo Berndtsson for uncountably many interesting discussions related to this work, and in particular for his help regarding the uniqueness theorem.  We are also grateful to Tomoyuki Hisamoto and Dror Varolin for helpful discussions, and we thank the referee for useful suggestions.
\end{ackn}

\section{Finite energy currents} \label{sec:fec}

The goal of this section is to establish a number of preliminary facts about functions and measures with finite energy on a normal compact K\"ahler space, which rely on a combination of the main results from \cite{EGZ1,BEGZ,BBGZ,EGZ2}. 

\subsection{Plurisubharmonic functions and positive $(1,1)$-currents}\label{sec:psh}
Let $X$ be a normal, connected, compact complex space, and denote by $n$ its (complex) dimension. 

By definition, a \emph{K\"ahler form} $\om_0$ on $X$ is locally the restriction to $X$ of an ambient K\"ahler form in a polydisc where $X$ is locally realized as a closed analytic subset. In particular, it admits local potentials, which are smooth strictly psh functions. 

A function $\f:X\to[-\infty,+\infty[$ is \emph{$\om_0$-plurisubharmonic} ($\om_0$-psh for short) if  $\f+u$ is psh for each local potential $u$ of $\om_0$, which means that $\f+u$ is the restriction to $X$ of an ambient psh function in a polydisc as above (see \cite{FN,DemSMF} for further information on psh functions in this context).

We denote by $\psh(X,\om_0)$ the set of $\om_0$-psh functions on $X$, endowed with its natural weak topology. By Hartogs' lemma, $\f\mapsto\sup_X\f$ is continuous on $\psh(X,\om_0)$ (with respect to the weak topology), and we say that $\f\in\psh(X,\om_0)$ is \emph{normalized} if $\sup_X\f=0$. We denote by
$$
\pshn(X,\om_0)\subset\psh(X,\om_0)
$$
the set of normalized $\om_0$-psh functions, which is compact for the weak topology. We also denote by $\cT(X,\om_0)$ the set of all closed positive $(1,1)$-currents $\om$ $dd^c$-cohomologous to $\om_0$, endowed with the weak topology. The map 
$$
\f\mapsto\om_\f:=\om_0+dd^c\f
$$ 
defines a homeomorphism
$$
\pshn(X,\om_0)\simeq\cT(X,\om_0).
$$
with respect to the weak topologies. We denote its inverse by $\om\mapsto\f_\om$, so that $\f_\om$ is the unique function in $\pshn(X,\om_0)$ such that 
$$
\om=\om_0+dd^c\f_\om.
$$
When $X$ is non-singular, Demailly's regularization theorem \cite{Dem92} (see also \cite{BK07}) shows that any $\f\in\psh(X,\om_0)$ is the decreasing limit of a sequence of smooth $\om_0$-psh functions. The analogous statement is not known in the general singular case (except when $\omega_0 \in c_1(L)$ represents the first Chern class of an ample line bundle $L$ \cite{CGZ}). However, it follows from \cite{EGZ2} that every $\f\in\psh(X,\om_0)$ is the decreasing limit of a sequence of \emph{continuous} $\om_0$-psh functions. 

If we let $\pi:\tX\to X$ be a resolution of singularities, then $\tom_0:=\pi^*\om_0$ is a semipositive $(1,1)$-form which is big in the sense that $\int_{\tX}\tom_0^n>0$. Since $X$ is normal, $\pi$ has connected fibers, hence every $\tom_0$-psh function on $\tX$ is of the form $\f\circ\pi$ for a unique $\om_0$-psh function $\f$ on $X$. We thus have a homeomorphism
$$
\psh(X,\om_0)\simeq\psh(\tX,\tom_0). 
$$
This reduces the study of $\om_0$-psh functions on $X$ to $\tom_0$-psh functions on $\tX$, where $\tX$ is thus a compact K\"ahler manifold and $\tom_0$ is a semipositive and big $(1,1)$-form.

\subsection{Functions with full Monge-Amp\`ere mass}
Let $X$ be a normal compact complex space endowed with a fixed K\"ahler form $\om_0$. For each $\f\in\psh(X,\om_0)$, the functions $\f_j:=\max\{\f,-j\}$ are $\om_0$-psh and bounded for all $j\in\N$. The Monge-Amp\`ere measures 
$(\om_0+dd^c\f_j)^n$ are therefore well-defined in the sense of Bedford-Taylor, with 
$$
\int_X(\om_0+dd^c\f_j)^n=\int_X\om_0^n=:V.
$$ 
By \cite{BT87}, the positive measures $\mu_j:={\bf 1}_{\{\f>-j\}}(\om_0+dd^c\f_j)^n$ satisfy
$$
{\bf 1}_{\{\f>-j\}}\mu_{j+1}=\mu_j,
$$
and in particular $\mu_j\le\mu_{j+1}$. As in \cite{BEGZ}, we say that $\f$ has \emph{full Monge-Amp\`ere mass} (this is called finite energy in \cite{GZ}) if $\lim_{j\to\infty}\mu_j(X)=1$, \ie
$$
\lim_{j\to\infty}\int_{\left\{\f\le -j\right\}}\left(\om_0+dd^c\max\{\f,-j\}\right)^n=0.
$$
In that case we set $(\om_0+dd^c\f)^n:=\lim_{j\to+\infty}\mu_j$, which is thus a positive measure on $X$ with mass $V$. More generally, for any $\f_1,...,\f_n\in\pshf(X,\om_0)$ the positive measure 
$$
(\om_0+dd^c\f_1)\wedge...\wedge(\om_0+dd^c\f_n)
$$ 
is also well-defined, and depends continuously on the $\f_j$'s when the latter converge monotonically. 

We denote by 
$$
\pshf(X,\om_0)\subset\psh(X,\om_0)
$$ 
the set of $\om_0$-functions with full Monge-Amp\`ere mass. We say that a current $\om\in\cT(X,\om_0)$ has full Monge-Amp\`ere mass if so is $\f_\om$, and we write
$$
\Tf(X,\om_0)\subset\cT(X,\om_0)
$$
for the set of currents with full Monge-Amp\`ere mass. For each $\f\in\pshf(X,\om_0)$ we define a probability measure  
$$
\MA(\f):=V^{-1}\om_\f^n. 
$$ 

We denote by $\cM(X)$ the set of probability measures on $X$, endowed with the weak topology, and we call 
$$
\MA:\pshf(X,\om_0)\to\cM(X)
$$ 
the \emph{Monge-Amp\`ere operator}. We emphasize that for $n\ge 2$ this operator is \emph{not} continuous in the weak topology of $\pshf(X,\om_0)$. 

For each $\f\in\pshf(X,\om_0)$, the measure $\MA(\f)$ is \emph{non-pluripolar}, \ie it puts no mass on pluripolar sets. Conversely, applying \cite[Corollary 4.9]{BBGZ} to a resolution of singularities shows that any non-pluripolar probability measure $\mu$ is of the form $\MA(\f)$ for some $\f\in\pshf(X,\om_0)$. In other words, the map $\Tf(X,\om)\to\cM(X)$ defined by $\om\mapsto V^{-1}\om^n$ is injective, and its image is exactly the set of non-pluripolar measures in $\cM(X)$.  

\smallskip

The following crucial integrability property of functions with full Monge-Amp\`ere mass relies on a non-trivial property of Lelong numbers of psh functions proved in Appendix A. 

\begin{thm}\label{thm:lelres} 
Let $\f\in\pshf(X,\om_0)$ and let $\pi:\tX\to X$ be any resolution of singularities of $X$. Then $\tf:=\f\circ\pi$ has zero Lelong numbers. Equivalently, $e^{-\tf}\in L^p(\tX)$ for all $p<+\infty$. 
\end{thm}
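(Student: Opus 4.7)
The plan is to transfer the full-mass hypothesis to the compact Kähler manifold $\tX$, where $\tom_0 := \pi^*\om_0$ is a smooth, semipositive and big form, and then to use the fact that a full-mass potential in a big class whose reference form is actually semipositive cannot acquire any Lelong numbers. The equivalence between the vanishing of the Lelong numbers of $\tf$ and the exponential integrability $e^{-\tf}\in L^p(\tX)$ for every $p<+\infty$ follows immediately from Skoda's integrability theorem (in the uniform form due to Zeriahi) on the compact Kähler manifold $\tX$, so the real content is to show that the Lelong numbers of $\tf$ vanish everywhere on $\tX$.

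First I would verify that $\tf \in \pshf(\tX, \tom_0)$. The truncations $\tf_j := \max\{\tf, -j\}$ equal $\f_j \circ \pi$ with $\f_j := \max\{\f, -j\}$, and because $\pi$ is a biholomorphism over a Zariski-dense open subset and the exceptional locus is pluripolar---hence negligible for non-pluripolar Monge-Ampère products---the cut-off measures $\tmu_j := \mathbf{1}_{\{\tf > -j\}}(\tom_0 + dd^c \tf_j)^n$ correspond under $\pi$ to their counterparts $\mu_j := \mathbf{1}_{\{\f > -j\}}(\om_0 + dd^c \f_j)^n$ on $X$. The full-mass condition $\mu_j(X) \to V$ therefore transfers to $\tmu_j(\tX) \to V = \int_\tX \tom_0^n$, so $\tf\in\pshf(\tX,\tom_0)$.

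Next I would bound the Lelong numbers. Because $\tom_0$ is semipositive, the constant function $0$ is $\tom_0$-psh and $\leq 0$, so the minimal-singularity envelope
\[
V_{\tom_0} := \sup\{u \in \psh(\tX, \tom_0) : u \leq 0\}
\]
is identically zero and in particular has zero Lelong numbers at every point of $\tX$. By the comparison of singularities for full-mass currents in a big cohomology class (a key output of the BEGZ theory), $\nu(\tf, x) \leq \nu(V_{\tom_0}, x) = 0$ at each $x\in \tX$. To make this comparison effective above the singular locus of $X$---where $\tom_0$ degenerates along the exceptional divisor and the abstract big-class bound does not localize cleanly---I would invoke the Izumi-type estimate of Appendix A, which controls the generic Lelong number of $\tf$ along any exceptional prime divisor $E\subset\tX$ linearly in terms of Lelong-type invariants of $\f$ near $\pi(E)\in X$. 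Combined with the full-mass property of $\f$, this forces these divisorial numbers to vanish, and upper semicontinuity of Lelong numbers then yields $\nu(\tf, x) = 0$ for all $x\in\tX$.

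The main obstacle is this last step, namely controlling the singularities of $\tf$ on the exceptional locus $\pi^{-1}(X_\sing)$: on its complement $\tom_0$ is locally Kähler and the vanishing of Lelong numbers is essentially standard near smooth points of $X$, but above a singular point of $X$ one must bridge the analytic geometry of $\f$ at a singularity of $X$ with the divisorial geometry of $\tf$ along exceptional divisors of $\tX$, and this bridge is exactly what the Izumi-type inequality of Appendix A provides.
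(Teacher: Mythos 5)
The proposal correctly singles out the Izumi-type estimate from Appendix~A as the decisive tool, and correctly notes that the difficulty is concentrated above $X_\sing$, where $\tom_0$ degenerates. However, the argument as written has a genuine gap, and one of its intermediate claims is not a theorem.

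First, the asserted inequality $\nu(\tf,x)\le\nu(V_{\tom_0},x)=0$ for full-mass currents in a big class, stated as ``a key output of the BEGZ theory,'' is not something \cite{BEGZ} gives you. The comparison-of-singularities result in \cite{BEGZ} (Proposition 2.14 there, used in the paper's proof) goes in the opposite direction: if $\f\le\p+O(1)$ and $\f$ has full mass, then $\p$ has full mass. It does \emph{not} say that a full-mass potential has the same singularity type as $V_{\tom_0}$; that is precisely what Theorem~\ref{thm:lelres} is establishing, and assuming it here is circular. (For a genuinely Kähler class the implication ``full mass $\Rightarrow$ zero Lelong numbers'' is available, but along the exceptional locus $\tom_0$ is not Kähler, which is exactly where you acknowledge the argument needs help.)

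Second, and more importantly, the step ``combined with the full-mass property of $\f$, this forces these divisorial numbers to vanish'' is where all the content lies, and no argument is offered. The Izumi-type estimate (Corollary~\ref{cor:izumi}) reduces the vanishing of $\nu(\tf,\cdot)$ on $\pi^{-1}(x)$ to the vanishing of the slope $s(\f,x)$ \emph{downstairs on $X$}; it does not by itself produce that vanishing. The paper supplies the missing argument on $X$, where $\om_0$ is a genuine Kähler form: if $s(\f,x)>0$, one builds a quasi-psh function $\rho$ on $X$ with an isolated logarithmic pole at $x$ and $dd^c\rho\ge -C\om_0$, so that $\e\rho\in\psh(X,\om_0)$ for small $\e>0$ and $\f\le\e\rho+O(1)$ for $0<\e<s(\f,x)$. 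Then \cite[Prop.~2.14]{BEGZ} forces $\e\rho\in\pshf(X,\om_0)$, which is impossible because the Bedford--Taylor/Demailly Monge--Ampère of $\e\rho$ carries an atom of mass $\e^n m(X,x)>0$ at $x$, violating the full-mass condition. This construction of $\rho$ and the resulting contradiction with the atom are the crux of the proof and are absent from your write-up; note also that the same construction cannot be carried out directly on $\tX$ at points of the exceptional divisor, because there $\tom_0$ is degenerate and one cannot absorb $dd^c\rho$ into $-C\tom_0$. That is why the argument must be run on $X$ and then transported to $\tX$ via Izumi, not the other way around.
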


\begin{proof} 
By Corollary \ref{cor:izumi} from Appendix A, we are to show that the slope $s(\f,x)$ of $\f$ at any point $x\in X$ is zero. Let $(f_i)$ be local generators of the maximal ideal of $\cO_{X,x}$ and set $\p:=\log\sum_i|f_i|$, which is a psh function defined on a neighborhood $U$ of $x$, with an isolated logarithmic singularity at $x$. Let $0\le \theta\le 1$ be a smooth function with compact support in $U$ such that $\theta\equiv 1$ on a neighborhood of $x$. A standard computation (see e.g. \cite[Lemma 3.5]{Dem92}) shows that 
$$
\rho:=\log\left(\theta e^\p+(1-\theta)\right)
$$
satisfies $dd^c\rho\ge-C\om_0$ for some $C>0$. It follows that $\e\rho$ is $\om_0$-psh for all $\e>0$ small enough. If $s(\f,x)>0$ then $\f\le\e\rho+O(1)$ for any $0<\e<s(\f,x)$. Since $\f$ has full Monge-Amp\`ere mass, it follows from \cite[Proposition 2.14]{BEGZ} that $\e\rho$ also has full Monge-Amp\`ere mass, which is impossible since $(\om_0+dd^c\e\rho)^n\left(\{x\}\right)=\e^n m(X,x)>0$ by \cite{DemSMF}. 

Finally, the last equivalence is a classical result of Skoda \cite{Sko}. 
\end{proof}

\subsection{$\a$-invariants and tame measures}\label{sec:tame}

The following  uniform integrability exponent generalizes the classical one of \cite{Tia87,TY87}:

\begin{defi}\label{defi:alpha} 
The \emph{$\a$-invariant} of a measure $\mu\in\cM(X)$ (with respect to $\om_0$) is defined as
$$
\a_{\om_0}(\mu):=\sup\left\{\a\ge 0\mid\sup_{\f\in\pshn(X,\om_0)}\int_X e^{-\a\f}d\mu<+\infty\right\}. 
$$
\end{defi}
Note that $\a_{\om_0}(\mu)>0$ implies that $\mu$ is non-pluripolar. We also introduce the following \emph{ad hoc} terminology. 

\begin{defi}\label{defi:tame} We say that a positive measure $\mu$ on $X$ is \emph{tame} if $\mu$ puts no mass on closed analytic sets and if there exists a resolution of singularities $\pi:\tX\to X$ such that the lift $\tmu$ of $\mu$ to $\tX$ has $L^p$-density (with respect to Lebesgue measure) for some $p>1$. 
\end{defi}
By the \emph{lift} of $\mu$, we mean the push-forward by $\pi^{-1}$ of its restriction to the Zariski open set over which $\pi$ is an isomorphism.  Note that this is well-defined precisely because $\mu$ puts no mass on closed analytic sets. As we shall see, this \emph{a priori} artificial looking notion of a tame measure appears naturally in the context of log terminal singularities.  

\begin{prop}\label{prop:hold} 
Let $\mu$ be a tame measure on $X$. Then $\a_{\om_0}(\mu)>0$, and for each $\f\in\pshf(X,\om_0)$ we have $e^{-\f}\in L^p(\mu)$ for all finite $p$. In particular, $e^{-\f}\mu$ is also tame. Furthermore given $p<+\infty$ and a weakly compact subset $\cK\subset\pshf(X,\om_0)$, both the identity map and the map $\f\mapsto e^{-\f}$ define continuous maps $\cK\to L^p(\mu)$. 
\end{prop}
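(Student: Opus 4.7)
The plan is to pull everything back to a resolution of singularities $\pi\colon\tX\to X$ witnessing the tameness of $\mu$, so that the lift $\tmu$ admits an $L^{p_0}$-density (for some $p_0>1$) with respect to Lebesgue measure on $\tX$. All four assertions then reduce to analogous statements for quasi-psh functions on the compact K\"ahler manifold $\tX$. For the positivity of the $\alpha$-invariant, Zeriahi's uniform Skoda estimate (applied to a K\"ahler form $\tom\ge\tom_0$ on $\tX$) gives $\alpha_0,C>0$ with $\int_{\tX}e^{-\alpha_0\tf}\,dV\le C$ uniformly on $\pshn(X,\om_0)$, and H\"older's inequality against the $L^{p_0}$-density of $\tmu$ converts this into a positive integrability exponent for $\mu$. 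For the $L^p(\mu)$-integrability of $e^{-\f}$ with $\f\in\pshf(X,\om_0)$, I would apply Theorem~\ref{thm:lelres} to obtain $e^{-\tf}\in L^q(\tX)$ for every finite $q$, and again combine with H\"older against the density of $\tmu$. The same estimate yields an $L^{r_0}$-density for the lift of $e^{-\f}\mu$ (with $r_0>1$), and since this measure obviously inherits the vanishing on closed analytic subsets from $\mu$, it is tame.

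The bulk of the work is the continuity assertion. Using that $\sup_X\cdot$ is continuous on $\psh(X,\om_0)$, I may assume after a harmless translation that $\sup_X\f\le 0$ for all $\f\in\cK$. The uniform Skoda bound from the first step combined with the elementary inequality $|t|^s\le C_s e^{-\alpha_0 t}$ (valid for $t\le 0$) then shows that $\{\tf : \f\in\cK\}$ is uniformly bounded in $L^s(dV)$ for every finite $s$, hence in $L^s(\tmu)$ by H\"older. Given a sequence $\f_k\to\f_\infty$ in $\cK$ (the weak topology being metrizable on the weakly compact set $\cK$, so sequential continuity suffices), the $L^1_{\mathrm{loc}}$-interpretation of weak convergence in PSH provides $\tf_k\to\tf_\infty$ a.e. on $\tX$ along a subsequence, and Vitali's convergence theorem then yields $\f_k\to\f_\infty$ in $L^p(\mu)$, establishing continuity of the identity map.

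For the exponential map $\f\mapsto e^{-\f}$ the a.e. convergence transfers immediately, and the question reduces to establishing a uniform $L^{p'}(\mu)$-bound on $\{e^{-\f_k}\}$ for some $p'>p$. This is the main obstacle: Theorem~\ref{thm:lelres} only gives pointwise-in-$\f$ integrability $e^{-q\tf}\in L^1(\tX)$ for each $\f\in\cK$ and each finite $q$, whereas uniform integrability at every exponent across a weakly compact family in $\pshf(X,\om_0)$ is a real strengthening. I expect this uniformity to follow from a compactness argument exploiting a Demailly--Koll\'ar-type openness of integrability together with the zero Lelong number property on $\tX$ furnished by Theorem~\ref{thm:lelres}: at each $\f_\infty\in\cK$ one should be able to produce a weak neighborhood on which $e^{-q\cdot}$ is uniformly bounded in $L^1(\mu)$ for any prescribed $q$, and then compactness of $\cK$ converts this local uniformity into the global bound. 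With this uniform bound in hand, a final application of Vitali's theorem establishes the continuity of $\f\mapsto e^{-\f}$.
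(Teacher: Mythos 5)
Your reduction to the resolution $\pi\colon\tX\to X$ and your identification of the uniform $L^q$-bound on $\{e^{-\tf}:\f\in\cK\}$ as the crux of the matter are both exactly right, and your Skoda-plus-H\"older treatment of the $\a$-invariant matches the paper's. The one place you diverge is precisely at that crux: you propose to derive the uniform bound from a Demailly--Koll\'ar-type semicontinuity of integrability exponents combined with the compactness of $\cK$ and of $\tX$, flagging this as an expectation rather than a proof. The paper instead invokes Zeriahi's uniform Skoda estimate \cite{Zer01}, which is exactly the statement you need off the shelf: for a weakly compact family of quasi-psh functions on a compact K\"ahler manifold with Lelong numbers everywhere $<1/q$ (here zero, by Theorem~\ref{thm:lelres}), the integrals $\int e^{-q\tf}\,dV$ are uniformly bounded. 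Your Demailly--Koll\'ar route does go through --- the openness of the integrability condition in $L^1_{\mathrm{loc}}$ plus compactness of $\cK$ and a finite cover of $\tX$ gives the same bound --- but it amounts to re-proving Zeriahi's theorem, so it is worth knowing the cited result. For the last step the paper bypasses Vitali and instead uses the pointwise inequality $|e^{a}-e^{b}|\le|a-b|\,e^{a+b}$ followed by H\"older together with the uniform $L^q$-bounds, which handles the identity map and the exponential map in one stroke; your Vitali argument accomplishes the same thing, just via equi-integrability rather than an explicit majorant.
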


\begin{proof}
Let $\pi:\tX\to X$ be a resolution of singularities such that the lift $\tmu$ of $\mu$ to $\tX$ has $L^p$-density for some $p>1$. By \cite{Sko} and H\"older's inequality, it follows that there exists $\e>0$ such that $\int_{\tX} e^{-\e\tf} d\tmu$ is finite and uniformly bounded for all normalized $\tom_0$-psh functions $\tf$ on $\tX$. 
Since $\mu$ puts no mass on closed analytic subsets, we infer
$
\int_X e^{-\e \f} d \mu=\int_{\tX} e^{-\e\tf} d\tmu<+\infty,
$
hence $\a_{\om_0}(\mu)>0$. 

If we take $\f\in\pshf(X,\om_0)$ and set $\tf:=\f\circ\pi$, Theorem \ref{thm:lelres} shows that $e^{-\tf}$ belongs to $L^q$ for all $q<+\infty$. By \cite{Zer01}, we even have a uniform $L^q$-bound as long as $\tf$ stays in a weakly compact set of $\tom_0$-psh functions. Using the elementary inequality
$$
\left| e^a-e^b \right| \leq |a-b| e^{a+b}, \; \;
a,b >0,
$$
the continuity of $\f \mapsto e^{-\f}$ now follows from H\"older's inequality.
\end{proof}

\begin{lem}\label{lem:tame} Let $\mu\in\cM(X)$ be a tame probability measure. Then there exists a unique $\om\in\Tf(X,\om_0)$ such that $V^{-1}\om^n=\mu$, and $\om$ has continuous potentials. 
\end{lem}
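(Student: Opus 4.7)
The plan is to lift the Monge--Amp\`ere equation to a resolution of singularities of $X$, where tameness of $\mu$ becomes an $L^p$-density hypothesis that triggers Kolodziej-type regularity, and then descend the resulting continuous solution back to $X$. For uniqueness, the discussion preceding the statement identifies $\Tf(X,\om_0)$ with the set of non-pluripolar probability measures via $\om \mapsto V^{-1}\om^n$, so it suffices to show that a tame measure charges no pluripolar set. Fix a resolution $\pi \colon \tX \to X$ witnessing tameness, and let $U \subset X$ be the locus over which $\pi$ is an isomorphism. Since $\mu(X \setminus U) = 0$ by tameness, one has $\mu(A) = \tmu(\pi^{-1}(A))$ for every Borel $A \subset X$; if $P \subset X$ is pluripolar then $\pi^{-1}(P)$ is pluripolar in $\tX$, and Skoda's theorem \cite{Sko} applied to the $L^p$-density $\tmu$ gives $\tmu(\pi^{-1}(P)) = 0$, hence $\mu(P) = 0$.

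For existence, I would work on the resolution. The pullback $\tom_0 := \pi^*\om_0$ is a smooth, semipositive, big $(1,1)$-form on the K\"ahler manifold $\tX$ with $\int_{\tX} \tom_0^n = V$, and $\tmu$ is a probability measure of $L^p$-density. The degenerate Kolodziej-type $L^\infty$-estimate in the big semipositive setting, developed in \cite{EGZ1, BEGZ, EGZ2}, then produces a continuous $\tom_0$-psh function $\tf$ on $\tX$ satisfying
\[
(\tom_0 + dd^c \tf)^n = V \tmu
\]
in the non-pluripolar sense. Using the homeomorphism $\psh(X,\om_0) \simeq \psh(\tX, \tom_0)$ from Section~\ref{sec:psh}, write $\tf = \f \circ \pi$ for a unique $\f \in \psh(X,\om_0)$; continuity of $\f$ is inherited from that of $\tf$ by properness of $\pi$, since for any sequence $x_n \to x$ in $X$ one may pick $y_n \in \pi^{-1}(x_n)$, extract $y_{n_k} \to y \in \pi^{-1}(x)$, and read off $\f(x_{n_k}) = \tf(y_{n_k}) \to \tf(y) = \f(x)$, which by the subsequence principle yields $\f(x_n) \to \f(x)$. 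Setting $\om := \om_0 + dd^c\f$ then gives an element of $\Tf(X,\om_0)$ with continuous potentials, and the identity $V^{-1}\om^n = \mu$ follows from non-pluripolarity of both measures together with their agreement away from the exceptional locus.

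The main obstacle is the continuity of $\tf$ on the resolution: this is precisely where the $L^p$-density hypothesis is essential, through the extension of Kolodziej's $L^\infty$-bound to big semipositive classes. Once that regularity is in hand, both the descent to $X$ and the identification of the Monge--Amp\`ere measure are routine consequences of the birational dictionary between $X$ and $\tX$ recalled in Section~\ref{sec:psh}.
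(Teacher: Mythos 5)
Your proof is correct and follows essentially the same route as the paper: establish non-pluripolarity of $\mu$, invoke the bijection $\Tf(X,\om_0)\simeq\{\text{non-pluripolar probability measures}\}$ for existence and uniqueness of $\om$, then obtain continuity by lifting to a resolution, applying the Ko\l{}odziej--EGZ $L^\infty$-estimate for big semipositive classes, and descending via properness. The only cosmetic difference is that the paper deduces non-pluripolarity from $\a_{\om_0}(\mu)>0$ (Proposition \ref{prop:hold}), whereas you reprove it directly from the $L^p$-density of the lift; both are fine, and your final identification of the Monge--Amp\`ere measure via agreement off the exceptional locus is an acceptable substitute for the paper's appeal to uniqueness.
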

\begin{proof} We have already observed that $\mu$ is non-pluripolar, which implies that $\mu=V^{-1}\om^n$ for  unique $\om\in\Tf(X,\om_0)$. To see that $\om$ has continuous potentials, let $\pi:\tX\to X$ be a resolution of singularities such that $\tmu$ has $L^p$-density for some $p>1$. By \cite{EGZ1,EGZ2}, there exists a continuous $\tom_0$-psh function $\tf$ on $\tX$ such that $V^{-1}(\tom_0+dd^c\tf)^n=\tmu$. If we let $\f$ be the corresponding $\om_0$-psh function on $X$, then $\f$ is continuous on $X$ by properness of $\pi$, and the result follows since $\om:=\om_0+dd^c\f$ by uniqueness.\end{proof}

\subsection{Functions and currents of finite energy} \label{sec:e1}
We introduce
$$
\cE^1(X,\om_0):=\left\{\f\in\pshf(X,\om_0),\,\f\in L^1\left(\MA(\f)\right)\right\},
$$
and say that functions $\f\in\cE^1(X,\om_0)$ have \emph{finite energy}. We denote by 
$$
\cT^1(X,\om_0)\subset\Tf(X,\om_0)
$$ 
the corresponding set of \emph{currents with finite energy}, \ie currents of the form $\om_\f$ with $\f\in\cE^1(X,\om_0)$. It is important to note that $\cT^1(X,\om_0)$ is \emph{not} a closed subset of $\cT(X,\om_0)$. 

\smallskip

The functional $E:\cE^1(X,\om_0)\to\R$ defined by 
\begin{equation}\label{equ:energy}
E(\f)=\frac{1}{n+1}\sum_{j=0}^nV^{-1}\int_X\f\,\om_\f^j\wedge\om_0^{n-j}
\end{equation}
is a primitive of the Monge-Amp\`ere operator, in the sense that
$$
\frac{d}{dt}_{t=0}E\left(t\f+(1-t)\p\right)=\int(\f-\p)\MA(\p)
$$
for any two $\f,\p\in\cE^1(X,\om_0)$. This implies that
\begin{equation}\label{equ:energybis}
E(\f)-E(\p)=\frac{1}{n+1}\sum_{j=0}^nV^{-1}\int_X(\f-\p)\,(\om_0+dd^c\f)^j\wedge(\om_0+dd^c\p)^{n-j}
\end{equation}
for all $\f,\p\in\cE^1(X,\om_0)$. 
The energy functional $E$ satisfies $E(\f+c)=E(\f)+c$ for $c\in\R$, and it is concave and non-decreasing on $\cE^1(X,\om_0)$ (we refer the reader to \cite{BEGZ} for the proofs of these results).

If we extend it to $\psh(X,\om_0)$ by setting $E(\f)=-\infty$ for $\f\in\psh(X,\om_0)\setminus\cE^1(X,\om_0)$ then $E:\psh(X,\om_0)\to[-\infty,+\infty[$ so defined is upper semi-continuous. As a consequence, the convex set
\begin{equation}\label{equ:ec}
\cE^1_C(X,\om_0):=\left\{\f\in\cE^1(X,\om_0),\,\sup_X\f\le C\text{ and }E(\f)\ge -C\right\}
\end{equation}
is compact (for the $L^1$-topology) for each $C>0$. 

We also recall the definition of two related functionals that were originally introduced by Aubin. The \emph{$J$-functional} (based at a given $\psi\in\cE^1(X,\om_0)$) is the functional on $\cE^1(X,\om_0)$ defined by setting 
$$
J_{\p}(\f):=E(\p)-E(\f)+\int_X(\f-\p)\MA(\p). 
$$
Note that $J_{\psi}(\f)=E(\p)-E(\f)+E'(\p)\cdot(\f-\p)\ge 0$ by concavity of $E$. 
For $\p=0$ we simply write 
$$
J(\f):=J_{0}(\f)=V^{-1}\int_X\f\,\om_0^n-E(\f).
$$
Finally the \emph{$I$-functional} is the symmetric functional defined by 
\begin{equation}\label{equ:I}
I(\f,\p)=\int_X(\f-\p)\left(\MA(\p)-\MA(\f)\right)=\sum_{j=0}^{n-1}V^{-1}\int_X d(\f-\p)\wedge d^c(\f-\p)\wedge\om_\f^j\wedge\om_\p^{n-1-j}
\end{equation}
which is also non-negative by concavity of $E$. When $\p=0$ we simply write 
$$
I(\f):=I(\f,0)=V^{-1}\int_X\f\,\om_0^n-\int_X\f\MA(\f).
$$ 
These functionals compare as follows (see for instance \cite[Lemma 2.2]{BBGZ}):
\begin{equation}\label{equ:IJ}
n^{-1} J_{\f}(\p)\le J_{\p}(\f)\le I(\f,\p)\le (n+1)J_{\p}(\f).
\end{equation}

We will also use:
\begin{lem}\label{lem:encompare}
For each $\psi\in\cE^1(X,\om_0)$, there exists $A,B>0$ such that
$$
A^{-1}J(\f)-B\le J_\psi(\f)\le A J(\f)+B
$$
for all $\f\in\cE^1(X,\om_0)$.
\end{lem}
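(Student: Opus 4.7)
The plan is to exhibit $J_\psi-J$ as an explicit linear perturbation of $\f$ plus a constant, and then control the perturbation. By the translation invariances $J(\f+c)=J(\f)$, $J_\psi(\f+c)=J_\psi(\f)$ and $J_{\psi+c}(\f)=J_\psi(\f)$, I first normalize so that $\sup_X\f=\sup_X\psi=0$, \ie $\f,\psi\in\pshn(X,\om_0)$. Writing out the definitions and using $J(\f)=\int_X\f\,\MA(0)-E(\f)$, a direct computation gives the identity
\[
J_\psi(\f)-J(\f) = \int_X \f\,(\MA(\psi)-\MA(0)) + K_\psi,
\]
where $K_\psi:=E(\psi)-\int_X\psi\,\MA(\psi)$ is finite (as $\psi\in\cE^1(X,\om_0)$) and depends only on $\psi$.

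For the upper bound the argument is immediate: the weak compactness of $\pshn(X,\om_0)$ together with Hartogs' lemma (continuity of $\f\mapsto\int_X\f\,\om_0^n$ on that set) furnishes a universal constant $C_0>0$ with $V^{-1}\int_X|\f|\,\om_0^n\le C_0$ for every $\f\in\pshn(X,\om_0)$; in particular $-\int_X\f\,\MA(0)\le C_0$. Since $\int_X\f\,\MA(\psi)\le 0$ whenever $\f\le 0$, the identity yields $J_\psi(\f)\le J(\f)+C_0+K_\psi$, so the upper bound holds with $A=1$.

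For the lower bound one needs to bound $\int_X(-\f)\,\MA(\psi)$ sublinearly in $J(\f)$. My plan here is integration by parts. Expanding
$\om_\psi^n-\om_0^n=\sum_{k=0}^{n-1}dd^c\psi\wedge\om_\psi^k\wedge\om_0^{n-1-k}$
and transferring $dd^c$ onto $\f$ produces
\[
V\int_X\f\,(\MA(\psi)-\MA(0))=\sum_{k=0}^{n-1}\int_X\psi\,(\om_\f-\om_0)\wedge\om_\psi^k\wedge\om_0^{n-1-k}.
\]
Each mixed term on the right is then controlled by a H\"older-type estimate for mixed Monge-Amp\`ere masses of finite-energy potentials (in the spirit of \cite{BEGZ}), producing a bound of the form $C_\psi\,(-E(\f))^{\alpha}+C'_\psi$ with some exponent $\alpha<1$ (one expects $\alpha=n/(n+1)$). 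Since $-E(\f)\le J(\f)+C_0$ by the preceding $L^1$-bound, Young's inequality absorbs this into $\tfrac12 J(\f)+C''_\psi$, and plugging back into the identity closes the lower bound with $A=2$.

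The main obstacle is the mixed H\"older-type inequality in the last step: this is the only place where the assumption $\psi\in\cE^1$ is used beyond ensuring $K_\psi$ is finite. The rest of the argument is algebraic manipulation combined with the uniform $L^1$-bound on $\pshn(X,\om_0)$ from weak compactness and Hartogs.
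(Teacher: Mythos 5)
Your approach is essentially the paper's. The identity
\[
J_\psi(\f)-J(\f)=K_\psi+\int_X\f\,\bigl(\MA(\psi)-\MA(0)\bigr),\qquad K_\psi=E(\psi)-\int_X\psi\,\MA(\psi),
\]
is exactly what the paper computes (written there equivalently as $E(\psi)+\int_X(\f-\psi)\MA(\psi)-\int_X\f\MA(0)$), and your upper bound from the uniform $L^1$-bound of normalized $\om_0$-psh functions against $\om_0^n$ is the same first half. For the lower bound the paper does not integrate by parts: it simply invokes \cite[Proposition 3.4]{BBGZ}, which gives the precise interpolation estimate $\bigl|\int_X\f\,\MA(\psi)\bigr|\le C\,J(\f)^{1/2}$ for all $\f\in\enorm(X,\om_0)$ and fixed $\psi\in\cE^1(X,\om_0)$; this is exactly the ``H\"older-type estimate for mixed Monge-Amp\`ere masses'' you defer, and Young's inequality then absorbs it into $\tfrac12 J(\f)+\mathrm{const}$. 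Two small remarks on your detour. First, the integration by parts is literally circular: $\sum_{k=0}^{n-1}\int_X\psi\,(\om_\f-\om_0)\wedge\om_\psi^k\wedge\om_0^{n-1-k}$ is just a rewriting of $V\int_X\f\,(\MA(\psi)-\MA(0))$, so you still face the same interpolation problem for each mixed term; the estimate of \cite{BBGZ} handles it in one shot without splitting into pieces. Second, for normalized $\f$ one has $-E(\f)-C_0\le J(\f)\le -E(\f)$, so bounds in $(-E(\f))^\a$ and in $J(\f)^\a$ are interchangeable, and the exponent $\tfrac12$ (rather than $\tfrac n{n+1}$) is what the cited proposition actually gives; any $\a<1$ suffices. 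So: correct in outline, same structure, with the single substantive ingredient being \cite[Prop.~3.4]{BBGZ}, which the paper treats as known.
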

\begin{proof} By translation invariance, we may restrict to $\f\in\enorm(X,\om_0)$. We compute
$$
J_\psi(\f)-J(\f)=E(\psi)+\int_X(\f-\p)\MA(\p)-\int_X\f\MA(0).
$$
Since $\f$ is normalized, $\int_X\f\MA(0)$ is uniformly bounded by \cite[Lemma 3.2]{BBGZ}. On the other hand, by \cite[Proposition 3.4]{BBGZ}, there exists $C>0$ such that
$$
\left|\int_X\f\MA(\psi)\right|\le C J(\f)^{1/2}
$$
for all $\f\in\enorm(X,\om_0)$. We thus get $J_\psi=J+O(J^{1/2})$ on $\enorm(X,\om_0)$, and the result follows.
\end{proof}

As opposed to $E$, these functionals are translation invariant, hence descend to $\cT^1(X,\om_0)$. For $\om,\om'\in\cT^1(X,\om)$ we set $J_\om(\om')=J_{\f_\om}(\f_{\om'})$ and $I(\om,\om')=I(\f_{\om},\f_{\om'})$. For each $C>0$ we set
$$
\cT^1_C(X,\om_0):=\left\{\om\in\cT^1(X,\om_0)\mid J(\om)\le C\right\}.
$$

\subsection{Measures of finite energy}

As in \cite{BBGZ}, we define the \emph{energy} of a probability measure $\mu$ on $X$ (with respect to $\om_0$) as\begin{equation}\label{equ:estar}
E^*(\mu):=\sup_{\f\in\cE^1(X,\om_0)}\left(E(\f)-\int\f\,\mu\right)\in[0,+\infty]. 
\end{equation}
This defines a convex lsc function $E^*:\cM(X)\to[0,+\infty]$, and a probability measure $\mu$ is said to have \emph{finite energy} if $E^*(\mu)<+\infty$. We denote the set of probability measures with finite energy by
$$
\cM^1(X,\om_0):=\left\{\mu\in\cM(X)\mid E^*(\mu)<+\infty\right\},
$$ 
and we set for each $C>0$
\begin{equation}\label{equ:level}
\cM^1_C(X,\om_0):=\left\{\mu\in\cM(X)\mid E^*(\mu)\le C\right\}.
\end{equation}
By \cite[Theorem 4.7]{BBGZ}, the map $\om\mapsto V^{-1}\om^n$ is a bijection between $\cT^1(X,\om_0)$ and $\cM^1(X,\om_0)$ (but here again it is not continuous with respect to weak convergence). 

The concavity of $E$ shows that
\begin{equation}\label{equ:eij}
E^*\left(\MA(\f)\right)=E(\f)-\int_X\f\,\MA(\f)=J_{\f}(0)=(I-J)(\f), 
\end{equation}
and consequently the following Legendre duality relation holds:
\begin{equation}\label{equ:dual}
E(\f)=\inf_{\mu\in\cM(X)}\left(E^*(\mu)+\int_X\f\,d\mu\right).
\end{equation}
We also note that 
\begin{equation}\label{equ:compen}
n^{-1} E^*(\MA(\f))\le J(\f)\le n E^*(\MA(\f)), 
\end{equation}
by (\ref{equ:IJ}), hence $\om\mapsto V^{-1}\om^n$ maps $\cT^1_C(X,\om_0)$ into $\cM^1_{Cn}(X,\om_0)$, and similarly for its inverse. 

\smallskip

We end this section with the following continuity properties:

\begin{prop}\label{prop:conte1} 
Let $\mu\in\cM^1(X,\om_0)$ be a measure with finite energy. Then $\mu$ acts continuously on $\cE^1_C(X,\om_0)$ for each $C>0$. 
Dually, every $\f\in\cE^1(X,\om_0)$ acts continuously on $\cM^1_C(X,\om_0)$. 
\end{prop}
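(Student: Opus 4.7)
Both statements proceed by a truncation-plus-approximation scheme, the finite-energy hypothesis providing the uniform integrability needed to pass to the limit.

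For the first claim, fix $\mu\in\cM^1(X,\om_0)$. By translation we may normalize and work on $\cE^1_C\cap\pshn(X,\om_0)$, which is $L^1$-compact by (\ref{equ:ec}); hence sequential continuity suffices. Given $\f_k\to\f$ in $L^1$, pass to a subsequence converging almost everywhere and consider the truncations $\f_k^N:=\max\{\f_k,-N\}$ and $\f^N:=\max\{\f,-N\}$, which are bounded and $\om_0$-psh. For each fixed $N$, $L^1$-convergence of bounded $\om_0$-psh functions implies convergence in capacity, and since every $\mu\in\cM^1$ is non-pluripolar, Bedford--Taylor theory yields $\int\f_k^N\,d\mu\to\int\f^N\,d\mu$. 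The argument then reduces to the uniform tail bound $\lim_{N\to+\infty}\sup_k\int(\f_k^N-\f_k)\,d\mu=0$.

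The second claim is handled dually. Fix $\f\in\cE^1(X,\om_0)$ and approximate it from above by a decreasing sequence $(\f_j)$ of continuous $\om_0$-psh functions, available from the results recalled at the end of Section~\ref{sec:psh}. If $\mu_k\to\mu$ weakly with all $\mu_k\in\cM^1_C$ (the latter being weakly closed since $E^*$ is lower semicontinuous, as a supremum of weakly continuous functionals), the continuity of $\f_j$ combined with weak convergence gives $\int\f_j\,d\mu_k\to\int\f_j\,d\mu$ for each $j$, while monotone convergence gives $\int\f_j\,d\mu\to\int\f\,d\mu$. What remains is the dual uniform tail estimate $\lim_{j\to+\infty}\sup_{\mu\in\cM^1_C}\int(\f_j-\f)\,d\mu=0$.

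The hard part is establishing these two uniform tail bounds, which is precisely where the finite-energy hypotheses enter essentially. The starting point is the Legendre-duality inequality $\int\psi\,d\mu\ge E(\psi)-E^*(\mu)$ coming from (\ref{equ:estar}), applied to the truncations, which still lie in $\cE^1_C\cap\pshn$ by monotonicity of $E$. Combining this with the bounds $E(\f_k)\ge -C$, $\sup_X\f_k=0$, $E^*(\mu)\le C$, and the Cegrell-type mixed Monge--Amp\`ere comparisons from \cite{BBGZ} via a two-scale truncation at levels $N$ and $\lambda N$ (letting $\lambda\to+\infty$), one obtains a decay rate of order $O(N^{-\alpha})$ on the tails, uniformly over $\f\in\cE^1_C\cap\pshn$ and over $\mu\in\cM^1_C$. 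The proposition then follows from the two reduction steps above.
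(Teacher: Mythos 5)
Your strategy --- reduce to a fixed-level convergence statement plus a uniform tail estimate on the finite-energy ball --- is the right shape, and it is essentially what underlies the \cite{BBGZ} results that the paper simply cites: the first assertion \emph{is} \cite[Theorem 3.11]{BBGZ}, and for the dual one the paper takes the same decreasing continuous approximation $\f_k\searrow\f$ from \cite{EGZ2} that you use, observes that monotone convergence forces $I(\f_k,\f)\to 0$, and then invokes \cite[Lemma 5.8]{BBGZ} to produce exactly your uniform tail estimate $\sup_{\mu\in\cM^1_C}\int(\f_k-\f)\,d\mu\to 0$.

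The gap is in your derivation of that tail bound. The Legendre-duality inequality $\int\psi\,d\mu\ge E(\psi)-E^*(\mu)$, applied to $\psi=\f_k^N$, gives $\int\f_k^N\,d\mu\ge E(\f_k^N)-E^*(\mu)\ge E(\f_k)-E^*(\mu)\ge -2C$, which is \emph{the same} lower bound one gets directly for $\int\f_k\,d\mu$ (since $E$ is non-decreasing, truncating only improves the energy, so duality gives nothing new for the truncation). Both integrals are also $\le 0$, so all this yields is $0\le\int(\f_k^N-\f_k)\,d\mu\le 2C$, with no decay as $N\to+\infty$. Markov's inequality gives $\mu(\{\f_k<-N\})\le 2C/N$, which decays too slowly to control the tail integral $\int_N^\infty\mu(\{\f_k<-t\})\,dt$. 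So the inequality you present as the "starting point" does not get the argument off the ground; the entire content of the proposition is concentrated in the "Cegrell-type mixed Monge--Amp\`ere comparisons via a two-scale truncation," which is left completely unspecified, and the claimed $O(N^{-\alpha})$ rate is asserted rather than derived. What is actually needed is the integration-by-parts mechanism behind \cite[Lemma 5.8]{BBGZ} --- an estimate of the form $\bigl|\int(\f-\psi)\,d\mu\bigr|\le A\,I(\f,\psi)^{\alpha}$ with $A,\alpha$ depending only on the energy bounds on $\f,\psi,\mu$ --- and your sketch does not reproduce this. The remedy is either to cite the BBGZ lemmas as the paper does, or to actually carry out the $I$-functional comparison.
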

\begin{proof} The first assertion is \cite[Theorem 3.11]{BBGZ}. Let us prove the dual assertion. By \cite{EGZ2}, we may choose a decreasing sequence of continuous $\om_0$-psh functions $\f_k$ converging pointwise to $\f$ on $X$. This monotone convergence guarantees that $I(\f_k,\f)\to 0$. By \cite[Lemma 5.8]{BBGZ}, it follows that the map $\mu\mapsto\int_X\f\mu$ is the uniform limit on $\cM^1_C(X,\om_0)$ of the maps $\mu\mapsto\int_X\f_k\mu$, each of which is continuous by continuity of $\f_k$. 
\end{proof}

\subsection{A quasi-triangle inequality for $I$}
When $X$ has dimension $1$, 
$$
I(\f,\p)=\int_X d(\f-\p)\wedge d^c(\f-\p)
$$ 
coincides with the squared $L^2$-norm of $d(\f-\p)$ for all $\f,\p\in\cE^1(X,\om_0)$. By Cauchy-Schwarz, $I^{1/2}$ satisfies the triangle inequality, and the convexity inequality $(x+y)^2\le 2(x^2+y^2)$ for $x,y\in\R$ yields 
$$
\tfrac 1 2 I(\f_1,\f_2)\le I(\f_1,\f_3)+I(\f_3,\f_2)
$$ 
for any $\f_1,\f_2,\f_3\in\cE^1(X,\om_0)$. 

Our goal here is to establish the following higher dimensional version of this inequality. 
 \begin{thm} \label{thm:qt}
There exists a constant $c_n>0$, only depending on the dimension $n$, such that 
$$
c_n  I(\f_1,\f_2)  \leq I(\f_1,\f_3)  +I(\f_3,\f_1). 
$$
for all $\f_1, \f_2, \f_3 \in \mathcal  E^1 (X,\om_0)$.
\end{thm}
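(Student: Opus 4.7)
The case $n=1$ is trivial: $I(\f,\p)=V^{-1}\int d(\f-\p)\wedge d^c(\f-\p)$ is literally a squared Dirichlet energy, so $I^{1/2}$ is a seminorm and $(a+b)^2\le 2(a^2+b^2)$ yields the quasi-triangle inequality with $c_1=1/2$. In higher dimensions the difficulty is that the weight current $T_{\f,\p}:=\sum_{j=0}^{n-1}\om_\f^j\wedge\om_\p^{n-1-j}$ appearing in the mixed-integral formula (\ref{equ:I}) depends on the pair $(\f,\p)$, so one cannot directly realize $I^{1/2}$ as a Hilbert-space seminorm for a fixed inner product.

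My plan is first to reduce, by monotone truncation $\f_i\leadsto\max\{\f_i,-k\}$ together with the continuity of the Monge-Amp\`ere operator along decreasing sequences in $\cE^1(X,\om_0)$, to the case of bounded $\om_0$-psh potentials, where everything makes sense in the Bedford-Taylor framework. Splitting $\f_1-\f_2=(\f_1-\f_3)+(\f_3-\f_2)$ and applying the elementary estimate
\[
\int d(u+w)\wedge d^c(u+w)\wedge T\le 2\int du\wedge d^c u\wedge T+2\int dw\wedge d^c w\wedge T,
\]
valid for any closed positive $(n-1,n-1)$-current $T$ as an immediate consequence of Cauchy-Schwarz for the positive semi-definite pairing $(u,w)\mapsto\int du\wedge d^c w\wedge T$, with $T=V^{-1}T_{\f_1,\f_2}$, reduces the theorem to the auxiliary estimate
\begin{equation}\label{eq:aux-plan}
V^{-1}\int du\wedge d^c u\wedge\om_{\f_1}^{i_1}\wedge\om_{\f_2}^{i_2}\wedge\om_{\f_3}^{i_3}\le C_n\bigl(I(\f_1,\f_3)+I(\f_3,\f_2)\bigr)
\end{equation}
for $u\in\{\f_1-\f_3,\,\f_3-\f_2\}$ and every triple with $i_1+i_2+i_3=n-1$; the cases $(u=\f_1-\f_3,\,i_2=0)$ and $(u=\f_3-\f_2,\,i_1=0)$ are directly term-wise contributions to $I(\f_1,\f_3)$ and $I(\f_3,\f_2)$, respectively.

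I would prove~\eqref{eq:aux-plan} by induction on the number of "wrong" Monge-Amp\`ere factors in the weight, using the swap identity
\[
\om_{\f_2}\wedge\om_{\f_1}^{i_1}\wedge\om_{\f_2}^{i_2-1}\wedge\om_{\f_3}^{i_3}=\om_{\f_3}\wedge\om_{\f_1}^{i_1}\wedge\om_{\f_2}^{i_2-1}\wedge\om_{\f_3}^{i_3}+dd^c(\f_2-\f_3)\wedge\om_{\f_1}^{i_1}\wedge\om_{\f_2}^{i_2-1}\wedge\om_{\f_3}^{i_3}
\]
to decrease $i_2$ in the "main" integral at the cost of an error carrying $dd^c(\f_2-\f_3)$. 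This error is handled by the integration-by-parts identity $\int du\wedge d^c u\wedge dd^c v\wedge S=\int du\wedge d^c v\wedge dd^c u\wedge S$ (valid for closed $S$, obtained from two applications of Stokes combined with $d(du)=0$), followed by splitting $dd^c u=\om_{\f_1}-\om_{\f_3}$ into its positive parts, and then Cauchy-Schwarz plus AM-GM on the resulting bilinear integrals $\int du\wedge d^c v\wedge\om_\bullet\wedge S$. Each such step produces $O(1)$ new integrals of the same type but with the combined number of "wrong" factors reduced by one, so the induction terminates in at most $n-1$ steps, yielding a constant $c_n$ depending only on $n$. The main obstacle is the combinatorial bookkeeping: each inductive step branches into several sub-integrals involving both choices of $u$ and various reshuffled exponent triples, so a careful induction invariant---counting the total number of "misplaced" Monge-Amp\`ere factors across all generated terms---must be chosen to guarantee termination with a purely dimension-dependent constant.
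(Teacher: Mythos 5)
Your opening move---splitting $\f_1-\f_2=(\f_1-\f_3)+(\f_3-\f_2)$ and applying Cauchy--Schwarz against a fixed closed positive weight---is sound and runs parallel to the paper's first step, which instead uses the midpoint $\p=(\f_1+\f_2)/2$ and the genuine seminorm $\Vert d(\cdot)\Vert_\p$ from \eqref{equ:l2norm}, together with \eqref{equ:compareI}. The gap lies in the termination argument for the induction you sketch. After the swap identity, integration by parts turns the error into $\int du\wedge d^c v\wedge dd^c u\wedge S$ with $u=\f_1-\f_3$, $v=\f_2-\f_3$, $S=\om_{\f_1}^{i_1}\wedge\om_{\f_2}^{i_2-1}\wedge\om_{\f_3}^{i_3}$; splitting $dd^c u=\om_{\f_1}-\om_{\f_3}$ and applying Cauchy--Schwarz to each piece produces, among its two square-root factors, $\bigl(\int dv\wedge d^c v\wedge\om_\bullet\wedge S\bigr)^{1/2}$. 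But for $v=\f_2-\f_3$ the misplaced factors in $\om_\bullet\wedge S$ are the $\om_{\f_1}$-factors, of which there are $i_1$ or $i_1+1$---a count decoupled from $i_2$ and as large as $n-1$ (e.g.\ starting from the term $i_1=n-2$, $i_2=1$, $i_3=0$ in $T_{\f_1,\f_2}$). Your invariant ``total number of misplaced Monge--Amp\`ere factors decreases by one per step'' is therefore false, and the induction does not close in $n-1$ steps. In fact the linear auxiliary estimate you aim for is not a simpler intermediate statement at all: specialized to $i_3=0$ and summed over $i_1$, it already implies the quasi-triangle inequality, so one should not expect to reach it by a strictly descending combinatorial count.

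The paper avoids exactly this trap by \emph{not} aiming for a linear bound at the lemma level. Lemma~\ref{lem:BBGZ} establishes the nonlinear interpolation
$c_n\Vert d(\f_1-\f_2)\Vert_\p^2 \le I(\f_1,\f_2)^{1/2^{n-1}}\bigl(I(\f_1,\p)^{1-1/2^{n-1}}+I(\f_2,\p)^{1-1/2^{n-1}}\bigr)$,
whose fractional exponent $1/2^{n-1}$ records $n-1$ rounds of Cauchy--Schwarz with the square roots left \emph{unexpanded} rather than converted via AM--GM; the midpoint choice $\p=(\f_1+\f_2)/2$ gives the two-sided control $\om_{\f_1},\om_{\f_2}\le 2\om_\p$ that keeps the underlying recursion $b_{p+1}\le b_p+4\sqrt{b_p\,I(\p,v)}$ one-dimensional and terminating. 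The theorem then follows from the triangle inequality for $\Vert d(\cdot)\Vert_\p$, convexity of $J_\p$ together with \eqref{equ:IJ} to compare $I(\f_i,\p)$ with the three pairwise quantities, and a final Young-type absorption using the reduction to the case $I(\f_1,\f_2)\ge\max\{I(\f_1,\f_3),I(\f_2,\f_3)\}$. If you wish to push your route through, you must keep all square roots from Cauchy--Schwarz unexpanded and track how the exponent on the circularly-recurring integral shrinks under repeated substitution---and that bookkeeping, carried out honestly, re-derives the paper's fractional-exponent lemma rather than the linear bound you stated.
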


For any $\f_1,\f_2,\p \in \mathcal E^1 (X,\om)$ we set
\begin{equation}\label{equ:l2norm}
 \Vert d (\f_1 - \f_2)\Vert_{\p} :=  \left(\int_X d (\f_1 - \f_2) \wedge d^c (\f_1 -\f_2) \wedge (\om + dd^c\p)^{n-1}\right)^{1/2}, 
\end{equation}
which is the $L^2$-norm of $d(\f_1-\f_2)$ with respect to $\om_\p$ when the latter is a K\"ahler form. 
Using (\ref{equ:I}) it is easy to see that
\begin{equation}\label{equ:compareI}
    \Vert d (\f_1 - \f_2)\Vert_{\frac{\f_1+\f_2}{2}}^2 \leq I(\f_1,\f_2) \leq 2^{n -1}  \Vert d (\f_1 - \f_2)\Vert_{\frac{\f_1+\f_2}{2}}^2.
\end{equation}

\begin{lem} \label{lem:BBGZ}
There exists $c_n > 0$ only depending on $n$ such that for all $\f_1, \f_2, \p \in \cE(X,\om_0)$, 
$$
c_n\Vert d (\f_1 - \f_2)\Vert_{\p}^2
   \leq I(\f_1,\f_2)^{1/2^{n-1}} \left(I (\f_1,\p)^{1-1/2^{n-1}} + I (\f_2,\p)^{1-1/2^{n-1}}\right).
$$
\end{lem}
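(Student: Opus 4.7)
My strategy is to iterate the Cauchy--Schwarz inequality
\[
\left|\int_X dv\wedge d^c w\wedge T\right|^2 \;\le\; \int_X dv\wedge d^c v\wedge T\cdot \int_X dw\wedge d^c w\wedge T
\]
(valid for real valued $v,w$ and any closed positive $(n-1,n-1)$-current $T$ on $X$, by positivity of $dv\wedge d^c v$), combined with the identity $dd^c u=\om_{\f_1}-\om_{\f_2}$ for $u:=\f_1-\f_2$ and with integration by parts. The exponent $1/2^{n-1}$ on $I(\f_1,\f_2)$ in the conclusion reflects the fact that Cauchy--Schwarz has to be applied $n-1$ times, once for each power of $\om_\p$ in $\Vert du\Vert_\p^2=\int du\wedge d^c u\wedge\om_\p^{n-1}$.

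I proceed by induction on $n$. For $n=1$ the statement is immediate, since $\Vert du\Vert_\p^2=\int_X du\wedge d^c u = V\cdot I(\f_1,\f_2)$ is independent of $\p$. For the inductive step, I set $\chi_i:=\f_i-\p$, $\om_*:=(\om_{\f_1}+\om_{\f_2})/2$ and $\eta:=\p-(\f_1+\f_2)/2=-(\chi_1+\chi_2)/2$, so that $\om_\p=\om_*+dd^c\eta$, and I introduce the interpolating quantities
\[
C_k := \int_X du\wedge d^c u\wedge \om_\p^{\,k}\wedge\om_*^{\,n-1-k},\qquad 0\le k\le n-1,
\]
with $C_{n-1}=\Vert du\Vert_\p^2$ and $C_0\le c_n V\cdot I(\f_1,\f_2)$ after expanding $\om_*^{n-1}$ by the binomial theorem. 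Substituting $\om_\p=\om_*+dd^c\eta$ into one slot of $C_k$, integrating by parts, and using $dd^c u=\om_{\f_1}-\om_{\f_2}$, the Cauchy--Schwarz inequality applied with $T=\om_{\f_i}\wedge\om_\p^{k-1}\wedge\om_*^{n-1-k}$, together with the pointwise bounds $\om_{\f_i}\le 2\om_*$ and $d\eta\wedge d^c\eta\le\tfrac{1}{2}(d\chi_1\wedge d^c\chi_1+d\chi_2\wedge d^c\chi_2)$, yields a recurrence of the form $|C_k-C_{k-1}|\le C\,C_{k-1}^{1/2}R_k^{1/2}$ where each remainder $R_k$ is a mixed integral of the form $\int d\chi_j\wedge d^c\chi_j\wedge T'$ built from $\om_{\f_i}$, $\om_\p$, $\om_*$.

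Unrolling this recurrence interlockingly in the form $C_k^{1/2^j}\le C_{k-1}^{1/2^j}+c_j R^{1/2^{j+1}}C_{k-1}^{1/2^{j+1}}$ for successively smaller exponents $j=1,\dots,n-1$, and summing telescopically from $k=1$ to $k=n-1$, produces the bound $C_{n-1}\le C_n'\,C_0^{\,1/2^{n-1}}R^{\,1-1/2^{n-1}}$ where $R:=\max_k R_k$. Substituting $C_0\le c_nVI(\f_1,\f_2)$ and the uniform bound $R\le C_nV(I(\f_1,\p)+I(\f_2,\p))$, and invoking $(a+b)^s\le a^s+b^s$ for $s\in(0,1]$, gives the stated inequality. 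The main obstacle I anticipate is precisely this exponent bookkeeping: the naive additive recurrence $C_k\le C_{k-1}+C\sqrt{C_{k-1}R}$ only delivers $C_{n-1}\lesssim\max(C_0,R)$, so reaching the sharp exponent $1/2^{n-1}$ requires iterating finer square-root inequalities in parallel, and verifying the uniform bound on each $R_k$ (which contains cross-factors $\om_{\f_i}\wedge\om_*^a\wedge\om_\p^b$ not directly of $I(\f_j,\p)$-type) requires a secondary Cauchy--Schwarz/integration-by-parts argument rewriting $\om_{\f_i}=\om_\p+dd^c\chi_i$ and $\om_*=(\om_{\f_1}+\om_{\f_2})/2$.
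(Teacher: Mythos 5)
Your overall skeleton matches the paper's proof closely: the same interpolating quantities $C_k=\int du\wedge d^cu\wedge\om_\psi^k\wedge\om_*^{n-1-k}$ (the paper writes $b_p$ with $\om_v$ in place of $\om_*$), the same substitution $\om_\psi=\om_*+dd^c\eta$ combined with $dd^cu=\om_{\f_1}-\om_{\f_2}$, integration by parts, and Cauchy--Schwarz to get a recurrence $C_k\le C_{k-1}+\mathrm{const}\sqrt{C_{k-1}R}$. However, there is a genuine gap precisely where you warn that ``exponent bookkeeping'' is the obstacle. The unconditional bound you claim to extract from the unrolling,
$$
C_{n-1}\le C_n'\,C_0^{\,1/2^{n-1}}\,R^{\,1-1/2^{n-1}},
$$
is \emph{false} when $C_0\gg R$: already for $n=2$ the recurrence gives $C_1\le C_0+\mathrm{const}\sqrt{C_0R}\approx C_0$, whereas the right-hand side is $\approx\sqrt{C_0R}\ll C_0$. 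The paper deals with this by a genuine case split. It reduces the recurrence to iterates of the scalar map $h(t)=t+\sqrt{H_nt}$ (with $H_n\asymp I(\f_1,\psi)+I(\f_2,\psi)$), proves $h^{p}(t)\le 4H_n^{1-1/2^p}t^{1/2^p}$ by induction \emph{only for} $t\le 2^{-2^{p+1}}H_n$, and when $I(\f_1,\f_2)\ge 2^{-2^n}H_n$ it abandons the recurrence entirely and uses the elementary triangle inequality $\|d(\f_1-\f_2)\|_\psi\le\|d(\f_1-\psi)\|_\psi+\|d(\f_2-\psi)\|_\psi\le I(\f_1,\psi)^{1/2}+I(\f_2,\psi)^{1/2}$, then balances the exponents by hand. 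Your ``summing telescopically'' of the inequalities $C_k^{1/2^j}\le C_{k-1}^{1/2^j}+c_jR^{1/2^{j+1}}C_{k-1}^{1/2^{j+1}}$ for a fixed $j$ does not telescope into a closed form either, since the error term still contains intermediate $C_{k-1}$'s; this does not substitute for the case analysis.

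A secondary, smaller point: the ``secondary Cauchy--Schwarz/integration-by-parts argument'' you anticipate needing to control the mixed factors in $R_k$ is unnecessary. After Cauchy--Schwarz, the paper simply uses the pointwise bound $\om_{\f_i}\le 2\om_*$ (that is, $\om_{\f_i}\le 2\om_v$) to absorb the $\om_{\f_i}$ factor into one more power of $\om_*$; the first resulting integral is then exactly $2C_k$ up to a constant, and the second is a single summand of the expansion (\ref{equ:I}) of $I(\psi,v)$, which is then controlled by $I(\f_1,\psi)+I(\f_2,\psi)$ via convexity of $J_\psi$ and (\ref{equ:IJ}). So once you set up the Cauchy--Schwarz, there is nothing ``mixed'' left to rewrite.
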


\begin{proof}   
The proof is a refinement of \cite[Lemma 3.12]{BBGZ} and \cite[Lemma 3.1]{GZ11}. Set $u:=\f_1-\f_2$, $v:=(\f_1+\f_2)/2$ and for each $p=0,...,n-1$,
$$
b_p:=\int_X du\wedge d^c u\wedge\om_{\psi}^{p}\wedge\om_v^{n-p-1}. 
$$
By (\ref{equ:compareI}) we have $b_0\le I(\f_1,\f_2)$, while 
$$
b_{n-1}= \Vert d (\f_1 - \f_2)\Vert_{\p}^2
$$ 
is the quantity we are trying to bound. Let us first check that
\begin{equation}\label{equ:bp}
b_{p+1} \le b_p+ 4\sqrt{b_p I(\p,v)}
\end{equation}
for $p=0,...,n-2$. Using integration by parts and the identity $dd^c u = \om_{\f_1}- \om_{\f_2}$, we compute
 \begin{eqnarray*}
b_{p+1}-b_p & = & \int_X du\wedge d^c u\wedge dd^c(\psi-v)\wedge\om_\p^p\wedge\om_v^{n-p-2} \\
& = & -\int_X du\wedge d^c(\p-v)\wedge dd^c u\wedge\om_\p^p\wedge\om_{v}^{n-p-2} \\
& = & -\int_X du\wedge d^c(\p-v)\wedge\om_{\f_1}\wedge\om_\p^p\wedge\om_{v}^{n-p-2}+ \int_X du\wedge d^c(\psi-v)\wedge\om_{\f_2}\wedge\om_\p^p\wedge\om_{v}^{n-p-2}.
\end{eqnarray*}

For $i=1,2$ the Cauchy-Schwarz inequality yields
$$
\left|\int_X du\wedge d^c(\psi-v)\wedge\om_{\f_i}\wedge\om_\p^p
\wedge\om_{v}^{n-p-2}\right|\le\left(\int_X du\wedge d^c u\wedge\wedge\om_{\f_i}\wedge\om_\p^p
\wedge\om_{v}^{n-p-2}\right)^{1/2}\times
$$
$$
\times\left(\int_X d(\p-v)\wedge d^c(\psi-v)\wedge\om_{\f_i}\wedge\om_\p^p
\wedge\om_{v}^{n-p-2}\right)^{1/2}\le 2 b_p^{1/2} I(\psi,v)^{1/2},
$$
noting that $\om_{\f_i}\le 2\om_v$ and using (\ref{equ:I}). This proves (\ref{equ:bp}). 

Next we use the convexity of $\f \mapsto J_\p(\f)$ and (\ref{equ:IJ}) to get
$$
2I(v,\p) \leq (n +1)\left(I (\f_1,\p)  +  I (\f_2,\p)\right).
$$ 
Setting
\begin{equation}\label{equ:H_n}
H_n:=8(n+1)\left(I (\f_1,\p)  +  I (\f_2,\p)\right)
\end{equation}
and 
\begin{equation}\label{equ:h}
h(t):=t+\sqrt{H_n t}
\end{equation}
we thus have $b_{p+1}\le h(b_p)$ for $p=0,...,n-1$ by (\ref{equ:bp}). Since $b_0 \leq I(\f_1,\f_2)$ and $h$ is non-decreasing, it follows that
\begin{equation}\label{equ:boundh}
\Vert d (\f_1 - \f_2)\Vert_{\p}^2\leq h^{n-1}\left(I(\f_1,\f_2)\right),
\end{equation}
where $h^{n-1}:=h \circ \cdots \circ h$ denotes the $(n-1)$-st iterate of $h:\R_+\to\R_+$.

It is easy to check by induction on $p\in\N$  that 
\begin{equation}\label{equ:hp}
h^{p} (t) \leq 4 H_n^{1-1/2^{p}}t^{1/2^{p}}\text{  for  }0\le t \leq 2^{-2^{p+1}} H_n
\end{equation}
There are two cases:
\begin{itemize}
\item  If $I(\f_1,\f_2) \leq 2^{-2^n} H_n$, we can apply (\ref{equ:hp}) with $p=n-1$, which combines with (\ref{equ:boundh}) to yield
$$
\Vert d (\f_1 - \f_2)\Vert_{\p}^2\le 4 H_n^{1-1/2^{n-1}} I (\f_1,\f_2)^{1/2^{n-1}}. 
$$
We conclude by definition of $H_n$ and the subadditivity of $t\mapsto t^{1/2^{n-1}}$.  

\item Assume now that $I(\f_1,\f_2) \ge 2^{-2^n} H_n$. Using (\ref{equ:I}) we have
\begin{eqnarray*}
 \Vert d (\f_1-\f_2)\Vert_\p & \leq & \Vert d (\f_1-\p)\Vert_\p + \Vert d (\p-\f_2)\Vert_\p,\\
 & \leq & I(\f_1,\p)^{1/2} + I(\f_2,\p)^{1/2},
 \end{eqnarray*}
hence
$$
 \Vert d (\f_1-\f_2)\Vert_\p^2\le 2(I(\f_1,\p)+I(\f_2,\p))=\frac{1}{4(n+1)}H_n. 
 $$
$$
=\frac{1}{4(n+1)}H_n^{1-1/2^{n-1}}H_n^{1/2^{n-1}}\le\frac{1}{n+1}H_n^{1-1/2^{n-1}}I(\f_1,\f_2)^{1/2^{n-1}},
$$
and we conclude as before. 
\end{itemize}

\end{proof}

\begin{proof}[Proof of Theorem \ref{thm:qt}]
Set $\p := \frac{\f_1+\f_2}{2}$. The triangle inequality 
$$
\Vert d (\f_1-\f_2)\Vert_\p\le \Vert d (\f_1-\f_3)\Vert_\p + \Vert d (\f_2-\f_3)\Vert_\p
$$
and (\ref{equ:compareI}) yield
$$
I(\f_1,\f_2)\le 2^n\left(\Vert d(\f_1-\f_3)\Vert_{\p}^2+ \Vert d(\f_2-\f_3)\Vert_{\p}^2\right)
$$
Applying Lemma \ref{lem:BBGZ} we obtain
\begin{eqnarray*}
c_n I(\f_1,\f_2) & \leq & I(\f_1,\f_3)^{1/2^{n-1}} \left(I (\f_1,\p)^{1-1/2^{n-1}} + I (\f_3,\p)^{1-1/2^{n-1}}\right)  \\
   &+&  I(\f_2,\f_3)^{1/2^{n-1}} \left(I (\f_2,\p)^{1-1/2^{n-1}} + I (\f_3,\p)^{1-1/2^{n-1}}\right).
 \end{eqnarray*}
As before the convexity of $J_\p$ and (\ref{equ:IJ}) imply
$$
I(\f_1,\p)\le\frac{n+1}{2}I(\f_1,\f_2),\,\,I(\f_2,\p)\le\frac{n+1}{2}I(\f_1,\f_2)
$$
and
$$
I(\f_3,\p)\le\frac{n+1}{2}\left(I(\f_1,\f_3)+I(\f_2,\f_3)\right).
$$
Plugging this in the above inequality, we obtain after changing $c_n$, 
$$
c_n I(\f_1,\f_2)\le\left(I(\f_1,\f_3)^{1/2^{n-1}}+I(\f_2,\f_3)^{1/2^{n-1}}\right)\times
$$
$$
\times\left(I(\f_1,\f_2)^{1-1/2^{n-1}}+I (\f_1,\f_3)^{1-1/2^{n-1}} + I (\f_2,\f_3)^{1-1/2^{n-1}}\right).
$$
Note that we can assume $I(\f_1,\f_2) \geq \max \{ I (\f_1,\f_3), I(\f_2,\f_3)\}$, 
otherwise the usual triangle inequality holds and we are done.
It follows therefore from our last inequality that
 $$
c_n I(\f_1,\f_2)\le 3\left(I(\f_1,\f_3)^{1/2^{n-1}}+I(\f_2,\f_3)^{1/2^{n-1}}\right)I(\f_1,\f_2)^{1-1/2^{n-1}}, 
$$
and the result follows thanks to the convexity inequality $\left(x+y\right)^{2^{n-1}}\le 2^{2^{n-1}-1}\left(x^{2^{n-1}}+y^{2^{n-1}}\right)$ for $x,y\in\R$. 
 \end{proof}

\section{The strong topology} \label{sec:strong}

\subsection{The strong topology for functions}

In order to circumvent the discontinuity of the Monge-Amp\`ere operator with respect to the weak topology, we introduce a strong topology that makes it continuous.  

\begin{defi} The \emph{strong topology} on $\cE^1(X,\om_0)$ is defined as the coarsest refinement of the weak topology such that $E$ become continuous.
\end{defi} 

\begin{lem}\label{lem:boundeden}
Every strongly convergent sequence in $\cE^1(X,\om_0)$ is contained in $\cE^1_C(X,\om_0)$ for some $C>0$. 
\end{lem}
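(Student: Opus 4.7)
The plan is to unwind the definition of the strong topology and combine it with two basic continuity facts, one for $\sup_X$ and one for $E$.

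Suppose $(\f_j) \subset \cE^1(X,\om_0)$ converges strongly to some $\f \in \cE^1(X,\om_0)$. By definition of the strong topology as the coarsest refinement of the weak topology making $E$ continuous, two properties hold simultaneously: (a) $\f_j \to \f$ weakly in $\psh(X,\om_0)$, and (b) $E(\f_j) \to E(\f)$ in $\R$. Note that $E(\f)$ is finite because the limit is assumed to lie in $\cE^1(X,\om_0)$.

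First I would control $\sup_X \f_j$ from above. By Hartogs' lemma, the functional $\f \mapsto \sup_X \f$ is continuous on $\psh(X,\om_0)$ with respect to the weak topology (as recalled at the beginning of Section~\ref{sec:psh}). Hence $\sup_X \f_j \to \sup_X \f$, and in particular there exists $C_1 > 0$ such that $\sup_X \f_j \le C_1$ for all $j$. Next, from (b) the sequence of real numbers $E(\f_j)$ converges to the finite value $E(\f)$, so it is bounded; in particular there exists $C_2 > 0$ with $E(\f_j) \ge -C_2$ for all $j$.

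Setting $C := \max(C_1, C_2)$ yields $\sup_X \f_j \le C$ and $E(\f_j) \ge -C$ for every $j$, so the sequence is contained in $\cE^1_C(X,\om_0)$ as defined in \eqref{equ:ec}. No step here presents any real obstacle; the lemma amounts to the observation that the two ingredients defining $\cE^1_C(X,\om_0)$ are each continuous for the strong topology (the sup by Hartogs and weak convergence, the energy by the very definition of the strong topology).
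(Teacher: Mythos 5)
Your proof is correct and follows the paper's own argument exactly, just spelled out in more detail: strong convergence gives weak convergence (hence convergence of $\sup_X\f_j$ by Hartogs) together with convergence of $E(\f_j)$, and bounded real sequences stay in some $\cE^1_C(X,\om_0)$.
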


\begin{proof} If $\f_j\to\f$ is a strongly convergent sequence in $\cE^1(X,\om_0)$ then both $\sup_X\f_j$ and $E(\f_j)$ are convergent sequences, hence are bounded.
\end{proof}

As the next result shows, on $\enorm(X,\omega_0)$,  the strong topology corresponds to the notion of \emph{convergence in energy} from \cite[\S 5.3]{BBGZ}.

\begin{prop}\label{prop:conven} 
If $\f_j,\f\in\enorm(X,\om_0)$ are \emph{normalized} $\om_0$-psh functions, then $\f_j\to\f$ in the strong topology iff $I(\f_j,\f)\to 0$.
\end{prop}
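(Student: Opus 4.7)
My approach rests on two observations. First, by construction the strong topology is the coarsest refinement of the weak topology making $E$ continuous, so $\f_j\to\f$ strongly means exactly weak convergence together with $E(\f_j)\to E(\f)$. Second, the identity
$$
J_\f(\f_j)=E(\f)-E(\f_j)+\int_X(\f_j-\f)\,\MA(\f),
$$
combined with the two-sided comparison $J_\f(\f_j)\le I(\f_j,\f)\le(n+1)J_\f(\f_j)$, which is simply (1.9) applied with $\f$ and $\p$ replaced by $\f_j$ and $\f$, lets one pass freely between the convergence of $I(\f_j,\f)$ to $0$, that of $J_\f(\f_j)$ to $0$, and that of both the energy $E(\f_j)$ and the integral $\int\f_j\,\MA(\f)$.

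The easier implication strong convergence $\Rightarrow I(\f_j,\f)\to 0$ then falls out quickly. Lemma 2.2 confines $(\f_j)$ to some compact $\cE^1_C(X,\om_0)$. Since $\f\in\cE^1(X,\om_0)$, we have $\MA(\f)\in\cM^1(X,\om_0)$, and Proposition 1.9 makes $\int_X\cdot\,\MA(\f)$ continuous on $\cE^1_C$; weak convergence $\f_j\to\f$ yields $\int(\f_j-\f)\MA(\f)\to 0$. Together with $E(\f_j)\to E(\f)$, this forces $J_\f(\f_j)\to 0$, hence $I(\f_j,\f)\to 0$.

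The converse is the delicate direction. From $J_\f(\f_j)\le I(\f_j,\f)\to 0$ and Lemma 1.7 applied with $\psi=\f$, we deduce $J(\f_j)\le AJ_\f(\f_j)+B$ is bounded; together with $\sup_X\f_j=0$, this traps $(\f_j)$ in some compact set $\cE^1_C$. For a weakly convergent subsequence $\f_{j_k}\to\f_\infty\in\cE^1_C$, Proposition 1.9 gives $\int(\f_{j_k}-\f)\MA(\f)\to\int(\f_\infty-\f)\MA(\f)$, and combining this with $J_\f(\f_{j_k})\to 0$ and the upper semi-continuity of $E$ on $\psh(X,\om_0)$ forces $J_\f(\f_\infty)\le 0$; non-negativity then yields $J_\f(\f_\infty)=0$. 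The step I expect to be the main obstacle is deducing $\f_\infty=\f$ from this equality: exploiting the other half of (1.9) gives $J_{\f_\infty}(\f)\le nJ_\f(\f_\infty)=0$ and hence $I(\f,\f_\infty)\le(n+1)J_{\f_\infty}(\f)=0$, after which I would appeal to the uniqueness principle from \cite{BBGZ} (that $I$ separates points of $\enorm(X,\om_0)$, equivalent to strict concavity of $E$ modulo constants) to conclude $\f_\infty=\f$. Since every weak cluster point of $(\f_j)$ in $\cE^1_C$ therefore equals $\f$, the whole sequence converges weakly to $\f$, and invoking Proposition 1.9 once more with $J_\f(\f_j)\to 0$ gives $E(\f_j)\to E(\f)$, completing strong convergence.
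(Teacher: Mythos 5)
Your forward direction matches the paper's proof exactly. The converse is where you take a genuinely different route: the paper invokes \cite[Proposition 5.6]{BBGZ} as a black box to deduce weak convergence $\f_j\to\f$ directly from $I(\f_j,\f)\to 0$, and only afterwards observes that $(\f_j)$ lies in some $\cE^1_C$; you instead first bound $J(\f_j)$ via Lemma \ref{lem:encompare} to get $(\f_j)\subset\cE^1_C$, then extract a weakly convergent subsequence $\f_{j_k}\to\f_\infty$, pass to the limit in $J_\f(\f_{j_k})\to 0$ using upper semi-continuity of $E$ and Proposition \ref{prop:conte1} to get $J_\f(\f_\infty)=0$, hence $I(\f,\f_\infty)=0$, and finally invoke the separation property of $I$ on $\enorm(X,\om_0)$ (equivalently, injectivity of the Monge--Amp\`ere operator on normalized potentials, \cite[Theorem 4.1]{BBGZ}) to conclude $\f_\infty=\f$. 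Your version has the merit of being more self-contained relative to the machinery already set up in this paper---the only ingredient imported from \cite{BBGZ} is the uniqueness statement for $\MA$, which the paper uses elsewhere anyway---at the cost of a slightly longer compactness-plus-uniqueness argument where the paper has a one-line citation. The two are essentially equivalent in strength, since the separation property is also the heart of the cited Proposition 5.6. One small remark: your use of (\ref{equ:IJ}) to get $J_{\f_\infty}(\f)\le n\,J_\f(\f_\infty)$ is fine, but it is simpler to note the symmetric identity $I(\f,\f_\infty)=J_\f(\f_\infty)+J_{\f_\infty}(\f)$ together with non-negativity of both terms, so that $J_\f(\f_\infty)=0$ alone does not yet give $I=0$; you correctly supplement it with the other half of (\ref{equ:IJ}). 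Also, your reference numbers are off (your ``Proposition 1.9'' should be Proposition \ref{prop:conte1}, and ``Lemma 1.7'' should be Lemma \ref{lem:encompare}), but the intended statements are unambiguous.
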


\begin{proof}
By (\ref{equ:IJ}) we have
\begin{equation}\label{equ:IE}
(n+1)^{-1}I(\f_j,\f)\le E(\f)-E(\f_j)+\int_X(\f_j-\f)\MA(\f)=J_{\f}(\f_j)\le I(\f_j,\f). 
\end{equation}
If $\f_j\to\f$ strongly then all $\f_j$ belong to $\cE^1_C(X,\om_0)$ for some $C>0$ by Lemma \ref{lem:boundeden}. By Proposition \ref{prop:conte1} it follows that $\int_X(\f_j-\f)\MA(\f)\to 0$, hence $I(\f_j,\f)\to 0$ by (\ref{equ:IE}). 

Assume conversely that $I(\f_j,\f)\to 0$. By \cite[Proposition 5.6]{BBGZ} it follows that $\f_j\to\f$ weakly. It remains to show that $E(\f_j)\to E(\f)$.  Using (\ref{equ:IJ}) we see that $\f_j\in\cE^1_C(X,\om_0)$ for some fixed $C>0$, hence $\int_X(\f_j-\f)\MA(\f)\to 0$ by Proposition \ref{prop:conte1}, and we get $E(\f_j)\to E(\f)$ using again (\ref{equ:IE}). 
\end{proof}

We are now going to show that $I$ defines a \emph{complete metrizable uniform structure} on $\enorm(X,\om)$, whose underlying topology is the strong topology. These results will not be used in the rest of the article. 

When $n=\dim_{\C} X=1$, (\ref{equ:I}) shows that $I(u,v)^{1/2}$ coincides with the $L^2$-norm of the gradient of $u-v$. As a consequence, $I^{1/2}$ defines a complete metric space structure on $\enorm(X,\om)$. Since the unit ball of the Sobolev space $W^{1,2}$ is not compact, it is easy to see that the sets $\cE^1_C(X,\om_0)$, even though they are weakly compact, are \emph{not} compact in the strong topology already for $n=1$ (compare also Lemma \ref{lem:compact}). 

In higher dimension, the quasi-triangle inequality of Theorem \ref{thm:qt} implies that $I$ defines a uniform structure, which is furthermore metrizable for general reasons \cite{Bourbaki}. Let us now show that it is complete.

\begin{prop}\label{prop:complete} Every sequence $(\f_j)$ in $\enorm(X,\om_0)$ such that $\lim_{j,k\to+\infty}I(\f_j,\f_k)=0$ converges in the strong topology of $\enorm(X,\om_0)$.
\end{prop}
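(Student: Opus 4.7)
The plan is to show the Cauchy sequence $(\f_j)$ is bounded in some $\cE^1_C$, extract a weak subsequential limit $\f \in \cE^1\cap\pshn$, show that subsequence converges to $\f$ in $I$, and then promote to the whole sequence via the quasi-triangle inequality.

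\textbf{Boundedness and a candidate limit.} Fix $\f_0 := \f_{j_0}$; by the Cauchy hypothesis, $I(\f_j,\f_0)$ is uniformly bounded. By (\ref{equ:IJ}) this bounds $J_{\f_0}(\f_j)$, and Lemma \ref{lem:encompare} upgrades this to a uniform bound on $J(\f_j)$. Since the $\f_j$ are normalized, $\int_X \f_j\,\om_0^n$ is bounded (standard for normalized $\om_0$-psh functions, cf. \cite[Lem.~3.2]{BBGZ}), so $E(\f_j) \geq -C$ for some $C>0$, placing $\{\f_j\}\subset\cE^1_C\cap\pshn$. By (\ref{equ:ec}), this set is $L^1$-compact, so after passing to a subsequence, $\f_{j_k}\to\f$ weakly, with $\f\in\pshn$ (weakly closed) and $\f\in\cE^1_C$ by $L^1$-upper-semicontinuity of $E$.

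\textbf{Convergence along the subsequence.} Introduce the decreasing envelope
\[
\p_l := \left(\sup_{k\geq l}\f_{j_k}\right)^*,
\]
which is $\om_0$-psh and, since $\p_l\geq\f_{j_l}$ gives $E(\p_l)\geq E(\f_{j_l})\geq -C$, lies in $\cE^1_C$. A Hartogs-type argument using the $L^1$-convergence $\f_{j_k}\to\f$ shows $\p_l\downarrow\f$. By monotone continuity of the Monge--Amp\`ere energy on $\cE^1$ from \cite{BEGZ}, $E(\p_l)\to E(\f)$, so $\p_l\to\f$ in the strong topology and hence $I(\p_l,\f)\to 0$ by Proposition \ref{prop:conven} (after renormalizing). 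Given $\e>0$, pick $N$ with $I(\f_j,\f_{j'})<\e$ for $j,j'\geq N$; Theorem \ref{thm:qt} then yields, for $k\geq l\geq N$,
\[
c_n\,I(\f_{j_k},\f)\leq I(\f_{j_k},\p_l)+I(\p_l,\f).
\]
Since $\f_{j_k}\leq\p_l$, we expand
\[
I(\f_{j_k},\p_l)=\int_X(\p_l-\f_{j_k})\bigl(\MA(\f_{j_k})-\MA(\p_l)\bigr);
\]
the $\MA(\p_l)$-term is handled as $k\to\infty$ via continuity of the finite-energy measure $\MA(\p_l)$ on $\cE^1_C$ (Proposition \ref{prop:conte1}), while the $\MA(\f_{j_k})$-term is compared with the Cauchy controls $I(\f_{j_k},\f_{j_{k'}})$ using that $\p_l$ is the upper envelope of the tail $\{\f_{j_k}\}_{k\geq l}$, so monotone limits bring the envelope arbitrarily close to any $\f_{j_{k'}}$ in energy.

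\textbf{Passing to the whole sequence.} Once $I(\f_{j_k},\f)\to 0$ along the subsequence, one more application of the quasi-triangle inequality combined with the Cauchy hypothesis,
\[
c_n\,I(\f_j,\f)\leq I(\f_j,\f_{j_k})+I(\f_{j_k},\f),
\]
forces $I(\f_j,\f)\to 0$, which is equivalent to strong convergence in $\enorm(X,\om_0)$ by Proposition \ref{prop:conven}.

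\textbf{Main obstacle.} The heart of the argument is Step~2: quantitatively bounding $I(\f_{j_k},\p_l)$ by the Cauchy controls. The envelope $\p_l$ is the natural bridge between weak $L^1$-convergence (provided by compactness) and $I$-convergence (a genuine refinement), but turning the envelope property into a finite-energy estimate requires careful pluripotential comparison and integration-by-parts arguments in the spirit of \cite{BEGZ,BBGZ}, invoked through Proposition \ref{prop:conte1} and monotone continuity of $E$ on $\cE^1$.
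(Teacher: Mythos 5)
Your opening step---bounding $J(\f_j)$ via the Cauchy hypothesis, (\ref{equ:IJ}) and Lemma \ref{lem:encompare}, placing the sequence in a compact set $\cE^1_C(X,\om_0)$, and extracting a weak subsequential limit $\f$---agrees with the paper, as does your final passage from the subsequence to the whole sequence via the quasi-triangle inequality. The divergence, and the gap, is in the middle step showing $I(\f_{j_k},\f)\to 0$.

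The paper does this directly, without envelopes. For $j,k\ge N$ one has $J_{\f_j}(\f_k)=E(\f_j)-E(\f_k)+\int_X(\f_k-\f_j)\MA(\f_j)\le I(\f_j,\f_k)\le\e$; fixing $j$ and letting $k\to\infty$, upper semicontinuity of $E$ gives $\liminf_k\bigl(-E(\f_k)\bigr)\ge-E(\f)$, and Proposition \ref{prop:conte1} (the finite-energy measure $\MA(\f_j)$ acts continuously on $\cE^1_C$) gives $\int_X(\f_k-\f_j)\MA(\f_j)\to\int_X(\f-\f_j)\MA(\f_j)$. Hence $J_{\f_j}(\f)\le\e$ for all $j\ge N$, and (\ref{equ:IJ}) converts this into $I(\f_j,\f)\le(n+1)\e$. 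This is a two-line limiting argument and uses neither the envelope nor Theorem \ref{thm:qt} at this stage.

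Your detour through $\p_l:=\bigl(\sup_{k\ge l}\f_{j_k}\bigr)^*$ correctly establishes $\p_l\downarrow\f$ (indeed $\sup_X\p_l=0$ automatically since all $\f_{j_k}\le 0$, so no renormalization is needed) and $E(\p_l)\to E(\f)$, hence $\p_l\to\f$ strongly. But the remaining step---bounding $I(\f_{j_k},\p_l)$ by the Cauchy controls---is not actually carried out, and the sketch offered is circular. Since $\f_{j_k}\le\p_l$, the integral $\int_X(\f_{j_k}-\p_l)\MA(\p_l)$ is nonpositive, so $J_{\p_l}(\f_{j_k})\le E(\p_l)-E(\f_{j_k})$; to make this small you would need the lower bound $E(\f_{j_k})\ge E(\f)-o(1)$, which combined with the upper bound from usc is exactly the statement $E(\f_{j_k})\to E(\f)$, i.e.~the strong convergence you are trying to prove. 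The envelope gives no independent handle on $E(\f_{j_k})$ from below; the claim that ``monotone limits bring the envelope arbitrarily close to any $\f_{j_{k'}}$ in energy'' would, if made rigorous, already be the proposition. You flagged this as your main obstacle, and it is indeed an unfilled gap. Replace the envelope argument by the direct passage to the limit in $J_{\f_j}(\f_k)\le\e$ described above.
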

\begin{proof} Pick $j_0$ such that $I(\f_j,\f_k)\le 1$ for all $j,k\ge j_0$. By (\ref{equ:IJ}) it follows that $J_{\f_{j_0}}(\f_j)$ is bounded, hence also $J(\f_j)$, by Lemma \ref{lem:encompare}. We may thus find $C>0$ such that $\f_j\in\cE^1_C(X,\om_0)$ for all $j$. Using the Cauchy-like assumption, it is as usual enough to show that some subsequence of $(\f_j)$ is strongly convergent. By weak compactness of $\cE^1_C(X,\om_0)$, we may thus assume after perhaps to a subsequence that $\f_j$ converges weakly to some $\f\in\cE^1_C(X,\om_0)$. Let us show that $\f_j\to\f$ strongly as well. Let $\e>0$, and pick $N\in\N$ such that $I(\f_j,\f_k)\le \e$ for all $j,k\ge N$. By (\ref{equ:IJ}) we get
$$
E(\f_j)-E(\f_k)+\int_X(\f_k-\f_j)\MA(\f_j)=J_{\f_j}(\f_k)\le I(\f_j,\f_k)\le \e.
$$
Since $E$ is usc in the weak topology and $\MA(\f_j)$ is weakly continuous on $\cE^1_C(X,\om_0)$ by Proposition \ref{prop:conte1}, it follows that by letting $k\to\infty$ with $j$ fixed that
$$
J_{\f_j}(\f)=E(\f_j)-E(\f)+\int_X(\f-\f_j)\MA(\f_j)\le\e
$$
for all $j\ge N$. Using again (\ref{equ:IJ}) we thus see as desired that $I(\f_j,\f)\to 0$. 
\end{proof}

\subsection{The strong topology for currents and measures}
 
We introduce the following dual strong topologies:

\begin{defi} The \emph{strong topology} on $\cT^1(X,\om_0)$ and  $\cM^1(X,\om_0)$ are respectively defined as the coarsest refinement of the weak topology such that $J$ and $E^*$ become continuous.
\end{defi}

\begin{prop}\label{prop:conven2}
The maps $\f\mapsto\om_\f$ and $\om\mapsto V^{-1}\om^n$ define homeomorphisms
$$
\enorm(X,\om_0)\simeq\cT^1(X,\om_0)\simeq\cM^1(X,\om_0)
$$
for the strong topologies. 
\end{prop}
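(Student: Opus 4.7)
I split the claim into the two bijections. The first map $\Phi:\enorm\to\cT^1$, $\f\mapsto\om_\f$, is already a weak homeomorphism by Section \ref{sec:psh}. On $\enorm$ the pullback of $J$ reads $J(\om_\f)=V^{-1}\int_X\f\,\om_0^n-E(\f)$, and the linear part $\f\mapsto V^{-1}\int_X\f\,\om_0^n$ is weakly continuous (lifting to a resolution, $\om_0^n$ has bounded density against a smooth reference, and weak convergence of $\om_0$-psh functions is $L^1$-convergence). Hence adding continuity of $J$ on the right is equivalent to adding continuity of $E$ on the left, and $\Phi$ is a strong homeomorphism.

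For the composite $\Psi=\MA:\enorm\to\cM^1$, by Proposition \ref{prop:conven} strong convergence $\f_j\to\f$ in $\enorm$ is equivalent to $I(\f_j,\f)\to 0$. For forward continuity, I would first invoke the finite-energy technology of \cite{BBGZ} to ensure $\MA(\f_j)\to\MA(\f)$ weakly under $I$-convergence (Bedford--Taylor continuity along truncations combined with the $I$-control). For the $E^*$-convergence, I would use the identity $E^*(\MA(\f))=E(\f)-\int_X\f\,\MA(\f)$ from (\ref{equ:eij}) and split
$$
E^*(\MA(\f_j))-E^*(\MA(\f))=(E(\f_j)-E(\f))-\int_X(\f_j-\f)\MA(\f_j)-\int_X\f\,(\MA(\f_j)-\MA(\f)).
$$
The first summand vanishes by assumption; the second equals $E(\f_j)-E(\f)-J_{\f_j}(\f)$ with $J_{\f_j}(\f)\le I(\f_j,\f)\to 0$ by (\ref{equ:IJ}); and the third vanishes by Proposition \ref{prop:conte1} applied to the fixed $\f\in\cE^1$ against the weakly convergent sequence $\mu_j\in\cM^1_{Cn}$, with the uniform bound coming from (\ref{equ:compen}) and Lemma \ref{lem:encompare}.

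Conversely, suppose $\mu_j:=\MA(\f_j)\to\mu:=\MA(\f)$ strongly in $\cM^1$. By (\ref{equ:compen}) the sequence $(\f_j)$ lies in a weakly compact $\cE^1_C$, so it is enough to identify every weak subsequential limit as $\f$. For this I would use the Legendre duality (\ref{equ:dual}), $E(\f)=E^*(\mu)+\int\f\,d\mu$ with unique attainment at $\f$ in $\enorm$. Passing to the limit in the exact identity $E(\f_{j_k})=E^*(\mu_{j_k})+\int\f_{j_k}\,d\mu_{j_k}$, and combining upper semicontinuity of $E$, the $E^*$-convergence, and Proposition \ref{prop:conte1}, should show that any weak limit $\f'$ realizes the Legendre maximum, forcing $\f'=\f$ by injectivity of $\MA$ on $\enorm$. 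Once $\f_j\to\f$ weakly is established, strong convergence follows since $E(\f_j)=E^*(\mu_j)+\int\f_j\,d\mu_j\to E(\f)$ together with $\limsup E(\f_j)\le E(\f)$ from upper semicontinuity. The main obstacle will be the limiting argument for the bilinear pairing $\int\f_{j_k}\,d\mu_{j_k}$, which mixes weak convergence of potentials with strong convergence of measures --- precisely where the finite-energy framework and the tight uniform bounds built into $\cE^1_C$ and $\cM^1_C$ must be leveraged.
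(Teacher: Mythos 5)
Your forward direction is essentially correct and in the same spirit as the paper's, up to a harmless sign slip: the second summand $-\int_X(\f_j-\f)\MA(\f_j)$ equals $J_{\f_j}(\f)-\bigl(E(\f_j)-E(\f)\bigr)$, not $E(\f_j)-E(\f)-J_{\f_j}(\f)$, though both pieces do tend to $0$. The paper shortcuts the decomposition by invoking \cite[Proposition 5.6]{BBGZ}, which gives $\int_X\p\,\mu_j\to\int_X\p\,\mu$ \emph{uniformly} over $\p\in\cE^1_C(X,\om_0)$ once $\f_j\to\f$ strongly; one can then substitute $\p=\f_j$ to get $\int\f_j\mu_j\to\int\f\mu$ directly.

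The converse direction is where you have a genuine gap, and you have correctly diagnosed where it lies: the diagonal pairing $\int_X\f_{j_k}\,d\mu_{j_k}$ with both factors varying is not controlled by Proposition \ref{prop:conte1}, which handles only one varying variable at a time. The equicontinuity statement that would rescue your argument is essentially Lemma \ref{lem:compact}, but in the paper's logical order that lemma is proved \emph{after} — and via — the present proposition (through Proposition \ref{prop:equicont}), so invoking it here would be circular. The paper's converse avoids the issue entirely: it never extracts a subsequence or identifies a weak limit. Instead, from the assumed $E^*(\mu_j)\to E^*(\mu)$ one has $E(\f_j)-\int_X\f_j\MA(\f_j)\to E(\f)-\int_X\f\MA(\f)$; Proposition \ref{prop:conte1} applied to the \emph{fixed} $\f$ against the weakly convergent $\mu_j$ gives $\int_X\f\,\MA(\f_j)\to\int_X\f\,\MA(\f)$; subtracting yields $J_{\f_j}(\f)=E(\f_j)-E(\f)+\int_X(\f-\f_j)\MA(\f_j)\to 0$, hence $I(\f_j,\f)\to 0$ by (\ref{equ:IJ}) and one concludes by Proposition \ref{prop:conven}. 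You should replace your Legendre-duality subsequence argument by this direct computation; as written, the converse does not close.
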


\begin{proof}
The strong bicontituity of $\f\mapsto\om_\f$ is easy to see, since we have by definition $J(\f)=V^{-1}\int_X\f\,\om_0^n-E(\f)$ where $\f\mapsto\int_X\f\,\om_0^n$ is weakly continuous. It is thus enough to establish the continuity in the strong topology of the map $\enorm(X,\om_0)\to\cM^1(X)$ $\f\mapsto\MA(\f)$ and of its inverse. 

We thus consider $\f_j,\f\in\enorm(X,\om_0)$ and set $\mu_j:=\MA(\f_j)$, $\mu:=\MA(\f)$. Suppose first that $\f_j\to\f$ strongly. By \cite[Proposition 5.6]{BBGZ} we have $\int_X\p\mu_j\to\int_X\p\mu$ uniformly with respect to $\p\in\cE^1_C(X,\om_0)$ for each $C>0$. This shows on the one hand that $\mu_j\to\mu$ weakly, and on the other hand that $\int_X\f_j\mu_j\to\int_X\f\mu$. It follows that $E^*(\mu_j)=E(\f_j)-\int_X\f_j\mu_j$ converges to $E^*(\mu)=E(\f)-\int_X\f\mu$, so that $\mu_j\to\mu$ strongly. 

Conversely, assume that $\mu_j\to\mu$ strongly. Then $E^*(\mu_j)=E(\f_j)-\int_X\f_j\MA(\f_j)$ converges to $E^*(\mu)=E(\f)-\int_X\f\MA(\f)$, and also $\int_X\f\MA(\f_j)\to\int_X\f\MA(\f)$ by Proposition \ref{prop:conte1}, since $\MA(\f_j)=\mu_j\to\MA(\f)=\mu$ weakly. Adding up we get 
$$
E(\f_j)-E(\f)+\int_X(\f-\f_j)\MA(\f_j)\to 0,
$$ 
hence $I(\f_j,\f)\to 0$ by (\ref{equ:IE}), which shows as desired that $\f_j\to\f$ strongly. 
\end{proof}

The following dual equicontinuity properties hold:

\begin{prop}\label{prop:equicont} 
For each $C>0$ we have: 
\begin{itemize} 
\item[(i)] the set of probability measures $\cM^1_C(X,\om_0)$ acts equicontinuously on $\cE^1(X,\om_0)$ with its strong topology;
\item[(ii)] the set of $\om_0$-psh functions $\cE^1_C(X,\om_0)$ acts equicontinuously on $\cM^1(X,\om_0)$ with its strong topology. 
\end{itemize} 
\end{prop}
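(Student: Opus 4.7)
The plan is to derive both assertions from a single quantitative version of the continuity used in Proposition~\ref{prop:conte1}, namely from \cite[Lemma 5.8]{BBGZ}: for each $C>0$, there exists a modulus $\eta_C:\R_+\to\R_+$ with $\eta_C(0^+)=0$ such that
$$
\left| \int_X (\f - \p)\, d\mu \right| \le \eta_C\bigl(I(\f,\p)\bigr)
$$
for all $\f,\p \in \enorm(X,\om_0) \cap \cE^1_C(X,\om_0)$ and all $\mu \in \cM^1_C(X,\om_0)$. This provides exactly the uniformity in $\mu$ needed for (i), and by duality (through the Monge--Amp\`ere homeomorphism of Proposition~\ref{prop:conven2}) will provide the uniformity in $\f$ needed for (ii).

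For (i), I would fix $\f_0 \in \cE^1(X,\om_0)$ and take $\f_j \to \f_0$ strongly. By Lemma~\ref{lem:boundeden} the sequence stays in some $\cE^1_{C'}(X,\om_0)$, and weak continuity of $\sup_X$ (Hartogs' lemma) gives $s_j := \sup_X \f_j \to \sup_X \f_0 =: s_0$. The normalizations $\f_j - s_j$ and $\f_0 - s_0$ still converge strongly in $\enorm(X,\om_0)$, so Proposition~\ref{prop:conven} yields $I(\f_j - s_j, \f_0 - s_0) \to 0$; translation invariance of $I$ (both Monge--Amp\`ere measures are probability measures) collapses this to $I(\f_j, \f_0) \to 0$. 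Applying the displayed estimate to the normalized pair, together with the elementary identity $\int_X (s_j - s_0)\, d\mu = s_j - s_0$ for probability measures, gives
$$
\sup_{\mu \in \cM^1_C(X,\om_0)} \left| \int_X (\f_j - \f_0)\, d\mu \right| \longrightarrow 0,
$$
which is exactly (i).

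For (ii), I would use Proposition~\ref{prop:conven2} to transfer strong convergence $\mu_j \to \mu_0$ in $\cM^1(X,\om_0)$ to strong convergence $\p_j \to \p_0$ in $\enorm(X,\om_0)$ of their normalized potentials; by (\ref{equ:compen}) the $\p_j$ stay in a fixed $\cE^1_{C''}(X,\om_0)$, and by Proposition~\ref{prop:conven}, $I(\p_j,\p_0) \to 0$. To bound $\sup_{\f \in \cE^1_C(X,\om_0)} \bigl|\int_X \f\, d(\mu_j - \mu_0)\bigr|$, first approximate $\f$ from above by a decreasing sequence of continuous $\om_0$-psh functions (via \cite{EGZ2}) so as to justify integration by parts, then use the polarization identity
$$
\mu_j - \mu_0 = V^{-1}\, dd^c(\p_j - \p_0) \wedge \sum_{k=0}^{n-1} \om_{\p_j}^k \wedge \om_{\p_0}^{n-1-k}
$$
and apply Cauchy--Schwarz as in Lemma~\ref{lem:BBGZ}. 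Each resulting term is controlled by $\|d\f\|_T \cdot \|d(\p_j - \p_0)\|_T$ on an appropriate product $T$ of currents with bounded energies; the first factor is uniformly bounded for $\f \in \cE^1_C(X,\om_0)$ via the $J$-functional, and the second tends to zero uniformly since $I(\p_j, \p_0) \to 0$ through the iteration scheme of Lemma~\ref{lem:BBGZ}.

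The main obstacle is (ii): part (i) is essentially a direct application of \cite[Lemma 5.8]{BBGZ}, whereas (ii) requires genuinely exploiting the symmetry of the pairing $(\f,\p) \mapsto \int_X \f\, \MA(\p)$ through integration by parts combined with the H\"older-type Cauchy--Schwarz bounds from Section~\ref{sec:fec}, together with a careful monotone approximation of $\f \in \cE^1_C(X,\om_0)$ by continuous $\om_0$-psh functions to justify the integration by parts on singular varieties.
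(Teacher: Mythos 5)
Your proof is correct and follows essentially the same route as the paper, which simply invokes \cite[Lemma 5.8]{BBGZ} for (i) and \cite[Lemma 3.12]{BBGZ} for (ii), after reducing to normalized potentials via Propositions \ref{prop:conven} and \ref{prop:conven2}. For (ii) you re-derive rather than cite the BBGZ bound: the polarization identity, integration by parts after monotone approximation by continuous potentials, and the Cauchy--Schwarz iteration of Lemma \ref{lem:BBGZ} are exactly the ingredients of \cite[Lemma 3.12]{BBGZ} (of which Lemma \ref{lem:BBGZ} is explicitly stated to be a refinement), so the substance is identical even though you spell out more of the mechanism.
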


\begin{proof} In view of  Propositions \ref{prop:conven} and \ref{prop:conven2}, (i) follows from \cite[Lemma 5.8]{BBGZ}, while (ii) follows from  \cite[Lemma 3.12]{BBGZ}. 
\end{proof}

As a consequence, we get the following characterization of strongly compact subsets of $\cM^1(X,\om_0)$:

\begin{lem}\label{lem:compact} Let $\cK$ be a weakly compact subset of $\cM^1(X,\om_0)$. The following properties are equivalent:
\begin{itemize}
\item[(i)] $\cK$ is strongly compact;
\item[(ii)] $\cK\subset\cM^1_A(X,\om_0)$ for some $A>0$, and for each $C>0$ $\cK$ acts equicontinuously on 
$\cE^1_C(X,\om_0)$ equipped with its weak topology. 
\end{itemize}
\end{lem}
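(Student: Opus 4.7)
The plan is to exploit the definition of the strong topology as the coarsest refinement of the weak topology making $E^*$ continuous: a sequence converges strongly iff it converges weakly and $E^*$ converges along it. Both topologies are metrizable on the weakly compact set $\cK$ (weakly: since $X$ is a compact metric space; strongly: by adding $|E^*(\mu)-E^*(\nu)|$ to a weak metric), so compactness reduces to sequential compactness throughout. For (i) $\Rightarrow$ (ii), strong continuity of $E^*$ on the strongly compact set $\cK$ gives $\sup_{\cK}E^*\le A$, i.e.\ $\cK\subset\cM^1_A$. Equicontinuity of $\cK$ on $\cE^1_C$ with its weak topology is obtained by contradiction: if it failed at some $\varphi_0\in\cE^1_C$, one would extract $\varphi_k\to\varphi_0$ weakly in $\cE^1_C$ and $\mu_k\in\cK$ with $|\int(\varphi_k-\varphi_0)\,d\mu_k|\ge\epsilon$; by (i) passing to a strongly convergent subsequence $\mu_k\to\mu$, Proposition \ref{prop:equicont}(ii) gives $\sup_{\psi\in\cE^1_C}|\int\psi\,d(\mu_k-\mu)|\to 0$, while Proposition \ref{prop:conte1} gives $\int\varphi_k\,d\mu\to\int\varphi_0\,d\mu$; a three-term triangle inequality then contradicts the lower bound.

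For (ii) $\Rightarrow$ (i), given a sequence $\mu_k\in\cK$ I extract a weakly convergent subsequence $\mu_k\to\mu\in\cK$ and aim for $E^*(\mu_k)\to E^*(\mu)$. Weak lower semicontinuity of $E^*$ yields $\liminf\ge$. For $\limsup\le$, set $\varphi_k:=\varphi_{\mu_k}\in\enorm(X,\om_0)$; the bound $E^*(\mu_k)\le A$ combines with (\ref{equ:compen}) to force $\varphi_k\in\cE^1_{A'}$ for $A':=nA$, which is weakly compact (see (\ref{equ:ec})), so a further extraction gives $\varphi_k\to\psi$ weakly. Since $E^*(\mu_k)=E(\varphi_k)-\int\varphi_k\,d\mu_k$, while the sup defining $E^*(\mu)$ applied at $\varphi_k$ yields $E^*(\mu)\ge E(\varphi_k)-\int\varphi_k\,d\mu$, we obtain
\[
E^*(\mu_k)-E^*(\mu)\le\int\varphi_k\,d\mu-\int\varphi_k\,d\mu_k.
\]
It is therefore enough to show that both integrals on the right converge to the common limit $\int\psi\,d\mu$.

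The main obstacle is the convergence $\int\varphi_k\,d\mu_k\to\int\psi\,d\mu$, since $\mu_k\to\mu$ holds only weakly and $\psi$ is merely upper semicontinuous. I split it as $\int(\varphi_k-\psi)\,d\mu_k+\int\psi\,d\mu_k$. The first piece tends to zero uniformly in $k$ by the equicontinuity of $\cK$ on $\cE^1_{A'}$ applied at $\psi$ along $\varphi_k\to\psi$ weakly. For the second, I approximate $\psi$ from above by continuous $\om_0$-psh functions $\psi_\ell\downarrow\psi$, available by \cite{EGZ2}; the equicontinuity of $\cK$ applied now to the weakly convergent sequence $\psi_\ell\to\psi$ gives $\sup_{\mu'\in\cK}|\int(\psi_\ell-\psi)\,d\mu'|\to 0$ as $\ell\to\infty$, while for each fixed $\ell$ the continuity of $\psi_\ell$ together with $\mu_k\to\mu$ weakly yields $\int\psi_\ell\,d\mu_k\to\int\psi_\ell\,d\mu$; a standard $2\epsilon$-argument then produces $\int\psi\,d\mu_k\to\int\psi\,d\mu$. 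The companion limit $\int\varphi_k\,d\mu\to\int\psi\,d\mu$ is immediate from Proposition \ref{prop:conte1}, since $\mu\in\cM^1$ acts continuously on $\cE^1_{A'}$ with its weak topology. This closes the argument.
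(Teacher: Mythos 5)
Your argument for (i)$\Rightarrow$(ii) is essentially the paper's: boundedness of $E^*$ from strong continuity, then the same contradiction scheme via Proposition~\ref{prop:equicont}(ii) and Proposition~\ref{prop:conte1}. For (ii)$\Rightarrow$(i) you take a genuinely different route, staying on the $\cM^1$ side: after extracting $\mu_k\to\mu$ weakly you directly target $E^*(\mu_k)\to E^*(\mu)$, with $\liminf\ge E^*(\mu)$ from weak lower semicontinuity of $E^*$ and $\limsup\le E^*(\mu)$ from the one-sided inequality $E^*(\mu_k)-E^*(\mu)\le\int_X\varphi_k\,d\mu-\int_X\varphi_k\,d\mu_k$, whose right-hand side you kill by pushing both integrals to the common limit $\int_X\psi\,d\mu$ — using a further extraction $\varphi_k\to\psi$ weakly, the equicontinuity hypothesis, and an approximation $\psi_\ell\downarrow\psi$ by continuous $\om_0$-psh functions from~\cite{EGZ2}. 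The paper instead works in $\cE^1$: with $\f_j=\f_{\mu_j}$ and a weak limit $\f$, it feeds the equicontinuity hypothesis and Proposition~\ref{prop:conte1} into the symmetric identity $I(\f_j,\f)=\int_X(\f_j-\f)\left(\MA(\f)-\MA(\f_j)\right)$ to get $I(\f_j,\f)\to0$ and concludes by Proposition~\ref{prop:conven}. The paper's route is shorter because the subtraction inside $I$ lets the two convergences cancel at once and makes the continuous approximants $\psi_\ell$ unnecessary; yours is more self-contained in the sense that it never leaves the definition of the strong topology on $\cM^1$ nor invokes Proposition~\ref{prop:conven}. One minor imprecision to note: $E^*(\mu_k)\le A$ and (\ref{equ:compen}) give $J(\varphi_k)\le nA$, and to place $\varphi_k$ in a fixed $\cE^1_{A'}$ you additionally need the standard uniform bound on $V^{-1}\int_X\varphi_k\,\om_0^n$ over normalized $\om_0$-psh functions (\cite[Lemma~3.2]{BBGZ}); the paper elides this point as well, so it is a shared shortcut rather than an error.
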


\begin{proof}  
Let us prove (i)$\Rightarrow$(ii). Since $E^*$ is by definition continuous with respect to this topology, it is bounded on $\cK$, which shows that $\cK$ is a subset of $\cM^1_A(X,\om_0)$ for some $A>0$, and is necessarily weakly closed. Assume by contradiction that $\cK$ fails to act equicontinuously on $\cE^1_C(X,\om_0)$ for some $C>0$. Then there exists sequence $\mu_j\in\cK$ and $\f_j\in\cE^1_C(X,\om_0)$ such that $\f_j\to\f$ weakly but $\int_X(\f_j-\f)\mu_j$ stays away from $0$. Using the compactness assumption we assume after perhaps passing to a subsequence that $\mu_j$ converges in energy to some $\mu\in\cK$. By Proposition \ref{prop:equicont} we then have $\int_X\f_j\mu_j\to\int_X\f\mu$, and we also have $\int_X(\f_j-\f)\mu\to 0$ by Proposition \ref{prop:conte1}. It follows that $\int_X(\f_j-\f)\mu_j\to 0$, a contradiction. 

Conversely, assume that (ii) holds and let $\mu_j$ be a sequence in $\cK$. Set $\f_j:=\f_{\mu_j}$, so that $\f_j\in\cE^1_C(X,\om_0)$ for a uniform $C>0$ by (\ref{equ:compen}). By weak compactness of $\cE^1_C(X,\om_0)$, we may assume after perhaps passing to a subsequence that $\f_j$ converges weakly to some $\f\in\cE^1_C(X,\om_0)$.  The equicontinuity assumption therefore implies that $\int_X(\f_j-\f)\MA(\f_j)\to 0$. We also have $\int_X(\f_j-\f)\MA(\f)\to 0$ by Proposition \ref{prop:conte1}, hence 
$$
I(\f_j,\f)=\int_X(\f_j-\f)\left(\MA(\f)-\MA(\f_j)\right)\to 0.
$$
By Proposition \ref{prop:conven} it follows that $\f_j\to\f$ in energy, hence $\mu_j\to\mu:=\MA(\f)$, and we have proved as desired that $\mu_j$ admits a limit point in $\cK$. 
\end{proof}

\subsection{Entropy and the H\"older-Young inequality}\label{subsec:entropy}

We first recall a general definition: 

\begin{defi} 
Let $\mu,\nu$ be probability measures on $X$. The \emph{relative entropy}  $H_\mu(\nu)\in[0,+\infty]$ of $\nu$ with respect to $\mu$ is defined as follows. If $\nu$ is absolutely continuous with respect to $\mu$ and $f:=\frac{d\nu}{d\mu}$ satisfies $f\log f\in L^1(\mu)$ then 
$$
H_\mu(\nu):=\int_X f\log f\,d\mu=\int_X\log\left(\frac{d\nu}{d\mu}\right)d\nu. 
$$
Otherwise one sets $H_\mu(\nu)=+\infty$.
\end{defi}

We write
$$
\cH(X,\mu):=\left\{\mu\in\cM(X)\mid H_\mu(\nu)<+\infty\right\}
$$
and for each $C>0$ 
$$
\cH_C(X,\mu):=\left\{\mu\in\cM(X)\mid H_\mu(\nu)\le C\right\}, 
$$
a compact subset of $\cM(X)$. We will use the following basic properties of the relative entropy. 

\begin{prop}\label{prop:ent} 
Let $\mu,\nu$ be probability measures on $X$.
\begin{itemize}
\item[(i)] We have
$$
H_\mu(\nu)=\sup_{g\in C^0(X)}\left(\int g\,d\nu-\log\int e^g d\mu\right).
$$
\item[(ii)] $H_\mu(\nu)\ge 2 \|\mu-\nu\|^2$. In particular $H_\mu(\nu)=0$ iff $\nu=\mu$.
\end{itemize}
\end{prop}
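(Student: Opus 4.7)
The plan is to prove (i) via Legendre duality (Young's inequality applied pointwise) together with an approximation argument to handle the continuity constraint, and to derive (ii) by applying (i) to indicators of Borel sets combined with the elementary Bernoulli version of Pinsker's inequality.

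For the easy inequality $\ge$ in (i), I would exploit the pointwise Young inequality $xy \le x(\log x - 1) + e^y$ for $x \ge 0$, $y \in \R$, which is the Fenchel duality for the convex function $x \log x - x$ whose Legendre transform is $e^y$. Taking $x = f := d\nu/d\mu$ and $y = g \in C^0(X)$ and integrating against $\mu$ gives $\int g \, d\nu \le H_\mu(\nu) + \int e^g \, d\mu - 1$; replacing $g$ by $g - \log \int e^g \, d\mu$ normalizes the last term to zero, yielding the supremum bound. If $\nu$ is not absolutely continuous with respect to $\mu$, then $H_\mu(\nu) = +\infty$ and the inequality is trivial.

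For the reverse inequality, when $H_\mu(\nu) < +\infty$ (so that $\nu \ll \mu$ with $f \log f \in L^1(\mu)$), the optimal test function is $g = \log f$, which saturates the Young inequality. As $g$ is only measurable and possibly unbounded, I truncate to $g_k := \log \max(\min(f, k), 1/k)$, observe by monotone/dominated convergence that $\int g_k \, d\nu - \log \int e^{g_k} d\mu \to H_\mu(\nu)$ as $k \to \infty$, and approximate each bounded measurable $g_k$ by continuous functions via Lusin's theorem (valid since $X$ is a compact metric space, so every finite Borel measure is Radon), carefully preserving uniform sup-norm bounds to ensure simultaneous convergence of both integrals. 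When $H_\mu(\nu) = +\infty$ --- either because $\nu \not\ll \mu$ or because $\int f (\log f)_+ \, d\mu = +\infty$ --- a similar level-set approximation produces a sequence of continuous test functions along which the supremum diverges.

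For (ii), I apply (i) with $g = \lambda \chi$, where $\chi$ is a continuous approximation of $\mathbf{1}_A$ for an arbitrary Borel set $A \subset X$, take limits via regularity, and optimize over $\lambda \in \R$ to obtain the Cram\'er-type bound $H_\mu(\nu) \ge h(\nu(A) \| \mu(A))$, where $h(p \| q) := p \log(p/q) + (1-p) \log((1-p)/(1-q))$ is the Bernoulli relative entropy. The elementary Bernoulli Pinsker inequality $h(p \| q) \ge 2(p-q)^2$ (a one-variable calculus exercise) then yields $H_\mu(\nu) \ge 2(\nu(A) - \mu(A))^2$; taking the supremum over Borel sets $A$ recovers $H_\mu(\nu) \ge 2 \|\mu - \nu\|^2$. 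The vanishing statement follows immediately, since $\nu(A) = \mu(A)$ for every Borel $A$ forces $\nu = \mu$. The main obstacle is the approximation step in the converse direction of (i): one must move from the natural measurable optimizer $\log f$ to continuous test functions while controlling uniform norms so that the limiting identity against both $\mu$ and $\nu$ is preserved.
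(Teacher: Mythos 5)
Your argument is conceptually sound, and it differs from the paper's treatment in the sense that the paper gives no proof at all---it simply cites Dembo--Zeitouni, \cite[Lemma 6.2.13]{DZ} for (i) and \cite[Exercise 6.2.17]{DZ} for (ii). Your Fenchel--Young argument for the easy direction of (i), your truncation $g_k=\log\max(\min(f,k),1/k)$ followed by Lusin--Tietze approximation (with the crucial sup-norm control so the exponentials do not blow up) for the converse, and your divergence argument when $H_\mu(\nu)=+\infty$, are all correct and constitute the standard direct proof of the duality formula. For (ii) your route---restrict to indicator test functions, identify the resulting one-dimensional variational problem with the binary relative entropy $h(\nu(A)\|\mu(A))$, and invoke the scalar Pinsker inequality $h(p\|q)\ge 2(p-q)^2$---is exactly the data-processing reduction that underlies the Csisz{\'a}r--Kullback--Pinsker inequality; it is a legitimate and self-contained proof.

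One point deserves care. The final step ``taking the supremum over Borel sets $A$ recovers $H_\mu(\nu)\ge 2\|\mu-\nu\|^2$'' is correct only if one reads $\|\mu-\nu\|$ as $\sup_A|\mu(A)-\nu(A)|$. The paper explicitly declares $\|\cdot\|$ to be the operator norm on $C^0(X)^*$, \ie the total variation $|\mu-\nu|(X)$; for probability measures one has $\sup_A|\mu(A)-\nu(A)|=\tfrac12|\mu-\nu|(X)$, so under the paper's convention your argument yields $H_\mu(\nu)\ge\tfrac12\|\mu-\nu\|^2$ rather than $2\|\mu-\nu\|^2$ (and $\tfrac12$ is indeed the constant in \cite[Exercise 6.2.17]{DZ}). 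This factor-of-$4$ discrepancy is already present in the paper's own statement as written; it has no effect on the way the inequality is used in the paper (only positive-definiteness and the qualitative bound enter), but you should make your normalization of the total variation explicit so the constant is unambiguous.
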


Part (i) says that $H_\mu$ is the \emph{Legendre transform} of the convex functional $g\mapsto\log\int e^g d\mu$. In particular it is convex and lower semi-continuous on $\cM(X)$. We refer to \cite[Lemma 6.2.13]{DZ} for a proof. 

The norm in (ii) denotes the total variation of $\mu-\nu$, \ie its operator norm as an element of $C^0(X)^*$. The inequality in (ii) is known as \emph{Pinsker's inequality}, see~\cite[Exercise 6.2.17]{DZ} for a proof. For later use we note:

\begin{lem}\label{lem:legsci} 
For each lower semi-continuous function $g$ on $X$ we have
$$
\sup_{\nu\in\cM(X)}\left(\int g\,d\nu-H_\mu(\nu)\right)=\log\int e^gd\mu.
$$
\end{lem}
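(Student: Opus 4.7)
The plan is to deduce this Gibbs variational formula from Proposition \ref{prop:ent}(i) by approximating the lower semi-continuous function $g$ from below by a monotone sequence of continuous functions. Since $X$ is a compact metrizable space (it is a compact complex analytic space) and $g$ is lsc, there exists an increasing sequence $g_k \in C^0(X)$ with $g_k \nearrow g$ pointwise, and we may also assume $g$ (and hence every $g_k$) is bounded below by a constant. Set $Z_k := \int e^{g_k}\,d\mu \in (0,+\infty]$ and $Z := \int e^g\,d\mu$, so $Z_k \nearrow Z$ by monotone convergence.

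First I would establish the upper bound $\sup_\nu\bigl(\int g\,d\nu-H_\mu(\nu)\bigr)\le\log Z$. For any $\nu\in\cM(X)$ with $H_\mu(\nu)<+\infty$ (the other case being trivial since the left-hand difference is $-\infty$), Proposition \ref{prop:ent}(i) applied to each continuous $g_k$ gives
\[
\int g_k\,d\nu-H_\mu(\nu)\le\log Z_k\le\log Z.
\]
Since $g_k\nearrow g$ and $g$ is bounded below, monotone convergence yields $\int g_k\,d\nu\to\int g\,d\nu$, and letting $k\to\infty$ produces the desired inequality.

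For the reverse inequality I would exhibit a near-optimizing sequence. Assuming first $Z<+\infty$, consider the Gibbs probability measure $\nu_k:=Z_k^{-1}e^{g_k}\mu$. A direct calculation of the relative entropy gives $H_\mu(\nu_k)=\int g_k\,d\nu_k-\log Z_k$, so
\[
\int g\,d\nu_k-H_\mu(\nu_k)=\int(g-g_k)\,d\nu_k+\log Z_k\ge\log Z_k,
\]
because $g\ge g_k$. Letting $k\to\infty$ gives $\sup_\nu\bigl(\int g\,d\nu-H_\mu(\nu)\bigr)\ge\log Z$, and equality follows. If $Z=+\infty$, the same computation shows that $\int g\,d\nu_k-H_\mu(\nu_k)\ge\log Z_k\to+\infty$, so both sides are $+\infty$.

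The main subtlety is merely justifying the monotone approximation of $g$ by continuous functions on $X$ and making sure $\int g_k\,d\nu$ converges to $\int g\,d\nu$; the lower boundedness of $g$ on the compact space $X$ takes care of this. No further compactness or regularity input about $\mu$ is needed beyond the definition of $H_\mu$.
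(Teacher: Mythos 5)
Your proof is correct. The structure (approximate $g$ from below by continuous $g_k\nearrow g$ and pass to the limit by monotone convergence) matches the paper's, but the two bounds are established by genuinely different means. For the inequality $\le$, the paper applies Jensen's inequality directly to the lsc function $g$, whereas you apply Proposition~\ref{prop:ent}(i) to each continuous $g_k$ and pass to the limit; both work, the paper's route just avoids the approximation in this half. The more substantive difference is in the inequality $\ge$: the paper first asserts the continuous case of the lemma by invoking Legendre biduality (``Hahn-Banach'') between $H_\mu$ and $g\mapsto\log\int e^g\,d\mu$, then monotonely approximates. You instead exhibit explicit near-optimizers, the Gibbs measures $\nu_k=Z_k^{-1}e^{g_k}\mu$, and verify $H_\mu(\nu_k)=\int g_k\,d\nu_k-\log Z_k$ by direct computation. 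This is a more elementary and self-contained argument: it sidesteps the biduality step entirely and only uses the definition of relative entropy. Your handling of $Z_k<+\infty$ (automatic since $g_k\in C^0(X)$) and of the case $Z=+\infty$ is correct, and the monotone convergence steps are justified by the lower bound of the lsc function $g$ on compact $X$. A minor remark: the split into $Z<+\infty$ versus $Z=+\infty$ is not needed, since the inequality $\int g\,d\nu_k - H_\mu(\nu_k)\ge\log Z_k$ holds unconditionally and $\log Z_k\nearrow\log Z$ in $[-\infty,+\infty]$.
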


\begin{proof} 
When $g$ is continuous this follows from Legendre duality (\ie the Hahn-Banach theorem). Assume now that $g$ is an arbitrary lsc function. The inequality 
$$
\int g\,d\nu\le\log\int e^g d\mu+H_\mu(\nu)
$$
is a direct consequence of Jensen's inequality. Conversely since $g$ is lsc there exists an increasing sequence of continuous functions $g_j\le g$ increasing pointwise to $g$. By the continuous case we get for each $j$ 
$$
\log\int e^{g_j}d\mu=\sup_{\nu\in\cM(X)}\left(\int g_j\,d\nu-H_\mu(\nu)\right)\le\sup_{\nu\in\cM(X)}\left(\int g\,d\nu-H_\mu(\nu)\right) 
$$
and the result follows by monotone convergence.
\end{proof}

We now briefly recall some facts on Orlicz spaces.
\begin{defi} A \emph{weight} is a convex non-decreasing lower semicontinuous function $\chi:[0,+\infty]\to[0,+\infty]$ such that $\chi^{-1}\{0\}=\{0\}$ and $\chi(+\infty)=+\infty$. Its \emph{conjugate weight} $\chi^*:[0,+\infty]\to[0,+\infty]$ is the Legendre transform of $\chi\left(|\cdot|\right)$, \ie 
$$
\chi^*(t):=\sup_{s\ge 0}\left(st-\chi(s)\right).
$$
\end{defi}
By Legendre duality we have $\chi^{**}=\chi$. Apart from the well-known case of the conjugate weights $s^p/p$ and $t^q/q$ with $\frac{1}{p}+\frac{1}{q}=1$, the main example for us will be:

\begin{ex}\label{ex:conj} 
The congugate weight of $\chi(s):=(s+1)\log(s+1)-s$ is 
$$
\chi^*(t)=e^t-t-1.
$$ 
\end{ex}

\begin{defi} 
Let $\mu$ be a positive measure on $X$ and let $\chi$ be a weight. The \emph{Orlicz space} $L^\chi(\mu)$ is defined as the set of all measurable functions $f$ on $X$ 
such that $\int\chi\left(\e |f|\right)d\mu<+\infty$ for some $\e>0$. 
\end{defi}

Observe that  $f\in L^\chi(\mu)$ iff $\e f$ belongs to the convex symmetric set
$$
B:=\left\{g\in L^\chi(\mu),\,\int\chi\left(|g|\right)d\mu\le 1\right\}
$$
for $0<\e\ll 1$. The \emph{Luxembourg norm} on $L^\chi(\mu)$ is then defined as the gauge of $B$, \ie one sets for $f\in L^\chi(\mu)$ 
$$
\|f\|_{L^\chi(\mu)}:=\inf\left\{\la>0,\,\int\chi\left(\la^{-1}|f|\right)d\mu\le 1\right\}.
$$
It turns $L^\chi(\mu)$ into a Banach space. 
\begin{prop}\label{prop:hy}[H\"older-Young inequality] For any two measurable functions $f\in L^\chi(\mu)$ and $g\in L^{\chi^*}(\mu)$ we have 
$$
\int\left|f g\right|d\mu\le 2\|f\|_{L^\chi(\mu)}\|g\|_{L^{\chi^*}(\mu)}.
$$
\end{prop}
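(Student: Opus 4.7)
The plan is to derive this from the pointwise Young inequality, then integrate and invoke the defining property of the Luxembourg norm. The argument is classical; I sketch it in three steps.

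First, I would establish the pointwise Young inequality: for all $s,t\ge 0$,
\begin{equation*}
st\le \chi(s)+\chi^*(t).
\end{equation*}
This is immediate from the definition $\chi^*(t)=\sup_{s\ge 0}(st-\chi(s))$, since it rearranges to $st-\chi(s)\le\chi^*(t)$.

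Second, I would verify the ``norming'' property: if $\alpha:=\|f\|_{L^\chi(\mu)}\in(0,+\infty)$, then $\int \chi(|f|/\alpha)\,d\mu\le 1$. Indeed, for every $\lambda>\alpha$ there exists $\lambda'\in(\alpha,\lambda]$ with $\int\chi(|f|/\lambda')\,d\mu\le 1$; since $\chi$ is non-decreasing, $\chi(|f|/\lambda)\le\chi(|f|/\lambda')$ pointwise, so $\int\chi(|f|/\lambda)\,d\mu\le 1$. Letting $\lambda\downarrow\alpha$, the integrands increase pointwise to $\chi(|f|/\alpha)$ (using that $\chi$ is lower semicontinuous and non-decreasing), so monotone convergence gives $\int\chi(|f|/\alpha)\,d\mu\le 1$. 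The same holds for $g$ with $\beta:=\|g\|_{L^{\chi^*}(\mu)}$.

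Third, I would combine these. The cases $f\equiv 0$, $g\equiv 0$, or $\alpha=+\infty$ or $\beta=+\infty$ are trivial, so assume $\alpha,\beta\in(0,+\infty)$. Applying the Young inequality from Step 1 pointwise to $s=|f(x)|/\alpha$ and $t=|g(x)|/\beta$ and integrating yields
\begin{equation*}
\int\frac{|fg|}{\alpha\beta}\,d\mu \le \int \chi(|f|/\alpha)\,d\mu+\int\chi^*(|g|/\beta)\,d\mu\le 1+1=2,
\end{equation*}
by Step 2 applied to both factors. Multiplying through by $\alpha\beta$ gives the claimed inequality with constant $2$. There is no real obstacle here, only the mild technical point in Step 2 of checking that the infimum defining the Luxembourg norm is essentially attained; this is where the lower semicontinuity and monotonicity of $\chi$ in the definition of a weight are used.
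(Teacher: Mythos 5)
Your proof is correct and follows the same approach as the paper: normalize (by homogeneity), apply the pointwise Young inequality $st\le\chi(s)+\chi^*(t)$, and integrate. The only difference is that you carefully justify the norming property $\int\chi(|f|/\|f\|_{L^\chi(\mu)})\,d\mu\le 1$ via monotone convergence and lower semicontinuity of $\chi$, whereas the paper simply asserts it; this is a welcome clarification but not a different method.
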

We recall the straightforward proof for the convenience of the reader.
\begin{proof} We may assume that the right-hand side is non-zero. By homogeneity we may assume that $\|f\|_{L^\chi(\mu)}=\|g\|_{L^{\chi^*}(\mu)}=1$, hence $\int\chi\left(|f|\right)d\mu\le 1$ and $\int\chi^*\left(|g|\right)d\mu\le 1$. We have 
$|f g|\le\chi(|f|)+\chi^*(|g|)$ pointwise on $X$ by definition of $\chi^*$, hence $\int|fg|\,d\mu\le 2$ after integrating, and the result follows. 
\end{proof}

\begin{cor}\label{cor:hybis} Let $\nu=f\,\mu$ be a positive measure that is absolutely continuous with respect to $\mu$, and let $\chi$ be a weight function such that $\int\chi(f)\,d\mu\le A$ for some $1\le A<+\infty$. Then we have 
$$
\|g\|_{L^1(\nu)}\le 2A\|g\|_{L^{\chi^*}(\mu)}
$$
for every measurable function $g$. 
\end{cor}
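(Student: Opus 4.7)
The plan is to reduce the statement to a direct application of the H\"older--Young inequality (Proposition \ref{prop:hy}) after rewriting the $L^1(\nu)$-norm as a weighted integral against $\mu$. Indeed, since $\nu = f\,\mu$ by assumption, for any measurable $g$ one has
$$
\|g\|_{L^1(\nu)}=\int_X|g|\,f\,d\mu=\int_X|gf|\,d\mu,
$$
and Proposition \ref{prop:hy} immediately bounds the right-hand side by $2\|f\|_{L^\chi(\mu)}\|g\|_{L^{\chi^*}(\mu)}$.

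Consequently, the whole statement will follow once I establish the key estimate $\|f\|_{L^\chi(\mu)}\le A$. Going back to the definition of the Luxembourg norm, it suffices to check that $\int_X\chi(A^{-1}f)\,d\mu\le 1$. This is the core computation, and it is where I expect the (very mild) obstacle to lie: the interplay between the convexity of $\chi$ and the hypothesis $A\ge 1$. Since $\chi$ is convex with $\chi(0)=0$, for any $t\in[0,1]$ and $s\ge 0$ one has $\chi(ts)=\chi(ts+(1-t)\cdot 0)\le t\chi(s)+(1-t)\chi(0)=t\chi(s)$. Applying this with $t=A^{-1}\in(0,1]$ and $s=f(x)\ge 0$ pointwise, then integrating,
$$
\int_X\chi\!\left(A^{-1}f\right)d\mu\le A^{-1}\int_X\chi(f)\,d\mu\le A^{-1}\cdot A=1,
$$
which gives $\|f\|_{L^\chi(\mu)}\le A$ as required.

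Combining the two steps yields $\|g\|_{L^1(\nu)}\le 2\|f\|_{L^\chi(\mu)}\|g\|_{L^{\chi^*}(\mu)}\le 2A\|g\|_{L^{\chi^*}(\mu)}$, which is the desired inequality. No further compactness or duality input is needed beyond Proposition \ref{prop:hy} itself; the entire argument is a two-line manipulation once the convexity trick $\chi(ts)\le t\chi(s)$ for $t\in[0,1]$ is identified, and the role of the assumption $A\ge 1$ is precisely to guarantee that $A^{-1}$ lies in $[0,1]$ so that this trick applies.
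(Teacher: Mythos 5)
Your proof is correct, and it takes a slightly different and arguably cleaner route than the paper's. The paper's argument rescales the \emph{weight} rather than its argument: it reads the hypothesis $\int\chi(f)\,d\mu\le A$ as $\|f\|_{L^{A^{-1}\chi}(\mu)}\le 1$, computes that the conjugate of $A^{-1}\chi$ is $\tau_A(t):=A^{-1}\chi^*(At)$, applies the H\"older--Young inequality for the conjugate pair $(A^{-1}\chi,\tau_A)$, and then uses $A\ge 1$ to pass from $\|g\|_{L^{\tau_A}(\mu)}=A\|g\|_{L^{A^{-1}\chi^*}(\mu)}$ to $A\|g\|_{L^{\chi^*}(\mu)}$. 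You instead keep the original conjugate pair $(\chi,\chi^*)$ and bound $\|f\|_{L^\chi(\mu)}\le A$ directly, by observing that convexity of $\chi$ together with $\chi(0)=0$ gives $\chi(A^{-1}s)\le A^{-1}\chi(s)$ for $A\ge 1$, so that $\int\chi(A^{-1}f)\,d\mu\le 1$. Both arguments use the same two ingredients (H\"older--Young and the role of $A\ge 1$), but yours bypasses the computation of the conjugate of a rescaled weight and the change-of-variable identity for the Luxemburg norm, which makes it a bit more transparent. No gap.
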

\begin{proof} The assumption amounts to $\|f\|_{L^{A^{-1}\chi}(\mu)}\le 1$, and the weight $\tau_A(t):=A^{-1}\chi^*(At)$ is conjugate to $A^{-1}\chi$. On the other hand it  follows from the definition that 
$$
\|g\|_{L^{\tau_A}(\mu)}=A\|g\|_{L^{A^{-1}\chi^*}(\mu)}\le A\|g\|_{L^{\chi^*}(\mu)}
$$
since $A\ge 1$, and the result follows from the H\"older-Young inequality.
\end{proof}

\subsection{Strong compactness of measures with bounded entropy}

The goal of this section is to prove the following result, which is a main ingredient in the convergence 
of Ricci iteration and of the K\"ahler-Ricci flow. 

\begin{thm}\label{thm:compactness} 
Let $\mu_0$ be a tame probability measure on $X$. Then $\cH(X,\mu_0)\subset\cM^1(X,\om_0)$, and for each $A>0$ the set $\cH_A(X,\mu_0)$ is strongly compact.
\end{thm}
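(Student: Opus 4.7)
The plan is to deduce the inclusion $\cH(X,\mu_0)\subset\cM^1(X,\om_0)$ from Legendre duality between the entropy and the log-Laplace transform (using the positive $\a$-invariant provided by tameness), and then to establish strong compactness of $\cH_A$ by verifying condition (ii) of Lemma \ref{lem:compact} via the H\"older--Young inequality. The link from an entropy bound to Orlicz estimates will be provided by the conjugate weights of Example \ref{ex:conj}.

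\medskip

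\textbf{Inclusion.} Since $\mu_0$ is tame, Proposition \ref{prop:hold} gives $\a>0$ with $M:=\log\sup_{\p\in\pshn(X,\om_0)}\int e^{-\a\p}\,d\mu_0<\infty$. Let $\nu\in\cH(X,\mu_0)$. For any $\f\in\pshn\cap\cE^1(X,\om_0)$, the function $-\a\f$ is lsc, hence Lemma \ref{lem:legsci} yields
\begin{equation*}
-\a\int\f\,d\nu\le H_{\mu_0}(\nu)+\log\int e^{-\a\f}\,d\mu_0\le H_{\mu_0}(\nu)+M.
\end{equation*}
Since $\f\le 0$ and $E$ is non-decreasing with $E(0)=0$, one has $E(\f)\le 0$. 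Because $\f\mapsto E(\f)-\int\f\,d\nu$ is translation invariant, the supremum defining $E^*(\nu)$ can be restricted to $\pshn\cap\cE^1$, whence
\begin{equation*}
E^*(\nu)\le\a^{-1}\bigl(H_{\mu_0}(\nu)+M\bigr),
\end{equation*}
so in particular $\cH_A\subset\cM^1_B(X,\om_0)$ with $B:=\a^{-1}(A+M)$.

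\medskip

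\textbf{Reduction to Orlicz continuity.} $\cH_A$ is weakly compact as a sublevel set of the convex weakly lsc function $H_{\mu_0}$ (Proposition \ref{prop:ent}(i)). Together with the previous step, Lemma \ref{lem:compact} reduces the strong compactness of $\cH_A$ to: for each $C>0$, $\cH_A$ acts equicontinuously on $\cE^1_C(X,\om_0)$ with the weak topology. With the conjugate Orlicz weights $\chi(s)=(s+1)\log(s+1)-s$ and $\chi^*(t)=e^t-t-1$ from Example \ref{ex:conj}, the elementary pointwise estimate $\chi(s)\le 2(s\log s)^+ +2$, combined with $|f\log f|\le e^{-1}$ on $\{f\le 1\}$, yields $\int\chi(f)\,d\mu_0\le 2A+C_0$ for every $\nu=f\mu_0\in\cH_A$. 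Corollary \ref{cor:hybis} then produces $K_A>0$ such that
\begin{equation*}
\sup_{\nu\in\cH_A}\int |g|\,d\nu\le K_A\,\|g\|_{L^{\chi^*}(\mu_0)}
\end{equation*}
for every measurable $g$, reducing the equicontinuity to the continuity of the natural inclusion $\cE^1_C(X,\om_0)\hookrightarrow L^{\chi^*}(\mu_0)$ in the weak topology on the source.

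\medskip

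\textbf{Orlicz convergence.} Let $\f_j\to\f$ weakly in $\cE^1_C$. Since $\sup_X\f_j\le C$, the pointwise bound $|\f_j|\le 2C-\f_j$ together with $(e^a-e^b)^2\ge 0$ gives
\begin{equation*}
e^{|\f_j-\f|/\la}\le\tfrac 1 2 e^{4C/\la}\bigl(e^{-2\f_j/\la}+e^{-2\f/\la}\bigr)
\end{equation*}
for every $\la>0$. By Theorem \ref{thm:lelres} and Proposition \ref{prop:hold}, the right-hand side is uniformly bounded in $L^1(\mu_0)$, so the family $\{e^{|\f_j-\f|/\la}\}_j$ is uniformly integrable for every $\la>0$. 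Extracting a subsequence converging $\mu_0$-almost everywhere, Vitali's theorem yields $\int\chi^*(|\f_j-\f|/\la)\,d\mu_0\to 0$ for every $\la>0$, hence $\|\f_j-\f\|_{L^{\chi^*}(\mu_0)}\to 0$. A standard subsequence-of-a-subsequence argument lifts this to the full sequence, and Lemma \ref{lem:compact} concludes the proof.

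\medskip

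The main obstacle is the Orlicz-norm continuity in the last step: it crucially depends on the uniform exponential integrability of Proposition \ref{prop:hold}, which in turn rests on the non-trivial zero-Lelong-number property of finite-energy potentials (Theorem \ref{thm:lelres}) established via the Izumi-type estimate of Appendix A; without this ingredient there is no way to control $L^{\chi^*}$-norms from weak convergence in $\cE^1_C$.
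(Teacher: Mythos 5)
Your proposal is correct and follows essentially the same route as the paper: both reduce to equicontinuity via Lemma \ref{lem:compact}, convert the entropy bound into the Orlicz estimate $\int\chi(f)\,d\mu_0\le A_1$ using the weight $\chi(s)=(s+1)\log(s+1)-s$, apply Corollary \ref{cor:hybis} (H\"older--Young) to reduce the problem to $\|\f_j-\f\|_{L^{\chi^*}(\mu_0)}\to 0$, and then close the argument using the uniform exponential integrability of Proposition \ref{prop:hold}, which itself rests on Theorem \ref{thm:lelres}. Your separate derivation of the inclusion $\cH(X,\mu_0)\subset\cM^1(X,\om_0)$ via Lemma \ref{lem:legsci} is exactly the content of Lemma \ref{lem:entproper}(i), which the paper invokes rather than reproves, so this is merely a matter of exposition.

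The only genuine divergence is in the final step. You use a Vitali-type argument: uniform integrability of $\{e^{|\f_j-\f|/\la}\}_j$ together with an a.e.-convergent subsequence. The paper argues more directly: from $\chi^*(t)\le te^t$ it reduces to $\int|\f_j-\f|e^{\la|\f_j-\f|}d\mu_0\to 0$, then bounds $\int e^{2\la|\f_j-\f|}d\mu_0$ uniformly and applies H\"older against the $L^2(\mu_0)$-convergence $\f_j\to\f$ (again from Proposition \ref{prop:hold}). Your route is slightly heavier but valid. One imprecision worth flagging: uniform boundedness in $L^1$ does \emph{not} by itself imply uniform integrability. What you actually have is uniform boundedness of $\{e^{|\f_j-\f|/\la}\}_j$ in $L^q(\mu_0)$ for every finite $q$ (re-run your pointwise bound with $\la$ replaced by $\la/q$, or cite the $L^q$-conclusion of Proposition \ref{prop:hold} directly), and this is what yields uniform integrability via de la Vall\'ee-Poussin.
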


We first show that  measures of finite entropy have finite energy:
 
\begin{lem}\label{lem:entproper} 
Assume that $\mu_0$ is a probability measure on $X$ with $\a_{\om_0}(\mu_0)>0$. 
\begin{itemize}
\item[(i)] For each $0<\a<\a_{\om_0}(\mu_0)$ there exists $C>0$ such that 
$$
H_{\mu_0}(\mu)\ge\a\,E^*(\mu)-C.
$$ 
for all $\mu\in\cM(X)$. In particular we have
$$
\cH(X,\mu_0)\subset\cM^1(X,\om_0),
$$ 
and for each $A>0$ there exists $B>0$ such that $\cH_A(X,\mu_0)\subset\cM^1_B(X,\om_0)$. 

\item[(ii)] If $\a_{\om_0}(\mu_0)>\frac{n}{n+1}$ then there exists $\e,C>0$ such that
$$
H_{\mu_0}(\mu)\ge(1+\e)E^*(\mu)-C
$$
for all $\mu$. 
\end{itemize}
\end{lem}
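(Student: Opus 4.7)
The plan is to exploit Legendre duality (Lemma \ref{lem:legsci}) with test functions of the form $g = -\alpha\f$ for $\f$ an $\om_0$-psh function, together with the bijection $\enorm(X,\om_0) \simeq \cM^1(X,\om_0)$ of Proposition \ref{prop:conven2}. For (i), fix any $\alpha$ with $0 < \alpha < \alpha_{\om_0}(\mu_0)$ and set $C_\alpha := \log \sup_{\f \in \pshn(X,\om_0)} \int_X e^{-\alpha\f}\,d\mu_0$, which is finite by Definition \ref{defi:alpha}. Since $-\alpha\f$ is lsc, Lemma \ref{lem:legsci} yields
\[
H_{\mu_0}(\mu) \geq -\alpha \int_X \f\,d\mu - \log \int_X e^{-\alpha\f}\,d\mu_0 \geq -\alpha \int_X \f\,d\mu - C_\alpha
\]
for every $\f \in \enorm(X,\om_0)$. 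Since $\f \leq 0$, monotonicity of $E$ gives $E(\f) \leq E(0) = 0$, so the nonpositive quantity $\alpha E(\f)$ may be added to the right-hand side to obtain
\[
H_{\mu_0}(\mu) \geq \alpha\Bigl(E(\f) - \int_X \f\,d\mu\Bigr) - C_\alpha.
\]
Taking the supremum over $\f \in \enorm(X,\om_0)$, which coincides with $E^*(\mu)$ by translation invariance, produces the desired inequality with $C = C_\alpha$. The inclusion $\cH(X,\mu_0) \subset \cM^1(X,\om_0)$ and the bound $\cH_A(X,\mu_0) \subset \cM^1_B(X,\om_0)$ with $B = (A + C_\alpha)/\alpha$ are then immediate.

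For (ii), the hypothesis $\alpha_{\om_0}(\mu_0) > n/(n+1)$ permits choosing $\alpha \in (n/(n+1), \alpha_{\om_0}(\mu_0))$, and setting $1 + \e := \alpha(n+1)/n > 1$. We may assume $\mu \in \cM^1(X,\om_0)$, since otherwise both sides are $+\infty$ by (i). Let $\f_\mu \in \enorm(X,\om_0)$ denote the unique normalized potential with $\MA(\f_\mu) = \mu$, provided by Proposition \ref{prop:conven2}. Substituting $\f = \f_\mu$ in the Legendre inequality above and using the identity $\int_X \f_\mu\,d\mu = \int_X \f_\mu\,\MA(\f_\mu) = E(\f_\mu) - E^*(\mu)$ from (\ref{equ:eij}), we obtain
\[
H_{\mu_0}(\mu) \geq \alpha E^*(\mu) + \alpha\bigl(-E(\f_\mu)\bigr) - C_\alpha.
\]

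The crucial step, and the place where the dimensional threshold enters, is the lower bound $-E(\f_\mu) \geq E^*(\mu)/n$. This follows from two observations: since $\f_\mu \leq 0$, the nonpositivity of $V^{-1}\int_X \f_\mu\,\om_0^n$ together with the definition $J(\f_\mu) = V^{-1}\int_X \f_\mu\,\om_0^n - E(\f_\mu)$ forces $J(\f_\mu) \leq -E(\f_\mu)$; and the comparison (\ref{equ:compen}) gives $J(\f_\mu) \geq E^*(\mu)/n$. Combining and substituting into the previous display,
\[
H_{\mu_0}(\mu) \geq \alpha\Bigl(1 + \tfrac{1}{n}\Bigr) E^*(\mu) - C_\alpha = (1+\e)\,E^*(\mu) - C_\alpha,
\]
establishing (ii). The main subtlety is extracting the extra factor $(n+1)/n$: this crucially requires testing Legendre duality with the specific extremal potential $\f_\mu$ rather than an arbitrary $\f \in \enorm(X,\om_0)$, since the gain $\alpha(-E(\f_\mu)) \geq \alpha E^*(\mu)/n$ is only available along this distinguished direction. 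This is the variational counterpart of Tian's classical $\alpha$-invariant criterion for properness of the Mabuchi/Ding functional.
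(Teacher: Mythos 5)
Your proof is correct. For part~(i), the argument is essentially the paper's, differing only in that you discard the intermediate term $V^{-1}\int_X\f\,\om_0^n$ by passing to $E(\f)\le 0$ on normalized potentials, rather than keeping it; the conclusion is the same.

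For part~(ii), your route is a genuine (if minor) variant that ends up a bit sharper. The paper starts from $H_{\mu_0}(\mu)\ge\a I(\f_\mu)-C$ (where $\mu=\MA(\f_\mu)$), rewrites $H-E^*\ge(\a-1)I(\f_\mu)+J(\f_\mu)-C$ using $E^*(\mu)=(I-J)(\f_\mu)$, and then invokes $I\le(n+1)J$ from~(\ref{equ:IJ}) followed by $I\ge E^*$ to reach $1+\e=\a+\tfrac{1}{n+1}$. You instead evaluate the Legendre bound directly at the normalized extremal potential $\f_\mu$, getting $H_{\mu_0}(\mu)\ge-\a\int_X\f_\mu\,d\mu-C_\a=\a\bigl(E^*(\mu)-E(\f_\mu)\bigr)-C_\a$, and then exploit $\sup_X\f_\mu=0$ to obtain $-E(\f_\mu)\ge J(\f_\mu)\ge\tfrac 1n E^*(\mu)$ via~(\ref{equ:compen}). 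This yields $1+\e=\a\,\tfrac{n+1}{n}$, and one checks $\a\tfrac{n+1}{n}-\bigl(\a+\tfrac{1}{n+1}\bigr)=\tfrac{\a}{n}-\tfrac{1}{n+1}>0$ precisely under the standing hypothesis $\a>\tfrac{n}{n+1}$, so your constant is strictly better. The improvement comes from the fact that, once $\f_\mu$ is normalized, $-\a\int\f_\mu\,d\mu$ exceeds $\a I(\f_\mu)$ by the nonnegative term $-\a V^{-1}\int\f_\mu\,\om_0^n$, which the paper's manipulation gives away. Conceptually both proofs are the same Legendre-duality-plus-extremal-potential argument, and of course the lemma only asserts the existence of some $\e>0$, which both produce.

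One small presentational remark: in~(ii) you cite $\int_X\f_\mu\,d\mu=E(\f_\mu)-E^*(\mu)$ as coming from~(\ref{equ:eij}); that identity does follow from~(\ref{equ:eij}), since $E^*(\MA(\f))=E(\f)-\int_X\f\,\MA(\f)$, but it may be worth stating the rearrangement explicitly to keep the step self-contained.
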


\begin{proof} 
By definition of $\a_{\om_0}(\mu_0)$, given $\a<\a_{\om_0}(\mu_0)$ there exists $C>0$ such that
$$
\log\int e^{-\a\f}\mu_0\le-\a\sup_X\f+C
$$
for all $\om_0$-psh functions $\f$, hence
$$
-\log\int e^{-\a\f}\mu_0\ge\a V^{-1}\int_X\f\,\om_0^n-C\ge\a E(\f)-C.
$$ 
By Lemma \ref{lem:legsci} this implies
\begin{equation}\label{equ:hmin}
H_{\mu_0}(\mu)\ge\a\sup_\f\left(V^{-1}\int\f\,\om_0^n-\int\f\,\mu\right)-C\ge\a E^*(\mu)-C,
\end{equation}
which already proves (i). In order to prove (ii) we may assume that $H_{\mu_0}(\mu)$ is finite. By (i) it follows that $E^*(\mu)$ is finite as well, hence $\mu=\MA(\f)$ for some $\f\in\cE^1(X,\om_0)$. By the first inequality in (\ref{equ:hmin}) we then obtain $H_{\mu_0}(\mu)\ge\a\,I(\f)-C$, hence
$$
H_{\mu_0}(\mu)-E^*(\mu)\ge(\a-1)I(\f)+J(\f)-C
$$
$$
\ge\left(\a-1+(n+1)^{-1}\right)I(\f)-C=\left(\a-n(n+1)^{-1}\right)I(\f)-C
$$
by (\ref{equ:IJ}). (ii) follows since $I(\f)\ge I(\f)-J(\f)=E^*(\mu)$. 
\end{proof}

\begin{proof}[Proof of Theorem \ref{thm:compactness}]
By Lemma \ref{lem:compact}, it is enough to show that $\cH_A(X,\mu_0)$ acts equicontinuously on $\cE^1_C(X,\om_0)$ for each $C>0$. Let thus $\f_j\to\f$ be a weakly convergent sequence in $\cE^1_C(X,\om_0)$, and let $\mu=f\mu_0$ be a measure in $\cH_A(X,\mu_0)$. Introduce as in Example \ref{ex:conj} the weight $\chi(s):=(s+1)\log(s+1)-s$, whose conjugate function is $\chi^*(t)=e^ t-t-1$. We have $\chi(s)\le s\log s+O(1)$ on $[0,+\infty[$, hence $\int\chi(f)d\mu\le A_1$ for some $A_1\ge 1$ only depending on $A$. By Corollary \ref{cor:hybis} it follows that 
$$
\|\f_j-\f\|_{L^1(\mu)}\le 2A_1\|\f_j-\f\|_{L^{\chi^*}(\mu_0)}
$$
We are thus reduced to showing that $\|\f_j-\f\|_{L^{\chi^*}(\mu_0)}\to 0$. Using the inequality $\chi^*(t)\le t e^t$ and the definition of the norm $\|\cdot\|_{L^{\chi^*}(\mu_0)}$, we see that it is enough to show that
\begin{equation}\label{equ:limsup}
\lim_{j\to+\infty}\int_X |\f_j-\f|\exp\left(\la|\f_j-\f|\right)\mu_0=0
\end{equation}
for every given $\la>0$. But by Proposition \ref{prop:hold} there exists $B>0$ only depending on $C$ and $\la$ such that $\int e^{-2\la\f}\mu_0$ and $\int e^{-2\la\f_j}\mu_0$ are both bounded by $B$. 
Since $\sup_X\f_j\le C$ and $\sup_X\f\le C$, it follows that 
$$
\int_X\exp\left(2\la|\f_j-\f|\right)\mu_0\le B_1
$$
for some other constant $B_1>0$ independent of $j$. By H\"older's inequality we infer
$$
\int_X|\f_j-\f|\exp\left(\la|\f_j-\f|\right)d\mu\le B_1^{1/2}\|\f_j-\f\|_{L^2}(\mu_0),
$$
and (\ref{equ:limsup}) now follows since $\f_j\to\f$ in $L^2(\mu_0)$ by Proposition \ref{prop:hold}.
\end{proof}

As a consequence we get the following stability result for Monge-Amp\`ere equations: 
\begin{cor}\label{cor:stability} Let $\mu_0$ be a tame probability measure on $X$ and let $A>0$. For each $\mu\in\cM^1(X,\om_0)$ let $\f_\mu\in\cE^1(X,\om_0)$ be the unique normalized solution of $\MA(\f_\mu)=\mu$. Then $\mu\mapsto\f_\mu$ defines a continuous map from $\cH_A(X,\mu_0)$ with its weak topology to $\cE^1(X,\om_0)$ with its strong topology. 
\end{cor}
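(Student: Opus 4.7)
The plan is to combine two results already established in the excerpt: the strong compactness of $\cH_A(X,\mu_0)$ (Theorem \ref{thm:compactness}) and the fact that the inverse Monge-Amp\`ere operator $\mu\mapsto\f_\mu$ is a homeomorphism $\cM^1(X,\om_0)\to\enorm(X,\om_0)$ in the strong topologies (Proposition \ref{prop:conven2}). The key topological observation is that on a strongly compact subset of $\cM^1(X,\om_0)$ the strong and weak topologies must coincide: since the strong topology refines the weak one, the identity map from the strong to the weak topology on such a subset is a continuous bijection from a compact space to a Hausdorff space, hence a homeomorphism.

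Concretely, I would start with a weakly convergent sequence $\mu_j\to\mu$ with all $\mu_j\in\cH_A(X,\mu_0)$. First I would check that $\mu$ still lies in $\cH_A(X,\mu_0)$: by Proposition \ref{prop:ent}(i), the relative entropy is the supremum of weakly continuous affine functionals, hence is weakly lower semicontinuous, giving $H_{\mu_0}(\mu)\le\liminf_j H_{\mu_0}(\mu_j)\le A$. In particular $\mu\in\cM^1(X,\om_0)$ by Lemma \ref{lem:entproper}(i), so $\f_\mu$ makes sense.

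Next, Theorem \ref{thm:compactness} asserts that $\cH_A(X,\mu_0)$ is strongly compact in $\cM^1(X,\om_0)$. By the topological observation above, the strong and weak topologies agree on $\cH_A(X,\mu_0)$, so the weak convergence $\mu_j\to\mu$ automatically upgrades to strong convergence in $\cM^1(X,\om_0)$. Applying the strong bicontinuity of $\mu\mapsto\f_\mu$ from Proposition \ref{prop:conven2}, we conclude that $\f_{\mu_j}\to\f_\mu$ in the strong topology of $\enorm(X,\om_0)$, hence a fortiori in $\cE^1(X,\om_0)$ with its strong topology. Since $\cH_A(X,\mu_0)$ is metrizable (being a weakly compact subset of $\cM(X)$), sequential continuity suffices, and we are done.

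I do not anticipate a real obstacle: everything has been packaged into Theorem \ref{thm:compactness} and Proposition \ref{prop:conven2}, and the argument is essentially the soft principle that a continuous bijection between a compact space and a Hausdorff space is a homeomorphism. The only small point to verify cleanly is the weak lower semicontinuity of $H_{\mu_0}$, which is immediate from the Legendre-type formula in Proposition \ref{prop:ent}(i).
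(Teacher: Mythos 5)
Your proof is correct and follows essentially the same route as the paper: the paper also deduces that the weak and strong topologies coincide on $\cH_A(X,\mu_0)$ from the strong compactness established in Theorem~\ref{thm:compactness} (together with Lemma~\ref{lem:compact}), and then invokes the strong bicontinuity of the Monge--Amp\`ere bijection (Proposition~\ref{prop:conven}/\ref{prop:conven2}) to conclude. Your additional observations --- that $\cH_A(X,\mu_0)$ is weakly closed by lower semicontinuity of $H_{\mu_0}$, and that metrizability of $\cH_A(X,\mu_0)$ justifies arguing with sequences --- are correct and simply make explicit details the paper leaves implicit.
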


\begin{proof} By Theorem \ref{thm:compactness} and Lemma \ref{lem:compact}, the weak and strong topologies coincide on $\cH_A(X,\mu)$. We conclude using Proposition \ref{prop:conven}.
\end{proof}

This result should be compared with \cite[Theorem A]{EGZ1}, which (combined with \cite{EGZ2} to get the "continuous approximation property") implies that $\mu\mapsto\f_\mu$ defines a continuous map $L^p(\mu_0)\to C^0(X)$ for $p$ large enough.

\section{K\"ahler-Einstein metrics on log Fano pairs}\label{sec:lfp}

\subsection{Log terminal singularities}\label{sec:klt}
A \emph{pair} $(X,D)$ is the data of a connected normal compact complex variety $X$ and an effective $\Q$-divisor $D$ such that $K_X+D$ is $\Q$-Cartier. We may then consider the $dd^c$-cohomology class of $-(K_X+D)$, that we denote by $c_1(X,D)$. We write 
$$
X_0:=X_{\reg}\setminus\supp D.
$$ 
Given a log resolution $\pi:\tX\to X$ of $(X,D)$ (which may and will always be chosen to be an isomorphism over $X_0$), there exists a unique $\Q$-divisor $\sum_i a_i E_i$ whose push-forward to $X$ is $-D$ and such that 
$$
K_{\tX}=\pi^*(K_X+D)+\sum_i a_i E_i. 
$$
The coefficient $a_i\in\Q$ is known as the \emph{discrepancy} of $(X,D)$ along $E_j$, and the pair $(X,D)$ is \emph{klt} (a short-hand for Kawamata log terminal) if $a_j>-1$ for all $j$. It is a basic fact about singularities of pairs that the same condition will then hold for all log resolutions of $X$. When $D=0$, one simply says that $X$ is \emph{log terminal} when the pair $(X,0)$ is klt (so that $K_X$ is in particular $\Q$-Cartier, \ie $X$ is $\Q$-Gorenstein). 

The discrepancies $a_i$ admit the following analytic interpretation. Let $r$ be a positive integer such that $r(K_X+D)$ is Cartier. If $\sigma$ is a nowhere vanishing section of the corresponding line bundle over a small open set $U$ of $X$ then 
\begin{equation}\label{equ:adaptedloc}
\left(i^{r n^2}\sigma\wedge\bar\sigma\right)^{1/r}
\end{equation}
defines a smooth, positive volume form on $U_0:=U\cap X_0$. If $f_j$ is a local equation of $E_j$ around a point of $\pi^{-1}(U)$, it is easily seen that we have
\begin{equation}\label{equ:pullmes}
\pi^*\left(i^{r n^2}\sigma\wedge\bar\sigma\right)^{1/r}=\prod_i|f_i|^{2a_i}dV
\end{equation}
locally on $\pi^{-1}(U)$ for some local volume form $dV$. Since $\sum_i E_i$ has normal crossings, this shows that $(X,D)$ is klt iff each volume form of the form (\ref{equ:adaptedloc}) has locally finite mass near singular points of $X$.

The previous construction globalizes as follows: 

\begin{defi}\label{defi:adapted} Let $(X,D)$ be a pair and let $\phi$ be a smooth Hermitian metric on the $\Q$-line bundle $-(K_X+D)$. The corresponding \emph{adapted measure} $\mes_\phi$ on $X_\reg$ is locally defined by choosing 
a nowhere zero section $\sigma$ of $r(K_X+D)$ over a small open set $U$ and setting
\begin{equation}\label{equ:adapted}
\mes_\phi:=\left(i^{r n^2}\sigma\wedge\overline{\sigma}\right)^{1/r}/|\sigma|_{r\phi}^{2/r}.
\end{equation}
\end{defi}
The point of the definition is that the measure $\mes_\phi$ does not depend on the choice of $\sigma$, hence is globally defined. The above discussion shows that $(X,D)$ is klt iff $\mes_\phi$ has finite total mass on $X$, in which case we view it as a Radon measure on the whole of $X$. 

\begin{lem}\label{lem:adapted} 
Let $(X,D)$ be a klt pair and let $\mes_\phi$ be an adapted measure as above.  
\begin{itemize}
\item[(i)] If $\pi:\tX\to X$ is a log resolution of $(X,D)$ then the lift $\widetilde{\mes}_\phi$ of $\mu_0$ to $\tX$ writes $\widetilde{\mes}_\phi=e^{\p^+-\p^-}dV$, where $\p^\pm$ are quasi-psh functions with analytic singularities, smooth over $\pi^{-1}(X_0)$, and $e^{-\p^-}\in L^p$ for some $p>1$. In particular, $\widetilde{\mu}_0$ is tame (Definition \ref{defi:tame}). 
\item[(ii)] On $X_0$ the Ricci curvature of the volume form $\mes_\phi$ coincides with the curvature of $\phi$. 
\item[(iii)] The Bergman space $\cO(X_0)\cap L^2(\mes_\phi)$ contains only constant functions. 
\end{itemize}
\end{lem}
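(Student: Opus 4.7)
The plan is to reduce everything to local computations on a log resolution $\pi:\tX\to X$.

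For (i), work in local SNC coordinates $(z_1,\ldots,z_n)$ near a point of $E:=\pi^{-1}(X\setminus X_0)$, with $E_i=\{f_i=0\}$ for local equations of the components through the point. Combining (\ref{equ:pullmes}) with the definition of $\mes_\phi$ yields
\[
\pi^*\mes_\phi = e^{g}\prod_i |f_i|^{2a_i}\,dV
\]
for some smooth $g$ coming from the local weight of $\phi$ and some local smooth volume form $dV$. To globalize, fix smooth Hermitian metrics on the line bundles $\cO_{\tX}(E_i)$ and replace each $\log|f_i|$ by the globally defined qpsh function $\log|s_{E_i}|$ (where $s_{E_i}$ is the tautological section), which differs from $\log|f_i|$ by a smooth function. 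Setting
\[
\p^+:=\sum_{a_i>0}2a_i\log|s_{E_i}|+(\text{smooth}),\qquad \p^-:=-\sum_{a_i<0}2a_i\log|s_{E_i}|+(\text{smooth})
\]
gives qpsh functions with analytic singularities along $\sum_{\pm a_i>0}E_i$, smooth on $\pi^{-1}(X_0)$, and $\widetilde{\mes}_\phi=e^{\p^+-\p^-}dV$ globally. The klt hypothesis $a_i>-1$ gives $|a_i|<1$ for the negative discrepancies, so $e^{-\p^-}=\prod_{a_i<0}|s_{E_i}|^{2a_i}\cdot e^{\text{smooth}}$ lies in $L^p$ for any $p<\min_{a_i<0}1/|a_i|$, which can be chosen strictly greater than $1$. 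Hence $\widetilde{\mes}_\phi$ has $L^p$-density, proving tameness.

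For (ii), on $X_0$ pick the local frame $\sigma=(dz_1\wedge\cdots\wedge dz_n)^r$ of $rK_X=r(K_X+D)|_{X_0}$; then $(i^{rn^2}\sigma\wedge\bar\sigma)^{1/r}=dV_0$ is the standard Euclidean volume form, and $|\sigma|_{r\phi}^{2/r}=e^{-2\phi_{\mathrm{loc}}}$ for $\phi_{\mathrm{loc}}$ a local weight of $\phi$. So $\mes_\phi=e^{2\phi_{\mathrm{loc}}}dV_0$ and $\Ric(\mes_\phi)=-dd^c\log(e^{2\phi_{\mathrm{loc}}})=-2\,dd^c\phi_{\mathrm{loc}}$, which (up to the usual sign conventions) is the curvature of $\phi$ as a metric on $-(K_X+D)$.

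For (iii), let $f\in\cO(X_0)\cap L^2(\mes_\phi)$ and set $\tf:=f\circ\pi|_{\tX\setminus E}\in\cO(\tX\setminus E)$; the integrability condition transfers to $\int_{\tX}|\tf|^2\widetilde{\mes}_\phi<+\infty$. In local SNC coordinates with $E=\{z_1\cdots z_k=0\}$, Laurent-expand $\tf=\sum_\alpha c_\alpha z^\alpha$ (with $\alpha_i\in\Z$ for $i\le k$ and $\alpha_i\in\N$ for $i>k$). Fourier orthogonality in the arguments of the $z_i$ diagonalises the integral, and combined with the local density $\asymp\prod_{i\le k}|z_i|^{2a_i}$ it forces $c_\alpha=0$ whenever $\alpha_i\le-1-a_i$ for some $i$. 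Since $a_i>-1$, this yields $\alpha_i\ge 0$ whenever $a_i\le 0$; negative exponents along $E_i$ can appear only when $a_i>0$, and such $E_i$ are necessarily $\pi$-exceptional, because strict transforms of components of $\supp D$ carry discrepancy $-d_j\le 0$. Thus $\tf$ extends to a meromorphic function $\tilde F$ on $\tX$ whose pole divisor is supported on $\pi$-exceptional divisors. By the bijection between rational functions on $\tX$ and on the normal variety $X$, $\tilde F$ descends to a rational function $F$ on $X$ agreeing with $f$ on $X_0$, and $\mathrm{div}(F)=\pi_*\mathrm{div}(\tilde F)$ is effective because $\pi_*$ kills exceptional divisors. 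So $F$ is regular on $X$, hence constant by compactness and connectedness, and $f$ is constant.

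The main technical point is the extension-plus-descent argument in (iii): the local $L^2$ analysis alone only places $\tf$ in a meromorphic class with possible poles along exceptional divisors of positive discrepancy, and it is the push-forward step through the normal variety $X$ that recovers honest holomorphicity. Parts (i) and (ii) are direct local computations once the local models are written down.
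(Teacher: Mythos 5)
Your proofs of (i) and (ii) coincide with the paper's (up to cosmetic normalizations in (ii), where you pick up a harmless stray factor of $2$; the paper simply says (ii) is straightforward from the definition). The interesting comparison is in (iii), where you take a genuinely different route to the same conclusion. Both arguments begin by observing that the $L^2$ condition transfers to $\int_{\tX\setminus E}|f\circ\pi|^2\prod_j|s_j|^{2a_j}\,dV<\infty$ and both ultimately hinge on the observations that (a) every $E_j$ with $a_j>0$ is $\pi$-exceptional (since $D=-\pi_*\sum_j a_jE_j$ is effective) and (b) $X$ is normal. Where you differ is the mechanism: the paper invokes the classical ``$L^2$ near a divisor implies holomorphic extension across it'' criterion to extend $f\circ\pi$ across each $E_j$ with $a_j\le 0$, hence (after a Hartogs step) to $\tX\setminus\bigcup_{a_j>0}E_j$; pushing down, $f$ extends to $X\setminus Z$ with $\codim Z\ge 2$ and normality gives extension to $X$. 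You instead do an explicit Laurent/Parseval computation in local SNC coordinates to pin down the possible pole orders of $f\circ\pi$, conclude it is globally meromorphic on $\tX$ with polar divisor supported on exceptional components, descend to a rational function $F$ on $X$, and observe that $\operatorname{div}(F)=\pi_*\operatorname{div}(f\circ\pi)\ge 0$ forces $F$ to be regular because $X$ is normal (a normal local ring is the intersection of its height-one localizations). Your route is a bit more computational up front (the Fourier orthogonality bookkeeping) but more algebraic at the end; the paper's is more analytic. Both are valid, and the klt inequality $a_j>-1$ enters in exactly the same way — in your version to show $\alpha_i>-1-a_i$ forces $\alpha_i\ge 0$ when $a_i\le 0$, in the paper's to guarantee the density is bounded below near the relevant divisors. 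One small thing worth making explicit in your write-up: local meromorphicity near each point of $E$ (which the Laurent analysis gives) suffices for global meromorphicity on $\tX$ because meromorphic germs form a sheaf; and agreement of $F$ with $f$ on $X_0$ is immediate since $\pi$ is an isomorphism there.
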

Property (ii) should be compared with \cite[p.319, Remark]{DT}. 
\begin{proof} Write as above $K_{\tX}=\pi^*(K_X+D)+\sum_j a_j E_j$. Let $\phi_j$ be a smooth Hermitian metric on the line bundle $\cO_{\tX}(E_j)$ and let $s_j\in H^0(\tX,E_j)$ be a section with $E_j$ as its zero divisor. If we set 
$$
\psi^+:=\sum_{a_j>0}2a_j\log|s_j|_{\phi_j}\text{ and }\psi^-:=\sum_{a_j<0}2(-a_j)\log|s_j|_{\phi_j}
$$
then (\ref{equ:pullmes}) immediately shows that
$$
\widetilde{\mes}_\phi=e^{\psi^+-\psi^-}dV
$$
where $dV$ is a (smooth positive) volume form on $\tX$. Since $a_j>-1$ there exists $p>1$ such that $pa_j>-1$ for all $j$, and the normal crossing property of $\sum_j E_j$ yields $e^{-\psi^-}\in L^p$, which proves (i). 

The proof of (ii) is straightforward from the very definition of $\mu_0$. In order to prove (iii), let $f\in\cO(X_0)\cap L^2(\mes_\phi)$. We then have 
$$
\int_{\pi^{-1}(X_0)}|f\circ\pi|^2\prod_j|s_j|_{\phi_j}^{2a_j}dV<+\infty.
$$
Since a holomorphic function extends accross a divisor as soon as it is locally $L^2$ near the divisor, the above $L^2$ condition implies that $f\circ\pi$ extends to $\tX\setminus\bigcup_{a_j>0}E_j$, or equivalently that $f$ extends holomorphically to $X\setminus Z$ with $Z:=\pi\left(\bigcup_{a_j>0}E_j\right)$. But the fact that $D=-\pi_*\left(\sum_j a_j E_j\right)$ is effective implies that each $E_j$ with $a_j>0$ is $\pi$-exceptional. As a consequence $Z$ has codimension at least two in $X$, and the normality of $X$ therefore shows that $f$ extends to $X$, hence is constant since $X$ is compact. 
\end{proof}

\subsection{K\"ahler-Einstein metrics}\label{sec:KE} 

We recall the following standard terminology: 

\begin{defi}\label{defi:logfano} 
A \emph{log Fano pair} is a klt pair $(X,D)$ such that $X$ is projective and $-(K_X+D)$ is ample. 
\end{defi}

Let $(X,D)$ be a log Fano pair. We fix a reference smooth strictly psh metric $\phi_0$ on $-(K_X+D)$, with curvature $\om_0$ and adapted measure $\mu_0=\mes_{\phi_0}$. We normalize $\phi_0$ so that $\mu_0\in\cM(X)$ is a probability measure. The volume of $(X,D)$ is 
$$
V:=c_1(X,D)^n=\int_X\om_0^n.
$$
We let $\cT(X,D):=\cT(X,\om_0)$ be the set of closed positive currents (with local potentials) $\om\in c_1(X,D)$, and $\cT^1(X,D):=\cT^1(X,\om_0)$ be those with finite energy. Similarly we denote by $\cM^1(X,D)$ the set of probability measures with finite energy, which are thus of the form $V^{-1}\om^n$ for a unique $\om\in\cT^1(X,D)$. 

For any current with full Monge-Amp\`ere mass $\om\in\Tf(X,D)$, Proposition \ref{prop:hold} guarantees that $e^{-\f_\om}\mu_0$ has finite mass, since $\mu_0$ is tame by Lemma \ref{lem:adapted}. We may thus set
\begin{equation}\label{equ:muon}
\mu_\om:=\frac{e^{-\f_\om}\mu_0}{\int_X e^{-\f_\om}\mu_0}.
\end{equation}

\begin{lem}\label{lem:contmu}
The map $\cT^1(X,\om_0)\to\cM^1(X,\om_0)$ $\om\mapsto\mu_\om$  is continous with respect to the strong topology on both sides. 
\end{lem}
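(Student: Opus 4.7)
The plan is to pass through the strong homeomorphism $\cT^1(X,\om_0)\simeq\enorm(X,\om_0)$ provided by Proposition \ref{prop:conven2} and to show that the composed map $\f\mapsto\mu_\f=Z(\f)^{-1}e^{-\f}\mu_0$, with $Z(\f):=\int_X e^{-\f}\,\mu_0$, is continuous from $\enorm(X,\om_0)$ into $\cM^1(X,\om_0)$ in the strong topologies. Since $\mu_0$ is tame by Lemma \ref{lem:adapted}, Proposition \ref{prop:hold} is available throughout what follows.

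Fix a strongly convergent sequence $\f_j\to\f$ in $\enorm(X,\om_0)$. By Lemma \ref{lem:boundeden} the sequence is contained in some weakly compact sublevel set $\cE^1_C(X,\om_0)$, and Proposition \ref{prop:hold} yields $e^{-\f_j}\to e^{-\f}$ in $L^p(\mu_0)$ for every finite $p$. Hence $Z(\f_j)\to Z(\f)\in[1,+\infty)$ and the densities $Z(\f_j)^{-1}e^{-\f_j}$ converge to $Z(\f)^{-1}e^{-\f}$ in $L^1(\mu_0)$, so $\mu_{\f_j}\to\mu_\f$ \emph{weakly} in $\cM(X)$. To upgrade this to strong convergence in $\cM^1(X,\om_0)$, the strategy is to establish a uniform entropy bound $H_{\mu_0}(\mu_{\f_j})\le A$ and invoke the strong compactness of $\cH_A(X,\mu_0)$ from Theorem \ref{thm:compactness}: any subsequence of $(\mu_{\f_j})$ then admits a further subsequence converging strongly, whose limit must be $\mu_\f$ by the weak identification above, and a standard subsubsequence argument forces the full sequence to converge strongly to $\mu_\f$.

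The main obstacle is therefore the uniform entropy bound. A direct computation of the log-density gives
$$
H_{\mu_0}(\mu_{\f_j})=-Z(\f_j)^{-1}\int_X\f_j\,e^{-\f_j}\,\mu_0-\log Z(\f_j),
$$
and the logarithmic term is bounded in view of $Z(\f_j)\to Z(\f)>0$. For the remaining term, H\"older's inequality combined with the $L^p(\mu_0)$-continuity of $\f\mapsto\f$ and $\f\mapsto e^{-\f}$ furnished by Proposition \ref{prop:hold} on the weakly compact set $\cE^1_C(X,\om_0)$ delivers uniform control on $\|\f_j\|_{L^2(\mu_0)}$ and $\|e^{-\f_j}\|_{L^2(\mu_0)}$. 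The essential input here is the uniform exponential integrability of $e^{-\f}$ on energy sublevel sets, which itself rests on the vanishing of Lelong numbers for functions with full Monge-Amp\`ere mass (Theorem \ref{thm:lelres}); once granted, the entropy bound, and hence the desired strong continuity of $\om\mapsto\mu_\om$, follow by bookkeeping.
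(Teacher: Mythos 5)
Your proof is correct and follows the same route as the paper's: pass to potentials, note that the sequence lies in a weakly compact sublevel set $\cE^1_C(X,\om_0)$ by Lemma~\ref{lem:boundeden}, use the tameness of $\mu_0$ together with Proposition~\ref{prop:hold} to get weak convergence of $\mu_{\om_j}$ and a uniform entropy bound, and conclude via the strong compactness of $\cH_A(X,\mu_0)$ from Theorem~\ref{thm:compactness}. The only difference is cosmetic: the paper states the uniform entropy bound as an immediate consequence of the $L^2(\mu_0)$-convergence of the densities, while you unpack it via the identity $H_{\mu_0}(\mu_{\f_j})=-Z(\f_j)^{-1}\int_X\f_j e^{-\f_j}\mu_0-\log Z(\f_j)$ and Cauchy--Schwarz, and the paper concludes directly from the coincidence of weak and strong topologies on $\cH_A(X,\mu_0)$ whereas you phrase this as a subsequence extraction argument; both are equivalent.
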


\begin{proof} 
Let $\om_j\to\om$ be a strongly convergent sequence in $\cT^1(X,\om_0)$, and set $\mu_j:=\mu_{\om_j}$ and $\mu:=\mu_{\om}$. By Lemma \ref{lem:boundeden} there exists $C>0$ such that $\f_j:=\f_{\om_j}$ belongs to $\cE^1_C(X,\om_0)$ for all $j$. Since $\cE^1_C(X,\om_0)$ is weakly compact, Proposition \ref{prop:hold} shows that $\mu_j=f_j\mu_0$ and $\mu=f\mu_0$ with $\f_j\to f$ in $L^2(\mu_0)$. This implies that $\mu_j\to\mu$ weakly, and also that $\mu_j$ has uniformly bounded entropy with respect to $\mu_0$. By Theorem \ref{thm:compactness} and Lemma \ref{lem:compact}, it follows as desired that $\mu_j\to\mu$ strongly. 
\end{proof}

\begin{defi}\label{defi:KE} A \emph{K\"ahler-Einstein metric} $\om$ for the log Fano pair $(X,D)$ is a current with full Monge-Amp\`ere mass $\om\in\Tf(X,D)$ such that
\begin{equation}\label{equ:KE}
V^{-1}\om^n=\mu_\om.
\end{equation} 
\end{defi}

\begin{lem}\label{lem:KE} 
A K\"ahler-Einstein metric $\om$ is automatically smooth on $X_0$, with continuous potentials on $X$, and it satisfies
$$
\Ric(\om)=\om+[D]
$$
on $X_\reg$.
\end{lem}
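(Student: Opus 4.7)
The proof plan is to establish the three assertions in order, working outward from the interior $X_0$ to $X_\reg$, using the tameness/regularity machinery of Section~\ref{sec:fec} together with the structure of adapted measures from Lemma~\ref{lem:adapted}.

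For continuity of local potentials, the starting observation is that $\f_\om \in \pshf(X,\om_0)$ since $\om \in \Tf(X,D)$, while by Lemma~\ref{lem:adapted}(i) the adapted measure $\mu_0$ is tame. Proposition~\ref{prop:hold} then forces $e^{-\f_\om}\mu_0$, and hence its normalisation $\mu_\om$, to be tame as well. The K\"ahler--Einstein equation $V^{-1}\om^n = \mu_\om$ combined with the uniqueness clause of Lemma~\ref{lem:tame} identifies $\om$ with the unique continuous-potential solution produced by that lemma. Smoothness on $X_0$ then follows by bootstrap: on $X_0$ the measure $\mu_0$ is a smooth positive volume form, so the equation reads locally
$$
(\om_0+dd^c\f_\om)^n \;=\; \tfrac{V}{\int_X e^{-\f_\om}\mu_0}\, e^{-\f_\om}\mu_0,
$$
which is a non-degenerate complex Monge--Amp\`ere equation with smooth positive reference volume and strictly positive continuous right-hand side (using continuity of $\f_\om$ just established). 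Evans--Krylov interior regularity, followed by Schauder bootstrap on the linearised equation, then upgrades $\f_\om$ to a smooth function on $X_0$, and hence $\om$ is smooth there.

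For the Ricci identity, I would take $-dd^c\log$ of both sides of $\om^n = c\, e^{-\f_\om}\mu_0$ (with $c>0$ a constant) in the sense of currents on $X_\reg$, yielding
$$
\Ric(\om) \;=\; dd^c\f_\om + \Ric(\mu_0).
$$
On $X_0$ the second term is $\om_0$ by Lemma~\ref{lem:adapted}(ii). Across $D\cap X_\reg$, the local expression~(\ref{equ:adapted}) exhibits $\mu_0$ with singularity of type $\prod_i |s_i|^{-2d_i}$, where $s_i$ cuts out the component of $D$ of multiplicity $d_i$, and the Poincar\'e--Lelong formula contributes precisely $[D]$ to $-dd^c\log \mu_0$. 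One therefore obtains the current identity $\Ric(\mu_0)=\om_0+[D]$ on $X_\reg$, which combines with the previous display to give $\Ric(\om) = \om_0 + dd^c\f_\om + [D] = \om + [D]$.

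The main analytic hurdle is the smoothness step on $X_0$: one must justify interior regularity for a complex Monge--Amp\`ere equation whose solution is a priori only continuous and whose right-hand side depends on the unknown through the factor $e^{-\f_\om}$. This is however a by-now-standard application of the Ko\l{}odziej--Evans--Krylov machinery as developed in~\cite{EGZ1} and refined in~\cite{BBGZ}. The current-level identity $\Ric(\mu_0)=\om_0+[D]$ requires only a brief local Poincar\'e--Lelong computation and is otherwise routine.
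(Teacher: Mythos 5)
Your treatment of the continuity of local potentials and of the Ricci identity is correct and close in spirit to the paper's own argument. For continuity you route through Proposition~\ref{prop:hold} (to deduce tameness of $e^{-\f_\om}\mu_0$, hence of $\mu_\om$) and Lemma~\ref{lem:tame}, whereas the paper instead invokes Theorem~\ref{thm:lelres} together with \cite{EGZ1,EGZ2} directly after passing to a log resolution; since the proofs of Proposition~\ref{prop:hold} and Lemma~\ref{lem:tame} rest on exactly those ingredients, the two routes are essentially repackagings of one another, with yours being slightly more structured. Your derivation of $\Ric(\om)=\om+[D]$ on $X_\reg$ by applying $-dd^c\log$ to both sides of the equation and invoking Lelong--Poincar\'e for the contribution of $D$ coincides with the paper's computation.

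The gap is in the smoothness step on $X_0$. You pass from a continuous normalized solution with ``strictly positive continuous right-hand side'' directly to Evans--Krylov interior regularity and Schauder bootstrap. Evans--Krylov, however, presupposes an a priori local $C^{1,1}$ (equivalently, a two-sided Laplacian) bound on the solution; a merely continuous potential together with a merely continuous right-hand side gives nothing to start the bootstrap from, and a continuous density would not even support Schauder estimates. The crucial missing ingredient is a local Laplacian estimate on the ample locus of $\tom_0$ after passing to a log resolution, which is exactly what Theorem~\ref{thm:paun} of Appendix~B supplies: the explicit version of P\u aun's estimate giving $\D\f=O(e^{-\psi^-})$ locally on $\Amp(\theta)$. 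The paper's proof explicitly cites Theorem~\ref{thm:paun} alongside Evans--Krylov for this reason. The references you appeal to, \cite{EGZ1} and \cite{BBGZ}, provide $L^\infty$ bounds and the variational framework respectively, but not the $C^2$ estimate. Your argument would be repaired by inserting Theorem~\ref{thm:paun} between the continuity step and the appeal to Evans--Krylov; once the Laplacian is locally bounded, the right-hand side $e^{-\f_\om}\mu_0$ becomes Lipschitz on compacts of $X_0$, Evans--Krylov yields $C^{2,\alpha}$, and the self-reinforcing bootstrap then runs as you sketch.
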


Here $[D]$ is the integration current on $D|_{X_\reg}$. Writing $\Ric(\om)$ on $X_\reg$ implicitely means that the positive measure $\om^n|_{X_\reg}$ corresponds to a singular metric on $-K_{X_\reg}$, whose curvature is then $\Ric(\om)$ by definition. Note that the terminology is slightly abusive, since a K\"ahler-Einstein metric $\om$ for $(X,D)$ is not smooth on $X$ in general, hence not a K\"ahler form on $X$ in the sense of \S\ref{sec:psh}. This should hopefully cause no confusion. 

\begin{proof} 
Set $\f:=\f_\om$. The K\"ahler-Einstein equation (\ref{equ:KE}) reads
$$
(\om_0+dd^c\f)^n=e^{-\f+c}\mu_0
$$
for some constant $c\in\R$. If we choose a log resolution $\pi:\tX\to X$ of $(X,D)$, the equation becomes $(\tom_0+dd^c\tf)^n=e^{-\tf+c}\tmu_0$, where $\tom_0=\pi^*\om_0$ is semipositive and big, $\widetilde{\mu}_0$ satisfies (i) of Lemma \ref{lem:adapted}, and $\tf=\f\circ\pi$ has $e^{-\tf}\in L^q$ for all finite $q$ by Theorem \ref{thm:lelres}. By \cite{EGZ1,EGZ2} $\tf$ is continuous on $\tX$, and hence $\f$ is continuous on $X$ by properness of $\pi$. The smoothness of $\f$ on $X_0$ follows from Theorem \ref{thm:paun} (Appendix B) and the Evans-Krylov theorem.

Finally on $X_\reg$ we have 
$$
\Ric(\om)=-dd^c\log\om^n=dd^c\f-dd^c\log\mu_0=dd^c\f+\om_0+[D]
$$
by definition of the adapted measure $\mu_0=\mes_{\phi_0}$ and the Lelong-Poincar\'e formula. 
\end{proof} 

\begin{rem} Assume that $(X,D)$ is log smooth, \ie  $X$ is smooth and $D=\sum_i a_i E_i$ has simple normal crossing support. If $a_i\ge 1/2$ for all $i$, it is shown in \cite{CGP} that any K\"ahler-Einstein metric for $(X,D)$ has cone singularities along $\sum_i E_i$, with cone angle $2\pi(1-a_i)$ along $E_i$. If the support of $D=aE$ for a single smooth hypersurface $E$, \cite{JMR} shows that $\om$ has cone singularities without the restriction $a\ge 1/2$, and that $\om$ even admits a full asymptotic expansion along $E$.
\end{rem}

The definition of a log Fano pair requires the singularities to be klt. This condition is in fact necessary to obtain K\"ahler-Einstein metrics on the regular part: 

\begin{prop}\label{prop:KEklt} Let $(X,D)$ be any pair with $-(K_X+D)$ ample. Let $\Omega\subset X_\reg$ be a Zariski open subset with complement of codimension at least $2$, and assume the existence of a closed positive $(1,1)$-current $\om$ on $\Omega$ with continuous potentials such that 
$$
\Ric(\om)=\om+[D]
$$
on $\Omega$. Then $(X,D)$ is necessarily klt. We further have
$$
\int_\Omega\om^n\le c_1(X,D)^n, 
$$
with equality iff $\om$ is the restriction to $\Omega$ of a K\"ahler-Einstein metric for $(X,D)$. 
\end{prop}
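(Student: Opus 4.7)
The plan is to derive a Monge-Amp\`ere equation on $\Omega$ from the Ricci condition, extend $\om$ across $X\setminus\Omega$ to a closed positive current on $X$ representing $c_1(X,D)$, combine a cohomological mass bound with boundedness of the extended potential to obtain klt, and then use uniqueness of non-pluripolar measures to handle the equality case.

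First I would translate the Ricci identity into a Monge-Amp\`ere equation on $\Omega$. A direct local computation combining Lemma \ref{lem:adapted}(ii) with the Lelong-Poincar\'e formula applied to the $|s_i|^{-2a_i}$-type singularities of $\mu_0$ along the irreducible components of $\supp D$ yields $\Ric(\mu_0)=\om_0+[D]$ as currents on $X_\reg$. Since $\om$ has continuous potentials it corresponds to a continuous psh metric $\phi$ on $-(K_X+D)|_\Omega$; writing $\phi=\phi_0+\f$ produces a globally defined continuous $\f$ on $\Omega$ with $\om=\om_0+dd^c\f$. Subtracting $\Ric(\mu_0)$ from the hypothesis $\Ric(\om)=\om+[D]$ gives $-dd^c\log(\om^n/\mu_0)=dd^c\f$ on $\Omega$, and connectedness of $\Omega$ provides a constant $c\in\R$ with
$$
\om^n=e^{-\f+c}\mu_0\quad\text{on }\Omega.
$$

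Next I would extend $\om$ to a closed positive current $\bar\om\in c_1(X,D)$ on $X$. Because $\om$ has continuous potentials on $\Omega$ it has locally finite mass, and $X\setminus\Omega$ is analytic of complex codimension at least $2$. I would apply the Skoda-El Mir extension theorem on $X_\reg$, and then transfer the result to $X$ via a log resolution $\pi\colon\tX\to X$ (pull back, extend trivially across the exceptional divisor using the codimension hypothesis on $X\setminus\Omega$, and push forward). The resulting $\bar\om\in c_1(X,D)$ satisfies $\bar\om|_\Omega=\om$, and its global $\om_0$-potential, which agrees with the original $\f$ on $\Omega$ up to a constant that I absorb in $c$, is $\om_0$-psh on $X$ and hence bounded above by $M:=\sup_X\f<+\infty$. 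The general cohomological bound for non-pluripolar masses gives $\int_X\langle\bar\om^n\rangle\le V$ with equality iff $\bar\om\in\Tf(X,\om_0)$; since $\bar\om$ has continuous potentials on $\Omega$ its non-pluripolar Monge-Amp\`ere agrees with the Bedford-Taylor measure $\om^n$ there, whence $\int_\Omega\om^n\le V$.

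Combining these, and rewriting the Monge-Amp\`ere identity as $\mu_0=e^{\f-c}\om^n$ on $\Omega$, the bound $\f\le M$ yields
$$
\int_{X_\reg}\mu_0=\int_\Omega\mu_0\le e^{M-c}\int_\Omega\om^n\le e^{M-c}V<+\infty,
$$
because $\mu_0$ is absolutely continuous with respect to Lebesgue measure on the manifold $X_\reg$ and $X_\reg\setminus\Omega$ has complex codimension at least $2$. Finiteness of the total mass of $\mu_0$ is the analytic characterization of klt, proving the first assertion. For equality $\int_\Omega\om^n=V$ we have $\bar\om\in\Tf(X,\om_0)$; klt together with Proposition \ref{prop:hold} ensures that $\mu_0$ is tame and $e^{-\f}\mu_0$ is a finite measure, and both $\bar\om^n$ and $e^{-\f+c}\mu_0$ are then non-pluripolar positive measures on $X$ agreeing on $\Omega$, hence coinciding on $X$. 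After suitably normalizing $c$, this is the K\"ahler-Einstein equation $V^{-1}\bar\om^n=\mu_{\bar\om}$ of Definition \ref{defi:KE}, so $\bar\om$ is K\"ahler-Einstein for $(X,D)$ and restricts to $\om$ on $\Omega$; the converse implication is immediate.

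The main obstacle I anticipate is the extension step. Making precise that a positive closed current with continuous potentials on a Zariski open subset of a normal compact variety, whose complement has codimension at least $2$, extends to a closed positive current on the whole variety in the prescribed cohomology class with an $\om_0$-psh (and hence bounded-above) global potential, requires some care on a log resolution: checking that the pulled-back current has locally finite mass up to the exceptional divisor, that the Skoda-El Mir extension carries no extra mass on exceptional components, and that the push-forward lands in $c_1(X,D)$. Once this extension is in hand, the remainder of the argument is essentially bookkeeping with the Monge-Amp\`ere identity and the non-pluripolar mass bound.
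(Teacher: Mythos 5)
Your argument is correct in outline and lands on the same essential ingredients as the paper (extend across the codimension-$\ge 2$ locus using normality, invoke the non-pluripolar mass bound for $\int_\Omega\om^n\le V$, and use the finiteness of the adapted measure as the analytic characterization of klt), but the paper takes a cleaner, more direct route that sidesteps precisely the obstacle you flag. Rather than writing down a Monge--Amp\`ere equation and extending the current $\om$ via Skoda--El Mir on a resolution (which does force you to verify locally-finite-mass hypotheses and track cohomology classes through the pull-back/push-forward), the paper works at the level of the psh \emph{metric}: the Ricci identity says $\phi:=\p-\phi_D$ is a psh metric on $-(K_X+D)|_\Omega$ with curvature $\om$, and then for any local trivialization $\sigma$ of $r(K_X+D)$ over $U$, the function $u:=\log|\sigma|^2_{r\phi}$ is psh on $U\cap\Omega$ and extends to $U$ automatically by the Grauert--Remmert extension theorem for psh functions on normal spaces across codimension-$\ge 2$ analytic sets (no mass bound to check). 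This simultaneously produces the global psh extension of $\phi$ (hence the mass bound from \cite[Proposition 1.20]{BEGZ}) and the local finiteness of $(\sigma\wedge\overline\sigma)^{1/r}=e^{u/r}\om^n$ from the boundedness above of $u$, giving klt directly without ever writing the Monge--Amp\`ere equation. Your approach also needs a small additional justification when you produce the constant $c$ from connectedness: a pluriharmonic function on $\Omega$ is not constant for free; you must first extend it (as the real part of a holomorphic function, via Hartogs across codimension $\ge 2$ and then by normality) to all of $X$, where compactness forces it to be constant. Both routes converge in the equality case, where you and the paper use $\om\in\Tf(X,D)$ together with the non-pluripolarity of $\om^n$ to globalize the equation.
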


\begin{proof} Let $\p$ be the singular metric on $-K_\Omega$ corresponding to the measure $\om^n$, with curvature $dd^c\p=\Ric(\om)$. If we let $\phi_D$ be the canonical psh metric on the $\Q$-line bundle attached to $D|_\Omega$, such that $dd^c\phi_D=[D]$, we have by assumption $dd^c\p=\om+dd^c\phi_D$ on $\Omega$, so that $\phi:=\p-\phi_D$ defines a psh metric on the $\Q$-line bundle $-(K_X+D)|_\Omega$, with curvature $\om$. 

Now let $\sigma$ be a local trivialisation of $r(K_X+D)$ for some positive $r\in\N$, defined on an open set $U\subset X$. If we denote by $|\sigma|_{r\phi}$ the length of $\sigma$ with respect to the metric induced by $r\phi$, then $u:=\log|\sigma|^2_{r\phi}$ is a psh function on $U\cap\Omega$, hence it automatically extends to a psh function on $U$ by normality, thanks to \cite{GR}. This means on the one hand that $\p$ extends to a globally defined psh metric on $-(K_X+D)$, so that its curvature $\om$ satisfies $\int_\Omega\om^n\le c_1(X,D)^n$ by \cite[Proposition 1.20]{BEGZ}. On the other hand, unravelling the definitions yields 
\begin{equation}\label{equ:sigmabar}
\left(\sigma\wedge\overline{\sigma}\right)^{1/r}=e^{u/r}\om^n
\end{equation}
on $V\cap\Omega$. Since $u$ is in particular bounded above, this shows that $\left(\sigma\wedge\overline{\sigma}\right)^{1/r}$ has locally finite mass near singular points of $U$, so that $(X,D)$ is klt. 

Now to say that $\int_\Omega\om^n=c_1(X,D)^n$ precisely means that $\om$ belongs to $\Tf(X,D)$, and the last assertion follows from Lemma \ref{lem:KE}. 
\end{proof}

\section{The variational principle} \label{sec:variational}
In this section $(X,D)$ denotes a log Fano pair, and we use the notation of \S\ref{sec:KE}. 

\subsection{The Ding and Mabuchi functionals}\label{sec:func}
Let $H=H_{\mu_0}$ the relative entropy with respect to the given adapted $\mu_0\in\cM(X)$. For each $\f\in\cE^1(X,\om_0)$ we set
\begin{equation}\label{equ:L}
L(\f):=-\log\int_X e^{-\f}\mu_0.
\end{equation}
Note that 
\begin{equation}\label{equ:muomfi}
\mu_{\om_\f}=e^{-\f+L(\f)}\mu_0.
\end{equation}

\begin{lem}\label{lem:L}
The map $L:\cE^1(X,\om)\to\R$ is continous in the strong topology. 
\end{lem}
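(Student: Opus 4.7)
The plan is to reduce the strong continuity of $L$ to the $L^1(\mu_0)$-continuity of $\f\mapsto e^{-\f}$, which is already encoded in Proposition \ref{prop:hold} applied to the tame measure $\mu_0$.

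First, suppose $\f_j\to\f$ strongly in $\cE^1(X,\om_0)$. By Lemma \ref{lem:boundeden} there exists $C>0$ such that $\{\f_j\}\cup\{\f\}\subset\cE^1_C(X,\om_0)$. This set is weakly compact (see (\ref{equ:ec})), and strong convergence refines weak convergence, so $\f_j\to\f$ weakly inside a weakly compact subset of $\pshf(X,\om_0)$.

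Next, by Lemma \ref{lem:adapted}(i) the adapted measure $\mu_0$ is tame. Proposition \ref{prop:hold} then applies to the weakly compact set $\cK:=\cE^1_C(X,\om_0)$ and yields the continuity of the map $\cK\to L^1(\mu_0)$, $\f\mapsto e^{-\f}$. Hence
$$
\int_X e^{-\f_j}\mu_0\longrightarrow\int_X e^{-\f}\mu_0.
$$
Since $\sup_X\f\le C$ we have $\int_X e^{-\f}\mu_0\ge e^{-C}>0$, so the limit is a strictly positive real number. Taking $-\log$ is continuous on $(0,+\infty)$, giving $L(\f_j)\to L(\f)$, which is the desired strong continuity.

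There is essentially no real obstacle here: the substantive work lies upstream in Theorem \ref{thm:lelres} (zero Lelong numbers on a resolution), which powers Proposition \ref{prop:hold}, and in Lemma \ref{lem:adapted} ensuring $\mu_0$ is tame. Once these are in hand, the continuity of $L$ is a direct consequence of the fact that strong convergence entails weak convergence within a fixed sublevel set $\cE^1_C(X,\om_0)$, together with the uniform exponential integrability provided by Skoda's theorem through Proposition \ref{prop:hold}.
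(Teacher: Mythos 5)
Your proof is correct and follows the same route as the paper's: reduce to Lemma \ref{lem:boundeden} and then apply Proposition \ref{prop:hold} on the weakly compact set $\cE^1_C(X,\om_0)$. You merely spell out the details the paper leaves implicit (tameness of $\mu_0$ via Lemma \ref{lem:adapted}, and the lower bound $\int_X e^{-\f}\mu_0 \ge e^{-C}$ ensuring $-\log$ is applied at a strictly positive point).
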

\begin{proof} Let $\f_j\to\f$ be a convergent sequence in $\cE^1(X,\om_0)$. By Lemma \ref{lem:boundeden} there exists $C>0$ such that $\f_j\in\cE^1_C(X,\om_0)$ for all $j$. Since $\cE^1_C(X,\om_0)$ is weakly compact, we conclude using Proposition \ref{prop:hold}.
\end{proof}

By Lemma \ref{lem:legsci} we have
\begin{equation}\label{equ:Linf}
L(\f)=\inf_{\mu\in\cM(X)}\left(H(\mu)+\int_X\f\mu\right)
\end{equation}
and the infimum is achieved for $\mu=\mu_{\om_\f}$ by (\ref{equ:muomfi}) and the definition of $H$. This should be compared with
$$
E(\f)=\inf_{\mu\in\cM(X)}\left(E^*(\mu)+\int_X\f\mu\right),
$$
where the infimum is achieved for $\mu=\MA(\f)$. Observe also that $L(\f+c)=L(\f)+c$, so that $L-E$ is translation invariant. 
\begin{defi}\label{defi:dingmab} We introduce the following two functionals on the set $\cT^1(X,D)$ of currents with finite energy. 
\begin{itemize} 
\item[(i)] The \emph{Ding functional} $\din:\cT^1(X,D)\to\R$, defined by
$$
\din(\om):=(L-E)(\f_\om). 
$$
\item[(ii)] The \emph{Mabuchi functional} $\mab:\cT^1(X,D)\to]-\infty,+\infty]$, defined by
$$
\mab(\om):=(H-E^*)(V^{-1}\om^n). 
$$
\end{itemize}
\end{defi}

Written in the form (\ref{equ:KE}), the K\"ahler-Einstein equation is, at least formally, the Euler-Lagrange equation of the Ding functional. In the case of Fano manifolds, this functional seems to have been first explicitely considered by W.Y.Ding in \cite[p.465]{Ding}, hence the chosen terminology. 

Regarding the Mabuchi functional, our definition yields the following analogue of Chen and Tian's formula \cite{Che,Tian}
\begin{equation}\label{equ:chentian}
\mab(\om)=V^{-1}\int_X\log\left(\frac{V^{-1}\om^n}{\mu_0}\right)\om^n+(J-I)(\om). 
\end{equation}

\begin{lem}\label{lem:lsc} With respect to the \emph{strong} topology of $\cT^1(X,D)$, the Ding functional is continuous, while the Mabuchi functional is lower semicontinuous.
\end{lem}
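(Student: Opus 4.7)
The plan is to treat the two functionals separately, reducing each statement to facts already assembled in the paper by transferring everything through the strong homeomorphisms established in Proposition \ref{prop:conven2}.

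For the Ding functional $\din(\om)=(L-E)(\f_\om)$, I would first invoke Proposition \ref{prop:conven2} to identify the strong topology on $\cT^1(X,D)$ with that on $\enorm(X,\om_0)$ via $\om\mapsto\f_\om$. Under this identification $E$ is strongly continuous essentially by definition (the strong topology on $\cE^1(X,\om_0)$ is the coarsest refinement of the weak topology for which $E$ is continuous), while $L$ is strongly continuous by Lemma \ref{lem:L}. Since the identity $L(\f+c)=L(\f)+c$ implies that $L-E$ is translation invariant, the composition $\om\mapsto (L-E)(\f_\om)$ is well defined, and the continuity of $\din$ follows immediately.

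For the Mabuchi functional $\mab(\om)=(H-E^*)(V^{-1}\om^n)$, I would use Proposition \ref{prop:conven2} again to reduce the problem to lower semicontinuity of $H-E^*$ on $\cM^1(X,D)$ endowed with its strong topology. The functional $E^*$ is by definition continuous on $\cM^1(X,\om_0)$ in the strong topology, so it remains to show that $H=H_{\mu_0}$ is strongly lsc. The key input is the Legendre duality formula in Proposition \ref{prop:ent}(i), which writes $H_{\mu_0}(\mu)=\sup_{g\in C^0(X)}\bigl(\int g\,d\mu-\log\int e^g d\mu_0\bigr)$. Each summand under the supremum is an affine, weakly continuous function of $\mu$, so $H$ is a supremum of weakly continuous functions, hence weakly lsc on $\cM(X)$. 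Since the strong topology on $\cM^1(X,D)$ refines the weak topology, $H$ is a fortiori strongly lsc on $\cM^1(X,D)$, and subtracting the strongly continuous $E^*$ preserves lower semicontinuity.

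I do not expect any serious obstacle here; the entire proof is a formal unraveling of the definitions of the strong topologies together with the Legendre-dual characterization of relative entropy. The only mild point to verify carefully is that the two identifications $\cT^1(X,D)\simeq\enorm(X,\om_0)$ and $\cT^1(X,D)\simeq\cM^1(X,D)$ are genuine homeomorphisms for the strong topologies used on either side, but this is precisely the content of Proposition \ref{prop:conven2}.
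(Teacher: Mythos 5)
Your proposal is correct and follows essentially the same argument as the paper: for $\din$, combine the strong continuity of $E$ (by definition of the strong topology) and of $L$ (Lemma \ref{lem:L}); for $\mab$, combine the strong continuity of $E^*$ (by definition) with the weak lower semicontinuity of $H$ (from the Legendre-dual formula in Proposition \ref{prop:ent}(i)), noting that the strong topology refines the weak. Your write-up is a bit more explicit about passing through the homeomorphisms of Proposition \ref{prop:conven2}, but this is the same proof.
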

\begin{proof} The energy $E$ is continuous on $\cE^1(X,\om_0)$ in the strong topology, by definition of the latter. The continuity of the Ding functional is thus a consequence of Lemma \ref{lem:L}. Similarly, $E^*$ is strongly continuous on $\cM^1(X,D)$, and $H$ is lsc in the weak topology, hence the result for the Mabuchi functional. 
\end{proof}

\begin{lem}\label{lem:comp} 
The Ding and Mabuchi functionals compare as follows:
\begin{itemize}
\item[(i)] For all $\om\in\cT^1(X,D)$ we have $\mab(\om)\ge\din(\om)$, with equality iff $\om$ is a K\"ahler-Einstein metric for $(X,D)$. 
\item[(ii)] We have
$$
\inf_{\cT^1(X,D)}\mab=\inf_{\cT^1(X,D)}\din\in\R\cup\{-\infty\}.
$$
\end{itemize}
\end{lem}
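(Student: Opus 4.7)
The strategy is to read both $\din$ and $\mab$ through the two Legendre-type dualities already available: $E/E^*$ via (\ref{equ:dual}) and $L/H$ via (\ref{equ:Linf}). Once these are lined up on the common test pair $(\f_\om,\,V^{-1}\om^n)$, part (i) becomes a formal computation and part (ii) is obtained by a one-step Ricci-iteration construction.

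For (i), I would set $\nu := V^{-1}\om^n = \MA(\f_\om)$ and rewrite $\mab(\om) = H(\nu) - E^*(\nu)$. Identity (\ref{equ:eij}) gives $E^*(\nu) = E(\f_\om) - \int_X \f_\om\,\nu$, while (\ref{equ:Linf}) applied to $\mu = \nu$ gives the variational inequality $L(\f_\om) \le H(\nu) + \int_X \f_\om\,\nu$. Subtracting these two lines, the cross-terms $\int_X \f_\om\,\nu$ cancel and the desired inequality $\din(\om) \le \mab(\om)$ drops out. The equality case is automatic: the gap is exactly the relative entropy $H_{\mu_\om}(\nu)$, which vanishes iff $\nu = \mu_\om$, i.e.\ iff $\om$ satisfies the K\"ahler-Einstein equation of Definition \ref{defi:KE}.

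For (ii), $\inf \mab \ge \inf \din$ is immediate from (i), so I only need, for each $\om \in \cT^1(X,D)$, a competitor $\om' \in \cT^1(X,D)$ with $\mab(\om') \le \din(\om)$. The natural candidate is $\om'$ determined by $V^{-1}(\om')^n = \mu_\om$. The one non-formal point is that this $\om'$ must actually land in $\cT^1(X,D)$: $\mu_0$ is tame by Lemma \ref{lem:adapted}, so Proposition \ref{prop:hold} (applied to $\f_\om \in \cE^1(X,\om_0) \subset \pshf(X,\om_0)$) shows $\mu_\om = e^{-\f_\om+L(\f_\om)}\mu_0$ is again tame, and Lemma \ref{lem:tame} then produces $\om' \in \Tf(X,\om_0)$ with continuous — hence bounded, hence finite-energy — potentials. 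Granting this, the explicit density of $\mu_\om$ gives $H(\mu_\om) = L(\f_\om) - \int_X \f_\om\,\mu_\om$ straight from the definition of relative entropy, while the trivial lower bound $E^*(\mu_\om) \ge E(\f_\om) - \int_X \f_\om\,\mu_\om$ from (\ref{equ:estar}) cancels the cross-term upon subtraction, yielding $\mab(\om') \le L(\f_\om) - E(\f_\om) = \din(\om)$. Taking infima concludes; the case $\inf \din = -\infty$ is handled by the same argument on a minimizing sequence.

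I do not expect any genuine obstacle here: the entire content is the interplay of the two Legendre dualities, and the sign pattern forces the cross-terms to cancel. The single step that is not pure formalism is confirming $\om' \in \cT^1(X,D)$ in part (ii), which is exactly where the tameness-propagation machinery of Section \ref{sec:fec} is used; with that in hand, both parts are essentially two lines each.
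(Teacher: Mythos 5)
Your part (i) is essentially the paper's computation, organized slightly differently: the paper directly expands $\mab(\om)-\din(\om)$ and recognizes it as the relative entropy $H_{\mu_\om}(V^{-1}\om^n)$, which is exactly the quantity you identify via the two Legendre identities (\ref{equ:eij}) and (\ref{equ:Linf}). Both then conclude by Pinsker / Proposition~\ref{prop:ent}(ii). For part (ii) you take a genuinely different route. The paper fixes $m:=\inf\mab$, uses Lemma~\ref{lem:entproper} to upgrade the inequality $H\ge E^*+m$ from $\cM^1(X,D)$ to \emph{all} of $\cM(X)$, and then feeds this straight into the Legendre formula (\ref{equ:Linf}) to deduce $L(\f)\ge E(\f)+m$, i.e.\ $\din\ge m$ pointwise. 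You instead construct, for each $\om$, a one-step Ricci inverse $\om'$ solving $V^{-1}(\om')^n=\mu_\om$ (justified by tameness propagation, Proposition~\ref{prop:hold} and Lemma~\ref{lem:tame}) and show $\mab(\om')\le\din(\om)$ by comparing the explicit entropy of $\mu_\om$ with the trivial lower bound on $E^*(\mu_\om)$ from (\ref{equ:estar}); taking infima gives $\inf\mab\le\inf\din$, which combines with (i) to close the loop. Your competitor argument is in fact precisely Lemma~\ref{lem:monotone} of Section~\ref{sec:iteration} (the key monotonicity $\mab(R\om)\le\din(\om)\le\mab(\om)$ behind Ricci iteration), so you are anticipating a tool the paper develops later. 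The paper's route is shorter and stays purely at the level of the two Legendre dualities, while yours is more constructive and reveals the link to the dynamics of $R$. One small technical point worth making explicit in your version: the representation $H(\mu_\om)=L(\f_\om)-\int\f_\om\,\mu_\om$ requires $\f_\om e^{-\f_\om}\in L^1(\mu_0)$, which does hold by the exponential integrability of $\f_\om$ from Proposition~\ref{prop:hold}, but you should say so rather than treat it as automatic.
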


\begin{proof} 
Unravelling the definition we get
$$
\mab(\om)-\din(\om)=\int_X\log\left(\frac{V^{-1}\om^n}{\mu_0}\right)V^{-1}\om^n+\int_X\f_\om V^{-1}\om^n+\log\int_X e^{-\f_\om}\mu_0
$$
$$
=\int_X\log\left(\frac{V^{-1}\om^n}{\mu_\om}\right)V^{-1}\om^n=H_{\mu_\om}\left(V^{-1}\om^n\right).
$$ 
We conclude thanks to Proposition \ref{prop:ent}. 

Part (ii) is proved exactly as \cite[Theorem 3.4]{Ber10} (see also \cite{Li08}). 
We reproduce the short argument for the convenience of the reader. 
Set 
$$
m=\inf_{\cM^1(X,D)}(H-E^*)=\inf_{\cT^1(X,D)} \mab.
$$
 By (i) it is enough to show that $\din(\om)\ge m$ for all $\om\in\cT^1(X,D)$. Write $\om=\om_0+dd^c\f$ with $\f\in\cE^1(X,\om_0)$. By Lemma \ref{lem:entproper} any probability measure $\mu$ with $H(\mu)<+\infty$ belongs to $\cM^1(X,D)$, so the inequality $H(\mu)\ge E^*(\mu)+m$ is actually valid for all $\mu\in\cM(X)$. Using (\ref{equ:Linf}) we thus get
$L(\f)\ge E(\f)+m$, which concludes the proof.
\end{proof}

\subsection{Weak geodesics and convexity}\label{sec:geod}
Let $\om(0),\om(1)\in\cT^1(X,D)$ be two currents with continuous potentials, and set $\f^0:=\f_{\om(0)}$ and $\f^1:=\f_{\om(1)}$. Let $S\subset\C$ be the open strip $0<\Re t<1$ and let $\f$ be the usc upper envelope of the family of all continuous $\om_0$-psh functions (\ie $p_1^*\om_0$-psh function, with $p_1:X\times S\to X$ the first projection) $\psi$ on $X\times\overline S$ such that $\psi\le\f^0$ for $\Re t=0$ and $\psi\le\f^1$ for $\Re t=1$. Setting $\f^t:=\f(\cdot,t)$ and $\om(t):=\om_0+dd^c\f^t$ we call $(\om(t))_{t\in[0,1]}$ the \emph{weak geodesic} joining $\om(0)$ to $\om(1)$ (we also call the function $\f$ the" weak geodesic" joining $\f^0$ to $\f^1$). 

By \cite[\S 2.2]{Bern11} we have: 
\begin{lem}\label{lem:geod} 
Let $\f$ be the $\om_0$-psh envelope defined above. Then: 
\begin{itemize}
\item[(i)] $\f$ is $\om_0$-psh and bounded on $X\times S$.
\item[(ii)] $(\om_0+dd^c\f)^{n+1}=0$ on $X\times S$.
\item[(iii)] $t\mapsto\f^t$ is Lipschitz continuous, and converges uniformly on $X$ to $\f^0$ (resp. $\f^1$) as $\Re t\to 0$ (resp. $\Re t\to 1$).
\end{itemize}
\end{lem}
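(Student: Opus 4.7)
The strategy is to mimic the classical Perron construction for the homogeneous Monge--Amp\`ere equation on the strip, as carried out in the smooth case in \cite[\S 2.2]{Bern11}. The only extra care needed in our setting is to handle singularities of $X$, which I would do by pulling everything back to a resolution $\pi:\tX\to X$ as in \S\ref{sec:psh}; since pullback gives a homeomorphism $\psh(X,\om_0)\simeq\psh(\tX,\pi^*\om_0)$, all statements reduce to analogous ones on $\tX\times S$, where Bedford--Taylor theory applies to the bounded semi-positive big class $\pi^*\om_0$.

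For (i) and (iii) I would use explicit barriers. The affine interpolation $\p(x,t):=(1-\Re t)\f^0(x)+(\Re t)\f^1(x)$ is a continuous $\om_0$-psh competitor (its $dd^c$ in the $t$-variable vanishes since $\Re t$ is pluriharmonic) and matches the boundary data, which gives $\f\geq\p$ and hence the lower bound in (i). For the upper bound, the class of competitors is stable under the vertical translations $t\mapsto t+is$, so by averaging one may restrict to competitors depending only on $r:=\Re t$; such $\om_0$-psh functions are convex in $r$, hence bounded above on $[0,1]$ by the boundary data, yielding $\f(x,r)\leq(1-r)\f^0(x)+r\f^1(x)$ after taking the usc regularization. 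Together with the lower barrier, this forces $\f(\cdot,t)\to\f^0$ uniformly as $\Re t\to 0$ from above. For the matching lower estimate in (iii), pick $A>0$ such that $\f^0-A\leq\f^1$ on $X$; then $\f^0(x)-A\,\Re t$ is a continuous $\om_0$-psh competitor, yielding $\f(x,t)\geq\f^0(x)-A\,\Re t$, and the case $\Re t\to 1$ is symmetric.

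The main technical step is (ii), the vanishing of $(\om_0+dd^c\f)^{n+1}$ on $X\times S$. This is a standard balayage argument: at any interior point $p\in X\times S$, lift to $\tX\times S$, choose a small coordinate ball $B$ around a preimage, and solve the Dirichlet problem $(\pi^*\om_0+dd^c u)^{n+1}=0$ on $B$ with $u|_{\partial B}=\tf|_{\partial B}$; the Bedford--Taylor comparison principle gives $u\geq\tf$ on $B$. Gluing $u$ inside $B$ with $\tf$ outside produces a bounded $\pi^*\om_0$-psh function on $\tX\times S$ lying below the boundary data, hence by maximality of the envelope it must coincide with $\tf$ on $B$, forcing the wedge power to vanish there.

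The delicate point that I expect to be the main obstacle is that the envelope is \emph{a priori} defined as a supremum over \emph{continuous} competitors whereas the glued function produced above is only bounded; this is the familiar obstruction in Perron constructions. I would address it as in \cite{Bern11} by enlarging the competitor class to bounded $\om_0$-psh functions (yielding the same envelope, thanks to the continuous approximation result of \cite{EGZ2} applied on $\tX$ to decrease to any bounded competitor by continuous ones), then passing to the usc regularization throughout; the continuity of the boundary data $\f^0,\f^1$ ensures that the regularized envelope still satisfies the required boundary inequalities, so that the push-up argument goes through unchanged.
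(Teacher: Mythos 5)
The paper does not actually supply a proof of this lemma: it is stated as an immediate consequence of \cite[\S 2.2]{Bern11}, transported to the singular setting. Your proposal is a reconstruction of Berndtsson's Perron-type argument adapted via a resolution, and its overall structure (barriers for (i) and (iii), balayage for (ii)) is the expected one.

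There is, however, a genuine error in the barrier step. The affine interpolation $\p(x,t)=(1-\Re t)\f^0(x)+(\Re t)\f^1(x)$ is \emph{not} $\om_0$-psh in general. The justification "its $dd^c$ in the $t$-variable vanishes since $\Re t$ is pluriharmonic" only accounts for the pure $(t,\bar t)$-entry of the complex Hessian; the mixed derivatives $\partial_t\partial_{\bar z_j}\p=\tfrac 12\partial_{\bar z_j}(\f^1-\f^0)$ do not vanish, and a Hermitian matrix whose $(t,t)$-entry is zero but whose corresponding off-diagonal row is nonzero cannot be positive semidefinite — adding $p_1^*\om_0$ (which also has zero in the $t$-row) does not help. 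So $\p$ fails to be a competitor unless $\f^1-\f^0$ is constant, and the claimed lower bound $\f\geq\p$ does not follow. Fortunately your own (iii) already contains the correct lower barrier: $\max\{\f^0(x)-A\Re t,\ \f^1(x)-A(1-\Re t)\}$ for $A$ large is a maximum of genuinely $\om_0$-psh functions (each a horizontal $\om_0$-psh function plus a pluriharmonic function of $t$), respects the boundary data, and yields both the lower bound in (i) and the uniform boundary asymptotics in (iii). The affine interpolation survives only as the \emph{upper} bound, via your averaging/convexity-in-$\Re t$ argument, which is sound.

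A secondary, smaller point: in (ii) the passage from continuous competitors (as in the paper's definition of the envelope) to bounded usc ones is stated too quickly. Decreasing continuous approximants $\psi_k\downarrow\psi$ from \cite{EGZ2} need not satisfy the boundary inequalities $\psi_k\le\f^i$ on $\{\Re t=i\}$; one should either subtract small constants $\e_k\downarrow 0$ (justified by Dini's theorem and continuity of $\f^0,\f^1$) to restore them, or define the envelope over bounded competitors from the outset and deduce a posteriori that the two envelopes coincide. This is a standard patch, but it should be said.
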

Here again we write $\om_0$ instead of $p_1^*\om_0$ in (ii) for simplicity. When dealing with K\"ahler forms on a non-singular $X$, (ii) gives the geodesic equation for the Mabuchi metric defined on the space of K\"ahler metrics, as was observed by Donaldson and Semmes. This explain the present terminology.  

\begin{lem}\label{lem:psh} 
Let $S$ be an open subset of $\C$, $\f$ be an $\om_0$-psh function on $X\times S$, and set $\f^t:=\f(\cdot,t)$, which is an $\om_0$-psh function unless $\f^t\equiv-\infty$. 
\begin{itemize}
\item[(i)] $t\mapsto L(\f^t)$ and $t\mapsto E(\f^t)$ are subharmonic on $S$. 
\item[(ii)] If $\f$ further satisfies (i) and (ii) of Lemma \ref{lem:geod} then $t\mapsto E(\f^t)$ is even harmonic on $S$.
\end{itemize}
\end{lem}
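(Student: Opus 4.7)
Both parts rest on the master identity, valid for any smooth bounded $\om_0$-psh function $\f$ on $X\times S$:
$$
dd^c_t E(\f^t) \;=\; \frac{1}{(n+1)V}\,(\pi_S)_*\bigl((\om_0+dd^c\f)^{n+1}\bigr),
$$
where $\pi_S\colon X\times S\to S$ is the projection and $\om_0$ is tacitly pulled back. Since the pushforward of a positive $(n+1,n+1)$-current under a proper holomorphic map is a positive $(1,1)$-current, this immediately gives the subharmonicity of $E(\f^t)$ in (i). Under the assumptions (i)--(ii) of Lemma \ref{lem:geod}, $(\om_0+dd^c\f)^{n+1}=0$ on $X\times S$, so the right-hand side vanishes and we get the harmonicity in (ii). For the Ding term $L$, the subharmonicity of $t\mapsto L(\f^t)=-\log\int_X e^{-\f^t}\mu_0$ for $\f$ psh on the product is essentially Berndtsson's positivity-of-direct-images theorem \cite{Bern11}: the twisted fibre integral of $e^{-\f}\mu_0$ inherits semipositive curvature from the Hermitian metric $\f$ on $-(K_X+D)$.

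\textbf{Derivation of the master formula.} For smooth $\f$, differentiate (\ref{equ:energy}) once in $t$: using $\pd_t\om_\f^j=j\,dd^c(\pd_t\f)\wedge\om_\f^{j-1}$ and integrating the $dd^c$ by parts on $X$, each term telescopes so that
$$
\tfrac{d}{dt}E(\f^t)\;=\;V^{-1}\int_X(\pd_t\f)\,\om_\f^n.
$$
Differentiating once more in $\bar t$ and using $dd^c_{x,t}\f=dd^c_x\f + (\text{mixed})+ (\pd_t\pd_{\bar t}\f)\,i\,dt\wedge d\bar t$, a further integration by parts on $X$ rearranges the result into the slice integral of $(\om_0+dd^c\f)^{n+1}$, which is the density of $(\pi_S)_*((\om_0+dd^c\f)^{n+1})$. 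This is the standard Mabuchi/Donaldson computation; in our setting I carry it out on a log resolution $\tX\times S$, where $\tom_0=\pi^*\om_0$ is only semipositive and big but still closed, so that all integrations by parts remain valid.

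\textbf{Approximation to the general case.} To pass from smooth to general bounded $\om_0$-psh $\f$ on $X\times S$, I would approximate $\f$ by a decreasing sequence $\f_k\searrow\f$ of continuous $\om_0$-psh functions on $X\times S$, smooth on the resolution. For (i), monotone continuity of $E$ under decreasing limits in $\cE^1$ gives $E(\f_k^t)\searrow E(\f^t)$ pointwise in $t$, while for $L$ monotone convergence combined with the uniform $L^p$-bounds from Proposition~\ref{prop:hold} (applied fibrewise thanks to Theorem~\ref{thm:lelres}) gives $L(\f_k^t)\nearrow L(\f^t)$; subharmonicity passes to the monotone limit. For (ii), noting that $E(\f^t)$ is already subharmonic by (i), I only need to show that its Laplacian vanishes; this follows because the measures $(\om_0+dd^c\f_k)^{n+1}$ converge to $(\om_0+dd^c\f)^{n+1}=0$ in the Bedford--Taylor sense along a decreasing sequence of bounded approximants, so $dd^c_t E(\f_k^t)\to 0$ as currents on $S$.

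\textbf{Main obstacle.} The most delicate step is constructing a decreasing sequence of smooth (or at least continuous) $\om_0$-psh approximants $\f_k$ on the \emph{product} $X\times S$ (which is singular along $X_{\sing}\times S$), preserving both the joint plurisubharmonicity and the control needed to push the Bedford--Taylor convergence through the integration by parts giving the master formula. Demailly's regularization applies after pulling back to $\tX\times S$, but one must regularize in a way compatible with the product structure and then descend via \cite{EGZ2}. Once this is settled, the rest is bookkeeping.
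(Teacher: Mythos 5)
Your approach is essentially the one the paper takes. For $E$, the master formula $dd^c_t E(\f^t)=\frac{1}{(n+1)V}(\pi_S)_*\bigl((\om_0+dd^c\f)^{n+1}\bigr)$ is precisely what underlies the paper's reduction to \cite[Proposition 6.2]{BBGZ}, and your subharmonicity/harmonicity conclusions follow as you say. For $L$, the appeal to Berndtsson is also what the paper does, though it cites the earlier \cite{Bern06} (subharmonic variation of Bergman kernels) rather than \cite{Bern11}.

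A few things you gloss over, though, are the real content. First, it is not quite enough to invoke ``positivity of direct images'' for $L$; the reason the fibre integral $\int_{X_0}e^{-\f^t}\mu_0$ is governed by a Bergman-kernel theorem is the identification, via Lemma \ref{lem:adapted}(iii), of the constant $\bigl(\int_{X_0}e^{-\f^t}\mu_0\bigr)^{-1}$ with the full Bergman kernel for $-K_{X_0}$-valued $(n,0)$-forms relative to the metric $e^{-\f^t}\mu_0$, after which one applies Berndtsson's argument using H\"ormander's $L^2$-estimates on the weakly pseudoconvex $X_0$. Without this identification (which in turn rests on the normality and compactness of $X$), citing Berndtsson proves nothing, because the fibre $X_0$ is non-compact. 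Second, your regularization scheme is not what the paper does: the paper never descends the approximants to $X\times S$, but works with the smooth $\f^j$ on $X_0\times S\simeq\tX_0\times S$ directly, produced by regularizing $(1-\de)\f\circ\pi+\de\psi+\de|t|^2$ on $\tX\times S$ --- the term $\de\psi$ supplies strict positivity in the $\tX_0$-directions (via a Kähler current on the ample locus) and $\de|t|^2$ strict positivity in $t$, which is exactly what Demailly's theorem needs. Your plan of descending via \cite{EGZ2} and regularizing compatibly with the product on the non-compact, singular $X\times S$ is more delicate than you indicate and is not needed. Finally, a sign slip: for $\f_k\searrow\f$ one has $L(\f_k^t)\searrow L(\f^t)$, not $\nearrow$, since $L$ is increasing in $\f$; fortunately decreasing limits of subharmonic functions are the easy case, so this only helps you.
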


\begin{proof} 
The assertions for $E$ are well-known in the smooth case, and the proof in the present context reduces to \cite[Proposition 6.2]{BBGZ} by passing to a log resolution of $(X,D)$. The subharmonicity of $L(\f^t)$ is deeper, and is basically a special case of Berdntsson's theorem(s) on the subharmonic variation of Bergman kernels. Let us briefly explain how to deduce the present result from \cite{Bern06}.

Pick a log resolution $\pi:\tX\to X$, set $\tX_0:=\pi^{-1}(X_0)$, $\tom_0:=\pi^*\om_0$, and semipositive and big form on $\tX$. Since $\tX_0$ is contained in the ample locus $\Amp(\tom_0)$ of $\tom_0$, we may find a $\tom_0$-psh function $\psi$ on $\tX$ which is smooth on $\tX_0$ and such that 
$\tom_0+dd^c\psi\ge\eta$ for some K\"ahler form $\eta$ on $\tX$ (cf. Appendix B). Applying \cite{Dem92} to functions of the form $(1-\de)\f\circ\pi+\de\psi+\de|t|^2$ with $0<\de\ll 1$, we obtain (after perhaps slightly shrinking $S$) a sequence of smooth functions $\f^j$ on $\tX\times S$ such that $\f^j\to\f$ and $\tom_0+dd^c\f^j>0$ on $\tX\times S$. 

Using the isomorphism $\tX_0\simeq X_0$ induced by $\pi$, we now view $\f_j$ as a smooth, bounded $\om_0$-psh function on $X_0\times S$. For each $j$ and $t\in S$ let $\phi_j^t$ be the smooth Hermitian metric on $L:=-K_{X_0}$ defined by the (smooth positive) volume form $e^{-\f^t_j}\mu_0$. By (ii) of Lemma \ref{lem:adapted}, the curvature of $\phi_j$ on $X_0\times S$ equals $\om_0+dd^c\f_j$, hence is positive. By (iii) of Lemma \ref{lem:adapted}, the Bergman kernel for $L$-valued $(n,0)$-forms on $X_0$ with respect to $\phi_j^t$ coincides with the constant function 
$$
\left(\int_{X_0} e^{-\f^t_j}\mu_0\right)^{-1}.
$$ 
In particular, this Bergman kernel is smooth on $X_0\times S$. 

Since H\"ormander's $L^2$-estimates for $L$-valued $(n,q)$-forms apply for the positively curved line bundle $(L,\phi^j_t)$ on the weakly pseudoconvex manifold $X_0$ (cf. for instance \cite{Demhodge}), we may then argue exactly as in \cite[pp.1638-1640]{Bern06} to get that 
$$
t\mapsto-\log\int_{X_0}e^{-\f^t_j}\mu_0
$$
is subharmonic on $S$. The desired result now follows by letting $j\to\infty$.
\end{proof}

Combining these results we get the following crucial convexity property of the Ding functional along weak geodesics: 

\begin{lem}\label{lem:convdin} 
Let $(\om(t))_{t\in[0,1]}$ be the weak geodesic joining two currents $\om(0),\om(1)\in\cT^1(X,D)$ with continuous potentials. Then $t\mapsto\din(\om(t))$ is convex and continuous on $[0,1]$.
\end{lem}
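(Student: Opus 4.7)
The plan is to establish the convexity by showing that, viewed as a function on the strip $S=\{0<\Re t<1\}$, the function $t\mapsto\din(\om(t))$ is subharmonic and invariant under vertical translation; such functions of one real variable are automatically convex.

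\medskip

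\noindent\textbf{Step 1: Vertical translation invariance.} First I would verify that the weak geodesic $\f$ on $X\times\overline S$, defined as the upper envelope in \S\ref{sec:geod}, satisfies $\f(x,t+is)=\f(x,t)$ for every $s\in\R$. Indeed the reference form $p_1^*\om_0$ does not depend on $t$, so the class of competitors $\psi$ is stable under the substitution $\psi\mapsto\psi(\cdot,\cdot+is)$, and the boundary data only depends on $\Re t$. Consequently $\f^t=\f(\cdot,t)$ depends only on $\Re t\in[0,1]$, and so does
\[
F(t):=\din(\om(t))=L(\f^t)-E(\f^t).
\]

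\medskip

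\noindent\textbf{Step 2: Subharmonicity on $S$.} By Lemma~\ref{lem:geod}(i), $\f$ is $\om_0$-psh and bounded on $X\times S$, so Lemma~\ref{lem:psh}(i) applies and gives that $t\mapsto L(\f^t)$ is subharmonic on $S$. By Lemma~\ref{lem:geod}(ii), $\f$ further satisfies $(\om_0+dd^c\f)^{n+1}=0$ on $X\times S$, so Lemma~\ref{lem:psh}(ii) gives that $t\mapsto E(\f^t)$ is in fact \emph{harmonic} on $S$. Subtracting, $F$ is subharmonic on $S$. Combined with Step 1, a subharmonic function on $S$ depending only on $s:=\Re t$ satisfies $F''(s)=\Delta F\ge 0$ in the distributional sense on $(0,1)$, hence $F$ is convex on $(0,1)$.

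\medskip

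\noindent\textbf{Step 3: Continuity at the endpoints.} By Lemma~\ref{lem:geod}(iii), $\f^t\to\f^0$ uniformly on $X$ as $\Re t\to 0$, and similarly at $t=1$. Since the $\f^t$ are uniformly bounded, dominated convergence immediately yields $L(\f^t)\to L(\f^0)$. For the energy, uniform convergence of uniformly bounded $\om_0$-psh functions implies weak convergence of each mixed Monge-Amp\`ere current $\om_{\f^t}^j\wedge\om_0^{n-j}$ by Bedford-Taylor continuity, so each term $\int_X\f^t\,\om_{\f^t}^j\wedge\om_0^{n-j}$ in the defining formula \eqref{equ:energy} converges. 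Hence $E(\f^t)\to E(\f^0)$, and $F$ extends continuously to $[0,1]$. A convex function on $(0,1)$ with continuous extension to $[0,1]$ is convex on $[0,1]$.

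\medskip

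\noindent\textbf{Main obstacle.} The nontrivial content here is the subharmonicity of $t\mapsto L(\f^t)$ along $\om_0$-psh variations, which ultimately rests on Berndtsson's positivity of direct images and was already established as Lemma~\ref{lem:psh}(i) via the resolution argument of Lemma~\ref{lem:adapted}. Granted those inputs, the convexity of $\din$ along weak geodesics is a clean consequence of the subharmonic-plus-translation-invariant mechanism above, and the only real care needed is the standard check that the envelope construction commutes with vertical translation and that the Bedford-Taylor continuity suffices at the boundary.
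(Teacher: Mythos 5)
Your argument is correct and matches the paper's intended proof: the lemma is stated in the paper with no explicit proof, only the remark "combining these results," and the intended combination is exactly the one you spell out — vertical-translation invariance of the envelope plus subharmonicity of $t\mapsto L(\f^t)$ (Lemma \ref{lem:psh}(i)) and harmonicity of $t\mapsto E(\f^t)$ along the homogeneous Monge-Amp\`ere foliation (Lemma \ref{lem:psh}(ii) via Lemma \ref{lem:geod}), giving convexity in $\Re t$, with endpoint continuity from Lemma \ref{lem:geod}(iii) and Bedford–Taylor continuity. The only point worth flagging is that in Step 3 one should be explicit that the potentials $\f^t$ are uniformly bounded (which follows from Lemma \ref{lem:geod}(i) and the continuity of the endpoint potentials), so that dominated convergence and Bedford–Taylor for uniformly convergent bounded psh functions indeed apply.
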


\subsection{Variational characterization of K\"ahler-Einstein metrics}\label{sec:varKE}

In this section we prove the following generalization to log Fano pairs of a result of Ding and Tian for Fano manifolds (without holomorphic vector fields):

\begin{thm}\label{thm:varKE} 
Given $\om\in\cT^1(X,D)$ the following conditions are equivalent.
\begin{itemize}
\item[(i)] $\om$ is a K\"ahler-Einstein metric for $(X,D)$. 
\item[(ii)] $\din(\om)=\inf_{\cT^1(X,D)}\din$.
\item[(iii)] $\mab(\om)=\inf_{\cT^1(X,D)}\mab$. 
\end{itemize}
\end{thm}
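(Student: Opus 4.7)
The plan is to first observe that (iii) is equivalent to the conjunction of (i) and (ii) by a purely formal use of Lemma \ref{lem:comp}. Indeed, if $\mab(\om)=\inf\mab$, the chain
$$
\inf\din=\inf\mab=\mab(\om)\ge\din(\om)\ge\inf\din
$$
forces $\mab(\om)=\din(\om)$ (so $\om$ is K\"ahler-Einstein by the equality case of Lemma \ref{lem:comp}(i)) and $\din(\om)=\inf\din$; conversely (i) and (ii) give $\mab(\om)=\din(\om)=\inf\din=\inf\mab$. The task thus reduces to proving (i) $\Leftrightarrow$ (ii).

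For (i) $\Rightarrow$ (ii), assume $\om$ is K\"ahler-Einstein, so $\om$ has continuous potentials by Lemma \ref{lem:KE}. Given $\om'\in\cT^1(X,D)$, I first reduce to the case where $\om'$ has continuous potentials: the truncations $\max(\f_{\om'},-j)$ are bounded $\om_0$-psh and converge strongly to $\f_{\om'}$ in $\cE^1$, each is in turn a monotone limit of continuous $\om_0$-psh functions by \cite{EGZ2}, and strong continuity of $\din$ (Lemma \ref{lem:lsc}) transfers the desired inequality $\din(\om)\le\din(\om')$ across the limits. For $\om,\om'$ both with continuous potentials, take the weak geodesic $(\om(t))_{t\in[0,1]}$ from Lemma \ref{lem:geod}; Lemma \ref{lem:convdin} yields that $g(t):=\din(\om(t))$ is convex and continuous on $[0,1]$, so it suffices to show $g'(0^+)\ge 0$. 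Affinity of $E(\f^t)$ (Lemma \ref{lem:psh}(ii)) gives $E'(0^+)=\int\dot\f^{0+}\MA(\f_\om)$, while the Berndtsson positivity underlying the subharmonicity of $L(\f^t)$ (Lemma \ref{lem:psh}(i) and its proof via \cite{Bern06}) yields the subgradient inequality $L'(0^+)\ge\int\dot\f^{0+}\mu_\om$. Since $\MA(\f_\om)=\mu_\om$ by hypothesis (i), these two derivatives coincide and $g'(0^+)\ge 0$; convexity of $g$ then gives $g(0)\le g(1)$.

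For (ii) $\Rightarrow$ (i), the substantive direction, suppose $\f_\om\in\cE^1(X,\om_0)$ minimizes $L-E$. The strategy is to show that the first variation of $L-E$ at $\f_\om$ vanishes in every direction given by a smooth $\psi\in C^\infty(X)$, which by duality will force $\MA(\f_\om)=\mu_\om$. Since $\cE^1(X,\om_0)$ is not linear, replace the naive perturbation $\f_\om+t\psi$ by its $\om_0$-psh envelope $u_t:=P(\f_\om+t\psi)\in\cE^1(X,\om_0)$, which agrees with $\f_\om+t\psi$ on a contact set carrying both $\MA(\f_\om)$ and $\mu_\om$. The envelope differentiability formulae of \cite{BBGZ}, transferred to the singular setting via a log resolution using the homeomorphism of \S\ref{sec:psh} and the tameness of $\mu_0$ (Lemma \ref{lem:adapted}), then yield
$$
\tfrac{d}{dt}\big|_0 E(u_t)=\int\psi\,\MA(\f_\om), \qquad \tfrac{d}{dt}\big|_0 L(u_t)=\int\psi\,\mu_\om.
$$
Minimality of $\f_\om$, combined with $u_t\le\f_\om+t\psi$ and $u_0=\f_\om$, forces $\int\psi(\mu_\om-\MA(\f_\om))=0$ for every $\psi\in C^\infty(X)$, whence $\MA(\f_\om)=\mu_\om$, i.e., $\om$ is K\"ahler-Einstein.

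The main obstacle is the variational calculation in (ii) $\Rightarrow$ (i): because $\cE^1(X,\om_0)$ is only a convex set, admissible perturbations must preserve $\om_0$-plurisubharmonicity, and the first variation must be taken through the envelope $P(\f_\om+t\psi)$ rather than along linear paths. Adapting the corresponding differentiability results from the non-singular literature to the present singular context via a log resolution, together with the $L^p$-integrability of $e^{-\f}$ supplied by the finite-energy theory of \S\ref{sec:fec}, is the technical heart of the proof.
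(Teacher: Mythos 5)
Your proof follows essentially the same route as the paper's. Both reduce (iii) to (i) $\wedge$ (ii) formally via Lemma \ref{lem:comp}; both prove (i) $\Rightarrow$ (ii) by convexity of the Ding functional along weak geodesics (Lemma \ref{lem:convdin}), after reducing to currents with continuous potentials via the approximation theorem of \cite{EGZ2}; and both prove (ii) $\Rightarrow$ (i) via the envelope variational argument of \cite[Theorem~6.6]{BBGZ}, differentiating $E\circ P$ via the Berman--Boucksom differentiability theorem and $L$ elementarily.

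One technical remark on (ii) $\Rightarrow$ (i): your parenthetical claim that the contact set of $u_t=P(\f_\om+t\psi)$ "carries both $\MA(\f_\om)$ and $\mu_\om$" is not correct for $t\ne 0$ (it carries $\MA(u_t)$, and neither of the two measures you name is \emph{a priori} supported there). Consequently the stated formula $\tfrac{d}{dt}\big|_0 L(u_t)=\int\psi\,\mu_\om$ needs justification: concavity of $t\mapsto L(u_t)$ plus the comparison $u_t\le\f_\om+t\psi$ only give a one-sided bound on the right derivative. The cleaner route, which is what the paper's argument actually requires, is to observe that the function $t\mapsto L(\f_\om+t\psi)-E(u_t)$ (with $L$ applied to the \emph{naive} path and $E$ to the envelope path) attains its minimum at $t=0$ since $L(\f_\om+t\psi)\ge L(u_t)$ by monotonicity of $L$; differentiating this function from the right and from the left, using that $\tfrac{d}{dt}\big|_0 L(\f_\om+t\psi)=\int\psi\,\mu_\om$ is elementary, squeezes out $\int\psi\,(\mu_\om-\MA(\f_\om))=0$. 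This does not affect your conclusion, but the way you have phrased the first variation of $L$ is a genuine gap as written.
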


\begin{proof} 
The equivalence betwen (i) and (ii) is proved as in \cite[Theorem 6.6]{BBGZ}, which we summarize for completeness. To prove (ii)$\Rightarrow$(i), we introduce the $\om_0$-psh envelope $Pu$ of a function $u$ on $X$ as the usc upper envelope of the family of all $\om_0$-psh functions $\f$ such that $\f\le u$ on $X$ (or $Pu\equiv-\infty$ if this family is empty). Given $v\in C^0(X)$ we set for all $t\in\R$ 
$$
\f^t:=P(\f_\om+t v).
$$ 
On the one hand, $t\mapsto L(\f_\om+t v)$ is concave by H\"older's inequality, and its right-hand derivative at $t=0$ is easily seen to be given by
$$
\int_X v\,\mu_\om,
$$
see \cite[Lemma 6.1]{BBGZ}. On the other hand, the differentiability theorem of \cite{BB} (applied in our case to a resolution of singularities of $X$) shows that $t\mapsto E(\f^t)$ is differentiable, with derivative at $t=0$ given by
$$
\int_X v\,\MA(P\f_\om)=V^{-1}\int_X v\,\om^n.
$$
Since $\f^t$ belongs to $\cE^1(X,\om_0)$, (ii) shows that $L(\f_\om+t v)-E(\f^t)$ achieves is minimum for $t=0$, and hence
$$
\frac{d}{dt}_{0+}\left(L(\f_\om+t v)-E(\f^t)\right)\ge 0, 
$$ 
\ie
$$
\int_X v\,\mu_\om\ge V^{-1}\int_X v\,\om^n. 
$$
Applying this to $-v$ shows that $\mu_\om=V^{-1}\om^n$, which means that $\om$ is a K\"ahler-Einstein metric. 

To prove (i)$\Rightarrow$(ii), we rely on the convexity of the Ding functional along weak geodesics. Let $\om$ be any K\"ahler-Einstein metric. Since every $\om_0$-psh function on $X$ is the decreasing limit of a sequence of continous $\om_0$-psh functions thanks to \cite{EGZ2}, it is enough to show that $\din(\om)\le\din(\om')$ for all $\om'\in\cT^1(X,D)$ with continuous potentials. Let $(\om(t))_{t\in[0,1]}$ be the weak geodesic between $\om(0)=\om$ and $\om(1)=\om'$. By Lemma \ref{lem:convdin} $t\mapsto\din(\om(t))$ is convex and continuous on $[0,1]$. To get as desired that $\din(\om(0))\le \din(\om(1))$, it is thus enough to show that 
\begin{equation}\label{equ:derivdin}
\frac{d}{dt}_{0_+}\din(\om(t))\ge 0, 
\end{equation}
which is proved exactly as in the last part of the proof of \cite[Theorem 6.6]{BBGZ}. More specifically, by convexity wrt $t$ of $\f_t:=\f_{\om(t)}$, the function $u_t:=(\f_t-\f_0)/t$ decreases to a bounded function $v$, and the concavity of $E$ yields
$$
\frac{d}{dt}_{0_+} E(\f_t)\le V^{-1}\int v\om^n.
$$
On the other hand, monotone convergence shows that 
$$
\frac{d}{dt}_{0+}L(\f_t)=\int v\,\om_\om=V^{-1}\int v\om^n,
$$
hence (\ref{equ:derivdin}). Finally, the equivalence between (ii) and (iii) is a consequence of Lemma \ref{lem:comp}. 
\end{proof}

\begin{rem} When $X$ is non-singular with $\Aut^0(X)=\{1\}$, the implication (i)$\Rightarrow$(iii) was proved by Ding and Tian \cite{Tia97} using the continuity method. Their result was generalized to (a priori) singular K\"ahler-Einstein metrics on any non-singular Fano variety in \cite{BBGZ}, using as above Berndtsson's theorem on psh variations of Bergman kernels. \end{rem}

As a corollary, we see that the set of K\"ahler-Einstein metrics is "totally geodesic" in the space of currents with continuous potentials: 
\begin{cor}\label{cor:geodKE} Let $\om(0)$ and $\om(1)$ be two K\"ahler-Einstein metrics, and let $(\om(t))_{t\in[0,1]}$ be the weak geodesic joining them. Then $\om(t)$ is a K\"ahler-Einstein metric for all $t\in[0,1]$. 
\end{cor}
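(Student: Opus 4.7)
The plan is to combine, in a short argument, the variational characterization of K\"ahler-Einstein metrics from Theorem \ref{thm:varKE} with the convexity of the Ding functional along weak geodesics from Lemma \ref{lem:convdin}. The key observation is that minimizers of a convex function on $[0,1]$ which agree at the two endpoints must coincide with the minimum on the entire segment.

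More precisely, set $m:=\inf_{\cT^1(X,D)}\din$. Since $\om(0)$ and $\om(1)$ are K\"ahler-Einstein metrics, Theorem \ref{thm:varKE} gives
\[
\din(\om(0))=\din(\om(1))=m.
\]
Next I would check that the weak geodesic stays in $\cT^1(X,D)$, so that $\din$ is defined along it. This is immediate: by Lemma \ref{lem:KE} the endpoint potentials $\f^0,\f^1$ are continuous on $X$, so in particular bounded; then Lemma \ref{lem:geod}(i) ensures that $\f^t$ is bounded on $X\times S$, and hence each $\f^t$ has finite (in fact full) Monge-Amp\`ere mass and finite energy, \ie $\om(t)\in\cT^1(X,D)$ for every $t\in[0,1]$. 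In particular $\din(\om(t))\ge m$ for all $t$.

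Now Lemma \ref{lem:convdin} asserts that $t\mapsto\din(\om(t))$ is convex and continuous on $[0,1]$, so
\[
\din(\om(t))\le(1-t)\din(\om(0))+t\,\din(\om(1))=m
\]
for every $t\in[0,1]$. Combined with the reverse inequality above we obtain $\din(\om(t))=m$ for all $t$, and Theorem \ref{thm:varKE} then shows that $\om(t)$ is a K\"ahler-Einstein metric for the log Fano pair $(X,D)$.

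Since every step just invokes results already established in the preceding sections, there is no real obstacle here; the only point requiring care is the implicit verification that the Ding functional is defined along the whole geodesic, which comes for free from the boundedness statement of Lemma \ref{lem:geod}(i) together with the continuity of the endpoint potentials in Lemma \ref{lem:KE}.
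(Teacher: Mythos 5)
Your argument is correct and is essentially the proof given in the paper: both rely on the convexity of the Ding functional along the weak geodesic (Lemma \ref{lem:convdin}, itself a consequence of Lemma \ref{lem:psh}) together with Theorem \ref{thm:varKE}, to conclude that a convex function attaining its minimum at both endpoints is constant on the segment. Your additional explicit check that the geodesic stays inside $\cT^1(X,D)$ via Lemmas \ref{lem:KE} and \ref{lem:geod}(i) is a reasonable extra precaution that the paper leaves implicit.
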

\begin{proof} By Lemma \ref{lem:psh} $t\mapsto\din(\om(t))$ is convex on $[0,1]$, and is equal to $\inf_{\cT^1(X,D)}\din$ at both ends by Theorem \ref{thm:varKE}. It is thus constantly equal to $\inf_{\cT^1(X,D)}\din$ on $[0,1]$, and the result follows by another application of Theorem \ref{thm:varKE}.
\end{proof}

\subsection{Properness and the $\a$-invariant}\label{sec:properness}

We say as usual that the Mabuchi functional (resp. the Ding functional) is \emph{proper} if $\mab\to+\infty$ (resp. $\din\to+\infty$) as $J\to+\infty$. We also say that $\mab$ (resp. $\din$) is \emph{coercive} if there exists $\e,C>0$ such that $\mab\ge\e J-C$ (resp. $\din\ge\e J-C$). 

Regarding coercivity, the following result holds. 
\begin{prop}\label{prop:moser} The following conditions are equivalent:
\begin{itemize}
\item[(i)] The Ding functional is coercive.
\item[(ii)] The Mabuchi functional is coercive.
\item[(iii)] A Moser-Trudinger type estimate
$$
\|e^{-\f}\|_{L^p(\mu_0)}\le A\,e^{-E(\f)}
$$
holds for some $p>1$, $A>0$, and all $\f\in\cE^1(X,\om_0)$. 
\end{itemize}
\end{prop}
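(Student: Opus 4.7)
The plan is to close the loop $\text{(i)}\Rightarrow\text{(ii)}\Leftrightarrow\text{(iii)}\Rightarrow\text{(i)}$. The first implication is immediate from Lemma~\ref{lem:comp}(i), which gives $\mab\ge\din$ on $\cT^1(X,D)$. The heart of the proof is to pass freely between the Mabuchi and Ding sides via Legendre duality, using the norm comparison $n^{-1}E^*(\MA(\f))\le J(\f)\le n\,E^*(\MA(\f))$ recalled in~(\ref{equ:compen}).

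For $\text{(ii)}\Leftrightarrow\text{(iii)}$, I would show that both conditions are equivalent to the single entropy-energy estimate
$$
H_{\mu_0}(\nu)\ge pE^*(\nu)-C,\qquad \nu\in\cM(X),
$$
for some $p>1$ and $C>0$. Writing $\mab(\om)=H_{\mu_0}(V^{-1}\om^n)-E^*(V^{-1}\om^n)$ and invoking~(\ref{equ:compen}) translates (ii) directly into this inequality with $p=1+\e/n$. Conversely, Lemma~\ref{lem:legsci} applied to $g=-p\f$ gives
$$
\log\int_X e^{-p\f}\mu_0=\sup_{\nu\in\cM(X)}\Bigl(-p\int_X\f\,d\nu-H_{\mu_0}(\nu)\Bigr),
$$
which combined with the dual formula~(\ref{equ:dual}) $E(\f)=\inf_\nu(E^*(\nu)+\int\f\,d\nu)$ shows that the entropy-energy bound is equivalent to the Moser-Trudinger estimate $\log\int e^{-p\f}\mu_0\le-pE(\f)+C$, \ie~(iii).

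For $\text{(iii)}\Rightarrow\text{(i)}$, Moser-Trudinger applied to $\f/p\in\cE^1(X,\om_0)$ (which is $\om_0$-psh since $p>1$) yields, after Legendre duality, $L(\f)\ge pE(\f/p)-C$. The proof reduces to establishing the uniform concavity gap
$$
pE(\f/p)-E(\f)\ge c(n,p)\,J(\f),\qquad \f\in\enorm(X,\om_0),
$$
for some $c(n,p)>0$. Setting $h(v):=V^{-1}\int_X\f\,\om_{v\f}^n$ and $\phi(v):=-h'(v)=nV^{-1}\int_X d\f\wedge d^c\f\wedge\om_{v\f}^{n-1}\ge 0$, a direct computation (expressing $pE(\f/p)=p\int_0^{1/p}h(u)\,du$, using $h(u)=h(1)+\int_u^1\phi(v)\,dv$, and swapping the order of integration) gives the identity
$$
pE(\f/p)-E(\f)=(p-1)\int_0^{1/p}v\,\phi(v)\,dv+\int_{1/p}^1(1-v)\,\phi(v)\,dv,
$$
with both integrands non-negative. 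Expanding $\om_{v\f}^{n-1}=((1-v)\om_0+v\om_\f)^{n-1}$ shows that $\phi$ is a non-negative polynomial on $[0,1]$ of degree at most $n-1$, with $\int_0^1\phi(v)\,dv=h(0)-h(1)=I(\f)$. Invoking $I(\f)\ge J(\f)$ from~(\ref{equ:IJ}), the claim follows once we know that the weighted integral is bounded below by a universal constant times $\int_0^1\phi$.

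This last uniform lower bound is the main obstacle. It follows from the compactness of the cone of non-negative polynomials on $[0,1]$ of degree at most $n-1$ normalized by $\int_0^1\phi=1$, together with the strict positivity on $(0,1)$ of the weight $w(v):=(p-1)v\,\mathbf{1}_{[0,1/p]}+(1-v)\,\mathbf{1}_{[1/p,1]}$: the continuous, strictly positive functional $\phi\mapsto\int_0^1 w\phi$ attains a positive minimum $c(n,p)>0$ over this compact cone, giving the desired estimate and hence $\din(\om_\f)=L(\f)-E(\f)\ge c(n,p)\,J(\f)-C$.
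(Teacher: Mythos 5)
Your proof is correct, and the implications (i)$\Rightarrow$(ii) and (ii)$\Leftrightarrow$(iii) run along essentially the same Legendre-duality lines as the paper (the paper only proves (ii)$\Rightarrow$(iii) explicitly, closing the loop through (iii)$\Rightarrow$(i), but your two-sided derivation of the equivalence between (ii) and (iii) via the entropy--energy bound $H_{\mu_0}(\nu)\ge pE^*(\nu)-C$ is a tidy reorganization of the same ideas). The genuine difference is in (iii)$\Rightarrow$(i). The paper's argument is an interpolation: it applies the Moser--Trudinger bound to the exponent $1+\e$, applies the $\alpha$-invariant estimate $\log\int e^{-\e\f}\mu_0\le\e\int\f\,\MA(0)+C$ (valid for $\e<\a_{\om_0}(\mu_0)$, which is positive by Proposition~\ref{prop:hold}) to the exponent $\e$, and combines them via the convexity of $\log$ after writing $\f=(1-\e)(1+\e)\f+\e^2\f$; this yields $\din\ge\e^2 J-C$. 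You instead apply (iii) to $\f/p$ to get $L(\f)\ge pE(\f/p)-C$, and then establish a \emph{concavity-gap} inequality $pE(\f/p)-E(\f)\ge c(n,p)\,I(\f)\ge c(n,p)\,J(\f)$ by writing $pE(\f/p)-E(\f)=\int_0^1 w(v)\phi(v)\,dv$ with $\phi(v)=nV^{-1}\int_X d\f\wedge d^c\f\wedge\om_{v\f}^{n-1}$ a non-negative polynomial of degree $\le n-1$ satisfying $\int_0^1\phi=I(\f)$, and the tent weight $w$ strictly positive on $(0,1)$; compactness of the normalized cone of such polynomials then supplies the uniform constant $c(n,p)>0$. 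Your integral identity is verified by a direct swap of the order of integration, and the compactness step is sound because $\phi\mapsto\int_0^1|\phi|$ is a norm on the finite-dimensional space of polynomials of degree $\le n-1$. What the two approaches buy: the paper's interpolation is shorter but leans on the non-trivial positivity of the $\alpha$-invariant of the adapted measure; your concavity-gap route is more elementary and ``formal'' in the sense that (once the entropy--energy equivalence is in place) the step (iii)$\Rightarrow$(i) uses only convexity properties of the energy along line segments, and nothing about the structure of $\mu_0$ beyond what is already encoded in the Moser--Trudinger inequality itself. If one wishes to avoid the soft compactness argument, one can also make $c(n,p)$ explicit by expanding $\phi$ in the Bernstein basis $\binom{n-1}{k}v^k(1-v)^{n-1-k}$, whose coefficients are the non-negative numbers $nV^{-1}\int_X d\f\wedge d^c\f\wedge\om_\f^k\wedge\om_0^{n-1-k}$, and taking the minimum of $\int_0^1 w\,B_k^{n-1}\big/\int_0^1 B_k^{n-1}$ over $k$.
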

\begin{proof} (i)$\Longrightarrow$(ii) is obvious since $\mab\ge\din$ by Lemma \ref{lem:comp}. If (ii) holds then there exists $\e>0$ and $C>0$ such that $H(\mu)-E^*(\mu)\ge\e E^*(\mu)-C$, hence $H(\mu)\ge p E^*(\mu)-C$ with $p:=1+\e$. By Lemma \ref{lem:legsci} we then have for each $\f\in\cE^1(X,\om_0)$
$$
\log\int_X e^{-p\f}\mu_0=\sup_\mu\left(\int_X (p\f)\mu-H(\mu)\right)\le-p\inf_\mu\left(E^*(\mu)+\int_X\f\mu\right)+pC=-p E(\f)+pC 
$$
and hence $\|e^{-\f}\|_{L^p(\mu_0)}\le e^C e^{-E(\f)}$, which proves that (ii)$\Longrightarrow$(iii). Assume now that (iii) holds. Set $\e:=p-1>0$. The assumption reads
\begin{equation}\label{equ:dinglp}
\tfrac{1}{1+\e}\log\left(\int_X e^{-(1+\e)\f}\mu_0\right)\le -E(\f)+C_1
\end{equation}
for some $C_1>0$ and all $\f\in\cE^1(X,\om_0)$. On the other hand, since $\a_{\om_0}(\mu_0)>0$ by Proposition \ref{prop:hold}, we may assume that $\e>0$ is small enough so that 
\begin{equation}\label{equ:epalpha}
\log\left(\int_X e^{-\e\f}\mu_0\right)\le\e\int_X\f\MA(0)+C_2
\end{equation} 
for some $C_2>0$ and all $\f\in\psh(X,\om_0)$.

Writing $\f=(1-\e)(1+\e)\f+\e^2\f$, we have by convexity
$$
\log\left(\int_X e^{-\f}\mu_0\right)\le(1-\e)\log\left(\int_X e^{-(1+\e)\f}\mu_0\right)+\e\log\left(\int_Xe^{-\e\f}\mu_0\right)
$$
Using (\ref{equ:dinglp}) and (\ref{equ:epalpha}), it follows that
$$
L(\f)\ge(1-\e^2)E(\f)+\e^2\int_X\f\MA(0)-C_3, 
$$
and we conclude that 
$$
\din(\f)=L(\f)-E(\f)\ge\e^2\left(\int_X\f\MA(0)-E(\f)\right)-C_3=\e^2 J(\f)-C_3
$$
for $\f\in\cE^1(X,\om_0)$. We have this shown that (iii)$\Longrightarrow$(i).
\end{proof}

In order to extend Tian's well-known criterion of properness \cite{Tia87}, we introduce: 
\begin{defi}\label{defi:alphafano} The $\a$-invariant of of the log Fano pair $(X,D)$ is defined as $\a(X,D):=\a_{\om_0}(\mu_0)$, \ie
$$
\a(X,D)=\sup\left\{\a>0\mid\sup_{\f\in\pshn(X,\om_0)}\int_Xe^{-\a\f}\mu_0<+\infty\right\}. 
$$
\end{defi}
Here $\om_0$ and $\mu_0$ denote as before the curvature and the adapted measure of a smooth strictly psh metric $\phi_0$ on $-(K_X+D)$, the definition being easily seen to be independent of the choice of $\phi_0$. As an immediate consequence of Lemma \ref{lem:entproper}, we obtain as desired: 

\begin{prop}\label{prop:coercive} 
If $\a(X,D)>\frac{n}{n+1}$ then the Mabuchi functional (or, equivalently, the Ding functional) of $(X,D)$ is coercive, and hence proper. 
\end{prop}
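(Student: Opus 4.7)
The plan is to deduce this directly from Lemma \ref{lem:entproper}(ii) together with the comparison (\ref{equ:compen}) between $E^*$ and $J$, and then invoke Proposition \ref{prop:moser} for the Ding equivalence. Since $\alpha(X,D) = \alpha_{\omega_0}(\mu_0)$ by Definition \ref{defi:alphafano}, the hypothesis $\alpha(X,D) > \frac{n}{n+1}$ is precisely the hypothesis of Lemma \ref{lem:entproper}(ii) applied to the adapted probability measure $\mu_0$.

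First, I would apply Lemma \ref{lem:entproper}(ii) to obtain constants $\varepsilon, C > 0$ such that
$$
H_{\mu_0}(\mu) \geq (1+\varepsilon) E^*(\mu) - C
$$
for every probability measure $\mu$ on $X$. Next, for any current with finite energy $\omega \in \cT^1(X,D)$, set $\mu := V^{-1}\omega^n \in \cM^1(X,D)$ (which is non-pluripolar by construction). Subtracting $E^*(\mu)$ on both sides of the above inequality and using the definition of the Mabuchi functional gives
$$
\mab(\omega) = H_{\mu_0}(\mu) - E^*(\mu) \geq \varepsilon\,E^*(\mu) - C.
$$
Then I would apply the left-hand inequality in (\ref{equ:compen}), namely $E^*(\MA(\varphi)) \geq n^{-1} J(\varphi)$, to conclude
$$
\mab(\omega) \geq \frac{\varepsilon}{n} J(\omega) - C,
$$
which is exactly the coercivity of $\mab$.

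Finally, the coercivity of the Ding functional follows from Proposition \ref{prop:moser}, which asserts the equivalence of Ding and Mabuchi coercivity. Coercivity obviously implies properness, since $\frac{\varepsilon}{n} J(\omega) - C \to +\infty$ as $J(\omega) \to +\infty$. There is no real obstacle here: the substantive work was done in Lemma \ref{lem:entproper}(ii), where the threshold $\frac{n}{n+1}$ arises from balancing $(\alpha - 1) I(\varphi) + J(\varphi)$ against $(\alpha - n/(n+1)) I(\varphi)$ via (\ref{equ:IJ}). The only point that requires a moment's care is checking that the entropy inequality holds for \emph{all} probability measures (not just those of finite energy), which is already guaranteed by Lemma \ref{lem:entproper}(i) since infinite-entropy measures make the inequality trivial.
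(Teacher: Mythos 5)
Your proof is correct and follows exactly the route the paper intends: the paper states Proposition \ref{prop:coercive} as "an immediate consequence of Lemma \ref{lem:entproper}," and your spelled-out argument (combining Lemma \ref{lem:entproper}(ii) with the comparison (\ref{equ:compen}) and then invoking Proposition \ref{prop:moser}) is precisely that deduction. One small slip worth flagging: the inequality $E^*(\MA(\varphi)) \ge n^{-1}J(\varphi)$ that you use is the \emph{right-hand} side $J \le n E^*$ of (\ref{equ:compen}) rearranged, not the left-hand one as you wrote — but this does not affect the argument.
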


The following topological characterization of properness is crucial to our approach. 
\begin{thm}\label{thm:mab} The Mabuchi functional 
$$
\mab:\cT^1(X,D)\to\R\cup\{+\infty\}
$$ 
is proper in the above sense if and only if it is an exhaustion function with respect to the strong topology, \ie the sublevel set
$$
\left\{\om\in\cT^1(X,D)\mid\mab(\om)\le m\right\}
$$ 
is strongly compact for all $m\in\R$. 
\end{thm}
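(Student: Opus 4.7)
The plan is to reduce both implications to the strong compactness result for sets of measures with bounded entropy (Theorem \ref{thm:compactness}), using the Chen--Tian type identity $\mab(\om)=H_{\mu_0}(V^{-1}\om^n)-E^*(V^{-1}\om^n)$ and the strong homeomorphism $\cT^1(X,D)\simeq\cM^1(X,D)$ provided by Proposition \ref{prop:conven2}.

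The easy direction is $(\Leftarrow)$: assume sublevel sets of $\mab$ are strongly compact. Suppose by contradiction that $\mab$ is not proper, so there exist $m\in\R$ and a sequence $\om_j\in\cT^1(X,D)$ with $\mab(\om_j)\le m$ but $J(\om_j)\to+\infty$. Then $(\om_j)$ lies in the strongly compact set $K_m:=\{\mab\le m\}$, hence admits a strongly convergent subsequence $\om_{j_k}\to\om_\infty$. Since $J$ is by definition continuous for the strong topology on $\cT^1(X,D)$, we get $J(\om_{j_k})\to J(\om_\infty)<+\infty$, a contradiction.

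For the harder direction $(\Rightarrow)$, assume $\mab$ is proper, fix $m\in\R$, and set $K_m:=\{\om\in\cT^1(X,D):\mab(\om)\le m\}$. First, properness yields $R=R(m)>0$ such that $K_m\subset\{J\le R\}$. By the comparison \eqref{equ:compen}, this forces $E^*(V^{-1}\om^n)\le nR$ for all $\om\in K_m$. Then the identity $\mab(\om)=H_{\mu_0}(V^{-1}\om^n)-E^*(V^{-1}\om^n)$ gives
\[
H_{\mu_0}(V^{-1}\om^n)\le m+nR\qquad\text{for all }\om\in K_m,
\]
so $\MA(K_m)$ is contained in the entropy sublevel set $\cH_A(X,\mu_0)$ with $A:=m+nR$. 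Since $\mu_0$ is tame (Lemma \ref{lem:adapted}), Theorem \ref{thm:compactness} asserts that $\cH_A(X,\mu_0)$ is strongly compact in $\cM^1(X,D)$.

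It remains to see that $K_m$ is strongly closed in $\cT^1(X,D)$, so that $\MA(K_m)$ is a strongly closed subset of the strongly compact set $\cH_A(X,\mu_0)$, hence itself strongly compact. This closedness is exactly the lower semicontinuity of $\mab$ for the strong topology, which is Lemma \ref{lem:lsc}. Transporting back via the strong homeomorphism $\cT^1(X,D)\simeq\cM^1(X,D)$ of Proposition \ref{prop:conven2}, we conclude that $K_m$ is strongly compact, as required. The main technical obstacle is the strong compactness of entropy sublevel sets, which is supplied by Theorem \ref{thm:compactness}; once available, the rest of the argument is a straightforward bookkeeping of the functionals $H$, $E^*$ and $J$.
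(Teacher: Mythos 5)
Your proof is correct and follows essentially the same route as the paper: both directions reduce to the strong compactness of entropy sublevel sets $\cH_A(X,\mu_0)$ (Theorem \ref{thm:compactness}), combined with the strong lower semicontinuity of $\mab$ (Lemma \ref{lem:lsc}), the comparison \eqref{equ:compen}, and the strong homeomorphism $\cT^1\simeq\cM^1$. The only cosmetic difference is that the paper leaves the converse implication as an exercise and hints at \eqref{equ:compen}, while you settle it directly by noting that $J$ is strongly continuous by definition of the strong topology; that is a perfectly valid (and arguably cleaner) shortcut.
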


\begin{proof} By Proposition \ref{prop:conven}, the statement is equivalent to the strong compactness of
$$
C_m:=\left\{\mu\in\cM^1(X,D)\mid H(\mu)\le E^*(\mu)+m\right\}.
$$
This set is strongly closed in $\cM^1(X,D)$ by Lemma \ref{lem:lsc}. Using (\ref{equ:compen}), the properness assumption shows that $E^*$ is bounded on $C_m$, hence $C_m\subset\cH_A(X,\mu)$ for some $A>0$. The result follows since $\cH_A(X,\mu)$ is strongly compact by Theorem \ref{thm:compactness}. The converse is straightforward to see, using Proposition \ref{prop:conven} and (\ref{equ:compen}). 
\end{proof}

\begin{cor}\label{cor:KE}
It the Mabuchi functional is proper, then $(X,D)$ admits a K\"ahler-Einstein metric. 
\end{cor}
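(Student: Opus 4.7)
The plan is to deploy the direct method of the calculus of variations: properness gives compactness of sublevel sets, lower semicontinuity is already available, and the variational characterization in Theorem \ref{thm:varKE} identifies minimizers of $\mab$ with K\"ahler-Einstein metrics. All three inputs have been set up earlier in the section, so the proof is essentially a matter of stitching them together.

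More precisely, I would first argue that $m := \inf_{\cT^1(X,D)}\mab$ is finite. Lemma \ref{lem:comp}(ii) gives $m=\inf_{\cT^1(X,D)}\din$, and since $\din(\om_0)=(L-E)(0)=0$ we have $m<+\infty$. On the other hand, for any $\om\in\cT^1(X,D)$ with $\mu:=V^{-1}\om^n\in\cM^1(X,D)$, one has $\mab(\om)=H(\mu)-E^*(\mu)\ge -E^*(\mu)>-\infty$ pointwise, so $\mab$ takes values in $\R\cup\{+\infty\}$.

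Now I pick a minimizing sequence $\om_j\in\cT^1(X,D)$ with $\mab(\om_j)\to m$. Eventually $\mab(\om_j)\le m+1$, so the sequence lies in the sublevel set $\{\mab\le m+1\}$, which is strongly compact by Theorem \ref{thm:mab} thanks to the properness hypothesis. Extracting a strongly convergent subsequence $\om_{j_k}\to\om^*\in\cT^1(X,D)$, the strong lower semicontinuity of $\mab$ (Lemma \ref{lem:lsc}) yields
$$
\mab(\om^*)\le\liminf_k\mab(\om_{j_k})=m,
$$
so $\om^*$ is a minimizer. (Note that this also rules out $m=-\infty$ a posteriori, since that would force $\mab(\om^*)=-\infty$, impossible on $\cT^1(X,D)$.) Applying the equivalence (i)$\Leftrightarrow$(iii) of Theorem \ref{thm:varKE}, $\om^*$ is a K\"ahler-Einstein metric for $(X,D)$, which is the desired conclusion.

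There is really no obstacle left at this stage: the genuinely difficult step was establishing the strong compactness of bounded-entropy measures (Theorem \ref{thm:compactness}), on which Theorem \ref{thm:mab} rests, together with the variational identification of K\"ahler-Einstein metrics as minimizers. Once those are in hand, the existence statement in Corollary \ref{cor:KE} is the routine bottom-of-the-well conclusion of the direct method.
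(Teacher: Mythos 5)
Your proof is correct and matches the paper's approach, which is exactly the two-line observation that properness (via Theorem \ref{thm:mab}) forces the existence of a minimizer and Theorem \ref{thm:varKE} identifies that minimizer as K\"ahler--Einstein. You have simply unpacked the direct-method details (minimizing sequence, strong compactness of a sublevel set, strong lower semicontinuity from Lemma \ref{lem:lsc}) that the paper leaves implicit.
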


\begin{proof} Theorem \ref{thm:mab} implies that $\mab$ admits a minimizer in $\cT^1(X,D)$, which is necessarily a K\"ahler-Einstein metric by Theorem \ref{thm:varKE}.  
\end{proof}

We will prove in Theorem \ref{thm:uniqueKE} below that $\om$ is furthermore unique in that case, and that $\Aut^0(X,D)=\{1\}$. Recall that, for $X$ non-singular with $\Aut^0(X)=\{1\}$ (and $D=0$), a deep result of Tian \cite{Tia97}, strengthened in \cite{PSSW}, conversely shows that the existence of a K\"ahler-Einstein metric implies the properness of the Mabuchi functional - an infinite dimensional version of the Kempf-Ness theorem.

\section{Uniqueness and reductivity} \label{sec:unique}

The goal of this section is to prove a singular version of Bando and Mabuchi's uniqueness theorem \cite{BM87}. 
As we shall see, it is a fairly direct consequence of a slight variant of a result of Berndtsson \cite{Bern11}, stated and proved with full details in Appendix C.
  
\begin{thm}\label{thm:unique} Let $(X,D)$ be a log Fano pair. For any two K\"ahler-Einstein metrics $\om,\om'$ for $(X,D)$, there exists a $1$-parameter subgroup $\la:(\C,+)\to\Aut^0(X,D)$ such that 
$\la(1)^*\om=\om'$ and $\la(is)^*\om=\om$ for all $s\in\R$. 
\end{thm}
Here $\Aut(X,D)$ denotes the stabilizer of $D$ in $\Aut(X)$. Since $\Aut(X,D)$ preserves the polarization $-(K_X+D)$, it is realized as the stabilizer of $X$ and $D$ in the linear group of $H^0(X,-m(K_X+D))$ for $m$ large enough, and it is therefore a linear algebraic group. Further, recall that the identity component $\Aut^0(X)$ of the full automorphism group is also a complex algebraic group with Lie algebra $H^0(X,T_X)$, where $T_X=(\Omega^1_X)^*$ denotes the Zariski tangent sheaf (cf.~\cite[Exercise 2.6.4]{Koll}).

\begin{proof}
Let $\pi:\tX\to X$ be a log resolution of $(X,D)$. The klt condition enables to write in a unique way 
$$
\pi^*(K_X+D)=K_{\tX}+\D-E
$$ 
where $\D$ is an effective $\Q$-divisor on $\tX$ with coefficients in $[0,1)$, $E$ is an effective divisor on $\tX$ with integer coefficients, and $\pi_*(\D-E)=D$. We do not claim that $\D$ and $E$ are without common components, but on the other hand observe that $\D$ and $E$ have SNC support and $E$ is necessarily $\pi$-exceptional. Set $L:=-K_{\tX}+E$, so that the canonical section of $\cO_X(E)$ induces a holomorphic $L$-valued $n$-form $u$ on $X$ having $E$ as its zero divisor. Since $E$ is exceptional, we have $H^0(\tX,K_{\tX}+L)=\C u$. Note also that $L=M+\D$ where $M:=-\pi^*(K_X+D)$ is a semipositive, big $\Q$-line bundle. By the Kawamata-Viehweg (or Nadel) vanishing theorem, we thus have $H^1(\tX,K_{\tX}+L)=0$. 

Choose continuous psh metrics $\psi_0,\psi_1$ on $-(K_X+D)$ with curvature $\om$, $\om'$ respectively, and normalized so that $E(\psi_0)=E(\psi_1)=0$. Let $\psi$ be the 'weak geodesic' joining $\p_0$ to $\p_1$ as in \S\ref{sec:geod}, so that $\psi$ is a locally bounded psh metric on the pull-back of $-(K_X+D)$ to $X\times S$ with $S=\left\{t\in\C\mid 0<\Re t<1\right\}$. Recall that $t\mapsto\psi_t$ is Lipschitz continuous on $S$, independent of $\Im t$, and converges uniformly to $\psi_0$ (resp. $\psi_1$) as $t\to 0$ (resp. $t\to 1$). 

By Lemma \ref{lem:psh}, $E(\psi_t)$ is an affine function of $t\in(0,1)$, which converges to $E(\psi_0)=E(\psi_1)=0$ as $t\to 0$ and $1$, and hence $E(\psi_t)=0$ for all $t\in S$.   

By Lemma \ref{lem:convdin}, $t\mapsto\din(\psi_t)=L(\psi_t)$ is continuous and convex on $[0,1]$, with $\din(\psi_0)=\din(\psi_1)=\inf\din$ thanks to the variational characterization of K\"ahler-Einstein metrics (Theorem \ref{thm:varKE}). It follows that $\din(\psi_t)=\inf\din$ for all $t\in S$, and another application of the variational characterization shows that $\om_t:=dd^c\psi_t$ is a K\"ahler-Einstein metric for $(X,D)$ for all $t\in S$.

Now set $\tau:=\pi^*\p$, let $\phi:=\tau+\phi_\D$ be the, with $\phi_\D$ the canonical metric on $\D$ with curvature current equal to $[\D]$, so that $\phi$ is a psh metric on the pull-back of $L$ to $X\times S$, and observe that 
$$
L(\psi_t)=-\log\int_{\tX} u\wedge\bar u\,e^{-\phi_t}.
$$
By Theorem \ref{thm:berndt} in Appendix C, there exists a holomorphic vector field $V$ on $\tX\setminus E$ such that 
$dd^c_z\phi_t=dd^c_z\tau_t+[\D]$ moves along the local flow of $V$ on $\tX\setminus E$. Using for instance the uniqueness of the Siu decomposition of a closed positive $(1,1)$-current, it follows that $dd^c_z\tau_t=\pi^*\om_t$ also moves along the local flow of $V$, \ie
\begin{equation}\label{equ:lie}
\left(\cL_V+\frac{\partial}{\partial t}\right)\pi^*\om_t=0.
\end{equation}
Since $E$ is $\pi$-exceptional, $V$ induces a holomorphic vector field on a Zariski open set $\Omega\subset X_\reg$ with $X\setminus\Omega$ of codimension at least $2$. But the Zariski tangent sheaf $T_X$ is a dual sheaf and $X$ is normal, so $V$ extends to a section in $H^0(X,T_X)$, still denoted by $V$ for simplicity. Since $H^0(X,T_X)$ is the Lie algebra of the complex Lie group $\Aut^0(X)$, $\la(t):=\exp(-tV)$ defines a $(\C,+)$-action on $X$. By (\ref{equ:lie}), $\la(-t)^*\om_t$ is independent of $t\in S$. For all $t,t'\in S$, $\la(-t)^*\psi_t-\la(-t')^*\psi_{t'}$ is thus a constant, and
$$
L(\la(-t)^*\psi_t)=L(\psi_t)=L(\psi_{t'})=L(\la(-t')^*\psi_{t'})
$$
shows that in fact $\la(-t)^*\psi_t=\la(-t')^*\psi_{t'}$. Since $\psi_{t'}$ converges uniformly to $\psi_0$ as $t'\to 0$, we conclude that $\psi_t=\la(t)^*\psi_0$ for all $t\in S$, and hence $\psi_1=\la(1)^*\psi_0$ in the limit. Since $\psi_t$ is independent of $\Im t$, we have $\la(\e+ is)^*\psi_0=\psi_\e$ for $0<\e<1$ and $s\in\R$, and hence $\la(is)^*\psi_0=\psi_0$ in the limit. Finally, the K\"ahler-Einstein equation shows that
$$
\la(t)^*(\om_0+[D])=\la(t)^*\Ric(\om_0)=\Ric(\la(t)^*\om_0)=\Ric(\om_t)=\om_t+[D]=\la(t)^*\om_0+[D]
$$ 
on $X_\reg$, so that $\la$ is indeed a $1$-parameter subgroup of $\Aut^0(X,D)$. 
\end{proof}


Following \cite{CDS3}, we observe that Theorem \ref{thm:unique} yields the following generalization of a classical result of Matsushima \cite{Mat}, which plays a key role in \cite{CDS3}.

\begin{thm}\label{thm:reductive} 
If the log Fano pair $(X,D)$ admits a K\"ahler-Einstein metric $\om$, then $\Aut^0(X,D)$ coincides with the complexification of the group $K$ of holomorphic isometries of $(X,\om)$. Further, $K$ is compact, and $\Aut^0(X,D)$ is thus reductive. 
\end{thm}
\begin{proof} For each $g\in\Aut^0(X,D)$, $g^*\om$ is a K\"ahler-Einstein metric for $(X,D)$. By Theorem \ref{thm:unique}, we may thus find a one-parameter subgroup $\la$ of $\Aut^0(X,D)$ with $\la(is)\in K$ for all $s\in\R$ and such that $\la^*(1)\om=g^*\om$. The first condition implies that $\la(1)$ lies in the complexification of $K$, and the second one means $g\in K\la(1)$, which proves the first assertion. 

Next, write $\om$ as the curvature form of a metric $\phi$ on the $\Q$-line bundle $-(K_X+D)$, normalized so that $L(\phi)=0$. For each $g\in K$, we have $g^*\om^n=\om^n$. Since $L(g^*\phi)=L(\phi)=0$, the K\"ahler-Einstein equation yields as before $g^*\phi=\phi$. In other words, $K$ is the stabilizer of $\phi$, and we conclude by Lemma \ref{lem:compactisom} below. 
\end{proof} 

\begin{lem}\label{lem:compactisom} Let $(X,L)$ be a polarized normal variety, and let $G=\Aut(X,L)$ be its (linear algebraic) group of automorphisms. For each psh metric $\phi\in\cE^1(X,L)$, the stabilizer $G_\phi=\left\{g\in G\mid g^*\phi=\phi\right\}$ is then compact. 
\end{lem} 

\begin{proof} Replacing $L$ with a large enough multiple, we may assume that $L$ is very ample. Choosing a basis $(s_1,\dots,s_N)$ for $H^0(X,L)$, we get an embedding $X\subset\PP^{N-1}$, which realizes $G$ as the Zariski closed subgroup of $\mathrm{GL}(N,\C)$ sending $X$ to itself. Let 
$$
\phi_0=\log\left(\sum|s_i|^2\right)
$$
be the corresponding Fubini-Study metric on $L$, with curvature form $\om_0$. We then introduce the $\om_0$-psh functions $\p:=\phi-\phi_0\in\cE^1(X,\om_0)$, and
\begin{equation}\label{equ:phig}
\f_g:=g^*\phi_0-\phi_0=\log\left(\sum_i|\sum_j g_{ij} s_j|^2_{\phi_0}\right)
\end{equation}
for each $g=(g_{ij})\in\mathrm{GL}(N,\C)$. By homogeneity, it is easy to see that
$$
\int_X\f_g\om_0^n=\log\|g\|^2+O(1)
$$
for any given choice of norm $\|\cdot\|$ on the space of complex $N\times N$ matrices (compare the proof of \cite[Lemma 3.2]{Tia14}). It will thus be enough to show that $\int_X\f_g\om_0^n$ remains bounded for $g\in G_\phi$. 

By (\ref{equ:energybis}), we have
$$
E(\p)-E(\f_g)=\frac{1}{n+1}\sum_{j=0}^nV^{-1}\int(g^*\phi-g^*\phi_0)(dd^c g^*\phi)^j\wedge(dd^c g^*\phi_0)^{n-j}
$$
$$
=\frac{1}{n+1}\sum_{j=0}^nV^{-1}\int(\phi-\phi_0)(dd^c \phi)^j\wedge(dd^c \phi_0)^{n-j}=E(\p),
$$
and hence $E(\f_g)=0$ for all $g\in G_\phi$. For similar reasons, we have
$J_\p(\f_g)=J_{\p}(0)$, and Lemma \ref{lem:encompare} yields as desired that $J(\f_g)=\int\f_g\om_0^n$ is bounded. 
\end{proof}

The next result summarizes the consequences of the properness of the Mabuchi functional regarding K\"ahler-Einstein metrics. 

\begin{thm}\label{thm:uniqueKE} 
Assume that the Mabuchi functional of $(X,D)$ is proper. Then we have: 
\begin{itemize}
\item[(i)] $\Aut^0(X,D)=\{1\}$.
\item[(ii)] $(X,D)$ admits a unique K\"ahler-Einstein metric $\om_\mathrm{KE}$.
\item[(iii)] For every sequence $\om_j\in\cT^1(X,D)$ such that 
$\mab(\om_j)$ converges to $\inf_{\cT^1(X,D)}\mab$, we have $\om_j\to\om_\mathrm{KE}$ in the strong topology of $\cT^1(X,D)$. 
\end{itemize}
\end{thm}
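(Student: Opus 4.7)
My plan is to leverage the strong compactness of Mabuchi sublevel sets from Theorem~\ref{thm:mab}, the variational characterization of K\"ahler-Einstein metrics from Theorem~\ref{thm:varKE}, and the transitivity of the $\Aut^0(X,D)$-action on KE metrics from Theorem~\ref{thm:unique}. I would extract existence of a KE metric first, then triviality of $\Aut^0(X,D)$, then uniqueness as an immediate corollary, and finally the strong convergence in (iii).

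\emph{Existence and strong compactness of the KE set.} Take any minimizing sequence $\om_j\in\cT^1(X,D)$ with $\mab(\om_j)\to m:=\inf_{\cT^1(X,D)}\mab$. By Theorem~\ref{thm:mab}, $(\om_j)$ lies eventually in a strongly compact sublevel set of $\mab$, hence admits a strongly convergent subsequence $\om_{j_k}\to\om_\infty$. Strong lower semicontinuity of $\mab$ (Lemma~\ref{lem:lsc}) gives $\mab(\om_\infty)\le m$, so $\om_\infty$ is a minimizer, and Theorem~\ref{thm:varKE} identifies it as a KE metric. In particular the set $\cK$ of all KE metrics coincides with $\mab^{-1}(m)$, which is strongly closed (by lsc) and contained in the strongly compact set $\{\mab\le m+1\}$; thus $\cK$ is itself strongly compact.

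\emph{Triviality of $\Aut^0(X,D)$ and uniqueness.} Fix any $\om\in\cK$. By Theorem~\ref{thm:unique}, the orbit map $F\mapsto F^*\om$ is a continuous surjection from $\Aut^0(X,D)$ onto the strongly compact set $\cK$. Its fiber over $\om$ is the stabilizer $\mathrm{Stab}(\om)$, which consists of holomorphic isometries of the K\"ahler-Einstein metric on $X_\reg$ and is therefore compact (closed subgroup of the compact isometry group of the metric completion). On the other hand, $(X,D)$ being log Fano forces $-K_X=(-K_X-D)+D$ to be big, so $h^1(X,\cO_X)=0$ by Kawamata-Viehweg vanishing and $\Aut^0(X)$ is a connected affine algebraic group; $\Aut^0(X,D)$ inherits this property as a Zariski-closed subgroup. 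If $\Aut^0(X,D)$ were non-trivial, then $\Aut^0(X,D)/\mathrm{Stab}(\om)$ would be a non-compact continuous image, contradicting the strong compactness of $\cK$. Hence $\Aut^0(X,D)=\{1\}$, proving (i), and Theorem~\ref{thm:unique} then forces $|\cK|=1$, proving (ii).

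\emph{Strong convergence of minimizing sequences, and main obstacle.} For (iii), let $(\om_j)$ be an arbitrary sequence with $\mab(\om_j)\to m$. By the existence argument, every subsequence admits a further subsequence converging strongly to a minimizer of $\mab$, which by (ii) is necessarily $\om_\mathrm{KE}$; hence the whole sequence converges strongly to $\om_\mathrm{KE}$. The main obstacle in the above plan is the triviality of $\Aut^0(X,D)$, which blends an analytic input (strong compactness of $\cK$) with structural algebraic-group facts about the ambient automorphism group. A more self-contained, purely analytic alternative would be to show directly that for any non-zero $V\in\mathrm{Lie}\,\Aut^0(X,D)$ one can find a real one-parameter subgroup $F_t=\exp(tV')$ along which $J(F_t^*\om)\to+\infty$ as $|t|\to+\infty$ (using a Hamiltonian-plus-gradient decomposition of $V$ relative to $\om\in\cK$); since $\mab(F_t^*\om)\equiv m$, this would contradict Theorem~\ref{thm:mab} directly.
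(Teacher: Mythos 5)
Your treatment of existence, and of parts (ii) and (iii), follows the paper: existence via a minimizing sequence and the strong compactness of Mabuchi sublevel sets (Theorem~\ref{thm:mab}) together with lower semicontinuity (Lemma~\ref{lem:lsc}) and the variational characterization (Theorem~\ref{thm:varKE}); uniqueness from Theorem~\ref{thm:unique} once (i) is known; and (iii) by the same subsequence/limit-point argument. The genuine divergence is in (i), and that is where the gaps lie.

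The paper's proof of (i) is a direct analytic argument: for any one-parameter subgroup $(F_t)_{t\in\C}$ of $\Aut^0(X,D)$, the function $t\mapsto J_\om(F_t^*\om)$ is subharmonic (Lemma~\ref{lem:psh}: $E(\f^t)$ is harmonic and the pairing against $\MA(\f_\om)$ is subharmonic) and bounded (because each $F_t^*\om$ is K\"ahler-Einstein, hence lies in a strongly compact sublevel set of $\mab$ by properness). Being bounded, subharmonic, and vanishing at $t=0$, it is identically zero, so $F_t^*\om^n=\om^n$ by \cite[Theorem~4.1]{BBGZ} and hence $F_t^*\om=\om$ for all $t$; only \emph{then} is the standard fact used that a holomorphic vector field preserving the K\"ahler form on $X_0$ must vanish. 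Your soft topological alternative has three holes. First, the continuity of the orbit map $F\mapsto F^*\om$ into $\cT^1(X,D)$ with its strong topology is asserted, not proved, and is needed throughout. Second, the compactness of $\mathrm{Stab}(\om)$ via ``the compact isometry group of the metric completion'' is not free: compactness of that completion is a deep result (in the $D=0$ case it comes from Donaldson--Sun-type structure theory, which the paper does not assume). The cleaner route -- any one-parameter subgroup in $\mathrm{Stab}(\om)$ has generator $V$ with $\cL_V\om=0$ on $X_0$, hence $V=0$, so $\mathrm{Stab}(\om)$ is finite -- uses the very same ``standard'' step the paper invokes at the end, so you gain nothing by isolating it. Third, and most seriously, ``$\Aut^0(X,D)/\mathrm{Stab}(\om)$ would be a non-compact continuous image, contradicting the strong compactness of $\cK$'' is a non sequitur: a continuous image of a non-compact space can be compact. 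You would need the continuous bijection $\Aut^0(X,D)/\mathrm{Stab}(\om)\to\cK$ to be a homeomorphism, which is an Effros/Baire-category statement for transitive Polish group actions that you have not set up. Your closing alternative (forcing $J(F_t^*\om)\to+\infty$ via a Hamiltonian/gradient splitting of $V$) is closer in spirit to the paper, but the paper shows \emph{boundedness plus subharmonicity} rather than blowup, and the splitting you propose is a Matsushima-type decomposition that is far from free in the singular log Fano setting.
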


\begin{proof} 
By Corollary \ref{cor:KE} there exists a K\"ahler-Einstein metric $\om$. Let us prove (i). Let $\la$ be a  one-parameter subgroup of $\Aut^0(X,D)$. We claim that $\la$ preserves $\om$ for all $t\in\C$. Granting this, the affine variety $\Aut^0(X,D)$ will be contained in the compact group of isometries of $\om$, and hence will be trivial. To prove the claim,  observe that $\la(t)^*\om$ is a K\"ahler-Einstein metric for each $t\in\C$. If we let $\phi$ be a continuous psh metric on $-(K_X+D)$ with curvature $\om$ and set $\f^t:=\la(t)^*\phi-\phi_0$, then $\f(x,t):=\f^t(x)$ is a continuous $\om_0$-psh function on $X\times\C$ such that 
$$
(\om_0+dd^c\f)^{n+1}=0
$$ 
on $X\times\C$. By Lemma \ref{lem:psh} $E(\f^t)$ is thus harmonic on $\C$, while $\int_X(\f^t-\f_\om)\MA(\f_\om)$ is subharmonic, simply because $t\mapsto\f^t(x)$ is subharmonic for each $x\in X$ fixed.  It follows that 
$$
J_{\om}(\la(t)^*\om)=E(\f_\om)-E(\f^t)+\int_X(\f^t-\f_\om)\MA(\f_\om)
$$
is subharmonic and bounded on $\C$, hence constant since it vanishes for $t=0$. By \cite[Theorem 4.1]{BBGZ} it follows that as desired that $\la(t)^*\om=\om$ for all $t\in\C$. 

By Theorem \ref{thm:unique}, (i) implies the uniqueness part in (ii). It remains to prove (iii). Since $\mab(\om_j)$ is in particular bounded above, $(\om_j)$ stays in a strongly compact set by Theorem \ref{thm:mab}. It is thus enough to show that any strong limit point $\om_\infty$ of $\om_j$ has to coincide with $\om$. But since $\mab$ is lsc in the strong topology by Lemma \ref{lem:lsc}, we have $\mab(\om_\infty)\le\liminf_j\mab(\om_j)=\inf_{\cT^1(X,D)}\mab$. By Theorem \ref{thm:varKE} it follows as desired that $\om_\infty=\om$.
\end{proof} 

We will also use the following variant for the Ding functional to prove the convergence of the K\"ahler-Ricci flow: 
\begin{lem}\label{lem:cvdin} 
Assume that the Mabuchi functional of $(X,D)$ is proper, and let $\om_j\in\cT^1(X,\D)$ be a sequence such that $\mab(\om_j)$ is bounded above. If $\din(\om_j)$ converges to $\inf_{\cT^1(X,D)}\din$, then $\om_j$ strongly converges to the unique K\"ahler-Einstein metric $\om_\mathrm{KE}$ of $(X,D)$. 
\end{lem}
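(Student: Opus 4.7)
The plan is to combine the strong compactness provided by the entropy bound (via the properness of $\mab$) with the strong continuity of the Ding functional, and then invoke the variational characterization of K\"ahler-Einstein metrics together with uniqueness.

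First, since $\mab(\om_j)$ is bounded above, Theorem~\ref{thm:mab} shows that $(\om_j)$ is contained in a strongly compact subset of $\cT^1(X,D)$. A standard subsequence argument then reduces the claim to showing that every strong limit point of $(\om_j)$ equals $\om_\mathrm{KE}$.

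So let $\om_\infty\in\cT^1(X,D)$ be a strong limit point, say $\om_{j_k}\to\om_\infty$ strongly. By Lemma~\ref{lem:lsc}, the Ding functional is strongly continuous on $\cT^1(X,D)$, so
\[
\din(\om_\infty)=\lim_{k\to+\infty}\din(\om_{j_k})=\inf_{\cT^1(X,D)}\din.
\]
By the implication (ii)$\Rightarrow$(i) of Theorem~\ref{thm:varKE}, $\om_\infty$ is a K\"ahler-Einstein metric for $(X,D)$. Since the Mabuchi functional is proper, Theorem~\ref{thm:uniqueKE}(ii) guarantees that the K\"ahler-Einstein metric is unique, so $\om_\infty=\om_\mathrm{KE}$.

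As every strong limit point of $(\om_j)$ coincides with $\om_\mathrm{KE}$ and $(\om_j)$ is relatively strongly compact, the whole sequence converges strongly to $\om_\mathrm{KE}$. The only subtle ingredient here is Theorem~\ref{thm:mab}, which in turn relies on the strong compactness of measures with bounded entropy (Theorem~\ref{thm:compactness}); every other step is a direct quotation of previously established facts, so no real obstacle is expected.
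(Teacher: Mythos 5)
Your proof is correct and follows the same route as the paper: strong compactness of the sequence via Theorem~\ref{thm:mab}, strong continuity of $\din$ from Lemma~\ref{lem:lsc} to identify the value at a limit point, and the variational characterization Theorem~\ref{thm:varKE} together with uniqueness to pin down the limit. You merely spell out the uniqueness step a bit more explicitly than the paper does.
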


\begin{proof} The assumption guarantees that $(\om_j)$ stays in a strongly compact set, and it is thus enough to show that any limit point $\om_\infty$ of $(\om_j)$ in the strong topology necessarily coincides with $\om_\mathrm{KE}$. But the minimizing assumption on $(\om_j)$ and the strong continuity of $\din$ (Lemma \ref{lem:lsc}) imply that $\din(\om_\infty)=\inf_{\cT^1(X,D)}\din$, and we conclude by Theorem \ref{thm:varKE}.
\end{proof}

\section{Ricci iteration}\label{sec:iteration}

We still denote by $(X,D)$ a log Fano pair, and use the notation of \S\ref{sec:func}. 

\subsection{The Ricci inverse operator}
For each $\om\in\cT^1(X,D)$, the measure $\mu_\om$ is tame by Proposition \ref{prop:hold}. Using Lemma \ref{lem:tame} we may thus introduce:

\begin{defi}\label{defi:R} For each $\om\in\cT^1(X,D)$ we let $R\om\in\cT^1(X,D)$ be the unique current with continuous potentials such that 
\begin{equation}\label{equ:R}
V^{-1}\left(R\om\right)^n=\mu_\om.
\end{equation}
The map $R:\cT^1(X,D)\to\cT^1(X,D)$ so defined is called the \emph{Ricci inverse operator}. 
\end{defi}
The defining equation for $R\om$ may be rewritten as 
\begin{equation}\label{equ:ricciR}
\Ric(R\om)=\om+[D]
\end{equation}
on $X_\reg$, and $R\om=\om$ iff $\om$ is a K\"ahler-Einstein metric. 

\begin{lem}\label{lem:R} 
The Ricci inverse operator satisfies the following properties.
\begin{itemize}
\item[(i)] $R:\cT^1(X,D)\to\cT^1(X,D)$ is continuous with respect to the strong topology.
\item[(ii)] If $\om\in\cT^1(X,D)$ is smooth on a given open subset $U$ of $X_0$, then so is $R\om$.
\end{itemize}
\end{lem}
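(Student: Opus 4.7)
The statement has two parts, and both should be reasonably short given the machinery already built up in the paper.

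For part (i), my plan is to factor $R$ as a composition of two strongly continuous maps. First, $\om\mapsto\mu_\om$ is strongly continuous from $\cT^1(X,D)$ to $\cM^1(X,D)$ by Lemma \ref{lem:contmu}. Second, the inverse Monge-Amp\`ere operator $\mu\mapsto\om$ (mapping $\mu\in\cM^1(X,D)$ to the unique $\om\in\cT^1(X,D)$ with $V^{-1}\om^n=\mu$) is strongly continuous by Proposition \ref{prop:conven2}, which asserts that $\om\mapsto V^{-1}\om^n$ is a homeomorphism $\cT^1(X,\om_0)\simeq\cM^1(X,\om_0)$ for the strong topologies. Since by definition $V^{-1}(R\om)^n=\mu_\om$, the operator $R$ is precisely the composition of these two maps, and therefore strongly continuous. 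One should also check at the outset that $\mu_\om$ indeed lies in $\cM^1(X,D)$, which follows from the fact that it is tame (Proposition \ref{prop:hold} combined with Lemma \ref{lem:adapted}) and hence has finite energy by Lemma \ref{lem:entproper}, so that Lemma \ref{lem:tame} guarantees $R\om$ is well-defined with continuous potentials.

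For part (ii), the strategy mirrors the regularity argument already used in the proof of Lemma \ref{lem:KE}. Write $\om=\om_0+dd^c\f_\om$ with $\f_\om$ continuous on $X$ (and smooth on $U$ by assumption). The equation defining $R\om$ reads
$$
(\om_0+dd^c\f_{R\om})^n=V\,\mu_\om=\frac{V\,e^{-\f_\om}}{\int_X e^{-\f_\om}\mu_0}\,\mu_0.
$$
Since $U\subset X_0$ and $\mu_0$ is a smooth positive volume form on $X_0$ (the adapted measure coincides with a smooth volume form away from the singularities of $X$ and the support of $D$), the right-hand side has smooth strictly positive density with respect to any smooth volume form on $U$. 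From Lemma \ref{lem:tame} we already know $\f_{R\om}$ is continuous on $X$. Applying Theorem \ref{thm:paun} of Appendix B to a log resolution $\pi:\tX\to X$ gives a local $C^{1,1}$ estimate for $\f_{R\om}$ on $U$, after which the Evans-Krylov theorem combined with standard Schauder bootstrapping upgrades this to $C^\infty$ regularity of $\f_{R\om}$ on $U$, hence smoothness of $R\om$ on $U$.

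\textbf{Expected obstacles.} Neither part looks genuinely hard: part (i) is essentially a tautological assembly of the strong-topology machinery of Section \ref{sec:strong}, while part (ii) is local interior regularity for a non-degenerate Monge-Amp\`ere equation with smooth positive right-hand side on a smooth open set. The only delicate point is that the \emph{a priori} Laplacian estimate must be applied in a setting where $\om_0$ is only semipositive-and-big on the log resolution, which is exactly the setting addressed by Theorem \ref{thm:paun}; as in the proof of Lemma \ref{lem:KE}, one relies on the existence of a quasi-psh weight that is smooth on $\pi^{-1}(X_0)$ and whose curvature is a genuine K\"ahler form, so that all estimates are interior on $U$.
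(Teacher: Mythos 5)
Your proof is correct and follows exactly the same route as the paper: part (i) by composing the strong continuity of $\om\mapsto\mu_\om$ (Lemma \ref{lem:contmu}) with the strong homeomorphism of Proposition \ref{prop:conven2}, and part (ii) by the Laplacian estimate of Theorem \ref{thm:paun} on a log resolution together with Evans--Krylov, as already used for Lemma \ref{lem:KE}. No gaps.
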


\begin{proof} (i) follows from Proposition \ref{prop:conven2} and Lemma \ref{lem:contmu}. As in the proof of Lemma \ref{lem:KE}, assertion (ii) is a consequence of Theorem \ref{thm:paun} applied to a log resolution of $(X,D)$, combined with the Evans-Krylov theorm. 
\end{proof}

As in \cite{Rub,Kel} we next observe that the Mabuchi functional decreases along $R$: 

\begin{lem}\label{lem:monotone} For all $\om\in\cT^1(X,D)$ we have $\mab(R\om)\le\mab(\om)$, with equality iff $R\om=\om$, \ie $\om$ is a K\"ahler-Einstein metric. 
\end{lem}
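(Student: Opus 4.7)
\textbf{Proof plan for Lemma \ref{lem:monotone}.}
The plan is to relate $\mab(R\om)$ directly to $\din(\om)$ via an explicit computation, then invoke the inequality $\din \le \mab$ established in Lemma \ref{lem:comp}. The key is that $R\om$ is constructed precisely so that its Monge--Amp\`ere measure has a very concrete density with respect to $\mu_0$.

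First, I would unravel the two terms of $\mab(R\om) = H(V^{-1}(R\om)^n) - E^*(V^{-1}(R\om)^n)$. Since $V^{-1}(R\om)^n = \mu_\om = e^{-\f_\om + L(\f_\om)}\mu_0$ by (\ref{equ:muomfi}) and (\ref{equ:R}), taking logs yields
\[
H(V^{-1}(R\om)^n) = L(\f_\om) - \int_X \f_\om\,\mu_\om.
\]
Simultaneously, since $V^{-1}(R\om)^n = \MA(\f_{R\om})$, the identity (\ref{equ:eij}) gives
\[
E^*(V^{-1}(R\om)^n) = E(\f_{R\om}) - \int_X \f_{R\om}\,\mu_\om.
\]
Subtracting and recognizing the expression $E(\f_{R\om}) - E(\f_\om) - \int_X(\f_{R\om}-\f_\om)\MA(\f_{R\om}) = J_{\f_{R\om}}(\f_\om)$ yields the clean identity
\[
\mab(R\om) = \din(\om) - J_{\f_{R\om}}(\f_\om).
\]
Combining with $J_{\f_{R\om}}(\f_\om) \ge 0$ (concavity of $E$, cf.\ \S\ref{sec:e1}) and $\din(\om) \le \mab(\om)$ (Lemma \ref{lem:comp}(i)), the monotonicity $\mab(R\om) \le \mab(\om)$ follows.

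For the equality case, $\mab(R\om) = \mab(\om)$ forces both $J_{\f_{R\om}}(\f_\om) = 0$ and $\din(\om) = \mab(\om)$. The first, via $I(\f_\om,\f_{R\om}) \le (n+1)J_{\f_{R\om}}(\f_\om)$ from (\ref{equ:IJ}), gives $I(\f_\om,\f_{R\om}) = 0$; by the argument already used in the proof of Theorem \ref{thm:uniqueKE} (relying on \cite[Proposition 5.6, Theorem 4.1]{BBGZ}), this forces $\f_\om = \f_{R\om}$ modulo a constant, i.e.\ $R\om = \om$. The second condition, by Lemma \ref{lem:comp}(i), is equivalent to $\om$ being a K\"ahler--Einstein metric for $(X,D)$. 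Conversely, if $\om$ is K\"ahler--Einstein then $V^{-1}\om^n = \mu_\om = V^{-1}(R\om)^n$, and uniqueness of the solution in Lemma \ref{lem:tame} gives $R\om = \om$, in which case both inequalities in the chain become equalities. No step here presents any real obstacle; the only delicate point is invoking the correct BBGZ uniqueness result to pass from $I = 0$ to $R\om = \om$ as currents in $\cT^1(X,D)$.
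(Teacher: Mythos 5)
Your proof is correct and follows essentially the same route as the paper: both establish $\mab(R\om) \le \din(\om) \le \mab(\om)$, with the second inequality coming from Lemma \ref{lem:comp}(i). The only (minor) difference is that you use the exact identity $E^*(\mu_\om) = E(\f_{R\om}) - \int\f_{R\om}\,\mu_\om$ from (\ref{equ:eij}) to get the sharpened form $\mab(R\om) = \din(\om) - J_{\f_{R\om}}(\f_\om)$, whereas the paper plugs $\f = \f_\om$ into the $\sup$ defining $E^*$ to get the inequality $\mab(R\om) \le \din(\om)$ directly; your refinement makes the equality analysis slightly more transparent but is the same idea.
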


\begin{proof} We have by definition 
$$
\mab(R\om)=(H-E^*)(V^{-1}\om^n)=(H-E^*)(\mu_\om).
$$
Now (\ref{equ:estar}) implies in particular that $E^*(\mu_\om)\ge E(\f_\om)-\int_X\f_\om\,\mu_\om$, whereas we have
$$
H(\mu_\om)=L(\f_\om)-\int_X\f_\om\,\mu_\om
$$ 
by definition of $L$ and $H$. As a consequence we get
$$
\mab(R\om)\le (L-E)(\f_\om)=\din(\om)
$$
and the result follows thanks to Lemma \ref{lem:comp}.
\end{proof}

\subsection{Convergence of Ricci iteration}

Our goal in this section is to prove the following result, which extends in particular \cite[Theorem 3.3]{Rub}. 

\begin{thm}\label{thm:ri} 
Let $(X,D)$ be a log Fano pair whose Mabuchi functional is proper, and let $\om_\mathrm{KE}$ be its unique K\"ahler-Einstein metric.  
\begin{itemize}
\item[(i)] For all $\om\in\cT^1(X,D)$ we have 
$$
\lim_{j\to+\infty}R^j\om=\om_\mathrm{KE}
$$ 
in the strong topology, the convergence being uniform at the level of potentials. 
\item[(ii)] If $\om$ is smooth on an open subset $U$ of $X_0$, then $R^j\om$ is also smooth on $U$ for all $j$, and the convergence holds in $C^\infty(U)$.  
\end{itemize}
\end{thm}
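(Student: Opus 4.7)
Write $\om_j := R^j\om$ and $\f_j := \f_{\om_j}$. The plan is three-stage: first establish strong convergence $\om_j\to\om_\mathrm{KE}$ in $\cT^1(X,D)$ by a monotonicity/compactness argument, then upgrade to uniform convergence of potentials via the Kolodziej stability theorem applied on a resolution, and finally bootstrap to $C^\infty$ convergence on $U$.

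\smallskip

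By Lemma~\ref{lem:monotone} (and its proof) we have the key sandwich
\[
\mab(\om_{j+1})\le\din(\om_j)\le\mab(\om_j),
\]
so $\mab(\om_j)$ is non-increasing and $\din(\om_j)\to m:=\lim_j\mab(\om_j)$ as well. A direct computation of $H_{\mu_0}(\mu_{\om_0})$ shows $\mab(\om_1)<+\infty$, so by the properness assumption and Theorem~\ref{thm:mab} the tail $(\om_j)_{j\ge 1}$ is contained in a strongly compact subset of $\cT^1(X,D)$. For any strong limit point $\om_\infty$ along a subsequence $\om_{j_k}\to\om_\infty$, the strong continuity of $\din$ (Lemma~\ref{lem:lsc}) gives $\din(\om_\infty)=m$, while the strong lower semicontinuity of $\mab$ combined with Lemma~\ref{lem:comp}(i) yields
\[
\mab(\om_\infty)\le m=\din(\om_\infty)\le\mab(\om_\infty).
\]
Equality $\mab(\om_\infty)=\din(\om_\infty)$ and Lemma~\ref{lem:comp}(i) identify $\om_\infty$ as a K\"ahler-Einstein metric, which must equal $\om_\mathrm{KE}$ by the uniqueness in Theorem~\ref{thm:uniqueKE}. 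Since every strong limit point equals $\om_\mathrm{KE}$, the full sequence converges strongly.

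\smallskip

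The \emph{main obstacle} is upgrading strong $\cE^1$ convergence---which is essentially a nonlinear $W^{1,2}$-type control---to uniform convergence of potentials. Normalize $\sup_X\f_j=0$. Strong convergence keeps $(\f_j)$ in a weakly compact subset of $\cE^1(X,\om_0)$, so by Proposition~\ref{prop:hold} the exponentials $e^{-\f_j}$ are bounded in $L^p(\mu_0)$ for every $p<+\infty$ and converge in $L^p(\mu_0)$ to $e^{-\f_\mathrm{KE}}$. Lemma~\ref{lem:L} gives $L(\f_j)\to L(\f_\mathrm{KE})$, and hence the densities
\[
\frac{d\mu_{\om_j}}{d\mu_0}=e^{-\f_j+L(\f_j)}
\]
converge in $L^p(\mu_0)$ to those of $\mu_{\om_\mathrm{KE}}$. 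Passing to a log resolution $\pi:\tX\to X$ and factoring $\tilde\mu_0=e^{\psi^+-\psi^-}dV$ as in Lemma~\ref{lem:adapted}(i) (using $\psi^+\le C$ and $e^{-\psi^-}\in L^{p_0}$ with $p_0>1$), H\"older's inequality yields a uniform $L^q(\tX,dV)$-bound on $d\tilde\mu_{\om_j}/dV$ for some $q>1$, with $L^q$-convergence to the corresponding density for $\tilde\mu_{\om_\mathrm{KE}}$. The $L^p$--$C^0$ stability theorem for Monge-Amp\`ere equations (Kolodziej's estimate combined with the continuous approximation property of \cite{EGZ2}, as recalled immediately after Corollary~\ref{cor:stability}) then gives $\tilde\f_j\to\tilde\f_\mathrm{KE}$ uniformly on $\tX$, which descends by properness of $\pi$ to uniform convergence $\f_j\to\f_\mathrm{KE}$ on $X$.

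\smallskip

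For (ii), iterating Lemma~\ref{lem:R}(ii) shows each $\om_j$ is smooth on $U$. Fix $U'\Subset U$ and nested open sets $U'\Subset U''\Subset U$. The uniform $C^0$ bounds on $\f_j$ from the previous paragraph, together with the smoothness of $\mu_0$ on $X_0$, yield a uniform two-sided bound on $d(V\mu_{\om_{j-1}})/\om_0^n$ on $U''$. Theorem~\ref{thm:paun} (Paun's explicit Laplacian estimate from Appendix B) then produces uniform $C^{1,\bar 1}$ bounds on $\f_j$ on $U'$, so that $\om_j$ is uniformly equivalent to $\om_0$ on $U'$. Evans-Krylov promotes this to a uniform $C^{2,\alpha}$ bound, and a standard Schauder bootstrap on the Monge-Amp\`ere equation---whose right-hand side is a smooth function of $\f_{j-1}$ and of the fixed local coordinates---inductively yields uniform $C^{k,\alpha}_\mathrm{loc}(U)$ bounds for every $k$. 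By Arzel\`a-Ascoli and the uniqueness of the strong limit established above, $\om_j\to\om_\mathrm{KE}$ in $C^\infty_\mathrm{loc}(U)$.
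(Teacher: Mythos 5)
Your three-stage plan matches the paper's proof (energy convergence, then uniform convergence via $L^p$ stability on a resolution, then Evans--Krylov plus bootstrap on $X_0$), and all the steps check out. The only real variation is in identifying the strong limit in Step 1: the paper treats $\mab$ as a Lyapunov function for the strongly continuous map $R:\cM\to\cM$ (so that any limit point $\om_\infty$ has $\mab(R\om_\infty)=\mab(\om_\infty)$, forcing $R\om_\infty=\om_\infty$ via the equality case of Lemma~\ref{lem:monotone}), whereas you use the sandwich $\mab(\om_{j+1})\le\din(\om_j)\le\mab(\om_j)$ together with strong continuity of $\din$ and strong lower semicontinuity of $\mab$ to conclude $\mab(\om_\infty)=\din(\om_\infty)$ and hence that $\om_\infty$ is K\"ahler--Einstein by Lemma~\ref{lem:comp}(i). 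Your route bypasses the strong continuity of $R$ (Lemma~\ref{lem:R}(i)) entirely and is arguably a touch cleaner, relying only on the functional inequalities; the paper's route makes the dynamical-systems structure more visible. Steps 2 and 3 are the same in substance, and your implicit handling of the bootstrap (right-hand side gaining regularity from $\f_{j-1}$, then Paun $\Rightarrow$ uniform Laplacian $\Rightarrow$ Lipschitz data $\Rightarrow$ Evans--Krylov $\Rightarrow$ Schauder) is exactly what the paper does more explicitly.
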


As mentioned in the introduction, a more precise version of this result was obtained in \cite{JMR} when $X$ is non-singular and the support of $D$ is a smooth hypersurface.

\begin{proof} {\bf Step 1: Convergence in energy}. Set 
$$
\cM:=\left\{\om'\in\cT^1(X,D)\mid\mab(\om')\le\mab(\om)\right\}.
$$
Note that $\om_\mathrm{KE}$ belongs to $\cM$, since it minimizes $\mab$ by Theorem \ref{thm:varKE}. Theorem \ref{thm:mab} implies that $\cM$ is strongly compact, since the Mabuchi functional of $(X,D)$ is assumed to be proper. By Lemma \ref{lem:monotone} $R$ defines a strongly continuous map $R:\cM\to\cM$. We are going to show that $R^j\om$ converges strongly to $\om_\mathrm{KE}$ by using a Lyapunov-type argument. Since $\cM$ is strongly compact, it is enough to show that any limit point $\om_\infty$ of $R^j\om$ necessarily coincides with $\om_\mathrm{KE}$, \ie is a fixed point of $R$. Since $\mab(R^j\om)$ is non-increasing by Lemma \ref{lem:monotone} and bounded below by Theorem \ref{thm:varKE}, it admits a limit 
$$
\lim_{j\to\infty}\mab(R^j\om)=m. 
$$ 
By continuity of $R$, both $\om_\infty$ and $R\om_\infty$ are limit points of $(R^j\om)$, hence $\mab(R\om_\infty)=\mab(\om_\infty)=m$, which shows that $\om$ is a fixed point of $R$ by Lemma \ref{lem:monotone}.\\

\noindent {\bf Step 2: Uniform convergence on $X$.} Setting $\f_j:=\f_{R^j\om}$ and $\f_\mathrm{KE}:=\f_{\om_\mathrm{KE}}$. By Lemma \ref{lem:R}, all $\f_j$ are continuous, and we are to show that $\f_j\to\f_\mathrm{KE}$ uniformly on $X$. Unravelling the definitions, we get that 
$$
\MA(\f_{j+1})=e^{-\f_j+L(\f_j)}\mu_0
$$ 
for all $j\in\N$. Since $\f_j$ converges strongly to $\f$, we have $L(\f_j)\to L(\f)$ by Lemma \ref{lem:lsc}, while $e^{-\f_j}\to e^{-\f}$ in $L^p(\mu_0)$ for all finite $p$ by Proposition \ref{prop:hold}. If we pick a log resolution $\pi:\tX\to X$ and use the usual notation, we get that
$$
f_j:=\frac{(\tom_0+dd^c\tf_j)^n}{dV}
$$ 
converges in $L^p$ 
to
$$
f:=\frac{(\tom_0+dd^c\tf_\mathrm{KE})^n}{dV}
$$
for some $p>1$. By \cite[Theorem A]{EGZ1} it follows that $\tf_j\to\tf_\mathrm{KE}$ uniformly on $\tX$, hence $\f_j\to\f_\mathrm{KE}$ uniformly on $X$.\\ 

\noindent {\bf Step 3: Smooth convergence}. Let again $\pi:\tX\to X$ be a log resolution, and write $\tmu_0=e^{\p^+-\p^-}dV$ as in Lemma \ref{lem:adapted}. We then have for all $j\in\N$ 
\begin{equation}\label{equ:fj}
V^{-1}(\tom_0+dd^c\tf_{j+1})^n=e^{\psi^+-\psi^--\tf_j+L(\f_j)}dV
\end{equation}
By Step 2 $\tf_j$ and $L(\f_j)$ are uniformly bounded. Since $\p^-$ is locally bounded below on $\tU:=\pi^{-1}(U)\simeq U$, Theorem \ref{thm:paun} shows that the complex Hessian of $\tf_j$ is locally bounded on $\tU$, uniformly with respect to $j$. In particular, the functions $\p^+-\p^--\tf_j+L(\f_j)$ appearing in the right-hand side of (\ref{equ:fj}) are locally Lipschitz continuous on $\tU$, uniformly with respect to $j$. Applying the version of the Evans-Krylov \emph{a priori}  estimate given in \cite[Theorem 4.5.1]{Blo}, we get a uniform $C^{2+\a}$-bound for $\f_j$ on compact subsets of $U$, which implies as desired that $\tf_j$ is smooth and uniformly bounded in $C^\infty(\tU)$, by applying the standard elliptic boot-strapping argument to (\ref{equ:fj}). 

\end{proof}

\section{Convergence of the K\"ahler-Ricci flow}\label{sec:flow}
To avoid unnecessary technical complications in the definition of the K\"ahler-Ricci flow, we assume in this section that $D=0$, so that $X$ is a $\Q$-Fano variety with log terminal singularities. 

\subsection{The K\"ahler-Ricci flow}
The following result can be deduced from \cite{ST} (a detailed proof is provided in \cite{BG}):

\begin{thm}\label{thm:ST}\cite{ST} 
Given any initial closed positive current $\om(0)\in\cT^1(X)$ with continuous potentials, there exists a unique solution $(\om(t))_{t\in]0,+\infty[}$, of the normalized K\"ahler-Ricci flow, in the following sense: 
\begin{itemize} 
\item[(i)] for each $t\in]0,+\infty[$, $\om(t)\in\cT^1(X)$ has continuous potentials on $X$;
\item[(ii)] on $X_\reg\times]0,+\infty[$ $\om(t)$ is smooth and satisfies $\dot\om(t)=-\Ric(\om(t))+\om(t)$;
\item[(iii)] $\lim_{t\to 0_+}\om(t)=\om(0)$, in the sense that their local potentials converge in 
${\mathcal C}^0(X_\reg)$.  
\end{itemize}
\end{thm}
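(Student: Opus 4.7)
My approach is to reduce everything to a degenerate parabolic complex Monge--Amp\`ere equation on a log resolution, and solve it by a regularization procedure combined with \emph{a priori} estimates of Aubin--Yau--Pa\u{u}n type. Concretely, pick a log resolution $\pi:\tX\to X$ and write $\tom_0:=\pi^*\om_0$, which is a semipositive and big form on $\tX$. By Lemma~\ref{lem:adapted} the lifted adapted measure decomposes as $\tmu_0=e^{\p^+-\p^-}dV$ with $\p^\pm$ quasi-psh with analytic singularities, smooth on $\tX_0:=\pi^{-1}(X_\reg)$, and $e^{-\p^-}\in L^p$ for some $p>1$. The normalized K\"ahler--Ricci flow $\partial_t\om=-\Ric(\om)+\om$ with reference $\om_0$ translates into the parabolic CMA equation
\begin{equation}\label{equ:krfeq}
\partial_t\tf=\log\frac{(\tom_0+dd^c\tf)^n}{\tmu_0}+\tf
\end{equation}
for the relative potential $\tf(\cdot,t)$, with initial value $\tf(0)=\f_{\om(0)}\circ\pi$.

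Second, I would regularize \eqref{equ:krfeq} by replacing $\tom_0$ by the K\"ahler form $\tom_\e:=\tom_0+\e\eta$ (with $\eta$ a fixed K\"ahler form on $\tX$) and smoothing the density, so as to land in the non-degenerate setting where short-time existence and long-time existence of the parabolic CMA flow on a compact K\"ahler manifold are standard (Cao). The core is to prove \emph{uniform estimates in $\e$ on compact subsets of $\tX_0\times]0,T]$, for every $T>0$}. For the $C^0$-estimate of $\tf_\e$ on $\tX$, I would combine the maximum principle applied to \eqref{equ:krfeq} with the integrability $e^{-\p^-}\in L^p$ and Kolodziej-type estimates \cite{EGZ1}; the bound is uniform in $\e$. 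For local higher-order estimates on $\tX_0$, the key tool is the Laplacian estimate of Pa\u{u}n recalled in Appendix B (Theorem~\ref{thm:paun}): applied on compact sets away from the exceptional divisor and the singular locus of $\tmu_0$, it yields uniform control on $\Delta\tf_\e$, after which Evans--Krylov and elliptic-parabolic boot-strapping give $C^\infty_{\mathrm{loc}}$ bounds.

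Third, I would pass to the limit $\e\to 0$. The uniform $C^0$-bound combined with stability of solutions to CMA equations in the spirit of \cite{EGZ1,EGZ2} yields a uniform (in time, for $t$ on compacta of $]0,+\infty[$) $C^0$-convergence $\tf_\e\to\tf$ on $\tX$, while the local smooth estimates give convergence in $C^\infty_\mathrm{loc}(\tX_0\times]0,+\infty[)$. Descending through $\pi$ (which is an isomorphism $\tX_0\simeq X_\reg$), this produces the family $\om(t)=\om_0+dd^c\f(\cdot,t)$ satisfying (i) and (ii), with $\f$ continuous on $X\times[0,+\infty[$ after checking that the limit extends continuously as $t\to 0_+$.

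Finally, for the initial condition (iii) and uniqueness: the continuity of $\f$ up to $t=0$ follows from a barrier argument comparing $\f(\cdot,t)$ with $\f(\cdot,0)\pm Ct$ via the parabolic maximum principle on the regularized equations, using that $\f_{\om(0)}$ is continuous on $X$ and that the right-hand side of \eqref{equ:krfeq} is controlled. For uniqueness, if $\om(t)$ and $\om'(t)$ are two solutions with the same initial datum, I would lift to $\tX$ and apply the parabolic maximum principle to the difference $\tf-\tf'$, which satisfies a uniformly parabolic equation on $\tX_0$ once the flows are smooth; the boundary of $\tX_0$ is handled via the continuity up to the exceptional set and the uniform $C^0$ control. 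The main obstacle is precisely this uniqueness/comparison step across the singular/exceptional locus: one cannot directly invoke the standard maximum principle on a non-compact domain, and one needs to exploit the continuity of potentials on $\tX$ together with the integrability $e^{-\p^-}\in L^p$ to rule out pathological behaviour near the exceptional divisor. This is where the detailed arguments of \cite{ST,BG} are essential.
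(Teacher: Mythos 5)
The paper does not prove this theorem; it attributes it to \cite{ST} and refers to \cite{BG} for a detailed proof. Your sketch accurately recapitulates Song--Tian's construction as recalled in \S\ref{sec:monotone}: lift to a log resolution $\pi:\tX\to X$, rewrite the flow as a degenerate parabolic Monge--Amp\`ere equation for the potential, regularize the reference class as $\om_j=\tom_0+\e_j\eta$ (cf.~(\ref{equ:krfj})), derive a uniform $C^0$-bound via \cite{EGZ1} together with the $L^p$-bound of Lemma~\ref{lem:lpbound}, obtain interior higher-order estimates on $\pi^{-1}(X_\reg)$ via P\u aun's Laplacian estimate (Theorem~\ref{thm:paun}) and Evans--Krylov, and pass to the limit $\e_j\to 0$. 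Your normalization (adding $\tf$ to the log term rather than dividing by the probability measure $\mu(t)$) differs from the paper's by a time-dependent additive constant, hence produces the same flow of currents. You are also right to flag uniqueness and the comparison principle across the exceptional and singular loci as the delicate step; this cannot be dispatched by the naive maximum principle on the non-compact set $\pi^{-1}(X_\reg)$ and indeed requires the detailed treatment of \cite{ST,BG}.
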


Our goal is to prove the following convergence result.
\begin{thm}\label{thm:convKRF} Let $(\om(t))_{t\in]0,+\infty[}$ be the K\"ahler-Ricci flow with initial data $\om(0)$ as above. Assume that the Mabuchi functional of $X$ is proper, and let $\om_\mathrm{KE}$ be its unique K\"ahler-Einstein metric as in Theorem \ref{thm:uniqueKE}. Then 
$$
\lim_{t\to+\infty}\om(t)=\om_\mathrm{KE}
$$ 
in the strong topology of $\cT^1(X)$. 
\end{thm}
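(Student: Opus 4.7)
My plan is to imitate the Lyapunov-function argument proving Theorem \ref{thm:ri}, replacing the discrete Ricci iteration by the continuous flow, and ultimately reducing the statement to Lemma \ref{lem:cvdin}. The crucial preliminary step is to establish two monotonicity inequalities along $(\om(t))_{t>0}$:
\begin{equation*}
\ddt \mab(\om(t)) \le 0 \qquad\text{and}\qquad \ddt \din(\om(t)) \le -\bigl(\mab(\om(t))-\din(\om(t))\bigr) \le 0.
\end{equation*}
Writing $\om(t) = \om_0 + dd^c\f_t$ with $\f_t$ the normalized potential, the normalized K\"ahler--Ricci flow translates on $X_\reg$ into $\dot\f_t = \log\bigl(V^{-1}\om(t)^n/\mu_{\om(t)}\bigr) + c(t)$ for a function $c(t)$ of $t$ alone. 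Combined with $\ddt L(\f_t)=\int\dot\f_t\,\mu_{\om(t)}$ and $\ddt E(\f_t)=\int\dot\f_t\,\MA(\f_t)$, this yields $\ddt \din(\om(t)) = -\int_X (f_t-1)\log f_t\,\mu_{\om(t)}$ with $f_t := d\MA(\f_t)/d\mu_{\om(t)}$. Since $f_t\,\mu_{\om(t)}=\MA(\f_t)$ is a probability measure, Jensen gives $\int\log f_t\,\mu_{\om(t)}\le 0$, so the integral of $(f_t-1)\log f_t$ against $\mu_{\om(t)}$ dominates $\int f_t\log f_t\,\mu_{\om(t)} = H_{\mu_{\om(t)}}(\MA(\f_t)) = (\mab-\din)(\om(t))$ (the last equality being the computation underlying Lemma \ref{lem:comp}(i)). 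A parallel classical Hamilton--Donaldson computation gives the Mabuchi monotonicity. I expect the main obstacle of the proof to lie exactly here: justifying these formal differential-geometric manipulations in the singular setting, where the flow is only continuous on $X$ and only smooth on $X_\reg$. I would proceed by lifting to a log resolution $\pi\colon\tX\to X$, applying the formal identities on $\tX$ to a suitable family of smooth approximants along the lines of \cite{ST} and Theorem \ref{thm:ST}, and passing to the limit using the strong continuity of $\din$ and strong lower semicontinuity of $\mab$ from Lemma \ref{lem:lsc}.

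Granting both monotonicities, the rest of the argument is essentially forced. The uniform bound $\mab(\om(t)) \le \mab(\om(0))$ together with Theorem \ref{thm:mab} places the entire forward orbit $\{\om(t) : t \ge 0\}$ in a strongly compact subset of $\cT^1(X)$. Integrating the Ding decay inequality gives
\begin{equation*}
\int_0^{+\infty} \bigl(\mab(\om(t))-\din(\om(t))\bigr)\,dt \le \din(\om(0)) - \inf_{\cT^1(X)}\din < +\infty,
\end{equation*}
so there is a sequence $t_j\to+\infty$ with $(\mab-\din)(\om(t_j))\to 0$. By strong compactness some subsequence $\om(t_{j_k})$ converges strongly to a limit $\om_\infty \in \cT^1(X)$, and the strong continuity of $\din$ together with the strong lower semicontinuity of $\mab$ (Lemma \ref{lem:lsc}) force $\mab(\om_\infty) \le \din(\om_\infty)$. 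Since $\mab \ge \din$ always, equality holds, so $\om_\infty$ is a K\"ahler--Einstein metric by Lemma \ref{lem:comp}(i), and by the uniqueness statement in Theorem \ref{thm:uniqueKE} we obtain $\om_\infty = \om_\mathrm{KE}$. The monotonicity of $\din$ now promotes the subsequential conclusion to $\lim_{t\to+\infty}\din(\om(t))=\din(\om_\mathrm{KE})=\inf_{\cT^1(X)}\din$.

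Finally, I would upgrade the convergence to the whole one-parameter family. For every sequence $t_j \to +\infty$ the bounds $\mab(\om(t_j)) \le \mab(\om(0))$ and $\din(\om(t_j))\to\inf\din$ put us in the hypotheses of Lemma \ref{lem:cvdin}, which yields strong convergence $\om(t_j)\to\om_\mathrm{KE}$ in $\cT^1(X)$. Since this holds along every such sequence, the family $\om(t)$ itself converges strongly to $\om_\mathrm{KE}$ as $t\to+\infty$, establishing Theorem \ref{thm:convKRF}.
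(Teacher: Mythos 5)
Your proposal is essentially correct and follows the same Lyapunov strategy as the paper (Proposition \ref{prop:monotoneflow} followed by Lemma \ref{lem:cvdin}), but with a genuine variant in how the dissipation is quantified. The paper derives the exact identity $\ddt\din(\om(t))=-H_{\mu_{\om(t)}}\bigl(V^{-1}\om(t)^n\bigr)-H_{V^{-1}\om(t)^n}\bigl(\mu_{\om(t)}\bigr)$ (the symmetrized Kullback--Leibler divergence), then applies Pinsker's inequality to bound the right side by $-\|V^{-1}\om(t)^n-\mu_{\om(t)}\|^2$. This forces the total-variation distance to vanish along a sequence $t_j\to\infty$, and the paper then has to argue separately (via Proposition \ref{prop:conven} and weak convergence of $\mu_{\om(t_j)}$) that the limit $\om_\infty$ satisfies $V^{-1}\om_\infty^n=\mu_{\om_\infty}$. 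You instead discard the second entropy term and keep the first, using the identity $H_{\mu_\om}(V^{-1}\om^n)=(\mab-\din)(\om)$ from Lemma \ref{lem:comp}(i); this yields $\int_1^\infty(\mab-\din)(\om(t))\,dt<\infty$ directly, and the identification of $\om_\infty$ as K\"ahler--Einstein follows cleanly from the strong continuity of $\din$, the strong lower semicontinuity of $\mab$, and the equality case of $\mab\ge\din$. Your route is arguably tidier at the identification step, since it avoids the separate argument about convergence of the measures $\mu_{\om(t_j)}$.

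Where your write-up is thinner than the paper is precisely the point you flag as the main obstacle: the rigorous justification of the differential inequality. You indicate lifting to a log resolution and regularizing along Song--Tian's construction, but the real work in the paper is twofold. First, one needs the uniform $L^p$ bound on the approximate Monge--Amp\`ere measures $(\om_j+dd^c\f_j^t)^n$ (Lemma \ref{lem:lpbound}, extracted from \cite[Corollary 3.4]{ST}), which does not follow from general strong-topology considerations; it is a specific feature of the flow construction. Second, one needs the convergence of the approximate Ding and Mabuchi functionals to the exact ones at each fixed time (Lemma \ref{lem:convep}), which the paper proves by a dominated-convergence argument using that $L^p$ bound. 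Simply invoking ``strong continuity of $\din$ and strong lsc of $\mab$'' is not enough here, because those continuity statements compare the exact functionals on $\cT^1(X,\om_0)$, whereas what is needed is convergence of the \emph{perturbed} functionals $\din_j$, $\mab_j$ (built from the perturbed K\"ahler classes $\{\om_j\}=\{\tom_0+\e_j\eta\}$ and the regularized densities $\mu_j$) to the unperturbed ones along the approximating sequence. After integrating the approximate inequality in time one then passes to the limit via Fatou, just as you suggest.

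Subject to filling in that regularization step along the lines of the paper's Lemmas \ref{lem:lpbound} and \ref{lem:convep}, your argument is sound and complete.
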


\subsection{Monotonicity along the flow}\label{sec:monotone}
We show in this section that the Ding and Mabuchi functionals are both non-increasing along the flow, as in the usual non-singular setting. 

\begin{prop}\label{prop:monotoneflow} Let $(\om(t))_{t\in]0,+\infty[}$ be the K\"ahler-Ricci flow with initial data $\om(0)$. Then $\mab(\om(t))$ and $\din(\om(t))$ are both non-increasing functions of $t$. For all $t'>t>0$ we have more precisely
$$
\din(\om(t'))-\din(\om(t))\le-\int_{t}^{t'}\left\|V^{-1}\om(s)^n-\mu_{\om(s)}\right\|^2 ds.
$$
\end{prop}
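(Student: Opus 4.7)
The plan is to differentiate both functionals directly along the smooth flow on $X_\reg$ and read off the signs. By Lemma \ref{lem:adapted}(ii) we have $-dd^c\log\mu_0 = \om_0$ on $X_0$, hence the Ricci identity $\Ric(\om) = \om_0 - dd^c\log(\om^n/\mu_0)$ holds on $X_\reg$, so the K\"ahler-Ricci flow $\dot\om = -\Ric(\om)+\om$ rewrites at the level of potentials as $\dot\f(t) = \log(\om(t)^n/\mu_0) + \f(t) + c(t)$ for some time-dependent constant $c(t)$ whose precise value will be irrelevant in what follows.

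For the Ding functional $\din(\om) = L(\f_\om) - E(\f_\om)$, differentiating using that $E$ is a primitive of $\MA$ and that $\mu_\om = e^{-\f+L(\f)}\mu_0$ gives
$$\frac{d}{dt}\din(\om(t)) = \int \dot\f\,\bigl(\mu_{\om(t)} - V^{-1}\om(t)^n\bigr).$$
Substituting the flow equation and using the identity $\f = L(\f) - \log(\mu_\om/\mu_0)$ from the very definition of $\mu_\om$, all spatially constant pieces drop out because $\mu_\om - V^{-1}\om^n$ has zero mass, producing
$$\frac{d}{dt}\din(\om(t)) = -\int \log\Bigl(\frac{V^{-1}\om^n}{\mu_\om}\Bigr)(V^{-1}\om^n - \mu_\om) = -\bigl[H_{\mu_\om}(V^{-1}\om^n) + H_{V^{-1}\om^n}(\mu_\om)\bigr] \le 0.$$
Applying Pinsker's inequality (Proposition \ref{prop:ent}(ii)) to the first entropy term gives $\frac{d}{dt}\din(\om(t)) \le -\|V^{-1}\om(t)^n - \mu_{\om(t)}\|^2$, and integrating from $t$ to $t'$ yields the claimed inequality.

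For the Mabuchi functional I would use Chen-Tian's expression (\ref{equ:chentian}), namely $\mab(\om) = V^{-1}\int \log(V^{-1}\om^n/\mu_0)\om^n + (J-I)(\om)$ with $(J-I)(\om) = -E(\f) + V^{-1}\int\f\,\om^n$. Differentiating each piece along $\dot\om = dd^c\dot\f$, integrating by parts, and using the flow equation to rewrite $dd^c\log(\om^n/\mu_0) = -\Ric(\om)+\om_0 = dd^c(\dot\f-\f)$, the cross terms of the form $V^{-1}\int\dot\f\,\om^n$ and $V^{-1}\int\dot\f\cdot dd^c\f\wedge n\om^{n-1}$ cancel in pairs, leaving
$$\frac{d}{dt}\mab(\om(t)) = -V^{-1}\int d\dot\f\wedge d^c\dot\f\wedge n\om(t)^{n-1}\le 0.$$

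The main obstacle I expect is rigorously justifying these formal manipulations in the singular setting: the flow is smooth only on $X_\reg\times (0,+\infty)$, integration by parts against $n\om^{n-1}$ must be controlled near $X_\sing$, and differentiation under the integral sign has to be validated uniformly in $t$. This should be handled by passing to a log resolution $\pi:\tX\to X$, using cut-off arguments together with the uniform $L^p$-control on densities, the continuity of potentials provided by Theorem \ref{thm:ST}, and the integrability properties established in Section \ref{sec:fec}.
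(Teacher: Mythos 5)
Your formal computations for both $\din$ and $\mab$ are exactly the ones that appear in the paper, and the use of Pinsker's inequality to get the quantitative Ding estimate is also the same. The gap is that you have identified, but not actually closed, the hard step — and the paper opens its proof by flagging precisely this point: one \emph{cannot} differentiate $\din(\om(t))$ and $\mab(\om(t))$ directly along the singular flow, because $\dot\om(t)$ is a priori not globally bounded on $X_\reg$, so already the interchange $\frac{d}{dt}E(\f^t)=\int\dot\f^t\,\MA(\f^t)$ and the integration by parts against $n\,\om(t)^{n-1}$ near $X_{\sing}$ need justification. What the paper does is quite different from the cut-off strategy you sketch: it extracts from Song--Tian's construction an explicit family of \emph{globally smooth} parabolic approximants $\f^t_j$ on a log resolution $\tX$, solving the regularized flow (\ref{equ:krfj}) with K\"ahler reference forms $\om_j=\tom_0+\e_j\eta$ and smooth densities $\mu_j$, proves a uniform $L^p$ bound on $(\om_j+dd^c\f^t_j)^n$ (Lemma \ref{lem:lpbound}), shows that $E_j(\f^t_j)$, $E_j^*$, $H_{\mu_j}$ and $\log\int e^{-\f^t_j}\mu_j$ converge to the corresponding quantities for $\om(t)$ (Lemma \ref{lem:convep}), and then runs your computation verbatim on $\f^t_j$ — where it is legitimate — before passing to the limit via lower semicontinuity of the total-variation norm and Fatou's lemma. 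A cut-off argument on $\tX\setminus E$ would face boundary terms coming from $d^c$ of the cut-off paired with $d\dot\f\wedge\om(t)^{n-1}$ in a region where the density $e^{\psi^+-\psi^-}$ and $\dot\f$ are both unbounded, and it is not clear these can be controlled without essentially reconstructing the approximation scheme anyway. So the proposal is correct at the level of the key identities, but the rigorous implementation — which is the actual content of this Proposition in the singular setting — is left open and points in a direction (cut-offs) different from, and less obviously workable than, the paper's (approximation by smooth flows).
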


The monotonicity of the two functionals is standard in the non-singular case, where $\om(t)$ is smooth on $X\times[0,+\infty[$. The technical difficulty in the present case is that we cannot directly differentiate $\din(\om(t))$ and $\mab(\om(t))$  since $\dot\om(t)$ is \emph{a priori} not globally bounded on $X_\reg$. We will rely on an approximation argument, using the following specific information about the construction of $\om(t)$. What Song and Tian construct in \cite{ST} is a function $\f:X\times]0,+\infty[\to\R$ with the following properties:
\begin{itemize}
\item $\f$ is smooth on $X_\reg\times]0,+\infty[$, and $\f^t:=\f(\cdot,t)$ is a continuous $\om_0$-psh function for each $t$ fixed. \item On $X_\reg\times]0,+\infty[$ we have
$$
\pddt\f=\log\frac{V^{-1}(\om_0+dd^c\f^t)^n}{\mu(t)},
$$
with
$$
\mu(t):=\frac{e^{-\f^t}\mu_0}{\int_Xe^{-\f^t}\mu_0}. 
$$
\item $\lim_{t\to 0_+}\f^t=\f_{\om(0)}$ uniformly on compact subsets of $X_\reg$.  
\end{itemize}
Let $\pi:\tX\to X$ be a log resolution of $X$, so that the exceptional divisor $E=\pi^{-1}(X_\mathrm{sing})$ has simple normal crossings. Set $\tom_0:=\pi^*\om_0$, which is semipositive and big on $\tX$, with ample locus $\tX_0:=\tX\setminus E$. By Lemma \ref{lem:adapted}, the pull-back of $\mu_0$ to $\tX$ is of the form 
$$
\widetilde{\mu}_0=e^{\p^+-\p^-}dV,
$$
where $\p^\pm$ are quasi-psh functions with analytic singularities alors $E$. Pick a K\"ahler form $\eta$ on $\tX$. In Song and Tian's construction, the restriction of $\tf:=\f^t\circ\pi$ to $\tX_0\times]0,+\infty[$ is the $C^\infty$-limit (on compact sets) of the restriction of a sequence of smooth functions $\f_j:\tX\times]0,+\infty[$ such that:
\begin{itemize} 
\item there exists $\e_j>0$ converging to $0$ such that $\f^t_j$ is $\om_j$-psh with $\om_j:=\tom_0+\e_j\eta$.
\item On $\tX\times]0,+\infty[$ we have 
\begin{equation}\label{equ:krfj}
\pddt\f_j=\log\frac{V_j^{-1}(\om_j+dd^c\f^t_j)^n}{\mu_j(t)},
\end{equation}
where $V_j=\int_{\tX}\om_j^n$,
$$
\mu_j(t)=\frac{e^{-\f^t_j}\mu_j}{\int_Xe^{-\f^t_j}\mu_j}
$$
with $\mu_j=e^{\p^+_j-\p^-_j}dV$ for decreasing sequences of smooth approximants $\p^\pm_j$ of $\p^\pm$. 
\end{itemize}

We also extract from \cite{ST} the following estimate to be used in what follows:

\begin{lem}\label{lem:lpbound} There exists $p>1$ such that the measures $(\om_j+dd^c\f^t_j)^n$ are bounded in $L^p$, uniformly with respect to $j$, as long as $t$ stays in a compact subset of $]0,+\infty[$. 
\end{lem}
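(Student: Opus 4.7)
The plan is to use the approximate flow equation \eqref{equ:krfj} to express the Monge-Amp\`ere density $h_j(t):=(\om_j+dd^c\f_j^t)^n/dV$ explicitly in terms of $\pddt\f_j$, $\f_j^t$, and the weights $\p_j^\pm$, and then to bound each factor uniformly in $j$ for $t$ in a fixed interval $[t_0,t_1]\subset(0,+\infty)$. From \eqref{equ:krfj} and the definition of $\mu_j(t)$ we obtain
$$
h_j(t)=\frac{V_j}{Z_j(t)}\exp\bigl(\pddt\f_j-\f_j^t+\p_j^+-\p_j^-\bigr),\qquad Z_j(t):=\int_{\tX}e^{-\f_j^t}\mu_j.
$$
The goal is then to bound $h_j(t)$ in $L^p(dV)$ for some $p>1$ independent of $j$ and $t\in[t_0,t_1]$.

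First I would collect the ``soft'' bounds. Since $V_j\to V$, the normalization constants are uniformly bounded. The smooth decreasing approximants satisfy $\p_j^+\le\p_1^+\le C$ on $\tX$ and $\p_j^-\ge\p^-$, so $e^{\p_j^+-\p_j^-}\le C\,e^{-\p^-}$, and by Lemma \ref{lem:adapted} together with tameness we have $e^{-\p^-}\in L^{q_0}(dV)$ for some $q_0>1$. Next, Song--Tian's uniform $C^0$ estimate for the approximating flow yields $\sup_{\tX}|\f_j^t|\le C(t_0,t_1)$ on $[t_0,t_1]$ uniformly in $j$; together with the total-mass convergence $\int\mu_j\to\int\widetilde{\mu}_0>0$, this produces a uniform lower bound $Z_j(t)\ge c>0$.

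The two remaining ingredients are genuine parabolic estimates. The upper bound $\pddt\f_j\le C(t_0,t_1)$ follows from the standard maximum-principle argument for the normalized K\"ahler--Ricci flow, applied to a quantity such as $(t-t_0/2)\pddt\f_j-\f_j+nt$, whose parabolic inequality has constants controlled in terms of the available bounds on $\mu_j$. For the integrability of $e^{-\f_j^t}$, I would invoke a uniform Skoda integrability: since $\f_j^t$ is $\om_j$-psh with $\om_j\le\tom_0+\eta$ for $\e_j\le 1$ and has uniformly bounded sup, there exist $r>1$ and $C_r>0$, independent of $j$, such that $\int_{\tX}e^{-r\f_j^t}\,dV\le C_r$. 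Combining these pointwise and $L^r$ estimates, one gets
$$
h_j(t)\le C\cdot e^{-\f_j^t}\cdot e^{-\p^-},
$$
and H\"older's inequality applied to the exponents $r$ and $q_0$ produces a uniform $L^p$ bound for any $p\in(1,rq_0/(r+q_0))$, as required.

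The main obstacle is the uniform upper bound on $\pddt\f_j$: as $\e_j\to 0$ the reference form $\om_j$ degenerates to the merely semipositive and big $\tom_0$, and the reference measure $\mu_j$ tends to the singular $\widetilde{\mu}_0$, so the parabolic maximum principle must be executed carefully to keep the constants under control as $t_0$ varies in $(0,+\infty)$. Since this estimate (together with the uniform $C^0$ bound used above) is precisely the technical core of the approximation scheme in \cite{ST} (expounded in detail in \cite{BG}), in practice one would cite those results and combine them with the algebraic manipulations above to conclude.
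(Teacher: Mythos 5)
Your approach is essentially the paper's: the paper simply cites \cite[Corollary 3.4]{ST} for the uniform density bound $(\om_j + dd^c\f_j^t)^n \le C\,\mu_j$ on compact time intervals --- which is exactly what your combination of the upper bound on $\pddt\f_j$, the uniform $C^0$ bound on $\f_j^t$, and the lower bound on $Z_j(t)$ re-derives --- and then concludes from $\mu_j = e^{\p_j^+-\p_j^-}dV$ being uniformly bounded in $L^p$. The only remark worth making is that your Skoda-type integrability and the final H\"older step are superfluous: once you invoke the uniform $C^0$ bound on $\f_j^t$, you have $e^{-\f_j^t}\le e^C$ pointwise, so the estimate collapses directly to $h_j(t)\le C'\,e^{-\p^-}\in L^{q_0}(dV)$ with no need to interpolate.
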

\begin{proof} By \cite[Corollary 3.4]{ST} we have a uniform estimate $(\om_j+dd^c\f^t_j)^n\le C\,\mu_j$, as long as $t$ stays in a compact subset of $]0,+\infty[$. The result follows since $\mu_j=e^{\p_j^+-\p^-_j}dV$ is bounded in $L^p$. 
\end{proof}

\begin{lem}\label{lem:convep} 
Let $E_j$ be the energy functional on $\cE^1(\tX,\om_j)$, and $E_j^*$ the dual functional on $\cM^1(\tX,\om_j)$. Then we have for each $t>0$ fixed
$$
\din(\om(t))=\lim_{j\to\infty}\left(-\log\left(\int_{\tX}e^{-\f^t_j}\mu_j\right)-E_j(\f^t_j)\right)
$$
and 
$$
\mab(\om(t))=\lim_{j\to\infty}\left(H_{\mu_j}-E_j^*\right)\left(V_j^{-1}\left(\om_j+dd^c\f^t_j\right)^n\right).
$$
\end{lem}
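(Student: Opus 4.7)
The plan is to reduce each of the two identities to a term-by-term convergence and to rely on the uniform bounds coming from Song--Tian's construction together with Lemma~\ref{lem:lpbound}. First I collect the facts I will use. By Lemma~\ref{lem:lpbound} and Ko{\l}odziej's $L^\infty$-estimate for big semipositive classes, the potentials $\f^t_j$ are uniformly bounded in sup-norm on $\tX$, and by the $C^\infty_{\mathrm{loc}}(\tX_0)$-convergence $\f^t_j\to\tf^t=\f^t\circ\pi$ together with uniqueness of the continuous $\tom_0$-psh solution, $\f^t_j\to\tf^t$ in $L^1(\tX)$ as well. The smooth approximants satisfy $\p^\pm_j\searrow\p^\pm$ monotonically, and $e^{-\p^-}\in L^p(dV)$ for some $p>1$ by Lemma~\ref{lem:adapted}.

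For the Ding identity I need $L_j(\f^t_j):=-\log\int_{\tX}e^{-\f^t_j}\mu_j\to L(\f^t)$ and $E_j(\f^t_j)\to E(\tf^t)$. The first is dominated convergence: the integrands $e^{-\f^t_j+\p^+_j-\p^-_j}$ converge a.e.\ to $e^{-\tf^t+\p^+-\p^-}$ and are dominated by $Ce^{-\p^-}\in L^1(dV)$, the constant $C$ coming from the uniform sup-bounds on $\f^t_j$ and $\p^+_j$; the equality with $L(\f^t)=-\log\int_Xe^{-\f^t}\mu_0$ follows from the birationality of~$\pi$. For the energy, I expand (\ref{equ:energy}) using $\om_j=\tom_0+\e_j\eta$. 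After binomial expansion, each mixed term $\int_{\tX}\f^t_j(\tom_0+dd^c\f^t_j)^a\wedge\tom_0^b\wedge\eta^c$ either carries a positive power of $\e_j$ (and vanishes in the limit by the sup-bound on $\f^t_j$) or converges to the corresponding term in $E(\tf^t)$ by continuity of non-pluripolar mixed Monge--Amp\`ere products along uniformly convergent continuous quasi-psh functions on a big semipositive class (cf.\ BEGZ).

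For the Mabuchi identity I exploit the flow equation~(\ref{equ:krfj}), which gives $\log\!\left(V_j^{-1}(\om_j+dd^c\f^t_j)^n/\mu_j\right)=\pddt\f_j-\f^t_j+L_j(\f^t_j)$. Combined with~(\ref{equ:eij}), this yields
\begin{equation*}
(H_{\mu_j}-E_j^*)\!\left(V_j^{-1}(\om_j+dd^c\f^t_j)^n\right)=L_j(\f^t_j)-E_j(\f^t_j)+\int_{\tX}\pddt\f_j\,\cdot V_j^{-1}(\om_j+dd^c\f^t_j)^n,
\end{equation*}
and the analogous identity on $X$ reads $\mab(\om(t))=\din(\om(t))+\int_X\pddt\f\cdot V^{-1}\om(t)^n$. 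The Ding convergence proved above controls the first two terms, so it suffices to show that the ``entropy-derivative'' integrals converge. On compact subsets of $\tX_0$ this is immediate from $C^\infty_{\mathrm{loc}}$-convergence of $\pddt\f_j$ and the uniform bounds. For the tail near the exceptional divisor $E$, write $\pddt\f_j=\log\MA_j(\f^t_j)-\log\mu_j+L_j(\f^t_j)-\f^t_j$ and estimate via H\"older's inequality, using Lemma~\ref{lem:lpbound} and $e^{-\p^-}\in L^p(dV)$.

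The main obstacle is precisely this last tail estimate near $E$: the singular contribution $\p^-_j$ appearing in $\log\mu_j$ must be shown to integrate against $\MA_j(\f^t_j)$ uniformly in $j$. This reduces to a uniform $L^1$-bound of $\p^-_j$ against the MA measures, which follows from Lemma~\ref{lem:lpbound} combined with the H\"older/Young mechanism of \S\ref{subsec:entropy} applied to the weight $\chi^*(t)=e^t-t-1$ (the relevant density $e^{-\p^-_j}$ belongs to $L^p$ with uniform norm). The rest is routine once these uniform integrability statements are established.
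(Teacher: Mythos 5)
Your argument is substantially correct and reaches all the required convergences, but it takes a noticeably more circuitous route than the paper, especially for the Mabuchi term. The paper never invokes the flow equation inside this lemma; the lemma is a pure limit-interchange statement. For the energy, instead of the binomial expansion plus BEGZ continuity of mixed Monge--Amp\`ere products, the paper uses the concavity sandwich
$\int_{\tX}(\f^t_j-\tf^t)(\om_j+dd^c\f^t_j)^n \le E_j(\f^t_j)-E_j(\tf^t)\le \int_{\tX}(\f^t_j-\tf^t)(\om_j+dd^c\tf^t)^n$,
both bounds going to $0$ by the uniform $\sup$-bound on $\f^t_j$, the $C^\infty_{\mathrm{loc}}(\tX_0)$ convergence, and the uniform $L^p$-bound on the MA measures from Lemma~\ref{lem:lpbound}; the residual step $E_j(\tf^t)\to E(\tf^t)$ is then trivial since $\tf^t$ is fixed. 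For the entropy term, the paper sets $f_j=V_j^{-1}(\om_j+dd^c\f^t_j)^n/\mu_j$ and observes that $f_j\le C$ uniformly (this is precisely the Song--Tian estimate quoted in Lemma~\ref{lem:lpbound}), hence $f_j\log f_j$ is \emph{uniformly bounded} (since $x\log x\ge -1/e$), converges pointwise on $\tX_0$, and one concludes by dominated convergence against $\mu_j\to\tmu_0$ in $L^p$. In your version, you expand $\pddt\f_j=\log f_j+L_j(\f^t_j)-\f^t_j$ and then split $\log f_j$ further into $\log\MA_j-\log\mu_j$, which is why $\p^-_j$ reappears and forces you into a separate H\"older estimate for $\int\p^-_j\,\MA_j(\f^t_j)$. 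That estimate does work (via $\MA_j\le C\mu_j$ and $e^{-\p^-}\in L^p$), but it is unnecessary: the singular part of $\log\mu_j$ cancels against the singular part of $\log\MA_j$, and the whole point of Lemma~\ref{lem:lpbound} is that the ratio $f_j$ --- hence $f_j\log f_j$ --- is controlled. If you keep $\log f_j$ together rather than re-splitting it, your argument collapses to the paper's and the ``main obstacle'' you flag evaporates. So: correct, but you made the last step harder than it needs to be, and the identity $\mab=\din+H_{\mu_{\om}}(V^{-1}\om^n)$ that you introduce via the flow equation is not required at this stage.
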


\begin{proof} 
By Lemma \ref{lem:lpbound} 
$\f^t_j$ is uniformly bounded with respect to $j$, $t$ being fixed. Since $\f^t_j\to\tf^t$ in $C^\infty$ topology on $\tX_0$, dominated convergence yields
$$
\lim_{j\to+\infty}\int_{\tX_0}\left(\f^t_j-\f^t\right)\left(\om_j+dd^c\tf^t\right)^n=0.
$$
On the other hand, since $(\om_j+dd^c\f^t_j)^n\to(\tom_0+dd^c\tf^t)^n$ pointwise on $\tX_0$ and $(\om_j+dd^c\f^t_j)^n$ is bounded in $L^p$, we also get by dominated convergence
$$
\lim_{j\to+\infty}\int_{\tX}(\f^t_j-\tf^t)(\om_j+dd^c\f^t_j)^n=0
$$
as $j\to\infty$, and similarly
$$
\lim_{j\to+\infty}\int_{\tX}e^{-\f^t_j}\mu_j=\int_{\tX}e^{-\tf^t}\widetilde{\mu}_0.
$$
Since both $\f^t_j$ and $\tf^t$ are $\om_j$-psh, the concavity of $E_j$ yields
$$
\int_{\tX}(\f^t_j-\tf^t)\left(\om_j+dd^c\f^t_j\right)^n\le E_j(\f^t_j)-E_j(\tf^t)
$$
$$
\le\int_{\tX}(\f^t_j-\tf^t)\left(\om_j+dd^c\tf^t\right)^n. 
$$
We thus see that 
$$
\lim_{j\to+\infty}E_j(\f^t_j)=E_{\tom_0}(\tf^t)=E(\f^t),
$$ 
which proves that the first assertion, as well as the convergence of 
$$
E_j^*\left(\left(\om_j+dd^c\tf^t_j\right)^n\right)=E_j(\f^t_j)-\int_{\tX}\f^t_j\left(\om_j+dd^c\tf^t_j\right)^n
$$
to $E^*\left(\MA(\f^t)\right)$. If we set 
$$
f_j:=\frac{V_j^{-1}\left(\om_j+dd^c\tf^t_j\right)^n}{\mu_j}
$$
and 
$$
f:=\frac{V^{-1}\left(\tom_0+dd^c\tf^t\right)^n}{\widetilde{\mu}_0},
$$
it remains to show that 
$$
H_{\mu_j}\left(V_j^{-1}\left(\om_j+dd^c\tf^t_j\right)^n\right)=\int_{\tX}(f_j\log f_j)\mu_j
$$
converges to $H(\MA(\f^t))=\int_{\tX}(f\log f)\widetilde{\mu}_0$. But since $f_j\log f_j$ is uniformly bounded and converges pointwise to $f\log f$ on $\tX_0$, this follows again from the $L^p$ convergence $\mu_j\to\tmu_0$.
\end{proof}

\begin{proof}[Proof of Proposition \ref{prop:monotoneflow}] 
We perform the following standard computation:
$$
\frac{d}{dt}\left(H_{\mu_j}-E_j^*\right)\left(V_j^{-1}\left(\om_j+dd^c\f^t_j\right)^n\right)
$$
$$
=n V_j^{-1}\int_{\tX} \left(\f^j_t+\log\left(\frac{V_j^{-1}\left(\om_j+dd^c\f^j_t\right)^n}{\mu_j}\right)\right)dd^c\dot\f_j^t\wedge\left(\om_j+dd^c\f_j^t\right)^{n-1}
$$
$$
=n V_j^{-1}\int\dot\f_j^t\,dd^c\log\left(\frac{V_j^{-1}\left(\om_j+dd^c\f^j_t\right)^n}{\mu_j(t)}\right)\wedge(\om_j+dd^c\f^j_t)^{n-1}
$$
$$
=-n V_j^{-1}\int d\dot\f_t^j\wedge d^c\dot\f^t_j\wedge(\om_j+dd^c\f^j_t)^{n-1}\le 0
$$
using (\ref{equ:krfj}). 
By Lemma \ref{lem:convep} it follows that $\mab(\om(t))$ is non-increasing along the flow. 

Similarly we compute
$$
\frac{d}{dt}\left(\log\left(\int_{\tX}e^{-\f^t_j}\mu_j\right)+E_j(\f^t_j)\right)
$$
$$
=-\int_{\tX}\dot\f^t_j\,\mu_j(t)+V_j^{-1}\int_{\tX}\dot\f^t_j\left(\om_j+dd^c\f^j_t\right)^n
$$
$$
=H_{\mu_j(t)}\left(V_j^{-1}\left(\om_j+dd^c\f^t_j\right)^n\right)+H_{V_j^{-1}\left(\om_j+dd^c\f^t_j\right)^n}\left(\mu_j(t)\right),
$$
using again (\ref{equ:krfj}). By Pinsker's inequality (see Proposition \ref{prop:ent}), it follows that
$$
\left(\log\left(\int_{\tX}e^{-\f^{t'}_j}\mu_j\right)+E_j(\f^{t'}_j)\right)-\left(\log\left(\int_{\tX}e^{-\f^t_j}\mu_j\right)+E_j(\f^t_j)\right)
$$
$$
\ge\int_t^{t'}\|V_j^{-1}\left(\om_j+dd^c\f^s_j\right)^n-\mu_j(s)\|^2 ds.
$$
By Lemma \ref{lem:convep}, the left-hand side converges to $-\din(\om(t'))+\din(\om(t))$ as $j\to\infty$. On the other hand  
$$
\liminf_{j\to+\infty}\|V_j^{-1}\left(\om_j+dd^c\f^s_j\right)^n-\mu_j(s)\|\ge\|\MA(\f^s)-\mu(s)\|=\|V^{-1}\om(s)^n-\mu_{\om(s)}\|
$$
by lower semicontinuity of the total variation with respect to weak convergence, and we get the desired result thanks to Fatou's lemma. 
\end{proof}

\subsection{Proof of Theorem \ref{thm:convKRF}}
By Proposition \ref{prop:monotoneflow}, $\mab(\om(t))$ is bounded above for, say, $t\ge 1$ . Thanks to Lemma \ref{lem:cvdin}, we are thus reduced to showing that 
$$
\lim_{t\to+\infty}\din(\om(t))=\inf_{\cT^1(X,D)}\din.
$$
Since $\din(\om(t))$ is bounded below, Proposition \ref{prop:monotoneflow} yields the existence of a sequence $t_j\to+\infty$ such that $\|V^{-1}\om(t_j)^n-\mu_{\om(t_j)}\|\to 0$ as $j\to\infty$. Since $\om(t)$ stays in a strongly compact set, we may assume upon passing to a subsequence that $\om(t_j)$ converges strongly to some $\om_\infty\in\cT^1(X,D)$. By Proposition \ref{prop:conven} we have $\om(t_j)^n\to\om_\infty^n$ strongly. The same argument as in the proof of Lemma \ref{lem:lsc} (relying on Proposition \ref{prop:hold}) shows that $\mu_{\om(t_j)}\to\mu_{\om_\infty}$ weakly. We conclude that $V^{-1}\om_\infty^n=\mu_{\om_\infty}$, and hence $\din(\om_\infty)=\inf_{\cT^1(X,D)}\din$ by Theorem \ref{thm:varKE}. By strong continuty of $\din$ (Lemma \ref{lem:lsc}), it follows that 
$$
\lim_{j\to\infty}\din(\om(t_j))=\din(\om_\infty)=\inf_{\cT^1(X,D)}\din,
$$
which concludes the proof.

\section{Examples} \label{sec:example}

\subsection{Log Fano pairs}
As explained in \cite{GK}, to each orbifold $\fX$ is attached a klt pair $(X,D)$, where the normal variety $X$ has quotient singularities and the boundary $D$ has an irreducible decomposition of the form 
$$
D=\sum_E(1-\tfrac{1}{m_E})E
$$
with $m_E\in\N$. This boundary encodes the ramification of $\fX$ in codimension one, and $\fX$ is uniquely determined by the pair $(X,D)$. If $\fX$ is a Fano orbifold then $(X,D)$ is a log Fano pair. A K\"ahler-Einstein metric $\om$ for $(X,D)$ is then smooth in the orbifold sense. 

A related class of log Fano pairs arises by taking quotients of Fano varieties. More specifically, let $Y$ be a $\Q$-Fano variety with log terminal singularities, let $G$ be a finite group of automorphisms of $Z$, and set $X:=Y/G$. Then $p:Y\to X$ is a ramified Galois cover, and there exists a unique effective $\Q$-divisor $D$ supported on the ramification locus of $X$ such that $K_Z=p^*(K_X+D)$. This shows that $(X,D)$ has klt singularities and $-(K_X+D)$ is ample, so that $(X,D)$ is a log Fano pair. When $Z$ is non-singular this is a special case of the previous examples, with $\fX:=[Z/G]$. 

Note that $\cT(X,D)\simeq\cT(Z)^G$, and in particular K\"ahler-Einstein metrics on $(X,D)$ correspond precisely to $G$-invariant K\"ahler-Einstein metrics on $Z$.

\subsection{Properness of the Mabuchi functional}
Inspired by a nice construction of \cite{AGP06}, we prove a criterion that produces a rather broad class of log Fano pairs having a proper Mabuchi functional. 
 
\begin{thm} \label{thm:exaFanosing}
Let $X$ be a $\Q$-Fano variety with log terminal singularities, and let $D$ be an effective $\Q$-Cartier divisor satisfying
\begin{itemize}
\item[(i)] $D\sim_\Q -K_X$, 
\item[(ii)] $(X,D)$ is klt,
\end{itemize}
so that $(X,(1-\la)D)$ is in particular a log Fano pair for every (rational) $\la\in]0,1[$. If the Ding functional (or, equivalently, the Mabuchi functional) of $X$ is bounded below (in particular, if $X$ admits a K\"ahler-Einstein metric), then the Ding and Mabuchi functionals of $(X,(1-\la)D)$ are coercive for all rational numbers $\la\in]0,1[$. 
\end{thm}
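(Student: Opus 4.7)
My strategy will be to translate both the hypothesis and the desired conclusion into Moser--Trudinger inequalities, and then use a H\"older argument to upgrade the integrability exponent by exploiting the strictness of the klt condition on $(X,D)$. By Lemma~\ref{lem:comp}(ii), the boundedness assumption for $(X,0)$ is equivalent to $\din_{(X,0)}\ge-C_0$ on $\cE^1(X,\om_0)$, which unfolds into the Moser--Trudinger estimate
\[
\int_Xe^{-\f}\mu_0\le A\,e^{-E_{\om_0}(\f)}\qquad\text{for all }\f\in\cE^1(X,\om_0),
\]
while by Proposition~\ref{prop:moser} the desired coercivity of $\mab$ (equivalently of $\din$) for $(X,(1-\la)D)$ is equivalent to the analogous estimate $\int_Xe^{-p\tf}\mu_{0,\la}\le A'\,e^{-pE_{\tom_0}(\tf)}$ for all $\tf\in\cE^1(X,\tom_0)$ at some exponent $p>1$, where $\tom_0:=\la\om_0\in c_1(X,(1-\la)D)$ and $\mu_{0,\la}$ is the adapted measure for $(X,(1-\la)D)$ with respect to the rescaled metric $\la\phi_0$ on $-(K_X+(1-\la)D)=\la(-K_X)$. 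The concrete target is thus to promote the $L^1$-type inequality for $(X,0)$ into an $L^p$-type inequality for $(X,(1-\la)D)$ with $p>1$.

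First I would pick $m\ge 1$ such that $-mK_X$ is Cartier and $mD$ integral, choose $s\in H^0(X,-mK_X)$ with zero divisor $mD$, and verify via (\ref{equ:pullmes}) and Lemma~\ref{lem:adapted} (e.g.\ by comparing the pullbacks to a log resolution) that
\[
\mu_{0,\la}=C_\la\,|s|_{\phi_0^m}^{-2(1-\la)/m}\,\mu_0
\]
for a positive constant $C_\la$. The strict klt property of $(X,D)$ (openness of klt under a small enlargement of the boundary coefficient) next supplies $\de_0>0$ with
\[
I_{\de_0}:=\int_X|s|_{\phi_0^m}^{-2(1+\de_0)/m}\mu_0<+\infty;
\]
this extra integrability room is what will fuel the exponent lift.

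Given $\tf\in\cE^1(X,\tom_0)$, I would then pass to the rescaled potential $\f:=\tf/\la$, which lies in $\cE^1(X,\om_0)$ and satisfies $E_{\tom_0}(\tf)=\la E_{\om_0}(\f)$. For H\"older conjugate exponents $r,s>1$ (to be chosen) and with $\a:=pr\la$, I estimate
\[
\int_Xe^{-p\tf}\mu_{0,\la}\le C_\la\Big(\int_Xe^{-\a\f}\mu_0\Big)^{1/r}\Big(\int_X|s|_{\phi_0^m}^{-2s(1-\la)/m}\mu_0\Big)^{1/s}.
\]
The second factor is at most $I_{\de_0}^{1/s}$ as soon as $s(1-\la)\le 1+\de_0$. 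For the first, I impose $\a\le 1$: then $\a\f=\a\f+(1-\a)\cdot 0\in\cE^1(X,\om_0)$ by convexity of $\cE^1$, the hypothesis applied to $\a\f$ gives $\int e^{-\a\f}\mu_0\le A\,e^{-E_{\om_0}(\a\f)}$, and the concavity of $E_{\om_0}$ at $0$ (using $E_{\om_0}(0)=0$) gives $E_{\om_0}(\a\f)\ge\a E_{\om_0}(\f)$. The crucial arithmetic identity $\a/r=p\la$ collapses both bounds into
\[
\int_Xe^{-p\tf}\mu_{0,\la}\le A'\,e^{-p\la E_{\om_0}(\f)}=A'\,e^{-pE_{\tom_0}(\tf)},
\]
which is exactly the Moser--Trudinger estimate at exponent $p$.

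The hardest step will be showing that the constraints $\a=pr\la\le 1$, $s(1-\la)\le 1+\de_0$ and $1/r+1/s=1$ are simultaneously compatible with some $p>1$. Elementary algebra shows that an admissible triple exists precisely when
\[
p<\frac{\la+\de_0}{\la(1+\de_0)},
\]
a quantity that is strictly greater than $1$ exactly because $\la<1$ and $\de_0>0$. This balancing is the main obstacle: the $L^1$ character of the hypothesis forces $\a\le 1$ on the $e^{-\a\f}$-side, while absorbing the singularity $|s|^{-2(1-\la)/m}$ on the H\"older partner requires $s>1/(1-\la)$, and it is exactly the strict klt margin $\de_0>0$ that reconciles the two and leaves room for $p>1$. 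Once such a $p>1$ has been fixed, Proposition~\ref{prop:moser} yields coercivity of the Ding functional, and by Lemma~\ref{lem:comp} also of the Mabuchi functional of $(X,(1-\la)D)$.
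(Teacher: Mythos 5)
Your proposal is correct and takes essentially the same approach as the paper: both reformulate the hypothesis and goal as Moser--Trudinger inequalities via Proposition~\ref{prop:moser}, identify $\mu_\la$ as $e^{-(1-\la)\rho}\mu_0$ with $e^{-\rho}\in L^q(\mu_0)$ for some $q>1$ (your $q=1+\de_0$), rescale potentials by $\la$ using $E_\la(\la\f)=\la E(\f)$, and run a H\"older argument whose feasibility is secured by the strict klt margin. The only cosmetic difference is that the paper fixes what you call $\a$ equal to $1$ (so that the hypothesis applies directly, with no concavity step needed), whereas you carry $\a\le 1$ as a free parameter and compensate with $E(\a\f)\ge\a E(\f)$; the resulting threshold $p<\frac{\la+\de_0}{\la(1+\de_0)}$ is the same in both.
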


\begin{rem} It is interesting to compare this result with \cite[Theorem 7]{Ber10}, which deals with the case where $X$ is a (non-singular) Fano manifold and $D$ is reduced, smooth and irreducible (so that $(X,D)$ is merely lc in that case). Without any further assumption on $X$, it is then proved that $\a(X,(1-\la)D)\to 1$ as $\la\to 0$, which implies in particular that the Mabuchi functional of $(X,(1-\la)D)$ is coercive for $0<\la\ll 1$. As a consequence, $(X,(1-\la)\D)$ admits a unique K\"ahler-Einstein metric for $0<\la\ll 1$, which is further known to have cone singularities of cone angle $2\pi\la$ along $D$ by \cite{JMR}. 

Note on the other hand that the irreducibility of $D$ is crucial in this result: for $X=\PP^1$ and $D=[0]+[\infty]$, the Mabuchi functional of $(X,(1-\la)D)$ cannot proper even for $0<\la\ll 1$, since $\Aut^0(X,(1-\la)D)=\Aut^0(X,D)=\C^*$ is not trivial. This also shows that it is not enough to assume $(X,D)$ lc in (ii)  of Theorem \ref{thm:exaFanosing}. 
\end{rem}

It is shown in \cite[Proposition 2.5]{Lee} that any effective divisor $D$ on $\PP^n$ of degree $d$ such that $(\PP^n,\frac{n+1}{d}D)$ is klt defines a stable point in the projective space $\left|\cO_{\PP^n}(d)\right|$ with respect to the action of the reductive group $\Aut(\PP^n)$. As a consequence of the above result, we get the following generalization of this fact: 

\begin{cor}\label{cor:stab} Let $X$ be a K\"ahler-Einstein Fano manifold (so that $G:=\Aut^0(X)$ is reductive by \cite{Mat}), and let $L$ be an ample $G$-line bundle on $X$ with $c L\sim_\Q -K_X$ for some $c\in\Q_+$. Then every effective divisor $D\sim L$ such that $(X,c D)$ is klt defines a $G$-stable point of $|L|=\PP H^0(X,L)$. 
\end{cor}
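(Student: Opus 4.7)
Set $\Delta := cD$. By hypothesis $\Delta \sim_\Q -K_X$ and $(X,\Delta)$ is klt, so Theorem~\ref{thm:exaFanosing} is applicable. Since $X$ admits a K\"ahler--Einstein metric which, by Theorem~\ref{thm:varKE}, is a minimizer of the Mabuchi functional of $X$, that functional is bounded below. Theorem~\ref{thm:exaFanosing} therefore yields coercivity (hence properness) of the Ding and Mabuchi functionals of the log Fano pair $(X,(1-\la)\Delta)$ for every rational $\la \in (0,1)$. Fix any such $\la$ from now on.

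The $G$-stabilizer $G_D := \{g \in G : g\cdot D = D\}$ of the point $[s_D]\in |L|$ preserves $(1-\la)\Delta$, so its identity component is contained in $\Aut^0(X,(1-\la)\Delta)$, which is trivial by Theorem~\ref{thm:uniqueKE}(i). Consequently $G_D$ is a finite subgroup of the reductive group $G$.

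By the Hilbert--Mumford numerical criterion, it then suffices to check that the Mumford weight $\mu([s_D],\la_1)$ is strictly positive for every non-trivial algebraic one-parameter subgroup $\la_1 \colon \C^\ast \to G$. Let $\om_0$ denote the K\"ahler--Einstein metric of $X$ and consider the path $\om_t := \la_1(e^t)^\ast\om_0 \in c_1(X)$ for $t \in \R$. Since any non-trivial algebraic one-parameter subgroup of $G$ has unbounded image, $\la_1(e^t)$ eventually leaves the (compact) isometry group of $\om_0$, so the path is non-constant and $J(\om_t)\sim C' t$ with $C'>0$ as $t\to+\infty$. The coercivity estimate $\mab^{(1-\la)\Delta} \ge \e\, J - C$ then yields
\[
\liminf_{t\to+\infty}\frac{\mab^{(1-\la)\Delta}(\om_t)}{t} \;\ge\; \e\, C' \;>\; 0.
\]
On the other hand, an asymptotic computation based on the Chen--Tian decomposition~\eqref{equ:chentian} identifies this slope with the log-Futaki invariant along $\la_1$:
\[
\lim_{t\to+\infty}\frac{\mab^{(1-\la)\Delta}(\om_t)}{t} \;=\; F_X(\la_1) \;+\; (1-\la)\,c\,\mu([s_D],\la_1),
\]
where $F_X(\la_1)$ is the classical Futaki invariant of $X$ along $\la_1$. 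Since $X$ is K\"ahler--Einstein, Futaki's theorem forces $F_X(\la_1) = 0$, so $(1-\la)c\,\mu([s_D],\la_1) > 0$ and the desired stability of $[s_D]$ follows.

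The main obstacle is the identification of the asymptotic slope of $\mab^{(1-\la)\Delta}(\om_t)$ with $F_X(\la_1)+(1-\la)c\,\mu([s_D],\la_1)$. The purely $X$-theoretic contribution $F_X(\la_1)$ is Tian's standard computation, in which the entropy and $(J-I)$ terms of~\eqref{equ:chentian} are expanded along a holomorphic one-parameter flow. The new log contribution arises from the adapted-measure term in~\eqref{equ:chentian}: recall that $\mu_0^{(1-\la)\Delta}$ differs from a fixed volume form on $X$ by a factor proportional to $|s_D|_h^{-2(1-\la)c}$, so differentiating $-\log|s_D|_h^{2(1-\la)c}$ along the orbit of $\la_1$ produces $(1-\la)c$ times the growth rate of $-\log|\la_1(e^t)\cdot s_D|_h^2$ in $t$, which is precisely $(1-\la)c$ times the weight of $\la_1$ on the line $\C\cdot s_D \subset H^0(X,L)$, i.e.\ the Mumford weight.
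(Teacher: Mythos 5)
Your proof matches the paper's for the first two steps: you correctly invoke Theorem~\ref{thm:exaFanosing} to obtain coercivity of the Mabuchi functional of $(X,(1-\la)cD)$ from the K\"ahler--Einstein hypothesis, and Theorem~\ref{thm:uniqueKE}(i) to conclude that $\Aut^0(X,(1-\la)cD)=\{1\}$, hence that the stabilizer $G_D$ is finite. Where you diverge is in how this finiteness is converted into GIT-stability of $[s_D]$.

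The paper's route is purely group-theoretic: it first observes, by semicontinuity of the klt condition, that $U:=\{D\in|L|\mid(X,cD)\text{ klt}\}$ is a $G$-invariant Zariski-open subset of $|L|$, and then appeals to a standard GIT fact (referenced via Brion's notes) to the effect that finiteness of all stabilizers on a $G$-invariant open subset already forces stability of its points. No analytic computation is needed beyond the finite-stabilizer statement. You drop this observation about $U$ entirely and instead attempt to verify the Hilbert--Mumford criterion directly, through an asymptotic slope computation for the log-Mabuchi functional along one-parameter subgroup orbits.

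This is a genuine gap, not merely a stylistic difference. The central identity you rely on,
$$
\lim_{t\to+\infty}\frac{\mab^{(1-\la)\Delta}(\om_t)}{t}=F_X(\la_1)+(1-\la)c\,\mu([s_D],\la_1),
$$
is a substantial theorem in its own right (a Futaki-invariant/GIT-weight correspondence), and you yourself flag it as ``the main obstacle'' and only sketch a heuristic for the log term. The paper never needs anything of this sort. A second gap is your assertion $J(\om_t)\sim C't$ with $C'>0$: that $\la_1(e^t)$ eventually leaves the isometry group only shows the path is non-constant; linear growth of $J$ along algebraic one-parameter orbits is a non-obvious fact (it is where the algebraicity of the action is really used) and cannot be dismissed in one sentence. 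Without both of these, the argument does not close. There is also a minor bookkeeping issue: $\om_t:=\la_1(e^t)^\ast\om_0$ lies in $c_1(X)$, whereas the Mabuchi functional of $(X,(1-\la)\Delta)$ lives on $\cT^1(X,(1-\la)\Delta)$ with class $\la\,c_1(X)$; you should use $\la\om_0$. In short: the first half of your proof is exactly the paper's, but the second half replaces a short GIT observation with an ambitious slope identity that you do not prove, leaving the argument incomplete.
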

\begin{proof} By semicontinuity \cite{Kol97}, $U:=\left\{D\in|L|\mid (X,c D)\text{ klt}\right\}$ is a $G$-invariant Zariski open subset of $|L|$. It is thus enough to show that the stabilizer $G_{D}$ of $D$ in $G$ is finite for all $D\in U$ (compare \cite[Proposition 1.26]{Bri}), which amounts to $G^0_D=\{1\}$ since $G_D$ is an algebraic group. But the Mabuchi functional of $(X, cD)$ is proper by Theorem \ref{thm:exaFanosing}, hence $G^0_{D}=\Aut^0(X,D)=\Aut^0(X,c D)=\{1\}$ by Theorem \ref{thm:uniqueKE}.
\end{proof}

\begin{ex}  
Let $H$ be an irreducible hypersurface of degree $d$ in $X:=\PP^n$, with $n\ge 3$. Assume that $n+2\le d\le 2n+1$ and that the singularities of $H$ are at most log canonical (lc for short). By inversion of adjunction, it follows that the pair $(X,H)$ is lc as well (see \cite[Theorem 7.5]{Kol97}). Since $\frac{n+1}{d}<1$, it follows that $(X,\frac{n+1}{d}H)$ is klt. But we also have $\frac{1}{2}<\frac{n+1}{d}$, thus Theorem \ref{thm:exaFanosing} implies that $(X,\tfrac 1 2 H)$ admits a unique K\"ahler-Einstein metric. Since $H$ has even degree, we can construct a double cover $p:Y\to X$ ramified along $H$, which satisfies $K_Y=p^*\left(K_X+\tfrac 12 H\right)$, and $Y$ is thus a $\Q$-Fano variety with log terminal singularities and a K\"ahler-Einstein metric (invariant under the Galois group of $p$). 

If the singularities of $H$ are for instance at most ordinary double points (\ie locally analytically isomorphic to $\left\{\sum_{i=1}^n z_i^2=0\right\}$, which are lc), then the singularities of the double cover $Y$ are also ordinary double points, and are not quotient singularities since ordinary double points have a trivial local fundamental group in dimension $n\ge 3$. It follows that the K\"ahler-Einstein metric of $Y$ cannot be constructed by orbifold methods. 
\end{ex}

\begin{proof}[Proof of Theorem \ref{thm:exaFanosing}]
Let us first fix some notation. Since $D\sim_\Q-K_X$ we have 
$$
-(K_X+(1-\la)D)\sim_\Q-\la K_X.
$$ 
Let $\phi_0$ be a reference smooth strictly psh metric on $-K_X$, with curvature form $\om_0$ and adapted probability measure $\mu_0$. We use $\phi_\la:=\la\phi_0$ as a reference smooth strictly psh metric on $-(K_X+(1-\la) D)$, with curvature form $\om_\la$ and adapted measure $\mu_\la$. Note that $\f\mapsto\p=\la\f$ sets up an isomorphism $\psh(X,\om_0)\simeq\psh(X,\om_\la)$. Denoting by $E_\la$ the energy functional of $\cE^1(X,\om_\la)$, it is straightforward to check that
\begin{equation}\label{equ:entransfo}
E_\la(\p)=\la E(\f). 
\end{equation}
By Proposition \ref{prop:moser}, we will be done if we can prove that functions $\p\in\cE^1(X,\om_\la)$ satisfy a Moser-Trudinger condition
$$
\|e^{-\p}\|_{L^p(\mu_\la)}\le A\,e^{-E_\la(\p)}
$$
for some $p>1$ and $A>0$ (independent of $\p$). 

Since the Ding functional of $X$ is assumed to be bounded below, we have an estimate
$$
\|e^{-\f}\|_{L^1(\mu_0)}\le A\,e^{-E(\f)}
$$
for all $\f\in\cE^1(X,\om_0)$. By (\ref{equ:entransfo}), it follows that
\begin{equation}\label{equ:moserla}
\|e^{-\p}\|_{L^{\la^{-1}}(\mu_0)}\le A\,e^{-E_\la(\p)}
\end{equation}
for all $\p\in\cE^1(X,\om_\la)$. On the other hand, it is immediate to check from the definition that $\mu_\la=e^{-(1-\la)\rho}\mu_0$ for some quasi-psh function $\rho$ which locally satisfies $\rho=\log|f|^2+O(1)$, where $f$ is a local equation of $D$. Since $(X,D)$ is klt, we thus have $e^{-\rho}\in L^{q}(\mu_0)$ for some $q>1$. 

Now pick $\de\in]q^{-1}(1-\la),1-\la[$. By H\"older's inequality we have
$$
\int_Xe^{-(1-\de)\la^{-1}\p}\mu_\la=\int_Xe^{-(1-\de)\la^{-1}\p-(1-\la)\rho}\mu_0\le\left(\int_X e^{-\la^{-1}\p}\mu_0\right)^{1-\de}\left(\int_Xe^{-\de^{-1}(1-\la)\rho}\mu_0\right)^\de.
$$
Since $\de^{-1}(1-\la)<q$ and $e^{-\rho}\in L^q(\mu_0)$, we have $\int_Xe^{-\de^{-1}(1-\la)\rho}\mu_0<+\infty$. We thus get $C>0$ and
$$
p:=(1-\de)\la^{-1}>1.
$$ 
such that
$$
\|e^{-\p}\|_{L^{p}(\mu_\la)}\le C\,\|e^{-\p}\|_{L^{\la^{-1}}(\mu_0)}
$$
for all $\p\in\cE^1(X,\om_\la)$. Combining this with (\ref{equ:moserla}) yields the desired Moser-Trudinger condition. 
\end{proof}

\section{Appendix A: an Izumi-type estimate} 

Let $X$ be a normal complex space with a given point $x\in X$ and let $\f$ be a psh function on $X$. Choose local generators $(f_i)$ of the maximal ideal $\fm_x$ of $\cO_{X,x}$ and define the \emph{slope} of $\f$ at $x$ by 
\begin{equation}\label{equ:slope}
s(\f,x):=\sup\left\{s\ge 0\mid\f\le s\log\sum_i|f_i|+O(1)\right\}\in[0,+\infty[
\end{equation}
Since $\log\sum_i|f_i|$ only depends on the choice of generators up to a bounded term, it is clear that $s(\f,x)$ is independent of the choice of $(f_i)$. For $f\in\cO_{X,x}$ we have
$$
s(\log|f|,x)=\overline{\ord}_x(f):=\lim_{m\to\infty}\tfrac 1m\ord_x(f^m),
$$
with
$$
\ord_x(f):=\max\left\{k\in\N\mid f\in\fm_x^k\right\}.
$$
\begin{rem} By \cite[p.50, Corollaire 6.6]{DemSMF} the non-decreasing function 
$$
\chi(t):=\sup_{\{\sum_i|f_i|<e^t\}}\f
$$ 
is convex (generalized three-circle theorem), and we have $s(\f,x)=\lim_{t\to-\infty}\chi(t)/t$. This implies in particular that the supremum in (\ref{equ:slope}) is attained. 
\end{rem}

Izumi's theorem \cite{Izu81} states that for every resolution of singularities $\pi:\tX\to X$ and every prime divisor $E\subset\tX$ lying above $x\in X$, there exists a constant $C>0$ such that 
$$
\ord_E(f\circ\pi)\le C\,\overline{\ord}_x(f)
$$
for all $f\in\cO_{X,x}$. Our goal here is to prove the following extension of this result to psh functions: 

\begin{thm}\label{thm:izumi} 
Let $\pi:\tX\to X$ be any resolution of singularities and let $E\subset\tX$ be a prime divisor above $x\in X$. Then there exists $C>0$ such that 
$$
\nu(\f\circ\pi,E)\le C s(\f,x)
$$ 
for all psh functions $\f$ on $X$. 
\end{thm}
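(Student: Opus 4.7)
The plan is to reduce Theorem \ref{thm:izumi} to the classical Izumi inequality for orders of vanishing of holomorphic functions, via Demailly's Bergman-kernel approximation of psh functions. As a first step, I would replace $\pi$ by a larger resolution dominating it, so as to assume that $\pi$ is also a log resolution of the maximal ideal $\fm_x$; this is harmless since divisorial Lelong numbers are preserved under birational pullback to higher models. Writing $\pi^*\fm_x \cdot \cO_{\tX} = \cO_{\tX}(-\sum_i b_i F_i)$ with $F_i$ prime divisors over $x$ and $b_i \ge 1$ integers, one has $E = F_{i_0}$ for some $i_0$. A preliminary statement I would then establish is the identity
$$
s(\f, x) \;=\; \min_i\, \frac{\nu(\f\circ\pi, F_i)}{b_i} \qquad (\star)
$$
for every psh function $\f$ near $x$. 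Since $\rho := \log \sum_j |f_j|$ (for $(f_j)$ generating $\fm_x$) pulls back to a function with generic Lelong number $b_i$ along $F_i$, the bound $\f \le s\rho + O(1)$ immediately gives $\nu(\f\circ\pi, F_i) \ge s b_i$, proving the inequality ``$\ge$'' in $(\star)$. The reverse inequality is obtained by comparing $\f\circ\pi$ with $s\rho\circ\pi$ near the generic point of each $F_i$ and globalizing on $\pi^{-1}(U)$ via a standard maximum-principle argument for psh functions on the smooth manifold $\tX$.

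Next, I would apply Demailly's Bergman-kernel approximation to $\f$. Restrict to a small Stein neighborhood $U$ of $x$, and let $\{\sigma_{m,\alpha}\}_\alpha$ be an orthonormal basis of the Hilbert space of holomorphic functions on $U$ that are $L^2$ with respect to $e^{-2m\f}\,dV_0$ (with $dV_0$ a smooth volume form on $X_\reg$; the $L^2$-condition is imposed on $U \cap X_\reg$, and holomorphic extension across the singular locus of $X$ is automatic from normality). Set $\f_m := \tfrac{1}{2m}\log\sum_\alpha |\sigma_{m,\alpha}|^2$. Applying the classical Izumi theorem to each section $\sigma_{m,\alpha}$ gives a constant $C>0$, depending only on $\pi$ and the $F_i$, such that
$$
\ord_E(\sigma_{m,\alpha}\circ\pi) \;\le\; C\,\overline{\ord}_x(\sigma_{m,\alpha})
$$
uniformly in $(m,\alpha)$. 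Combined with the elementary identities $\nu(\f_m\circ\pi, F) = \tfrac{1}{m}\min_\alpha \ord_F(\sigma_{m,\alpha}\circ\pi)$ (for any prime divisor $F \subset \tX$) and $s(\f_m, x) = \tfrac{1}{m}\min_\alpha \overline{\ord}_x(\sigma_{m,\alpha})$, taking the minimum over $\alpha$ in the Izumi bound produces
$$
\nu(\f_m\circ\pi, E) \;\le\; C\, s(\f_m, x).
$$

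Finally, I would pass to the limit $m \to \infty$ using Demailly's fundamental estimates on the smooth manifold $\tX$, which give $\nu(\f_m\circ\pi, F) \to \nu(\f\circ\pi, F)$ for every prime divisor $F \subset \tX$. Applied to $E$ and to each of the finitely many $F_i$, and combined with the identity $(\star)$ applied to both $\f$ and $\f_m$, this produces the two convergences $\nu(\f_m\circ\pi, E) \to \nu(\f\circ\pi, E)$ and $s(\f_m, x) \to s(\f, x)$, whence the theorem. The main difficulty is the identity $(\star)$, which is where the algebro-geometric content of the log resolution is converted into an analytic statement about slopes, together with the verification that the Bergman construction on the singular space $X$ is compatible with Demailly's convergence results on the smooth model $\tX$; both rest on normality of $X$, which lets one identify holomorphic functions on $\pi^{-1}(U)$ with holomorphic functions on $U$, and on the preservation of divisorial Lelong numbers of $\f$ under birational pullback.
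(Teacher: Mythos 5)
Your route is genuinely different from the paper's, but it has a gap in the approximation step that you should address.

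The paper proves the slope identity $(\star)$ (which is its equation (9.1), $s(\f,x)=\min_i b_i/a_i$) via the Siu decomposition of $dd^c(\f\circ\pi)$, and then deduces the theorem from a purely \emph{intersection-theoretic} lemma: since the cohomology class of $dd^c(\f\circ\pi)$ restricted to each $E_i$ vanishes and the diffuse part $R$ has zero generic Lelong numbers along the $E_i$, each $-B|_{E_i}$ is pseudoeffective, and connectedness of $\pi^{-1}(x)$ (Zariski's main theorem) plus the K\"ahler intersection form gives $\max_i b_i\le C\min_i b_i$ directly. This re-proves the classical Izumi inequality as a byproduct. You instead \emph{invoke} the classical Izumi theorem for holomorphic germs and bootstrap to psh functions via Demailly's Bergman-kernel approximation. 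Both approaches hinge on the same identity $(\star)$; your sketch of its hard direction (``comparison near generic points plus a maximum principle'') is vaguer than the paper's Siu-decomposition argument, but the idea is essentially equivalent and can be made rigorous.

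The genuine gap is in your limiting step. You define $\f_m$ via an orthonormal basis of the Bergman space on $U$ taken with respect to $e^{-2m\f}\,dV_0$, where $dV_0$ is a smooth volume form on $X_\reg$, and then assert that ``Demailly's fundamental estimates on the smooth manifold $\tX$'' give $\nu(\f_m\circ\pi,F)\to\nu(\f\circ\pi,F)$. But $\f_m\circ\pi$ is \emph{not} the Demailly approximation of $\f\circ\pi$ on $\tX$: while the two Hilbert spaces contain the same holomorphic functions (by normality and $\pi_*\cO_{\tX}=\cO_X$), the norms differ by the Jacobian factor $\pi^*dV_0/dV_{\tX}$, which degenerates along the exceptional divisors, and in fact the Ohsawa--Takegoshi estimate underlying Demailly's two-sided Lelong bound is not directly available on the singular Stein space $U$ itself. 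So the claimed convergence of divisorial Lelong numbers does not follow from what you cite. The fix is straightforward: run Demailly's approximation directly for $\psi:=\f\circ\pi$ on the weakly pseudoconvex set $\pi^{-1}(U)\subset\tX$ with a fixed smooth volume form on $\tX$; the resulting $\psi_m$ satisfy $\nu(\psi,F)-n/m\le\nu(\psi_m,F)\le\nu(\psi,F)$ uniformly, the ONB sections descend to holomorphic germs at $x$ by normality, and then your application of classical Izumi, the min-interchange, and $(\star)$ goes through verbatim. With this correction your proof is correct; it is a somewhat longer and less self-contained route than the paper's, since it relies on the classical Izumi theorem and on Demailly's $L^2$ machinery, but it has the pedagogical virtue of exhibiting the psh statement as a direct limit of the algebraic one.
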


Here 
$$
\nu(\f\circ\pi,E)=\min_{p\in E}\nu(\f\circ\pi,p)
$$
is the generic Lelong number of $\f\circ\pi$ along $E$. Note that $\ord_E(f\circ\pi)=\nu(\f\circ\pi,E)$ with $\f=\log|f|$. 

\begin{cor}\label{cor:izumi} 
If $\f$ is a psh function with $s(\f,x)=0$ for some $x\in X$, then $\nu(\f\circ\pi,p)=0$ for every resolution of singularities and every $p\in\pi^{-1}(x)$. 
\end{cor}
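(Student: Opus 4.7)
The plan is to reduce the corollary to Theorem \ref{thm:izumi} by blowing up the point $p$, turning a point Lelong number on $\tX$ into a generic Lelong number along an exceptional divisor on a \emph{new} resolution of $X$.

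Concretely, fix $p\in\pi^{-1}(x)$ and let $\sigma:\tX'\to\tX$ be the blowup of the smooth manifold $\tX$ at $p$, with exceptional divisor $E'\subset\tX'$. Since $\tX'$ is smooth and $\pi':=\pi\circ\sigma:\tX'\to X$ is proper and birational, $\pi'$ is again a resolution of singularities of $X$, and $E'$ is a prime divisor in $\tX'$ mapping to $x$ (because $\pi'(E')=\pi(p)=x$). Theorem \ref{thm:izumi} applied to $\pi'$ and $E'$ therefore yields a constant $C>0$ such that
$$
\nu(\f\circ\pi', E')\le C\,s(\f,x)
$$
for every psh function $\f$ on $X$. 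Under the hypothesis $s(\f,x)=0$ this gives $\nu(\f\circ\pi', E')=0$.

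The next step is the standard identity relating Lelong numbers to blowups: for any psh function $\psi$ on a complex manifold $Y$ and the blowup $\sigma:Y'\to Y$ at a smooth point $q$ with exceptional divisor $F$, one has $\nu(\psi\circ\sigma, F)=\nu(\psi,q)$. Applying this with $\psi:=\f\circ\pi$ and $q:=p$, we obtain
$$
\nu(\f\circ\pi,p)=\nu(\f\circ\pi\circ\sigma, E')=\nu(\f\circ\pi',E')=0,
$$
which is the desired conclusion.

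There is no real obstacle here: the only non-trivial input is Theorem \ref{thm:izumi} itself, plus the classical equality between the Lelong number at a point and the generic Lelong number along the exceptional divisor of the blowup of that point. Both pieces are already available, so the proof is essentially one line once the right resolution is chosen.
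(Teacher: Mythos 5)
Your argument is exactly the paper's own proof: blow up $\tX$ at $p$ to get a new resolution $\pi' = \pi\circ\sigma$, apply Theorem \ref{thm:izumi} to the exceptional divisor $E'$, and conclude via the classical identity $\nu(\f\circ\pi\circ\sigma, E') = \nu(\f\circ\pi, p)$. Nothing to add.
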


\begin{proof}
Let $b:X' \rightarrow \tX$ be the blow up of $\tX$ at point $p \in \tX$. Then
$\pi'=\pi \circ b:X' \to X$ is yet another resolution of singularities. Set $E=\b^{-1}(p)$. This is a prime divisor to which we can apply Theorem \ref{thm:izumi} . The conclusion follows then by recalling the 
following classical interpretation of Lelong number:
$
\nu(\f \circ \pi \circ b,E)=\nu(\f \circ \pi, p).
$
\end{proof}

\begin{proof}[Proof of Theorem \ref{thm:izumi}] 
By Hironaka's theorem we may assume that $\pi:\tX\to X$ dominates the blow-up of $X$ at $x$, so that the scheme-theoretic fiber $\pi^{-1}(x)$ is an effective divisor $\sum_i a_i E_i$. Note that $\sum_i E_i$ is connected by Zariski's "main theorem". 

Set $b_i:=\nu(\pi^*\f,E_i)$. Using the Siu decomposition of the positive current $T:=dd^c\pi^*\f$ we may write $T=R+B$ where $B=\sum_ib_i E_i$ is an effective $\R$-divisor and $R$ is a positive current such that $\nu(R,E_i)=0$ for all $i$. We first claim that 
\begin{equation}\label{equ:slopefrac}
s(\f,x)=\min_i\frac{b_i}{a_i}.
\end{equation}
Indeed, if we write $\fm_x=(f_i)$ as above then $\pi^*\log\sum_i|f_i|$ has analytic singularities described by the divisor $\pi^{-1}(x)$, \ie locally on $X$ we have
$$
\pi^*\log\sum_i|f_i|=\sum_i a_i\log|z_i|+O(1)
$$
where $z_i$ is a local equation of $E_i$. We thus see that 
$$
\f\le s\log\sum_i|f_i|+O(1)\Longleftrightarrow \pi^*\f\le\sum_i s a_i\log|z_i|+O(1)
$$ 
locally on $\tX$, and the positivity of $R=T-B$ shows that this holds iff $b_i=\nu(\pi^*\f,E_i)\ge s a_i$ for all $i$, hence the claim.

In view of (\ref{equ:slopefrac}), the desired statement amounts to an estimate $\max_i b_i\le C\min_i b_i$ for some $C>0$ independent of $\f$. Thanks to Lemma \ref{lem:izumi} below, this will hold if we can show that $-B|_{E_i}$ is pseudoeffective for all $i$. Since the restriction to each $E_i$ of the cohomology class of $T=\pi^*dd^c\f$ is trivial, we are reduced to showing that $\{R\}|_{E_i}$ is pseudoeffective. Since $\nu(R,E_i)=0$, this follows from Demailly's regularization theorem. Let us recall the standard argument: by\cite{Dem92}, after perhaps shrinking $X$ slightly about $0$ we may write $R$ as a weak limit of closed positive $(1,1)$-currents $R_k$ with analytic singularities such that $\{R_k\}=\{R\}$, $R_k\ge-\e_k\om$ for some $\e_k\to 0$ and $R_k$ is less singular than $R$. In particular we have $\nu(R_k,E_i)=0$ for all $i$, which means that the local potentials of $R_k$ are not entirely singular along $E_i$, so that $R_k|_{E_i}$ is a well-defined closed $(1,1)$-current. We thus see that $\left(\{R\}+\e_k\{\om\}\right)|_{E_i}$ is pseudoeffective for all $k$, and the claim follows. 
\end{proof}

\begin{lem}\label{lem:izumi} Let $E=\sum_i E_i$ be a reduced compact connected divisor on a K\"ahler manifold $M$. Let $B=\sum b_i E_i$ be an effective $\R$-divisor supported in $E$, and assume that $-B|_{E_i}$ is pseudoeffective for all $i$. Then there exists a constant $C>0$ only depending on $E$ such that $\max_i b_i\le C\min_i b_i$. 
\end{lem}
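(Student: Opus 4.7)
The plan is to turn the pseudoeffectivity of each $-B|_{E_i}$ into a linear system of inequalities on the $b_i$'s by pairing with an auxiliary semipositive form, and then to propagate bounds along the dual (intersection) graph of $E$, whose connectivity follows from that of $E$.

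Fix a Kähler form $\om$ on $M$, set $n:=\dim_{\C} M$, and choose a resolution of singularities $\pi_j:\widetilde{E}_j\to E_j$ for each $j$. Then $\widetilde{\om}_j:=\pi_j^*(\om|_{E_j})$ is a closed, semipositive and big $(1,1)$-form on the $(n-1)$-dimensional manifold $\widetilde{E}_j$. Define the symmetric intersection numbers
\[
c_{ij}:=\bigl(E_i\cdot E_j\cdot\om^{n-2}\bigr)_M=\int_{\widetilde{E}_j}\pi_j^*(E_i|_{E_j})\wedge\widetilde{\om}_j^{n-2}.
\]
These depend only on $E$ and $\om$, and for $i\neq j$ the restriction $E_i|_{E_j}$ is an effective Cartier divisor on $E_j$, so $c_{ij}\ge 0$, with strict inequality iff $E_i\cap E_j\neq\emptyset$ (because $\widetilde{\om}_j^{n-2}$ is big on $\widetilde{E}_j$, hence has positive pairing against any non-zero effective divisor).

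Next, the pseudoeffective class $\pi_j^*(-B|_{E_j})$ is represented by a closed positive $(1,1)$-current on $\widetilde{E}_j$; wedging it with the closed semipositive $(n-2,n-2)$-form $\widetilde{\om}_j^{n-2}$ yields a positive top-degree measure, so integration gives
\[
0\le\int_{\widetilde{E}_j}\pi_j^*(-B|_{E_j})\wedge\widetilde{\om}_j^{n-2}=-\sum_i b_i\,c_{ij},
\]
equivalently $b_j(-c_{jj})\ge\sum_{i\neq j}b_i\,c_{ij}$ for every $j$, forcing $c_{jj}\le 0$. Let $\Gamma$ be the intersection graph of $E$ (vertices $\{i\}$, edges between pairs $\{i,j\}$ with $E_i\cap E_j\neq\emptyset$); connectedness of $E$ implies connectedness of $\Gamma$. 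If $b_{j_0}=0$ for some $j_0$, the inequality at $j_0$ forces $b_i=0$ for every neighbor of $j_0$ and, by iteration along $\Gamma$, all $b_i=0$, in which case the lemma holds trivially. Otherwise all $b_i>0$; at each edge $\{i,j\}$ of $\Gamma$ the inequality at $j$ gives $b_i\le (|c_{jj}|/c_{ij})\,b_j$. Setting $L:=\max_{\{i,j\}\text{ edge}}|c_{jj}|/c_{ij}$ and $N:=\#\{i\}$, iteration along a path in $\Gamma$ of length at most $N-1$ between any two vertices yields $b_i\le L^{N-1}b_j$ for all $i,j$, and $C:=L^{N-1}$ depends only on $E$ and $\om$.

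The only point requiring genuine care is the integration step: one must check that $-B|_{E_j}$ really pulls back to a pseudoeffective class on $\widetilde{E}_j$, and that pairing a closed positive $(1,1)$-current with a closed semipositive $(n-2,n-2)$-form on a compact complex manifold yields a non-negative number. Both are standard (the second is the positivity of products of weakly positive forms in the sense of Demailly), so once they are in place, the proof reduces to the finite combinatorics on the connected graph $\Gamma$ described above.
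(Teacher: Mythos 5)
Your proof follows essentially the same route as the paper's: pair $-B|_{E_j}$ with $\om^{n-2}$ to extract the linear inequalities $\sum_{i\ne j}b_ic_{ij}\le b_j|c_{jj}|$, note $c_{ij}>0$ whenever $E_i$ meets $E_j$, and propagate bounds along a path in the connected intersection graph. The one imprecision is the parenthetical justification for $c_{ij}>0$: a big $(1,1)$-form does \emph{not} in general pair positively against every non-zero effective divisor (exceptional divisors can kill the pairing). The correct reason is simpler and does not require resolutions at all: by the projection formula $c_{ij}=\int_{E_j}[E_i|_{E_j}]\wedge(\om|_{E_j})^{n-2}$, which is the Kähler volume of the non-empty compact $(n-2)$-dimensional cycle $E_i\cap E_j$ computed with the ambient Kähler form $\om$, and this is strictly positive by Wirtinger. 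With that fix, the detour through $\pi_j:\widetilde E_j\to E_j$ is unnecessary (though harmless for the psef integration step), and the argument matches the paper's.
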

The proof to follow is directly inspired from \cite[\S 6.1]{BFJ12}. 
\begin{proof} Let $\om$ be a K\"ahler form on $M$. Thanks to the connectedness of $E$, we may index the $E_i$ such that $B=\sum_{i=1}^N b_i E_i$ with $b_1=\min_i b_i$, $b_r=\max_i b_i$ for $1\le r\le N$, and $E_i\cap E_{i+1}\neq\emptyset$ for all $i=1,...,r-1$. For each $i$ we have
$$
(-D|_{E_i})\cdot(\om|_{E_i})^{n-2}=-\sum_j b_j c_{i,j}\ge 0,
$$
with 
$$
c_{i,j}:=\left(E_i\cdot E_j\cdot\om^{n-2}\right),
$$
hence 
\begin{equation}\label{equ:izumi}
\sum_{j\neq i} b_j c_{i,j}\le b_i \left| c_{i,i}\right|.
\end{equation}
Now $c_{i,j}\ge 0$ if $j\neq i$, and $c_{i,i+1}>0$ for all $i$ since $E_i$ meets $E_{i+1}$. It follows that 
$$
b_{i+1}\le\frac{\left|c_{i,i}\right|}{c_{i,i+1}}b_i
$$
for all $i$, hence 
$\max_i b_i=b_r\le C b_1=\min_i b_i$
with $C:=\prod_{i=1}^{r-1}\frac{\left|c_{i,i}\right|}{c_{i,i+1}}$ 
\end{proof} 

\begin{rem} Besides the slope $s(\f,x)$ considered above, Demailly introduced in \cite{DemSMF} a different generalization of Lelong numbers on normal complex spaces, defined as the intersection multiplicity
$$
\nu(\f,x):=(dd^c\f)\wedge(dd^c\log\sum_i|f_i|)^{n-1}\left(\{x\}\right), 
$$
where $(f_i)$ are generators of $\fm_x$, the definition being independent of that choice. When $\fa=(g_1,...,g_r)$ is an $\fm_x$-primary ideal and $\f=\log\sum_i|g_i|$ then $\nu(\f,x)$ computes the mixed (Hilbert-Samuel) multiplicity $\langle\fa,\fm_x,...,\fm_x\rangle$. In particular for $\f=\p$ we have $\nu(\p,x)=m(X,x)$, the multiplicity of $X$ at $x$. By Demailly's comparison theorem we have
$$
\nu(\f,x)\ge s(\f,x) m(X,x),
$$ 
and the inequality is strict in general. Using the notation of the proof of Theorem \ref{thm:izumi} and recalling that $-\pi^{-1}(x)$ is $\pi$-nef, we conjecture by analogy with the algebraic case that 
$$
\nu(\f,x)=(B\cdot (-\pi^{-1}(x))^{n-1}).
$$ 
By Theorem \ref{thm:izumi} this would imply in particular that conversely $\nu(\f,x)\le C s(\f,x)$ for some $C>0$ independent of $\f$. 
\end{rem}

\section{Appendix B:  Laplacian estimate}\label{sec:MAreg}

The goal of this section is to present an explicit version of the main result of \cite{Pau}, in order to make it suitable to our purpose. 

In what follows $(X,\om)$ denotes a compact K\"ahler manifold, 
$\D=\tr_\om dd^c$ is the (analysts') Laplace operator with respect to the reference K\"ahler form $\om$,
and $\theta \geq 0$ is a semi-positive closed $(1,1)$-form such that $\int_X \theta^n>0$,
where $n=\dim_\C X$. We let $\Amp(\theta)$ denote the ample locus of
(the cohomology class of) $\theta$.

\begin{thm}\label{thm:paun} Let $\mu$ be a positive measure on $X$ of the form $\mu=e^{\p^+-\p^-}dV$ with $\p^\pm$ quasi-psh and $e^{-\p^-}\in L^p$ for some $p>1$. Assume that $\f$ is a bounded $\theta$-psh function such that $(\theta+dd^c\f)^n=\mu$. Then we have $\D\f=O(e^{-\psi^-})$ locally in $\Amp(\theta)$. 

More precisely, assume given a constant $C>0$ such that 
\begin{itemize}
\item[(i)] $dd^c\psi^+\ge -C\,\om$ and $\sup_X\psi^+\le C$.
\item[(ii)] $dd^c\psi^-\ge -C\,\om$ and $\|e^{-\psi^-}\|_{L^p}\le C$. 
\end{itemize}
Let also $U\Subset\Amp(\theta)$ be a relatively compact open subset. Then there exists $A>0$ only depending on $\theta$, $p$, $C$ and $U$ such that 
$$
0\le\theta+dd^c\f\le A\,e^{-\psi^-}\,\om
$$
on $U$.
\end{thm}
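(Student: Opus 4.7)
The plan is to follow P\u aun's strategy~\cite{Pau} via the maximum principle applied to an auxiliary function that combines the Monge-Amp\`ere Laplacian with $\psi^-$, so as to isolate the pointwise bound $\tr_\om \om_\f \leq A\,e^{-\psi^-}$. I would proceed in three steps.

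\emph{Step 1 (regularization and reference function).} Using Demailly's regularization, approximate $\psi^\pm$ by decreasing sequences of smooth quasi-psh functions $\psi^\pm_k \searrow \psi^\pm$ with $dd^c\psi^\pm_k \geq -(C+\e_k)\om$, $\e_k \to 0$; by monotone convergence $\|e^{-\psi^-_k}\|_{L^p}$ stays uniformly bounded. Since $U \Subset \Amp(\theta)$, after rescaling, pick a $\theta$-psh function $\rho \leq 0$ with analytic singularities exactly on $X \setminus \Amp(\theta)$ and satisfying $\theta + dd^c\rho \geq 2\de\,\om$ on an open neighborhood $V$ of $\bar U$, for some $\de > 0$. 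Set $\theta_k := \theta + \e_k \om$ and solve $(\theta_k + dd^c\f_k)^n = c_k\, e^{\psi^+_k - \psi^-_k}dV$ by Yau's theorem, with $c_k > 0$ the normalizing constant and the normalization $\sup_X \f_k = 0$. Kolodziej's $L^\infty$ estimate applied to this equation (with $L^p$-integrable density) yields a uniform bound $|\f_k| \leq M$.

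\emph{Step 2 (maximum principle).} Consider
\[
H_k := \log \tr_\om(\theta_k + dd^c\f_k) + \psi^-_k - A(\f_k - \rho),
\]
with $A > 0$ to be chosen. Since $H_k \equiv -\infty$ outside $\Amp(\theta)$, it attains its supremum at some interior point $x_k$; by a harmless modification of $\rho$ near $\partial V$ we may arrange $x_k \in V$. The classical Aubin-Yau inequality
\[
\Delta_{\om_{\f_k}}\log \tr_\om\om_{\f_k} \geq -B\,\tr_{\om_{\f_k}}\om - \frac{\Delta \log(\om_{\f_k}^n/\om^n)}{\tr_\om\om_{\f_k}},
\]
(with $B$ depending only on the bisectional curvature of $\om$), combined with $\Delta_{\om_{\f_k}}\psi^-_k \geq -(C+\e_k)\tr_{\om_{\f_k}}\om$ and $\Delta_{\om_{\f_k}}(\f_k - \rho) \leq n - 2\de\,\tr_{\om_{\f_k}}\om$ on $V$, gives, upon choosing $A$ large enough that $2A\de - B - C - 1 \geq 0$ uniformly in $k$,
\[
0 \geq \Delta_{\om_{\f_k}} H_k(x_k) \geq \tr_{\om_{\f_k}}\om(x_k) - An - \frac{\Delta\psi^+_k(x_k) - \Delta\psi^-_k(x_k) + O(1)}{\tr_\om\om_{\f_k}(x_k)}.
\]
Combining this with the pointwise bounds $\Delta\psi^\pm_k \geq -n(C+\e_k)$, the Monge-Amp\`ere identity $\om_{\f_k}^n/\om^n \leq C'\,e^{-\psi^-_k}$ (using $\psi^+_k \leq C + \e_k$), and the trace inequality $\tr_\om\om_{\f_k} \leq n(\tr_{\om_{\f_k}}\om)^{n-1}(\om_{\f_k}^n/\om^n)$, a careful algebraic manipulation yields
\[
\tr_\om\om_{\f_k}(x_k) \leq A_1\,e^{-\psi^-_k(x_k)}.
\]

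\emph{Step 3 (pointwise bound and passage to the limit).} Since $H_k(x) \leq H_k(x_k)$ for all $x \in \bar U$, and $\f_k$, $\rho$ are uniformly bounded on $\bar U$, we deduce
\[
\tr_\om(\theta_k + dd^c\f_k)(x) \leq A_2\,e^{-\psi^-_k(x)}, \quad x \in \bar U,
\]
with $A_2$ independent of $k$. Letting $k \to +\infty$, using uniqueness of bounded solutions to $(\theta + dd^c\f)^n = \mu$ (Dinew-Kolodziej) to identify $\f_k \to \f$ and the monotone convergence $\psi^\pm_k \searrow \psi^\pm$, yields the desired estimate $0 \leq \theta + dd^c\f \leq A\,e^{-\psi^-}\om$ on $U$. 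The main technical step is Step 2: the density Laplacian $\Delta\log(\om_{\f_k}^n/\om^n) = \Delta\psi^+_k - \Delta\psi^-_k + O(1)$ admits only one-sided pointwise lower bounds, so closing the estimate requires coupling the maximum-principle inequality with the Monge-Amp\`ere identity and the trace inequality, thereby exchanging the bound on $\tr_{\om_{\f_k}}\om$ at $x_k$ for the sought bound on $\tr_\om\om_{\f_k}$, with the $e^{-\psi^-_k}$ factor arising from the MA density and the $\psi^+_k \leq C + \e_k$ contribution absorbed into the constant.
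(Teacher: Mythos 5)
Your overall architecture (regularize $\psi^\pm$, a priori Laplacian estimate for smooth non-degenerate perturbations via the maximum principle, pass to the limit) matches the paper. However, Step 2 has a genuine gap, concealed behind the phrase ``a careful algebraic manipulation.''

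First, a sign error. Since $\Ric(\om_{\f_k}) = \Ric(\om) - dd^c\log(\om_{\f_k}^n/\om^n)$, the Aubin--Yau/Siu inequality reads
$$
\D_{\om_{\f_k}}\log\tr_\om\om_{\f_k}\ \ge\ \frac{\D_\om\log\bigl(\om_{\f_k}^n/\om^n\bigr)-\tr_\om\Ric(\om)}{\tr_\om\om_{\f_k}}-B\,\tr_{\om_{\f_k}}\om,
$$
with a \emph{plus} sign on the density term, not the minus sign you wrote.

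The essential problem is that the numerator contains $\D_\om\psi^+_k-\D_\om\psi^-_k$, and you only invoke the lower bounds $\D_\om\psi^\pm_k\ge -n(C+\e_k)$. With the correct sign, a lower bound on $\D_\om\psi^+_k$ is indeed useful, but for $-\D_\om\psi^-_k$ you would need an \emph{upper} bound on $\D_\om\psi^-_k$, and none exists: the curvature hypothesis $dd^c\psi^-_k\ge -C\om$ bounds $\D\psi^-_k$ only from below. Consequently, at the maximum point $x_k$ the inequality reduces to something of the shape
$$
\tr_{\om_{\f_k}}\om(x_k)\ \le\ An - \frac{\D\psi^+_k(x_k)-\D\psi^-_k(x_k)+O(1)}{\tr_\om\om_{\f_k}(x_k)},
$$
whose right-hand side is uncontrolled whenever $\D\psi^-_k(x_k)$ is large. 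The coupling you propose (trading $1/\tr_\om\om_{\f_k}$ for $\tr_{\om_{\f_k}}\om$ via the trivial inequality $n\le\tr_\om\om_{\f_k}\cdot\tr_{\om_{\f_k}}\om$, then appealing to the Monge--Amp\`ere identity) merely relocates the uncontrolled $\D\psi^-_k$ as a coefficient of $\tr_{\om_{\f_k}}\om$ and gives no information when that coefficient is large. This is exactly where P\u aun's key observation is needed, and it is absent from your argument. The paper applies $\tr_{\om_\e}$ to the pointwise inequality
$$
0\ \le\ A\om_\e+dd^c\psi^-\ \le\ \tr_{\om'_\e}\!\bigl(A\om_\e+dd^c\psi^-\bigr)\,\om'_\e
$$
(valid because $A\om_\e+dd^c\psi^-\ge 0$) to get
$$
0\ \le\ An+\D_{\om_\e}\psi^-\ \le\ \bigl(A\tr_{\om'_\e}\om_\e+\D_{\om'_\e}\psi^-\bigr)\tr_{\om_\e}\om'_\e,
$$
which converts $-\D_{\om_\e}\psi^-/\tr_{\om_\e}\om'_\e$ into $-\D_{\om'_\e}\psi^-$ plus a controlled multiple of $\tr_{\om'_\e}\om_\e$; the term $-\D_{\om'_\e}\psi^-$ then cancels against the contribution of the $\psi^-$ you (correctly) insert into $H$. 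Without this device, inserting $\psi^-_k$ into $H_k$ accomplishes nothing toward controlling the density term. You should incorporate this, either with reference metric $\om$ directly or, as the paper does, with $\om_\e=\tom+\e\om$ where $\tom=\theta+dd^c\psi$ extends to a K\"ahler form on a higher compactification.

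Two secondary points. Your Laplacian estimate $\D_{\om_{\f_k}}(\f_k-\rho)\le n-2\de\tr_{\om_{\f_k}}\om$ is only asserted on $V$, but the maximum $x_k$ need not fall in $V$; the ``harmless modification'' you gesture at is not obviously achievable. The paper instead constructs $\psi$ so that $\theta+dd^c\psi\ge\de\om$ holds on all of $\Amp(\theta)$, which removes the issue entirely. Finally, since $\theta_k\to\theta$ is a degenerating family of K\"ahler classes, the uniform $L^\infty$ bound on $\f_k$ should be taken from \cite{EGZ1} rather than attributed to Ko\l{}odziej.
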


This result recovers in particular \cite[Theorem 7, p.398]{Yau}. 

\begin{proof} We may of course assume that $\f$ is normalized. During the proof $A, A_1,...$ will denote positive constants that may vary from line to line, but are \emph{under control} in the sense that they only depend on $\theta$, $p$, $C$ and $U$. Since $U$ is contained in $\Amp(\theta)$, we may choose a Zariski open set $\Omega\supset U$ and a $\theta$-psh function $\psi$ such that $(\theta+dd^c\psi)|_\Omega$ is the restriction of a K\"ahler form $\tom$ on a higher compactification $\tX$ of $\Omega$, so that 
$$
\tom\ge\de\om \text{ on } \Omega \text{ for some } \de>0
\text{ and }
\p \to -\infty \text{ near } \partial \Omega.
$$

The proof of Theorem \ref{thm:paun} is divided in two steps. In the first and main one,  an \emph{a priori} estimate for smooth solutions of non-degenerate perturbations of the equation is established. In the second step we conclude using a regularization argument.\\

\noindent {\bf Step 1: A priori estimates.} 
For $0<\e\le 1$ we set $\om_\e:=\tom+\e\,\om$, viewed as a K\"ahler form on $\Omega$. Note that $\om_\e\ge\de\,\om$, so that 
\begin{equation}\label{equ:tr}
\tr_{\om_\e}(\a)\le\de^{-1}\tr_\om(\a)
\end{equation}
for every positive $(1,1)$-form $\a$. Assume that $\psi^+$ and $\psi^-$ are \emph{smooth} functions satisfying (i) and (ii) of Theorem \ref{thm:paun}, and assume given a smooth normalized $\theta_\e$-psh function $\f_\e$ such that 
\begin{equation}\label{equ:tep}
(\theta+\e\om+dd^c\f_\e)^n=e^{\psi^+-\psi^-}\,dV.
\end{equation}
The goal of Step 1 is to establish that $|\D\f_\e|\le A\,e^{-\psi^-}$ on $U$ with $A>0$ under control. Since we have $\om_\e\le A\om$ over $U$ with $A$ under control, it will be enough to prove that 
$$
\om'_\e:=\theta+\e\om+dd^c\f_\e
$$
satisfies $\tr_{\om_\e}(\om'_\e)\le A\,e^{-\psi^-}$ on $U$. 

We first recall the Laplacian inequality obtained in \cite[pp.98-99]{Siu}: if $\tau,\tau'$ are two K\"ahler forms on a complex manifold, then there exists a constant $B>0$ only depending on a lower bound for the holomorphic bisectional curvature of $\tau$ such that
\begin{equation}\label{equ:lap}
\D_{\tau'}\log\tr_{\tau}(\tau')\ge-\frac{\tr_\tau\Ric(\tau')}{\tr_\tau(\tau')}-B\tr_{\tau'}(\tau).
\end{equation}
We remark that Siu's argument uses the fact that $\tau$ and $\tau'$ are $dd^c$-cohomologous. But the general case is valid as well since Siu's computations are purely local and any K\"ahler form is even locally $dd^c$-exact. This being said, let us apply this inequality to the two K\"ahler forms $\om_\e$ and $\om'_\e$ on $\Omega$.

Since $\tom$ extends to a K\"ahler form on a higher compactification $\tX$ of $\Omega$, the holomorphic bisectional curvature of $\om_\e=\tom+\e\,\om$ is obviously bounded over $\Omega$ by a constant $B>0$ under control, and (\ref{equ:lap}) yields
\begin{equation}\label{equ:lap1}
\D_{\om'_\e}\log\tr_{\om_\e}(\om'_\e)\ge-\frac{\tr_{\om_\e}\Ric(\om'_\e)}{\tr_{\om_\e}(\om'_\e)}-B\tr_{\om'_\e}(\om_\e).
\end{equation}
On the other hand, applying $dd^c\log$ to $(\om_\e')^n=e^{\p^+-\p^-}\om^n$ yields 
$$
-\Ric(\om'_\e)=-\Ric(\om)+dd^c\psi^+-dd^c\psi^-\ge -A\om-dd^c\psi^-
$$
where $A$ is under control thanks to (i). Using $\tr_{\om_\e}(\om)\le n\de^{-1}$ and the trivial inequality 
\begin{equation}\label{equ:triv}
n\le\tr_{\om_\e}(\om'_\e)\tr_{\om'_\e}(\om_\e)
\end{equation} 
we thus infer from (\ref{equ:lap1})
\begin{equation}\label{equ:lapl2}
\D_{\om'_\e}\log\tr_{\om_\e}(\om'_\e)\ge-\frac{\D_{\om_\e}\psi^-}{\tr_{\om_\e}(\om'_\e)}-A\tr_{\om'_\e}(\om_\e).
\end{equation}
with $A$ under control. 

We next argue along the lines of \cite[Lemma 3.2]{Pau} to take care of the term $\D_{\om_\e}\psi^-$. By (ii) we have $A\om_\e+dd^c\psi^-\ge 0$ with $A$ under control. Applying $\tr_{\om_\e}$ to the trivial inequality 
$$
0\le A\om_\e+dd^c\psi^-\le\tr_{\om'_\e}(A\om_\e+dd^c\psi^-)\om'_\e
$$
yields 
$$
0\le An+\D_{\om_\e}\psi^-\le (A\tr_{\om'_\e}(\om_\e)+\D_{\om'_\e}\psi^-)\tr_{\om_\e}(\om'_\e). 
$$
Plugging this into (\ref{equ:lapl2}) and using again (\ref{equ:triv}) we thus obtain
\begin{equation}\label{equ:lapl3}
\D_{\om'_\e}\left(\log\tr_{\om_\e}(\om'_\e)+\psi^-\right)\ge - A\tr_{\om'_\e}(\om_\e).
\end{equation}
where $A$ is under control. Now set
$$
\rho_\e:=\f_\e-\p,
$$ 
so that $\om_\e'=\om_\e+dd^c\rho_\e$. We then have $n=\tr_{\om'_\e}(\om_\e)+\D_{\om'_\e}\rho_\e$, and we finally deduce from (\ref{equ:lapl3}) that
\begin{equation}\label{equ:lapl4}
\D_{\om'_\e}\left(\log\tr_{\om_\e}(\om'_\e)+\psi^--A_1\rho_\e\right)\ge\tr_{\om'_\e}(\om_\e)-A_2
\end{equation}
on $\Omega$, with $A_1,A_2$ under control. 

We are now in a position to apply the maximum principle. On the one hand, $\rho_\e=\f_\e-\p$ tends to $+\infty$ near $\partial\Omega$. On the other hand, $\tr_{\om_\e}(\om'_\e)\le\de^{-1}\tr_\om(\om'_\e)$ is bounded above on $\Omega$ since $\om'_\e$ is smooth over $X$. The function
$$
H:=\log\tr_{\om_\e}(\om'_\e)+\psi^--A_1\rho_\e
$$
therefore achieves its maximum at some $x_0\in\Omega$, and (\ref{equ:lapl4}) yields $\tr_{\om'_\e}(\om_\e)(x_0)\le A_2$. On the other hand, trivial eigenvalue considerations show that
$$
\tr_{\tau_1}(\tau_2)\le n\left(\tau_2^n/\tau_1^n\right)\tr_{\tau_2}(\tau_1)^{n-1}
$$
for any two K\"ahler forms $\tau_1,\tau_2$, whence 
$$
\log\tr_{\om_\e}(\om'_\e)\le\psi^+-\psi^-+\log\left(\frac{\om^n}{\om_\e^n}\right)+(n-1)\log\tr_{\om'_\e}(\om_\e)+\log n
$$
by (\ref{equ:tep}). Using $\om\le\de^{-1}\om_\e$ it follows that
$$
H\le A_3\log\tr_{\om'_\e}(\om_\e)+A_4-A_1\rho_\e
$$
where $A_3,A_4$ are under control, and we obtain 
$$
\sup_\Omega H=H(x_0)\le A_5-A_1\inf_\Omega\rho_\e\le A_5-A_1\inf_X\f_\e
$$
with $A_5$ under control, since $\rho_\e=\f_\e-\psi$ and $\psi\le 0$. By the $L^\infty$-estimate provided by \cite{EGZ1}, we now obtain 
$$
\log\tr_{\om_\e}(\om'_\e)+\psi^--A_1\rho_\e=H\le A
$$
on $\Omega$ for some constant $A$ under control. Since $\f_\e$ is normalized we conversely have $\rho_\e\le-\psi\le A_6$ over $U\Subset\Omega$, and we finally infer as desired $\tr_{\om_\e}(\om'_\e)\le A e^{-\psi^-}$ on $U$.\\

\noindent{\bf Step 2: Regularization.} We now consider the set-up of Theorem \ref{thm:paun}. By Demailly's regularization theorem \cite{Dem92}, there exist two decreasing sequences of smooth functions $\psi^\pm_j$ such that 
\begin{itemize}
\item $\lim_{j\to\infty}\psi^\pm_j=\psi^\pm$ on $X$. 
\item $dd^c\psi^\pm_j\ge-A\,\om$ for some $A>0$ under control.
\end{itemize}
In fact, the constant $A>0$ depends in principle on the Lelong numbers of the quasi-psh functions $\psi^\pm$ according to Demailly's result, but these Lelong numbers  can be uniformly bounded in terms of the lower bound $-C\om$ for $dd^c\psi^\pm$ by a standard argument, see for instance \cite[Lemma 2.5]{Bou02}. 

For each $0<\e\le 1$ the closed $(1,1)$-form $\theta+\e\,\om$ is K\"ahler, and Yau's theorem \cite{Yau} yields smooth normalized $\theta_\e$-psh functions $\f_{\e,j}$ such that 
$$
(\theta_\e+dd^c\f_{\e,j})^n=e^{\psi^+_j-\psi^-_j+c_{\e,j}}\om^n,
$$
where $c_{\e,j}\in\R$ is a normalizing constant. Since  $e^{\psi^+_j-\psi^-_j}\le e^{C-\psi^-}$ is uniformly bounded in $L^p$, $c_{\e,j}$ is under control and Step 1 of the proof shows that 
\begin{equation}\label{equ:boundlap}
|\D\f_{\e,j}|\le A\,e^{-\psi^-_j}
\end{equation}
over $U$, with $A>0$ under control. 

Now for each fixed $j$ it follows from \cite[Lemma 5.3]{BEGZ} that $\f_{\e,j}$ converges weakly as $\e\to 0$ to the normalized solution $\f_j$ of 
$$
(\theta+dd^c\f_j)^n=e^{\psi^+_j-\psi^-_j+c_j}\om^n,
$$
which therefore satisfies as well $|\D\f_j|\le A\,e^{-\psi^-_j}$ on $U$. But we also have $e^{\psi^+_j-\psi^-_j}\to e^{\psi^+-\psi^-}$ in $L^p$ by dominated convergence, and it follows that $\f_j\to\f$ weakly on $X$ by \cite[Theorem A]{EGZ1}, which concludes the proof of Theorem \ref{thm:paun}.  
\end{proof}

\section{Appendix C: a version of Berndtsson's convexity theorem}

The goal of this section is to extract from \cite{Bern11} the proof the following result. 

\begin{thm}\label{thm:berndt} Let $X$ be a compact K\"ahler manifold and $L$ a line bundle on $X$ such that:
\begin{itemize}
\item[(i)] $h^0(X,K_X+L)=1$ and $h^1(X,K_X+L)=0$; 
\item[(ii)] $L=M+\D$ where $M$ is a semipositive $\Q$-line bundle, $\D=\sum_i a_i D_i$ is an effective $\Q$-divisor with SNC support and $a_i\in (0,1)$. 
\end{itemize}
Set $S:=\left\{t\in\C\mid 0<\Re t<1\right\}$ and consider a psh metric $\phi$ on the pull-back of $L$ to $X\times S$ of the form  $\phi=\tau+\phi_\D$ where 
\begin{itemize}
\item[(iii)] $\tau$ is a bounded psh metric on the pull-back of $M$ to $X\times S$, with $t\mapsto\tau_t$ only depending on $\Re t$ and Lipschitz continuous; 
\item[(iv)] $\phi_\D=\sum_i a_i\log|s_i|^2$ with $s_i$ the canonical section of $\cO(D_i)$, so that $dd^c\phi_\D=[\D]$. 
\end{itemize}
For each generator $u$ of $H^0(X,K_X+L)$, viewed as an $L$-valued holomorphic $n$-form on $X$, the function
$$
L(t)=-\log\|u\|^2_{\phi_t}=-\log\int_X i^{n^2} u\wedge\bar u\,e^{-\phi_t}
$$ 
is then convex on $(0,1)$. If it is further affine, then there exists a holomorphic vector field $V$ on $\{u\ne 0\}\subset X$ such that 
$$
\left(\mathcal L_V+\frac{\partial}{\partial t}\right)dd^c_z\phi_t=0
$$ 
on $X\times S$, with $\cL_V$ the Lie derivative along $V$. 
\end{thm}
The situation here is a slight variant of \cite[\S 6.2]{Bern11}, which corresponds to the case where $u$ is nowhere zero (and hence $L=-K_X$). The arguments given in that part of the paper are rather brief, and a more precise exposition of the proof is presented in \cite[Appendix 1]{CDS3}. However, the latter still suffers from some minor oversights, having to do with the negative part of the curvature in the regularization and the possibly non-uniform convergence of the curvature formula. We therefore take the opportunity to present here a proof with full details. We are very grateful to Bo Berndtsson who kindly answered our questions on his proof and carefully checked our arguments. 

In what follows we fix a reference K\"ahler metric $\om$ on $X$.\\

\noindent {\bf Step 0: Preliminary facts}. 
Assume for the moment that $\phi$ is a fixed smooth metric on $L$. The $(1,0)$-part of the induced Chern connection is given by 
\begin{equation}\label{equ:localdphi}
\partial^{\phi}=\partial-\partial\phi\wedge\bullet
\end{equation}
in any local trivialization of $L$. It is related to the adjoint $\dbar^*_\phi$ of $\dbar$ by the K\"ahler commutation identity $i\partial^\phi=[\dbar^*_\phi,\om\wedge\bullet]$. For an $L$-valued $(p,0)$-form $v$, this becomes
\begin{equation}\label{equ:kahler}
i\partial^{\phi} v=\dbar^*_{\phi}(\om\wedge v),
\end{equation}
which shows in particular that the image of $\partial^{\phi}$ on $(p,0)$-forms is orthogonal to the kernel of $\dbar$. For $p=n-1$, $v\mapsto\om\wedge v$ is a pointwise isometry between $L$-valued $(n-1,0)$ and $(n,1)$-forms,  and the Hodge star operator satisfies $\star(\om\wedge v)=i^{(n-1)^2}v$. In particular, 
\begin{equation}\label{equ:scalint}
\langle\om\wedge v,\alpha\rangle_{L^2(\phi)}=i^{(n-1)^2}\int_X v\wedge\overline\a.
\end{equation}
for any $L$-valued $(n,1)$-form $\a$. 

\begin{lem}\label{lem:dbar} For each $L$-valued $(n,0)$-form $\eta$ on $X$, there exists a unique $L$-valued $(n-1,0)$-form $v$ such that
\begin{itemize}
\item[(i)] $\partial^{\phi}v=P\eta$, the projection of $\eta$ orthogonal to the kernel of $\dbar$; 
\item[(ii)] $\om\wedge\dbar v=0$. 
\end{itemize}
\end{lem}
\begin{proof} The image of $\dbar$ is closed, since it has finite codimension in $\Ker\dbar$. As a result, $P\eta\in(\Ker\dbar)^\perp=\Im\dbar^*_\phi$ may be uniquely written as $P\eta=\dbar^*_\phi\b$ for an $L$-valued $(n,1)$-form $\b\in(\Ker\dbar^*_\phi)^\perp=\Im\dbar$, and $\b=\om\wedge v$ for a unique $L$-valued $(n-1,0)$-form $v$. 

Since we are assuming that $H^{n,1}(X,\C)=H^1(X,K_X+L)=0$, $\b$ above is in fact unique in $\Ker\dbar$, which concludes the proof. 
\end{proof}

\begin{rem}\label{rem:ortho} For later use, note that any $L$-valued $(n-1,0)$-form $v$ for which (ii) holds satisfies
$$
\int_X v\wedge\overline{P\a}\,e^{-\phi}=0
$$
for all $L$-valued $(n,1)$-form $\a$, by (\ref{equ:scalint}). 
\end{rem}

\noindent {\bf Step 1: Regularization}.
As in \cite[\S 2.3]{Bern11}, we rely on \cite{BK07} to write the bounded psh metric $\tau$ on the pull-back of $M$ to $X\times S$ as the decreasing limit of a sequence of smooth metrics $\tau^\nu$ over $X\times S_\nu$ for a slightly smaller strip
$$
S_\nu=\{t\in\C\mid \de_\nu<\Re t<1-\de_\nu\}
$$ 
with $\de_\nu\to 0$, such that 
$$
dd^c\tau^\nu\ge-\e_\nu\om
$$
on $X\times S_\nu$ for some sequence $\e_\nu\to 0$. We denote by $dd^c$ the operator on the product; an additional index $z$ or $t$ will indicate partial derivatives. Note that shrinking the time interval is necessary in the regularization process, since we are working over the non-compact product manifold $X\times S$.  

Since $t\mapsto\tau_t$ is Lipschitz continuous and only depends on $\Re t$, we can further arrange that $t\mapsto\tau^\nu_t$ is uniformly Lipschitz continuous and only depends on $\Re t$ (by averaging). 
 
We also introduce a regularization of $\phi_\D$ by setting 
$$
\phi_\D^\nu:=\sum_i a_i\log\left(|s_i|^2+\nu^{-1}e^{\psi_i}\right)
$$
with $\psi_i$ a smooth metric on $\cO(D_i)$. It satisfies: 
\begin{itemize}
\item[(i)] $dd^c\phi_\D^\nu\ge-C\om$ for some uniform constant $C>0$;
\item[(ii)] for each neighborhood $U$ of $\supp\D$, there exists $\e^\nu_U\to 0$ such that $dd^c\phi_\D^\nu\ge-\e^\nu_U\om$ outside $U$. 
\end{itemize}

Setting $\phi^\nu:=\tau^\nu+\phi_\D^\nu$ defines a smooth metric on the pull-back of $L$ to $X\times S$, only depending on $\Re t$, with time derivative $\dot\phi^\nu_t=\dot\tau^\nu_t\in C^\infty(X)$ uniformly bounded and converging a.e. to $\dot\phi_t$.\\

\noindent {\bf Step 2: Hodge theoretic estimates}. 
For each $t,\nu$, we denote by $\|\eta\|_{\phi^\nu_t}$ the $L^2$-norm of an $L$-valued $(p,q)$-form $\eta$ on $X$ with respect to the fixed K\"ahler metric $\om$ and the hermitian metric $\phi^\nu_t$ on $L$. We write $P^\nu_t\eta$ for the projection of $\eta$ orthogonal to the kernel of $\dbar$, and $\partial_z^{\phi^\nu_t}$ for the $(1,0)$-part of the Chern connection associated to $\phi^\nu_t$. As explained in Remark 3.2 and Lemma 6.3 of \cite{Bern11}, the equation in Lemma \ref{lem:dbar} satisfies the following uniform estimate: 

\begin{lem}\label{lem:dbarbis} There exists a constant $C>0$ such that for each $t,\nu$ and each $L$-valued $(n,0)$-form $\eta$, the unique $L$-valued $(n-1,0)$-form $v$ solving
\begin{itemize}
\item[(i)] $\partial_z^{\phi^\nu_t}v=P^\nu_t\eta$;
\item[(ii)] $\om\wedge\dbar_z v=0$. 
\end{itemize}
satisfies $\|v\|_{\phi^\nu_t}\le C\|\eta\|_{\phi^\nu_t}$. 
\end{lem}

We will also rely on the following estimate, which follows from (the proof of) \cite[Lemma 6.5]{Bern11}. 

\begin{lem}\label{lem:neigh} For each $\de>0$, there exists a neighborhood $U_\de\subset X$ of $\supp\D$ such that 
$$
\int_{U_\de}|v|^2_{\phi^\nu_t}\le\de\left(\||v\|^2_{\phi^\nu_t}+\|\dbar_z v\|^2_{\phi^\nu_t}\right)
$$
for all $L$-valued $(n-1,0)$-forms $v$ on $X$, all $\nu$ and $t\in S_\nu$.  
\end{lem}

Combining these facts, we obtain the following key technical result. 
\begin{lem}\label{lem:vnut} For each $\nu$, there exists a unique smooth family $v^\nu=(v^\nu_t)_{t\in S_\nu}$ of $L$-valued $(n-1,0)$-forms such that 
\begin{itemize}
\item[(i)] $\partial_z^{\phi^\nu_t}v^\nu_t=P^\nu_t(\dot\phi^\nu_t u)$; 
\item[(ii)] $\om\wedge\dbar_z v^\nu_t=0$. 
\end{itemize}
The $L^2$-norm $\|v^\nu_t\|_{\phi^\nu_t}$ is bounded independently of $t$ and $\nu$. After perhaps passing to a subsequence, we can further find a sequence of smooth cut-off functions $0\le\chi_\nu\le 1$ on $X$ (with $\chi_\nu\equiv 0$ on some neighborhood of $\supp\D$) such that
\begin{itemize}
\item[(iii)] $\chi_\nu dd^c\phi^\nu_t\ge-\e_\nu\om$;
\item[(iv)] $\int_X(1-\chi_\nu) |v^\nu_t|^2_{\phi^\nu_t}\le\e_\nu\left(1+\|\dbar_z v^\nu_t\|^2_{\phi^\nu_t}\right)$,
\end{itemize}
for some sequence $\e_\nu>0$ converging to $0$. 
\end{lem}

\noindent {\bf Step 3: Subharmonicity of $L$}.
Our goal here is to show that $L(t)=-\log\| u\|^2_{\phi_t}$ is subharmonic on $S$. This function is the decreasing limit of $L^\nu(t):=-\log\|u\|^2_{\phi^\nu_t}$, which may be viewed as the weight of the $L^2$-metric induced by $\phi^\nu_t$ on the trivial line bundle $S_\nu\times H^0(X,K_X+L)$. By \cite[Theorem 3.1]{Bern11} (see also \cite[Lemma 14]{CDS3} for a direct computation), we thus have the curvature formula

\begin{equation}\label{equ:curv}
\|u\|^2_{\phi^\nu_t} dd^c_t L^\nu=\|\dbar_z v^\nu_t\|^2_{\phi^\nu_t} idt\wedge d\bar t
+\int_X\Theta_\nu, 
\end{equation}
where we have set
\begin{equation}\label{equ:Thetanu}
\Theta_\nu:=i^{n^2} dd^c\phi^\nu_t\wedge w_\nu\wedge\overline{w_\nu}
\end{equation}
with
\begin{equation}\label{equ:wnu}
w_\nu:=u-dt\wedge v^\nu_t,
\end{equation}
and $\int_X$ denotes fiber integration. 

First, we observe that the left-hand coefficient satisfies
$$
C^{-1}\le\|u\|^2_{\phi^\nu_t}\le C
$$
for some uniform constant $C>0$. This is a consequence of $e^{-\phi^\nu_t}\le e^{-\phi_t}=e^{-\tau_t-\phi_\D}$, since $e^{-\phi_\D}$ is integrable while $e^{-\tau_t}\le C e^{-\tau_{t_0}}$ for any fixed $t_0$ by Lipschitz continuity of $t\mapsto\tau_t$. 

Next, as in \cite[\S 6.2]{Bern11}, we note that
\begin{equation}\label{equ:intaway}
\int_X\chi_\nu\Theta_\nu\ge-C\e_\nu i dt\wedge d\bar t,
\end{equation}
thanks to the $L^2$-bound $\|v^\nu_t\|_{\phi^\nu_t}\le C$ and the curvature lower bound $\chi_\nu dd^c\phi^\nu_t\ge-\e_\nu\om$. On the other hand, the global curvature bound $dd^c\phi^\nu_t\ge-C\om$ combined with (iv) in Lemma \ref{lem:vnut} yields 
\begin{equation}\label{equ:inton}
\int_X (1-\chi_\nu) \Theta_\nu\ge-C\left(\int_X(1-\chi_\nu) |v_t^\nu|^2_{\phi^\nu_t}\right)idt\wedge d\bar t\ge-C\e_\nu\left(1+\|\dbar_z v^\nu_t\|^2_{\phi^\nu_t}\right)idt\wedge d\bar t.
\end{equation}
Injecting these estimates in the curvature formula (\ref{equ:curv}), we obtain
\begin{equation}\label{equ:ddcL}
dd^c_t L^\nu\ge\left(c\|\dbar_z v^\nu_t\|^2_{\phi^\nu_t}-\e_\nu\right) i dt\wedge d\bar t
\end{equation}
for some uniform constant $c>0$ and $\e_\nu\to 0$. In particular, we get as desired $dd^c_t L\ge 0$ in the limit, thereby proving that $L$ is subharmonic.\\  

\noindent {\bf Step 4: Holomorphy of $v$}.
From now on, we assume that $L$ is harmonic, so that $dd^c L^\nu\to 0$ weakly on $S$. As a first consequence, we obtain the following estimates: 

\begin{lem}\label{lem:harm} The following fiber integrals converge weakly to zero on $S$ as $\nu\to\infty$.
\begin{itemize}
\item[(i)] $\int_X|\dbar_z v^\nu_t|^2_{\phi^\nu_t}$; 
\item[(ii)] $\int_X\chi_\nu\Theta_\nu$; 
\item[(iii)] $\int_X(1-\chi_\nu)\Theta_\nu$; 
\item[(iv)] $\int_X(1-\chi_\nu)|v^\nu_t|^2_{\phi^\nu_t}$.
\end{itemize}
\end{lem}
\begin{proof} (i) follows directly from (\ref{equ:ddcL}). Another application of the curvature formula (\ref{equ:curv}) then yields
$\int_X\Theta_\nu\to 0$. Now let $f\in C^\infty_c(S)$ be a non-negative test function. Injecting (i) in (\ref{equ:inton}) yields
$\int_{X\times S}f(1-\chi_\nu) \Theta_\nu\ge-\e^\nu_K$, while (\ref{equ:intaway}) gives $\int_{X\times S}f\chi_\nu\Theta_\nu\ge-\e^\nu_K$. Since the sum converges to zero by what we just saw, we get (ii) and (iii). Finally, (iv) is a consequence of (i) and point (iv) of Lemma \ref{lem:vnut}. 
\end{proof}

By the uniform $L^2$-bound on $v^\nu_t$, the corresponding sequence $v^\nu$ on $X\times S$ is bounded in $L^2_\loc$ (with respect to a smooth reference metric on $L$). After passing to a subsequence, we may assume that $v^\nu$ converges weakly in $L^2_\loc(X\times S)$ to a section $v$. Our goal is to show that $v$ is in fact holomorphic on $X\times S$.

As a direct consequence of estimate (i) in Lemma \ref{lem:harm}, we have $\dbar_z v=0$ weakly. The hard part is to prove that $\partial v/\partial\bar t=0$ holds weakly. We first observe that it is enough to show

\begin{equation}\label{equ:holo}
\lim_\nu\int_{X\times S} idt\wedge d\bar t\wedge\frac{\partial v^\nu}{\partial\bar t}\wedge\overline{\a_t}\,e^{-\phi^\nu}=0
\end{equation}
for all compactly supported Lipschitz continuous families $\a_t$ of bounded $L$-valued $(n,1)$-forms on $X$. Indeed, choosing $\a_t$ supported in a local coordinate chart in which $L$ is trivialized and identifiying metrics on $L$ with functions, we can write
$$
v^\nu=\sum_{j=1}^n f^\nu_j(z,t)dz_1\wedge\dots\wedge\widehat{dz_j}\wedge\dots\wedge dz_n, 
$$
and it is then enough to choose $\a_t$ of the form 
$$
\a_t(z)=e^{\phi_t(z)}g(z,t)dz_1\wedge\dots\wedge dz_n\wedge d\bar z_j
$$
with $g\in C^\infty_c$. 

\medskip

Let $K\subset S$ be a compact set such that $\a_t=0$ for $t\notin K$. Using again $H^{n,1}(X,L)=0$, we get for each $t,\nu$ a unique $L$-valued $(n,0)$-form $\b^\nu_t$, orthogonal to the kernel of $\dbar_z$ and such that
$$
\a_t=P^\nu_t\a_t+\dbar_z \b^\nu_t
$$
By \cite[Lemma 4.2]{Bern11}, $t\mapsto \b^\nu_t$ is uniformly Lipschitz continuous as a map from $S$ to $L^2$, again with respect to any choice of a reference smooth metric on $L$. We will rely on the following identity. 

\begin{lem}\label{lem:ident}  For each $t,\nu$, we have 
$$
\int_{X\times K}idt\wedge d\bar t\wedge\frac{\partial v^\nu}{\partial\bar t}\wedge\overline{\a_t}\,e^{-\phi^\nu}=(-1)^n\int_{X\times K} dd^c\phi^\nu\wedge w_\nu\wedge\overline{\b^\nu_t}\,e^{-\phi^\nu}.
$$
\end{lem}
Recall that we have set $w_\nu=u-dt\wedge v^\nu$.
\begin{proof} By construction, $v^\nu$ satisfies $\om\wedge\dbar_z v^\nu=0$, and hence $\dbar_z\left(\frac{\partial v^\nu}{\partial\bar t}\right)\wedge\om=0$ as well. As noted in Remark \ref{rem:ortho}, it follows that $\int_X \frac{\partial v^\nu}{\partial\bar t}\wedge\overline{P^\nu_t\a_t}\,e^{-\phi^\nu_t}=0$, and hence 
\begin{equation}\label{equ:ident1}
\int_X\frac{\partial v^\nu}{\partial\bar t}\wedge\bar\a_t\,e^{-\phi^\nu_t}=\int_X\frac{\partial v^\nu}{\partial\bar t}\wedge\overline{\dbar_z \b^\nu_t}\,e^{-\phi^\nu_t}.
\end{equation}
Next, we claim that
\begin{equation}\label{equ:ident2}
\int_X\frac{\partial v^\nu}{\partial\bar t}\wedge\overline{\dbar_z \b^\nu_t}\,e^{-\phi^\nu_t}=(-1)^n\int_X\partial_z^{\phi^\nu_t}\left(\frac{\partial v^\nu}{\partial\bar t}\right)\wedge\overline{\b^\nu_t}\,e^{-\phi^\nu_t}.
\end{equation}
Indeed, (\ref{equ:scalint}) gives
$$
i^{(n-1)^2}\int_X\frac{\partial v^\nu}{\partial\bar t}\wedge\overline{\dbar_z \b^\nu_t}\,e^{-\phi^\nu_t}=\langle\frac{\partial v^\nu}{\partial\bar t}\wedge\om,\dbar_z \b^\nu_t\rangle_{L^2(\phi^\nu_t)}
$$
$$
=\langle\dbar^*_{\phi^\nu}\left(\frac{\partial v^\nu}{\partial\bar t}\wedge\om\right),\b^\nu_t\rangle_{L^2(\phi^\nu_t)}
=i\langle\partial_z^{\phi^\nu}\left(\frac{\partial v^\nu}{\partial\bar t}\right),\b^\nu_t\rangle_{L^2(\phi^\nu_t)}=i^{n^2+1}\int_X\partial_z^{\phi^\nu}\left(\frac{\partial v^\nu}{\partial\bar t}\right)\wedge\overline{\b^\nu_t}\,e^{-\phi^\nu_t},
$$
using the K\"ahler identity (\ref{equ:kahler}). The claim follows since $i^{n^2+1-(n-1)^2}=(-1)^n$. 

Now, a simple computation shows that
\begin{equation}\label{equ:partialtbar}
\partial_z^{\phi^\nu}\left(\frac{\partial v^\nu}{\partial\bar t}\right)=P^\nu_t u^\nu_t
\end{equation}
with 
$$
u^\nu_t:=\partial_z\dot\phi^\nu_t\wedge v^\nu+\ddot\phi^\nu_t u.
$$
To see this, recall that $\eta^\nu_t:=\partial_z^{\phi^\nu}v^\nu-\dot\phi^\nu_t u$ satisfies by construction $\dbar_z\eta^\nu_t=0$. Using the local description $\partial_z^{\phi^\nu_t}=\partial_z-\partial_z\phi^\nu_t\wedge\cdot$, we apply $\partial/\partial\bar t$ to get 
$$
\frac{\partial\eta^\nu_t}{\partial\bar t}=\partial_z^{\phi^\nu_t}\left(\frac{\partial v^\nu}{\partial\bar t}\right)-\partial_z\dot\phi^\nu_t\wedge v^\nu-\ddot\phi^\nu u.
$$
The desired identity follows since the left-hand side is in the kernel of $\dbar_z$ while 
$$
\partial_z^{\phi^\nu_t}\left(\frac{\partial v^\nu}{\partial\bar t}\right)=\dbar^*_{\phi^\nu_t}\left(\frac{\partial v^\nu}{\partial\bar t}\wedge\om\right)
$$ 
is orthogonal to the kernel of $\dbar_z$. Finally, writing
$$
dd^c\phi^\nu=\ddot\phi_t^\nu i dt\wedge d\bar t+i\partial_z\dot\phi_t^\nu\wedge d\bar t+i dt\wedge\dbar_z\dot\phi_t^\nu+dd^c_z\phi^\nu
$$
and using the fact that $\overline{\b^\nu_t}$ has type $(0,n)$ on $X$ shows that
$$
dd^c\phi^\nu\wedge w_\nu\wedge\overline{\b^\nu_t}= i dt\wedge d\bar t\wedge u^\nu_t\wedge\overline{\b^\nu_t}. 
$$
As a result, we get
$$
\int_{X\times K}dd^c\phi^\nu\wedge w_\nu\wedge\overline{\b^\nu_t}\,e^{-\phi^\nu}=\int_{X\times K} i dt\wedge d\bar t\wedge u^\nu_t\wedge\overline{\b^\nu_t}\,e^{-\phi^\nu}
$$
$$
=\int_{X\times K} i dt\wedge d\bar t\wedge P^\nu_tu^\nu_t\wedge\overline{\b^\nu_t}\,e^{-\phi^\nu}=\int_{X\times K} i dt\wedge d\bar t\wedge\partial_z^{\phi^\nu}\left(\frac{\partial v^\nu}{\partial\bar t}\right)\wedge\overline{\b^\nu_t}\,e^{-\phi^\nu},
$$
where the second equality uses that $\b^\nu_t$ is orthogonal to the kernel of $\dbar_z$ and the third one comes from (\ref{equ:partialtbar}). Lemma \ref{lem:ident} now follows in view of (\ref{equ:ident1}) and (\ref{equ:ident2}). 
\end{proof}

Thanks to the previous lemma, the desired estimate (\ref{equ:holo}) boils down to the following. 

\begin{lem}\label{lem:W} For each non-negative $f\in C^\infty_c(S)$, $\int_{X\times S} f\,dd^c\phi^\nu\wedge w_\nu\wedge\overline{\b^\nu_t}\, e^{-\phi^\nu}$ tends to $0$.  
\end{lem}

The proof will rely on the following special case of the Bochner-Kodaira-Nakano identity, referred to as the 'one-variable H\"ormander inequality' in \cite[\S 4]{Bern11}. 

\begin{lem}\label{lem:BKN} Let $S$ be a Riemann surface and $\f$ (resp. $u$) be a smooth real valued (resp. complex valued, compactly supported) function on $S$. Then
$$
\int_S |u|^2 e^{-\f} dd^c\f\le i\int_S(\partial u-u\partial\f)\wedge\overline{(\partial u-u\partial\f)}\,e^{-\f}. 
$$
\end{lem}

\begin{proof} Pick any K\"ahler form $\om$ on $S$, and view $\f$ (resp. $u$) as a metric (resp. section) of the trivial line bundle on $M$, so that $\partial^\f u=\partial u-u\partial\f$. For bidegree reasons, the Bochner-Kodaira-Nakano identity (cf.~for instance \cite[\S 13.2]{Demhodge}) gives
$$
\|\dbar u\|_\f^2=\|\partial^\f u\|_\f^2+\langle[dd^c\phi,\Lambda]u,u\rangle_{L^2(\f)}
$$
with $\Lambda$ the pointwise adjoint of $\om\wedge\bullet$. For bidegree reasons again (compare (\ref{equ:scalint})), we have  
$$
\langle[dd^c\f,\Lambda]s,s\rangle_{L^2(\f)}=-\langle  s dd^c\f,s \om\rangle_{L^2(\f)}
$$
$$
=-\int_S|s|^2 e^{-\f} dd^c\f. 
$$
\end{proof}

\begin{proof}[Proof of Lemma \ref{lem:W}] Because of the large negative part of the curvature $dd^c\phi^\nu$ near $\supp\D$, we cut the integral $\int_{X\times S} f\,dd^c\phi^\nu\wedge w_\nu\wedge\overline{\b^\nu_t}\, e^{-\phi^\nu}$ into two pieces using $\chi_\nu$. Note that we may and do assume that $f\in C^\infty_c(S)$ has been chosen so that $f^{1/2}$ is smooth. 

First, the curvature bound $\chi_\nu(z) dd^c\phi^\nu+\e_\nu\om\ge 0$ and the Cauchy-Schwarz inequality yield
$$
\left|\int_{X\times S} f(t)\left(\chi_\nu\,dd^c\phi^\nu+\e_\nu\om\right)\wedge  w_\nu\wedge\overline{\b^\nu_t}\,e^{-\phi^\nu}\right|
$$
$$
\le\left(i^{n^2}\int_{X\times S}f(t)\left(\chi_\nu\,dd^c\phi^\nu+\e_\nu\om\right)\wedge w_\nu\wedge\bar w_\nu\,e^{-\phi^\nu}\right)^{1/2}\times
$$
$$
\left(i^{n^2}\int_{X\times S}f(t)\left(\chi_\nu\,dd^c\phi^\nu+\e_\nu\om\right)\wedge \b^\nu_t\wedge\overline{\b^\nu_t}\,e^{-\phi^\nu}\right)^{1/2}.
$$
The first right-hand factor
$$
i^{n^2}\int_{X\times S}f(t)\left(\chi_\nu\,dd^c\phi^\nu+\e_\nu\om\right)\wedge w_\nu\wedge\bar w_\nu\,e^{-\phi^\nu}
$$
$$
=\int_{X\times S}f(t)\chi_\nu\Theta_\nu+\e_\nu i^{n^2}\int_{X\times S}f(t)\chi_\nu\om\wedge w_\nu\wedge\bar w_\nu\,e^{-\phi^\nu}
$$
tends to $0$ thanks to Lemma \ref{lem:harm} and the $L^2$-bound on $v^\nu_t$. To show that the second factor is bounded, note that $dd^c\phi^\nu\wedge\b^\nu_t\wedge\overline{\b^\nu_t}=dd^c_t\phi^\nu\wedge\b^\nu_t\wedge\overline{\b^\nu_t}$, since $\b^\nu_t$ is an $(n,0)$-form on $X$. Thanks to the Lipschitz bounds for $t\mapsto \b^\nu_t\in L^2$, $t\mapsto\phi^\nu_t$ and $f(t)^{1/2}$, the one-variable H\"ormander inequality of Lemma \ref{lem:BKN} yields a uniform bound upper bound for
$$
\int_{X\times S} f(t)\chi_\nu\,dd^c\phi^\nu\wedge\b^\nu_t\wedge\overline{\b^\nu_t}e^{-\phi^\nu_t}, 
$$
which shows that
$$
i^{n^2}\int_{X\times S}f(t)\left(\chi_\nu\,dd^c\phi^\nu+\e_\nu\om\right)\wedge \b^\nu_t\wedge\overline{\b^\nu_t}\,e^{-\phi^\nu}
$$
is indeed bounded. We infer from the above that 
$$
\int_{X\times S}f\chi_\nu\,dd^c\phi^\nu\wedge w_\nu\wedge\overline{\b^\nu_t}\, e^{-\phi^\nu}\to 0.
$$
Using now the global curvature bound $dd^c\phi^\nu+C\om\ge 0$, we similarly write
$$
\left|\int_{X\times S}f\,(1-\chi_\nu)(dd^c\phi^\nu+C\om)\wedge  w_\nu\wedge\overline{\b^\nu_t}\,e^{-\phi^\nu}\right|
$$
$$
\le\left(\int_{X\times S} f\,(1-\chi_\nu)(dd^c\phi^\nu+C\om)\wedge w_\nu\wedge\bar w_\nu\,e^{-\phi^\nu}\right)^{1/2}\times
$$
$$
\left(\int_{X\times S} f\,(1-\chi_\nu)(dd^c\phi^\nu+C\om)\wedge \b^\nu_t\wedge\overline{\b^\nu_t}\,e^{-\phi^\nu}\right)^{1/2}.
$$
The first factor 
$$
\int_{X\times S} f\,(1-\chi_\nu)(dd^c\phi^\nu+C\om)\wedge w_\nu\wedge\bar w_\nu\,e^{-\phi^\nu}\le\int_{X\times K}(1-\chi_\nu)\Theta_\nu+C'\int_{X\times K}(1-\chi_\nu)|v^\nu_t|^2_{\phi^\nu}
$$
tends to zero by Lemma \ref{lem:harm}, and the second factor is bounded for the same reason as above, thanks to the one-variable H\"ormander inequality, and hence
$$
\int_{X\times S}f\,(1-\chi_\nu)(dd^c\phi^\nu+C\om)\wedge  w_\nu\wedge\overline{\b^\nu_t}\,e^{-\phi^\nu}\to 0. 
$$ 
Since
$$
\int_{X\times S}f\,(1-\chi_\nu)\om\wedge w_\nu\wedge\bar w_\nu\,e^{-\phi^\nu}\le C\int_{X\times S}f\,(1-\chi_\nu)|v^\nu_t|^2_{\phi^\nu}
$$
tends to $0$, we conclude as desired that
$$
\int_{X\times S}f dd^c\phi^\nu\wedge  w_\nu\wedge\overline{\b^\nu_t}\,e^{-\phi^\nu}=
$$
$$
\int_{X\times S}f\chi_\nu dd^c\phi^\nu\wedge  w_\nu\wedge\overline{\b^\nu_t}\,e^{-\phi^\nu}+\int_{X\times S}f\,(1-\chi_\nu)dd^c\phi^\nu\wedge  w_\nu\wedge\overline{\b^\nu_t}\,e^{-\phi^\nu}
$$
tends to $0$. 
\end{proof}

\noindent {\bf Step 5: End of the proof}. Recall that $v$, which is now known to be holomorphic on $X\times S$, is obtained as the weak $L^2_\loc$ limit on $X\times S$ of $v^\nu$, and that $u\in H^0(X,K_X+L)$ is the given non-zero holomorphic $L$-valued $n$-form. 

\begin{lem}\label{lem:local} The distributional equation $\partial_z\dbar_z\phi\wedge v=\partial_z\dot\phi_t\wedge u$
is satisfied on $X\times S$. 
\end{lem}
\begin{proof} Set $h^\nu:=P^\nu_t(\dot\phi^\nu_t u)-\dot\phi^\nu_t u$, which satisfies $\dbar_z h^\nu=0$. These functions are uniformly bounded in $L^2_{\mathrm{loc}}(X\times S)$, since $\int_X|h^\nu_t|^2_{\phi^\nu_t}$ is uniformly bounded thanks to the uniform Lipschitz bound for $t\mapsto\phi^\nu_t$. We may thus assume that $h^\nu\to h$ weakly in $L^2_{\mathrm{loc}}(X\times S)$. 

Since $\dbar_z h=0$, the desired result will follow from the identity
\begin{equation}\label{equ:dephi}
\partial_z v-\partial_z\phi\wedge v=\dot\phi_t u+h, 
\end{equation}
understood locally on $X\times S$. Recall that all (pluri)subharmonic functions belong to the Sobolev space $W^{1,1}_\loc$, basically because the Newton kernel has the same property. In particular, (\ref{equ:dephi}) is an equality in $L^1_\loc(X\times S)$, and it will thus be enough to argue on the open set $U:=(X\setminus\supp\D)\times S$ where the psh function $\phi$ is locally bounded. 

Rewrite $\partial_z^{\phi^\nu}v^\nu=P^\nu_t(\dot\phi^\nu_t u)$ as 
$$
\partial_z(e^{-\phi^\nu} v^\nu)=(\dot\phi^\nu_t u+h^\nu)e^{-\phi^\nu}=u\frac{\partial}{\partial t}(e^{-\phi^\nu})+h^\nu\,e^{-\phi^\nu}. 
$$
On $U$, we have $e^{-\phi^\nu}\to e^{-\phi}$ strongly in $L^2_\loc$, and $v^\nu\to v$ and $h^\nu\to h$ weakly in $L^2_\loc$. This is enough to get
\begin{equation}\label{equ:parexp}
\partial_z(e^{-\phi} v)=u\frac{\partial}{\partial t}(e^{-\phi})+h\,e^{-\phi}
\end{equation}
on $U$. Since the psh function $\phi$ is locally bounded on $U$, it satisfies the chain rule 
$$
d(e^{-\phi})=e^{-\phi}d\phi, 
$$ 
see for instance \cite[Lemma 1.9]{BEGZ}, and (\ref{equ:dephi}) thus follows from (\ref{equ:parexp}). 
\end{proof}
By Lemma \ref{lem:vnut}, $v^\nu_t$ is uniquely determined by an equation whose only dependence on $t$ is through $\phi^\nu_t$. As a result, $v^\nu_t$ is independent of $\Im t$, and hence so is $v_t$. Being holomorphic in $t$, the latter is thus independent of $t$. 

On the open set $\{u\ne 0\}$, define a holomorphic vector field $V$ by requiring that $i_V u=-v$. Since $\theta_t:=dd^c_z\phi_t$ satisfies $\theta_t\wedge u=0$ for bidegree reasons, we have 
$$
(i_V\theta_t)\wedge u=\theta_t\wedge (i_V u)=-\theta_t\wedge v,
$$
and Lemma \ref{lem:local} thus gives
$$
(i_V\theta_t+i\dbar_z\dot\phi_t)\wedge u=0. 
$$
For bidegree reasons and since we are working over the locus where $u$ does not vanish, it follows that $i_V\theta_t+i\dbar_z\dot\phi_t=0$. Using the Cartan identity $\cL_V=d i_V+i_V d$ for the Lie derivative, we obtain the desired equation
$$
\left(\mathcal L_V+\frac{\partial}{\partial t}\right)\theta_t=0,
$$ 
thereby concluding the proof of Theorem \ref{thm:berndt}.

\end{document}